\numberwithin{equation}{section}
\newtheorem{theorem}{Theorem}[section]
\newtheorem{lemma}[theorem]{Lemma}
\newtheorem{proposition}[theorem]{Proposition}
\newtheorem{remark}[theorem]{Remark}
\newtheorem*{theorem*}{Theorem}
\newtheorem*{lemma*}{Lemma}
\newtheorem*{proposition*}{Proposition}
\newtheorem*{corollary*}{Corollary}
\renewcommand\tilde{\widetilde}
\def\R{\mathbb{R}}
\def\T{\mathbb{T}}
\def\Z{\mathbb{Z}}
\def\N{\mathbb{N}}
\def\EE{\mathbb{E}}
\def\PP{\mathbb{P}}
\def\H{\mathcal{H}}
\def\F{\mathcal{F}}
\def\LM#1{\hbox{\vrule width.2pt \vbox to#1pt{\vfill \hrule width#1pt
height.2pt}}}
\def\LL{{\mathchoice {\>\LM7\>}{\>\LM7\>}{\,\LM5\,}{\,\LM{3.35}\,}}}
\def\restr{{\LL}}
\renewcommand{\phi}{\varphi}
\def\Div{\nabla \cdot}
\def\dist{\textup{dist}}
\def\1{\mathbf{1}}
\def\XXint#1#2#3{{\setbox0=\hbox{$#1{#2#3}{\int}$ }
\vcenter{\hbox{$#2#3$ }}\kern-.57\wd0}}
\def\eps{\varepsilon}
\def\lt{\left}
\def\rt{\right}
\def\les{\lesssim}
\def\ges{\gtrsim}
\DeclareMathOperator*{\argmin}{\arg\!\min}
\def\Bf{\overline{f}}
\def\Brho{\overline{\rho}}
\def\Brhop{\Brho_+}
\def\Brhom{\Brho_-}
\def\lay{\textrm{lay}}
\def\bdr{\textrm{bdr}}
\def\bulk{\textrm{bulk}}
\def\spt{\textup{Spt}\,}
\def\nabbdr{\nabla^{\bdr}}
\def\per{\textrm{per}}
\def\Wper{W_{2,\textrm{per}}^2}
\def\xL{x_L}
\def\yL{y_L}
\newcommand{\bra}[1]{\left( #1 \right)}
\newcommand{\sqa}[1]{\left[ #1 \right]}
\newcommand{\cur}[1]{\left\{ #1 \right\}}
\def\K{\mathcal{K}}
\begin{document}
\title{A large-scale regularity theory for the Monge-Amp\`ere equation with rough data and application to the optimal matching problem}

\author{Michael Goldman\thanks{ Universit\'e Paris-Diderot, Sorbonne Paris-Cit\'e, Sorbonne Universit\'e,  CNRS,  Laboratoire Jacques-Louis Lions, LJLL, F-75013 Paris, France, \texttt{goldman@math.univ-paris-diderot.fr}} \and Martin Huesmann\thanks{Martin Huesmann, Institut f\"ur Mathematik, Rheinische Friedrich-Wilhelms-Universit\"at Bonn, 53115 Bonn, Germany, \texttt{huesmann@iam.uni-bonn.de}} \and Felix Otto\thanks{Max Planck Institute for Mathematics in the Sciences, 04103 Leipzig, Germany, \texttt{Felix.Otto@mis.mpg.de}}}

\date{\today}

\maketitle

\abstract{The aim of this paper is to obtain  quantitative bounds for solutions to the optimal matching problem in dimension two. 
These bounds show that up to a logarithmically divergent shift,  the optimal transport maps are close to be the identity at every scale. These bounds allow us to pass to the limit as the system size goes to infinity and construct a locally optimal
coupling between the Lebesgue measure and the Poisson point process which retains the stationarity properties of the Poisson
point process only at the level of second-order differences. Our quantitative bounds are obtained through a Campanato iteration scheme 
based on a deterministic and a stochastic ingredient. The deterministic part, which can be seen as our main contribution, is a regularity result for Monge-Amp\`ere equations with rough right-hand side. 
Since we believe that it could be useful in other contexts, we prove it for general space dimensions.  The stochastic part is a concentration result for the optimal matching problem which builds on previous work by Ambrosio, Stra and Trevisan.  
}

\tableofcontents

\section{Introduction}
\setlength\parindent{0pt}

We are interested in  the
 optimal matching problem between the Lebesgue measure and the Poisson point process $\mu$ on the torus $Q_L:=\lt[-\frac{L}{2},\frac{L}{2}\rt)^d$ (i.e.\ $Q_L$ with the periodized induced metric $|\cdot|_{\mathrm{per}}$ from $\R^d$):
\begin{align}\label{eq:Poisson matching}
\Wper\bra{\frac{\mu(Q_L)}{|Q_L|}\chi_{Q_L}, \mu},
\end{align}
where $\Wper$ denotes the squared $L^2-$Wasserstein distance on $Q_L$ with respect to $|\cdot|_{\mathrm{per}}$.

This problem and some of its variants such as generalizations to $L^p-$costs or more general reference measures, have been the subject of intensive work in the past thirty years (see for instance \cite{Ta14,BaBo13,AmStTr16,Le17,Le18}). 
As far as we know, essentially all the previous papers investigating \eqref{eq:Poisson matching} were focusing on estimating the mean of \eqref{eq:Poisson matching}, e.g.\ \cite{AKT84, DoYu95, BaBo13, Ta94,AmStTr16}, 
or deviation  from the mean, e.g.\ \cite{DeScSc13, FoGu15}, by constructing on the one hand sophisticated couplings whose costs are asymptotically optimal  and proving on the other hand ansatz free lower bounds. The only exception is \cite{HuSt13} where for $d\ge 3$, stationary couplings between the Lebesgue 
measure and a  Poison point process on $\R^d$ minimizing the cost per unit volume are constructed.
From these works, in fact since \cite{AKT84}, it is understood that $d=2$ is the critical dimension for \eqref{eq:Poisson matching}. Indeed, while for $d\ge 3$, $\EE_L\sqa{\frac{1}{L^d}\Wper\bra{\frac{\mu(Q_L)}{|Q_L|}\chi_{Q_L}, \mu}}$
is of order  $1$, it is logarithmically diverging for $d=2$ (see Section \ref{sec:intro stoch}).

We focus here on the critical dimension $d=2$  and aim at a better description of the optimal transport maps, that is, of minimizers of  \eqref{eq:Poisson matching}. Building on a large-scale regularity theory for convex maps solving the Monge-Amp\`ere equation
\begin{equation}\label{eq:MAintro}
 \nabla \psi\# dx=\mu,
\end{equation}
which we develop along the way, we prove that  from the macroscopic scale $L$ down to the microscopic scale the solution of \eqref{eq:Poisson matching} is close to the identity plus a shift. Here
 closeness is measured with respect to  a scale-invariant $L^2$ norm.
Our main result is the following:
\begin{theorem}\label{theo:mainresult intro}
Assume that $d=2$. There exists $c>0$ such that for each dyadic $L\ge 1$ there exists a  random variable $r_{*,L}=r_{*,L}(\mu)\ge 1$ satisfying the exponential bound $\sup_L\EE_L\sqa{\exp\bra{\frac{c r_{*,L}^2}{\log 2 r_{*,L}}}  } <\infty$ and such that if\footnote{We use the short-hand notation $A\ll 1$ to indicate that there exists $\eps>0$ depending only on the dimension such that $A\le \eps$. Similarly, $A\les B$ means that there exists a dimensional
constant $C>0$ such that $A\le C B$.} $r_{*,L}\ll L$ there exists $\xL=\xL(\mu)\in Q_L$ with\footnote{Here and in the rest of the paper $\log$ denotes the natural logarithm}
\begin{equation}\label{eq:shift intro} |\xL|^2 \les r_{*,L}^2 \log^3\bra{\frac{L}{r_{*,L}}} \end{equation}
such that  if $T=T_{\mu,L}$ is the minimizer of \eqref{eq:Poisson matching}, then for every $2r_{*,L}\leq \ell\leq L$, there holds
\begin{equation}\label{eq:main estimate intro}
  \frac{1}{\ell^4} \int_{B_\ell(\xL)} |T-(x-\xL)|^2\les \frac{\log^3\lt(\frac{\ell}{r_{*,L}}\rt)}{\lt(\frac{\ell}{r_{*,L}}\rt)^2}.
\end{equation}
\end{theorem}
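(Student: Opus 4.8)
The plan is to run a harmonic--approximation (Campanato) iteration across dyadic scales, feeding in at each scale two ingredients: a \emph{deterministic} one-step regularity estimate for the Monge--Amp\`ere equation \eqref{eq:MAintro}, and a \emph{stochastic} bound on how far $\mu$ sits from Lebesgue on that scale. Write the minimizer as $T=\nabla\psi$ with $\psi$ the Brenier-type potential, so that the displacement $T-x=\nabla u$, $u=\psi-\tfrac12|x|^2$, formally solves the linearized relation $-\Delta u\approx\mu-1$. For a ball $B_r(y)\subset Q_L$ I set the scale-invariant excess
\begin{equation*}
 E(r,y):=\inf_{b\in\R^2}\frac{1}{r^{4}}\int_{B_r(y)}\bigl|T-(x-b)\bigr|^{2},
\end{equation*}
and a data term $D(r,y)$ quantifying the closeness of $\mu\restr B_r(y)$ to its own average, i.e.\ a localized version of $\tfrac{1}{r^{4}}W^2(\bar\rho\,\chi_{B_r(y)},\mu)$ together with the boundary flux through $\partial B_r(y)$. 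The deterministic core of the paper --- which I take as given and which, per the abstract, is the genuinely hard part of the whole work --- is a one-step improvement of the form: there are dimensional $\theta\in(0,1)$, $\eps_0>0$ such that if $E(r,y)+D(r,y)\le\eps_0$ then $E(\theta r,y)\lesssim\theta^{2}E(r,y)+D(r,y)$ up to a bounded number of logarithmic losses, while the optimal shifts at scales $r$ and $\theta r$ differ by a vector $b_r$ with $|b_r|^2\lesssim r^{2}D(r,y)$ (plus a geometrically small remainder). Its proof replaces $\nabla u$ by the gradient of the solution of a constant-coefficient Poisson problem with the merely measure-valued right-hand side $\mu-\bar\rho$ and then invokes interior Schauder-type decay for the latter.

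\textbf{Stochastic input and the definition of $r_{*,L}$.} Building on the concentration estimate of Ambrosio--Stra--Trevisan (sharpened in the paper), in microscopic coordinates the data term at scale $\rho$ has mean $\EE D(\rho)\lesssim\rho^{-2}\log\rho$ in $d=2$ and concentrates sub-Gaussianly about it with variance proxy of the same order; hence $\PP\bigl(D(\rho)>\eps_0\ \text{or}\ D(\rho)>C\rho^{-2}\log\rho\bigr)\lesssim\exp(-c\rho^{2}/\log\rho)$ once $\rho$ exceeds a dimensional constant. I would then \emph{define} $r_{*,L}$ as the smallest dyadic radius $\rho_0\ge1$ for which, simultaneously for all dyadic $\rho\in[\rho_0,L]$, one has $D(\rho)\le\eps_0$ and $D(\rho)\le C\,(\rho/\rho_0)^{-2}\log(\rho/\rho_0)$, and moreover $E(L)\ll1$ at the top scale --- the last condition failing only on an event of the same super-exponentially small probability, since $E(L)=\tfrac{1}{L^4}\Wper(\tfrac{\mu(Q_L)}{|Q_L|}\chi_{Q_L},\mu)$ and $\EE_L[\Wper]\lesssim L^{2}\log L$ with matching concentration. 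A union bound over the $\lesssim\log L$ dyadic scales gives $\PP(r_{*,L}>\rho_0)\lesssim\log L\,\exp(-c\rho_0^{2}/\log\rho_0)$, whence $\sup_L\EE_L[\exp(c'r_{*,L}^{2}/\log 2r_{*,L})]<\infty$.

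\textbf{Iteration and conclusion.} On the event $\{r_{*,L}\ll L\}$ I start at scale $L$ with $E(L)\ll1$ and apply the one-step lemma repeatedly down to any dyadic $\ell\in[2r_{*,L},L]$; the smallness hypothesis $E+D\le\eps_0$ propagates because $E$ is forced only by the controlled data terms, and the running shift accumulates to $\xL:=\sum_{k}b_{2^{-k}L}\in Q_L$. Unrolling, $E(\ell)\lesssim\theta^{2N}E(L)+\sum_{i\ge1}\theta^{2i}D(\theta^{-i}\ell)$; inserting $D(\rho)\lesssim(\rho/r_{*,L})^{-2}\log(\rho/r_{*,L})$, summing the geometric series and carrying the logarithmic losses of the one-step estimate through the $\lesssim\log(L/r_{*,L})$ steps produces exactly the exponent $3$ in \eqref{eq:main estimate intro}. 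Likewise each increment obeys $|b_{2^{-k}L}|\lesssim r_{*,L}\,\log^{1/2}(2^{-k}L/r_{*,L})$ from $|b_r|^{2}\lesssim r^{2}D(r)$, so summing over the dyadic scales gives $|\xL|\lesssim r_{*,L}\log^{3/2}(L/r_{*,L})$, i.e.\ \eqref{eq:shift intro}; and using this single $\xL$ at every $\ell$ rather than the partial sum down to $\ell$ costs only $\lesssim|\xL|^{2}/\ell^{2}$, again bounded by the right-hand side of \eqref{eq:main estimate intro}.

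\textbf{Where the difficulty lies.} Apart from the deterministic regularity theorem itself, the delicate points in assembling the proof are: (i) keeping the iteration robust, i.e.\ guaranteeing $E+D\le\eps_0$ over the whole scale range and that the mildly super-multiplicative logarithmic losses of the one-step estimate remain summable --- this is precisely why one obtains $\log^{3}$ rather than a single power; and (ii) the shift bookkeeping together with the passage from the periodic problem on $Q_L$ to the balls $B_\ell(\xL)$, including the a priori smallness of $E(L)$ that ignites the scheme.
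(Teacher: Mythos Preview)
Your architecture is correct --- Campanato iteration combining a deterministic one-step improvement with stochastic control of the data --- and your handling of $r_{*,L}$ and of the shift summation $|\xL|\lesssim r_{*,L}\log^{3/2}(L/r_{*,L})$ is essentially the paper's. But two points are off.

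\textbf{The one-step estimate needs a matrix, not just a shift.} The deterministic step in the paper (Theorem~\ref{theo:det intro}) produces a symmetric matrix $B$ with $|B-Id|^2\lesssim E$ \emph{and} a vector $b$, and the decayed excess is that of the affinely transformed map $\hat T(\hat x)=B(T(B\hat x)-b)$. The matrix absorbs $\nabla^2\phi(0)$ from the harmonic approximation of Proposition~\ref{prop:detintro}. Your excess $E(r,y)=\inf_b r^{-4}\int_{B_r(y)}|T-(x-b)|^2$ optimizes only over translations; with that definition the claimed decay $E(\theta r)\lesssim\theta^2E(r)+D(r)$ is false, because after subtracting $\nabla\phi(0)$ the linear part $\nabla^2\phi(0)\,x$ remains and contributes a term of order $E(r)$ (not $\theta^2E(r)$) to $E(\theta r)$. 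The iteration then does not close: the coefficient in front of $E$ exceeds $1$ and blows up over $\sim\log(L/r_{*,L})$ steps. In the paper the cumulative matrices $A_k=B_k\cdots B_0$ are tracked alongside the shifts $a_k$ (see \eqref{def:Ak}--\eqref{eq:induction3}); they stay uniformly close to $Id$ because $\sum_k|B_k-Id|$ is summable, and are removed only at the very end. Relatedly, the increment is controlled by $E$, not $D$: \eqref{eq:estimBb} gives $|b_k|^2\lesssim\ell_k^2E_{k-1}$.

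\textbf{Where the $\log^3$ actually comes from, and your final replacement step.} The paper's one-step has no logarithmic loss: iterating \eqref{eq:mainonestep} yields \eqref{eq:induction2}, i.e.\ $E_k\lesssim\log(\ell_k/r_{*,L})/(\ell_k/r_{*,L})^2$ with a single logarithm (your own geometric sum $\sum_i\theta^{2i}D(\theta^{-i}\ell)$ gives the same). The two extra logs in \eqref{eq:main estimate intro} enter only when one replaces the scale-adapted shift $A_k^{-1}a_k$ by the fixed $\xL$: the paper bounds $|A_K^{-1}a_K-A_k^{-1}a_k|^2\lesssim r_{*,L}^2(K-k)^2\log(\ell_k/r_{*,L})\lesssim r_{*,L}^2\log^3(\ell_k/r_{*,L})$, see \eqref{estimaminak}. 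Your last sentence --- that this replacement costs $\lesssim|\xL|^2/\ell^2$ --- is too crude: it yields $r_{*,L}^2\log^3(L/r_{*,L})/\ell^2$, with $L$ rather than $\ell$ inside the logarithm, which does not match the right-hand side of \eqref{eq:main estimate intro} when $\ell\ll L$. One must bound the \emph{tail} $\sum_{i>k}|b_i|$, not the full sum.
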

 Note that the bounds in \eqref{eq:shift intro} and \eqref{eq:main estimate intro} are probably not optimal. Indeed, in both estimates one would rather expect a linear dependence on  the logarithms. 
 Similarly, using a similar proof in dimension $d\geq 3$ we would get bounds of the order of $\log^2(L)$ for the shift $x_L$ even though it is expected to be of order one.
 In order to improve our bounds, one would need to better capture some cancellation effects. Notice however that the proof of \eqref{eq:main estimate intro} lead to the optimal estimate (cf.\ Remark \ref{rem:optimal rate})
  \[
  \inf_{\xi\in \R^2} \ \frac{1}{\ell^4} \int_{B_\ell(x_L)} |T-(x-\xi)|^2\les \frac{\log \lt(\frac{\ell}{r_{*,L}}\rt)}{\lt(\frac{\ell}{r_{*,L}}\rt)^2} \qquad \qquad \forall\  2 r_{*,L}\le \ell\le L.
 \]

Let us point out that Theorem \ref{theo:mainresult intro} would also hold for the Euclidean  transport problem on $Q_L$. The motivation for considering instead the transport problem on the torus comes from the good stationarity properties of the optimal transport maps in this setting.
 Indeed, since the bound \eqref{eq:main estimate intro} is uniform in $L$ one can try   to construct  a covariant and locally optimal coupling between
 the Lebesgue measure and the Poisson point process on $\R^2$  by taking the limit $L\to \infty$  in $T_{\mu,L}$. A similar strategy has been implemented in \cite{HuSt13} for $d\ge 3$ using ergodic-type arguments\footnote{Notice that actually in \cite{HuSt13} a slightly relaxed version of \eqref{eq:Poisson matching} was considered and it is not known (although conjectured) that solutions of \eqref{eq:Poisson matching} converge to the unique stationary coupling with minimal cost per unit volume.}.
  One of the key ingredients used in that paper, namely the fact that the minimal cost per unit volume is finite, 
is missing for $d=2$. 
 The presence of the logarithmically divergent shift $\xL$  in \eqref{eq:main estimate intro} can be seen as a manifestation of the logarithmic divergence of the minimal cost per unit volume. In order to pass to the limit we thus need to renormalize the transport map by subtracting this shift.
 Because of this  renormalization the limiting map will loose its stationarity properties which will roughly speaking only survive at the level of the gradient. Also since we do not have any uniqueness statement for the limit objects, we will have to pass to the limit in the sense of Young measures.\\
 
 In order to state our second main result, we need  more notation. It is easier to pass to the limit at the level of the Kantorovich potentials rather than for the corresponding transport maps. Also, since the Lebesgue measure on $\R^2$ is invariant under arbitrary shifts while the Poisson point process on $Q_L$ (extended by periodicity to $\R^2$) is not, 
 it is more natural to make the shift in the domain and keep the image unchanged. This could also serve as motivation for centering the estimate \eqref{eq:main estimate intro} around $x_L$ (which is approximately  equal to $T^{-1}(0)$) rather than around $0$.\\ 
To be more precise, denote by $\widehat \psi=\widehat \psi_{\mu,L}$ the Kantorovich potential defined on $\R^2$ and associated to $T_{\mu,L}$, i.e. $T_{\mu,L}=\nabla \widehat{\psi}_{\mu,L}$, satisfying $\widehat \psi_{\mu,L}(0)=0$ (see Section \ref{sec:optimaltransp}). Define,
$$\psi_{\mu,L}(x):= \widehat\psi_{\mu,L}(x+\xL)$$
with corresponding Legendre dual $\psi^*_{\mu,L}(y)=\widehat \psi^*_{\mu,L}(y) - \xL\cdot y$. With this renormalization we still  have 
\[
\nabla \psi_{\mu,L} \# \frac{\mu(Q_L)}{|Q_L|} =\mu.
\]
Denote the space of all real-valued convex functions $\psi:\R^2\to\R$ by $\mathcal K$ and the space of all locally finite point configurations by $\Gamma$. We equip $\mathcal{K}$ with the topology of locally uniform convergence and $\Gamma$ with the 
topology obtained by testing against continuous and compactly supported functions. There is a natural action on $\Gamma$ by $\R^2$ denoted by $\theta_z$ and given by $\theta_z\mu:=\mu(\cdot +z)$ for $z\in\R^2.$
 Define the map $\Psi_L:\Gamma\to\mathcal K$ by $\mu\mapsto \psi_{\mu,L}$ and denote by $\PP_L$ the Poisson point process on $Q_L$. For each dyadic $L\ge 1$ we define the Young measure associated to $\Psi_L$ by
$$q_L:=(id,\Psi_L)\#\PP_L=\PP_L\otimes \delta_{\Psi_L}.$$
Then, we have the following result:
\begin{theorem}\label{theo:limit intro}
 The sequence $(q_L)_{L}$ of probability measures is tight. Moreover, any accumulation point $q$ satisfies the following properties:
\begin{enumerate}
 \item[(i)] The first marginal of $q$ is the Poisson point process $\PP_L$;
\item[(ii)] $q$ almost surely $\nabla \psi\#dx=\mu$;
\item[(iii)] for any $h,z\in\R^2$ and $f\in C_b(\Gamma\times C^0(\R^2))$ there  holds
$$ \int_{\Gamma\times \K} f(\mu,D^2_h\psi^*) dq = \int_{\Gamma\times \K} f(\theta_{-z}\mu,D^2_h \psi^*(\cdot -z)) dq,  $$
where  $D^2_h \psi^*(y):= \psi^*(y+h)+\psi^*(y-h)-2\psi^*(y)$.
\end{enumerate}
\end{theorem}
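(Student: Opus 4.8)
The plan is to first establish tightness of $(q_L)_L$, then pass to the limit and verify properties (i)--(iii) one at a time. Tightness of the first marginals is immediate since they are all equal to the Poisson point process $\PP_L$ (viewed on $\R^2$ by periodic extension, restricted to compact sets), which is a fixed probability measure on $\Gamma$. The nontrivial point is tightness of the second marginals, i.e. of the laws of $\psi_{\mu,L}$ in $\mathcal K$ with the topology of locally uniform convergence. Here I would use Theorem \ref{theo:mainresult intro}: the estimate \eqref{eq:main estimate intro}, centered at $x_L$ and with the shift absorbed into the domain, gives for $2r_{*,L}\le \ell\le L$ an $L^2$-bound on $T_{\mu,L}-(x-x_L)=\nabla\psi_{\mu,L}-x$ on $B_\ell$ of the order $\ell^2\log^{3/2}(\ell/r_{*,L})/(\ell/r_{*,L})$. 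Combined with the exponential moment bound $\sup_L\EE_L[\exp(cr_{*,L}^2/\log 2r_{*,L})]<\infty$, which makes $r_{*,L}$ stochastically bounded uniformly in $L$, this shows that on any fixed ball $B_R$ the functions $\nabla\psi_{\mu,L}$ are bounded in $L^2(B_R)$ with high probability, uniformly in $L$. Since $\psi_{\mu,L}$ is convex with $\psi_{\mu,L}(0)$ controlled (via the normalization $\widehat\psi_{\mu,L}(0)=0$ together with the bound \eqref{eq:shift intro} on $x_L$), convexity upgrades an $L^2$ gradient bound to a local Lipschitz and hence local uniform bound on compact sets after possibly shrinking the ball; the Arzel\`a--Ascoli/Blaschke-type compactness for convex functions then yields tightness in $\mathcal K$. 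Tightness of the joint law $q_L=\PP_L\otimes\delta_{\Psi_L}$ follows since it is supported (up to small mass) on a product of compact sets.

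Once tightness is established, fix an accumulation point $q$ along a subsequence $L_k\to\infty$. Property (i) is preserved under weak limits because the first marginal is $\PP_{L_k}\to\PP$ (the Poisson point process on $\R^2$), a fact about weak convergence of Poisson processes on the torus to the one on the whole space. For property (ii), I would pass to the limit in the Monge-Amp\`ere relation $\nabla\psi_{\mu,L}\#\frac{\mu(Q_L)}{|Q_L|}dx=\mu$: on any compact set the convexity plus the local uniform convergence $\psi_{\mu,L_k}\to\psi$ forces $\nabla\psi_{\mu,L_k}\to\nabla\psi$ in $L^1_{\mathrm{loc}}$ (convexity makes a.e. convergence of gradients automatic from uniform convergence of the functions, away from the non-differentiability set of the limit, which has measure zero), while $\frac{\mu(Q_{L_k})}{|Q_{L_k}|}\to 1$ a.s. by the law of large numbers and $\mu\to\mu$ in the vague topology along the coupled sequence; pushing forward measures is continuous under these modes of convergence when tested against $C_b$, so the relation $\nabla\psi\#dx=\mu$ survives $q$-a.s. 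This is essentially a standard stability statement for optimal transport, but care is needed because the point process changes with $L$; the way around this is that $\PP_{L_k}$ and $\PP$ can be realized on a common probability space with a.s. local agreement for $k$ large.

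Property (iii) is the heart of the statement and the main obstacle. The point is that the map $\mu\mapsto\psi_{\mu,L}$ is \emph{not} covariant under $\theta_z$ because of the shift $x_L$, but the \emph{second difference} $D^2_h\psi^*_{\mu,L}$ should be asymptotically covariant: the shift only contributes the affine term $-x_L\cdot y$ to $\psi^*_{\mu,L}$, which is annihilated by $D^2_h$. Concretely, on $Q_L$ the Poisson point process does enjoy the stationarity $\theta_z\PP_L\overset{d}{=}\PP_L$ (for $z$ in the torus), and the Euclidean optimal transport map satisfies the exact covariance $T_{\theta_{-z}\mu,L}(x)=T_{\mu,L}(x+z)+(\text{correction})$; at the level of Kantorovich potentials $\widehat\psi_{\theta_{-z}\mu,L}(x)=\widehat\psi_{\mu,L}(x+z)+\ell(x)$ for some affine $\ell$, hence $\widehat\psi^*_{\theta_{-z}\mu,L}$ and $\widehat\psi^*_{\mu,L}(\cdot-z)$ differ by an affine function, so $D^2_h\widehat\psi^*_{\theta_{-z}\mu,L}(y)=D^2_h\widehat\psi^*_{\mu,L}(y-z)$ exactly. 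The shift $x_L$ and the normalization only add affine terms, which $D^2_h$ kills, so $D^2_h\psi^*_{\theta_{-z}\mu,L}(y)=D^2_h\psi^*_{\mu,L}(y-z)$ holds exactly at finite $L$ for $z$ in the torus. I would then formulate this as the identity
\[
\int f(\mu,D^2_h\psi^*_{\mu,L})\,d\PP_L=\int f(\theta_{-z}\mu,D^2_h\psi^*_{\mu,L}(\cdot-z))\,d\PP_L
\]
for every fixed $z$ and every $L$ large enough that $z$ fits in $Q_L$, using the stationarity of $\PP_L$ to relabel. Passing to the limit $L_k\to\infty$ on both sides—using tightness, the continuity of $\psi\mapsto D^2_h\psi^*$ under local uniform convergence (which follows since Legendre transform and second differences are continuous on the relevant compact sets of convex functions, recalling that $\psi^*$ has the same regularity), and $f\in C_b$—gives property (iii). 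The delicate technical points will be: (a) controlling the Legendre transform under the convergence, since $\psi^*$ lives on the image which is all of $\R^2$, so one needs the gradient bounds to be two-sided (both $\nabla\psi_{\mu,L}\approx x$ and $\nabla\psi^*_{\mu,L}\approx y$), which again comes from Theorem \ref{theo:mainresult intro} applied symmetrically via the inverse map; and (b) ensuring that the exceptional events where $r_{*,L}\gg L$ or where the local agreement of point processes fails have probability tending to zero, so they do not obstruct the limit.
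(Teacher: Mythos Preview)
Your plan is correct and follows essentially the same route as the paper: tightness via the quantitative bounds of Theorem~\ref{theo:mainresult intro}, item~(i) from $\PP_L\to\PP$, item~(ii) from stability of the pushforward under locally uniform convergence of convex functions, and item~(iii) from the exact finite-$L$ identity $D^2_h\psi^*_{\theta_{-z}\mu,L}(y)=D^2_h\psi^*_{\mu,L}(y-z)$ together with continuity of the Legendre transform.

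Two points are worth sharpening. First, the paper packages the tightness input as a \emph{two-sided} quadratic bound $\tfrac14|x|^2-Cr_{*,L}^2\le\psi_L(x)\le|x|^2+Cr_{*,L}^2$ (their Lemma on uniform potential bounds), obtained from \eqref{eq:main estimate intro} plus the $L^\infty$ estimate of Lemma~\ref{lem:Linftybound}. This two-sidedness is exactly what you flag in your point~(a): it makes the sets $F_\lambda=\{\tfrac14|x|^2-\lambda\le\psi\le|x|^2+\lambda\}$ compact in $\mathcal K$, it forces $|\nabla\psi(x)|\ge\tfrac14|x|-2\lambda/|x|$ so that preimages of compacts under $\nabla\psi$ are uniformly bounded (needed to pass to the limit in $\nabla\psi_m\#dx$ via dominated convergence), and it makes $\psi\mapsto\psi^*$ continuous from $F_\lambda$ into $\mathcal K$. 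You should state and use this bound explicitly rather than leaving it implicit.

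Second, for item~(ii) the paper does not invoke a Skorokhod coupling; since the limit $q$ need not be concentrated on a graph, it argues via Portmanteau on the closed sets $G^{k,n}=\{(\mu,\psi)\in\Gamma\times F_k:\ (1-\tfrac1n)\mu\le\nabla\psi\#dx\le(1+\tfrac1n)\mu\}$, showing $q_L(G^{k,n})\ge 1-\varepsilon$ for $k,L$ large and then $\limsup_L q_L(G^{k,n})\le q(G^{k,n})$. Your Skorokhod route would also work (realize $q_{L_k}\to q$ as a.s.\ convergence in $\Gamma\times\mathcal K$ and use the continuity of $(\mu,\psi)\mapsto\nabla\psi\#dx$ on each $F_\lambda$), but the closed-set argument is cleaner here and avoids any discussion of ``local agreement'' of point processes, which is a red herring: the relevant continuity is in the pair $(\mu,\psi)$, not in coupling $\PP_L$ to $\PP$.
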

 Part (iii) of Theorem \ref{theo:limit intro} says that for any $h$, under the measure $q$ the random variable $(\mu,\psi)\mapsto (\mu,D^2_h\psi^*)$ is stationary with respect to the action induced by the natural shifts on $\Gamma$ and $\mathcal K$.
Observe that as already pointed out, while the second-order increments of the potentials $\psi$ are stationary,  the induced couplings $(id,\nabla\psi)\#dx$ are not. This is due to the necessary renormalization by $\xL$.
It is an interesting open problem to understand whether one can prove that the sequence $(\Psi_L)_L$ actually converges and get rid of the Young measures. A slightly weaker open problem is to show non/uniqueness of the accumulation points of $(q_L)_L.$

\subsection{Main ideas for the proof of Theorem \ref{theo:mainresult intro}}
The proof of Theorem \ref{theo:mainresult intro} is  inspired by
(quantitative) stochastic homogenization, in the sense that it is based on a Campanato iteration scheme which allows to transfer the information that  \eqref{eq:main estimate intro} holds at the ``thermodynamic'' scale  (here, the scale $L\uparrow\infty$ of the torus) by \cite{AKT84,AmStTr16} 
to scales of order one (here, the scale $r_{*,L}$).  This is reminiscent of the approach of Avellaneda and Lin \cite{MR0910954} to a regularity theory for (linear) elliptic equations
with periodic coefficients: The good regularity theory of the homogenized operator, i.e.~the regularity theory
on the thermodynamic scale, is passed down to the scale of the periodicity. This approach has been adapted
by Armstrong and Smart \cite{MR3481355} to the case of random coefficients; the approach has been further refined by Gloria, Neukamm and
the last author \cite{arXiv:1409.2678} (see also \cite{AKMbook})
where the random analogue of the scale of periodicity, and an analogue to $r_{*,L}$ in this paper,
has been introduced and optimally estimated (incidentally also by concentration-of-measure arguments
as in this paper).\\

The  Campanato  scheme is obtained by a combination of a deterministic and a stochastic argument.
The deterministic one is similar in spirit to \cite{GO}. It asserts that if at some scale $R>0$ the excess energy is small and if the Wasserstein distance  of $\mu\LL B_R$ to $\frac{\mu(B_R)}{|B_R|}\chi_{B_R}$ is also small then up to an affine change of variables  the excess energy is well controlled by these two quantities at scale $\theta R$ for some $\theta\ll1$. 
 The aim of the stochastic part is to prove that  with overwhelming probability, the Euclidean $L^2-$Wasserstein distance $\frac{1}{R^{4}}W_2^2\lt(\mu\LL B_R,\frac{\mu(B_R)}{|B_R|}\rt)$ is small for every (dyadic) scale $R$ between $L$ and $1$ so that the Campanato scheme can indeed be iterated down to the microscopic scale. 
Since our proof of the stochastic estimate is based on the results of \cite{AmStTr16} which are stated for cubes, we will actually prove the stochastic estimate on cubes instead of balls.   
We now describe these two parts separately in some more detail. We start with the stochastic aspect since it is simpler.
 
\subsubsection{The stochastic part}\label{sec:intro stoch}

 For a measure $\mu$ on $Q_L$ and $\ell \le L$ we denote its restriction to $Q_\ell\subset Q_L$ by $\mu_\ell:=\mu\LL Q_{\ell}$. Then, the main stochastic ingredient for the proof of Theorem \ref{theo:mainresult intro} is the following result:
\begin{theorem}\label{theo:stoch intro}
For dyadic $L\ge 1$ and $\mu$ a Poisson point process on $Q_L$ there exists a universal constant $c>0$ and a family of  random variables $r_{*,L}\ge 1$
 satisfying $\sup_L \EE_L\sqa{\exp\lt(\frac{c r_{*,L}^2}{\log (2r_{*,L})}\rt)}<\infty$ and  such that for every dyadic $\ell$ with $2 r_{*,L}\le \ell\le L$,
 \begin{equation*}
  \frac{1}{\ell^4}W_2^2\lt(\mu_\ell, \frac{\mu(Q_\ell)}{\ell^2}\rt)\le \frac{ \log\bra{\frac{\ell}{r_{*,L}}}}{\lt(\frac{\ell}{r_{*,L}}\rt)^2}.
 \end{equation*}
\end{theorem}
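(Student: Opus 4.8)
The plan is to reduce the assertion to two facts about the matching problem at a single (arbitrary) scale — a bound on the expectation and an exponential deviation bound — and then to \emph{define} $r_{*,L}$ as the smallest dyadic scale above which the claimed inequality propagates upward. Concretely, set
\[
 r_{*,L}\ :=\ \inf\Big\{\,2^{k}\ge 1\ :\ \tfrac{1}{\ell^{4}}W_{2}^{2}\big(\mu_{\ell},\tfrac{\mu(Q_{\ell})}{\ell^{2}}\big)\le\tfrac{\log(\ell/2^{k})}{(\ell/2^{k})^{2}}\ \text{ for every dyadic }\ell\in[2^{k+1},L]\,\Big\}.
\]
This infimum is attained (the choice $2^{k}=L$ satisfies the condition vacuously) and is $\ge 1$, and with it the conclusion of the theorem holds by construction; hence the whole content is the exponential moment bound. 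Since $r_{*,L}$ takes dyadic values, a routine partial summation shows that this bound follows, with a smaller absolute constant, once we prove
\[
 \PP_{L}\big(r_{*,L}>\rho\big)\ \lesssim\ \exp\!\Big(-c\,\tfrac{\rho^{2}}{\log(2\rho)}\Big),\qquad \rho\ \text{dyadic},
\]
uniformly in $L$. Finally, $\{r_{*,L}>\rho\}$ forces the candidate scale $\rho$ itself to fail, i.e.\ forces $\tfrac{1}{\ell^{4}}W_{2}^{2}(\mu_{\ell},\tfrac{\mu(Q_{\ell})}{\ell^{2}})>\tfrac{\log(\ell/\rho)}{(\ell/\rho)^{2}}$ for some dyadic $\ell\in[2\rho,L]$, so a union bound over the $O(\log L)$ dyadic scales reduces everything to a deviation estimate at each fixed $\ell$.

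Two such inputs are needed. The first is the in-expectation estimate from \cite{AKT84,AmStTr16}: rescaling $Q_{\ell}$ to the unit cube gives, uniformly in $2\le\ell\le L$,
\[
 \EE_{L}\Big[W_{2}^{2}\big(\mu_{\ell},\tfrac{\mu(Q_{\ell})}{\ell^{2}}\big)\Big]\ \lesssim\ \ell^{2}\log(2\ell),
\]
the logarithm being exactly the manifestation of $d=2$ being critical (Section~\ref{sec:intro stoch}); note that for $\ell<L$ the cube $Q_{\ell}$ does not wrap around the torus, so $\mu_{\ell}$ is simply a unit-intensity Poisson process on $Q_{\ell}\subset\R^{2}$ and this estimate, like everything below, is independent of $L$ (the borderline case $\ell=L$ on the torus is identical). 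The second, and crucial, input is a deviation inequality of the schematic form
\[
 \PP_{L}\Big(\tfrac{1}{\ell^{4}}W_{2}^{2}\big(\mu_{\ell},\tfrac{\mu(Q_{\ell})}{\ell^{2}}\big)\ge C\,\tfrac{\log(2\ell)}{\ell^{2}}\,t\Big)\ \le\ \exp\!\Big(-c\,\tfrac{\ell^{2}}{\log(2\ell)}\,t\Big),\qquad t\ge 2,
\]
again uniformly in $2\le\ell\le L$. To prove it I would run the PDE approach of Ambrosio--Stra--Trevisan: after a harmless regularization of $\mu_{\ell}-\tfrac{\mu(Q_{\ell})}{\ell^{2}}$ at the unit scale, the linearization (contraction) estimate of \cite{AmStTr16} bounds $W_{2}^{2}$ from above — up to a concentrated lower-order error, handled as in \cite{AmStTr16} — by the Dirichlet energy $\int_{Q_{\ell}}|\nabla h_{\ell}|^{2}=\int_{Q_{\ell}}\big(\mu_{\ell}-\tfrac{\mu(Q_{\ell})}{\ell^{2}}\big)h_{\ell}$ of the solution of the Neumann problem $-\Delta h_{\ell}=\mu_{\ell}-\tfrac{\mu(Q_{\ell})}{\ell^{2}}$ on $Q_{\ell}$; this is a degree-two polynomial functional of the Poisson process whose add-a-point increments are controlled in $L^{2}$ of the intensity measure by the two-dimensional Green function on $Q_{\ell}$, and a modified logarithmic Sobolev / Poincar\'e inequality for Poisson functionals then yields the displayed sub-exponential tail, the $\log(2\ell)$ factors being precisely the ones produced by the two-dimensional Green function. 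Following \cite{AmStTr16} all of this is done on cubes, which is why the statement is phrased for $Q_{\ell}$ rather than for balls.

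Granting these two inputs, the union bound is routine: for a dyadic $\ell\in[2\rho,L]$ apply the deviation inequality with $t=t(\ell):=\rho^{2}\log(\ell/\rho)/\big(C\log(2\ell)\big)$, which is increasing in $\ell$ and hence $\ge t(2\rho)\gtrsim\rho^{2}/\log(2\rho)\ge 2$ once $\rho$ exceeds an absolute constant, to get
\[
 \PP_{L}\Big(\tfrac{1}{\ell^{4}}W_{2}^{2}\big(\mu_{\ell},\tfrac{\mu(Q_{\ell})}{\ell^{2}}\big)>\tfrac{\log(\ell/\rho)}{(\ell/\rho)^{2}}\Big)\ \le\ \exp\!\Big(-c\,\tfrac{\ell^{2}\,\rho^{2}\,\log(\ell/\rho)}{\log^{2}(2\ell)}\Big);
\]
summing this rapidly decaying bound over dyadic $\ell\ge 2\rho$ is controlled by the $\ell=2\rho$ term, which is already $\le\exp(-c\,\rho^{4}/\log^{2}(2\rho))\le\exp(-c\,\rho^{2}/\log(2\rho))$ — in fact much smaller, consistent with the bounds in this paper not being optimal — while for $\rho$ below the absolute constant the tail bound is trivial. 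This gives the desired $\PP_{L}(r_{*,L}>\rho)\lesssim\exp(-c\rho^{2}/\log(2\rho))$ uniformly in $L$, whence the uniform exponential moment. The single genuinely substantial step, and the expected main obstacle, is the deviation inequality: upgrading the essentially in-expectation analysis of \cite{AmStTr16} to an explicit exponential concentration bound that is uniform across the full range $1\le\ell\le L$, and tracking the logarithmic losses inherited from the critical dimension (these logarithms are what ultimately produce the $\log(2r_{*,L})$ in the exponent, which we do not claim is sharp).
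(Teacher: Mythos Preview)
Your overall strategy is correct and coincides with the paper's: prove a single-scale concentration estimate for $W_2^2(\mu_\ell,\tfrac{\mu(Q_\ell)}{\ell^2})$, then union-bound over the $O(\log L)$ dyadic scales. The paper packages the union bound through the auxiliary variable $\Theta:=\sup_{k}\tfrac{1}{\ell_k^{2}\log\ell_k}W_2^2(\mu_{\ell_k},\tfrac{\mu(Q_{\ell_k})}{\ell_k^2})$, shows $\Theta$ has exponential moments, and defines $r_*$ by $r_*^{2}/\log(2r_*)=\Theta/\log 2$; your direct definition of $r_{*,L}$ as the minimal good scale is a harmless (in fact slightly sharper) variation. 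The paper also obtains the single-scale deviation bound differently from what you propose: rather than a direct Poisson modified log-Sobolev argument, it conditions on the number of points $n=\mu(Q_\ell)$, invokes the Gaussian concentration of \cite[Remark~4.7]{AmStTr16} for $n$ iid points, and handles the atypical values of $n$ by Cram\'er--Chernoff for the Poisson distribution. This is shorter, since \cite{AmStTr16} has already done the work of controlling the linearization error $W_2^2\approx\int|\nabla h_\ell|^2$.

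There is, however, a genuine error in your stated deviation rate. Your bound $\exp(-c\,\ell^{2}t/\log(2\ell))$ is too strong to be true: the event that all points land in a unit ball has probability $\exp(-O(\ell^{2}\log\ell))$ and forces $W_2^2\sim\ell^{4}$, i.e.\ $t\sim\ell^{2}/\log\ell$, at which your bound would read $\exp(-c\,\ell^{4}/\log^{2}\ell)\ll\exp(-O(\ell^{2}\log\ell))$. The rate actually delivered by \cite{AmStTr16} (and reproduced in the paper as Proposition~\ref{prop:concentration}) is
\[
\PP_L\Big(\tfrac{1}{\ell^{2}\log\ell}\,W_2^{2}\big(\mu_\ell,\tfrac{\mu(Q_\ell)}{\ell^{2}}\big)\ge M\Big)\le\exp(-cM\log\ell),\qquad M\ge 1,
\]
which in your parametrization is $\exp(-c\,t\log(2\ell))$, not $\exp(-c\,\ell^{2}t/\log(2\ell))$. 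Fortunately your union bound still closes with this correct rate: the failure event at scale $\ell$ corresponds to $M=\rho^{2}\log(\ell/\rho)/\log\ell\ge 1$ for $\rho$ larger than an absolute constant, giving tails $\exp(-c\rho^{2}\log(\ell/\rho))$ that sum geometrically over dyadic $\ell\ge 2\rho$ to $\exp(-c\rho^{2})$, which is even stronger than the $\exp(-c\rho^{2}/\log(2\rho))$ you need.
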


The proof of this result relies on an adaptation of the concentration argument put forward in \cite[Remark 4.7]{AmStTr16}. One of the   differences between this article and \cite{AmStTr16}
is that in \cite{AmStTr16} the more classical version of \eqref{eq:Poisson matching}, namely the matching of the empirical measure of $n$ iid uniformly distributed points $X_1,\ldots,X_n$ on the cube $Q=[-\frac{1}{2},\frac{1}{2})^d$ to their reference measure is considered:
\begin{align}\label{eq:matching problem}
C_{n,d}:=\EE \sqa{ W_2^2\bra{\chi_Q, \frac1n \sum_{i=1}^n \delta_{X_i}} } = \frac{1}{n^{\frac2d}} \EE \sqa{ W_2^2\bra{\frac1n \chi_{Q_{n^{\frac1d}}}, \frac1n \sum_{i=1}^n \delta_{n^{\frac1d}X_i}} } .
\end{align}
Since the typical distance between nearby points $X_i$ and $X_j$ is of order $n^{-\frac1d}$ it is expected that $C_{n,d}\sim n^{-\frac2d}$. However, it turns out that this is only true in $d\geq 3.$ Since the seminal work \cite{AKT84}, it is known that in dimension two an extra logarithmic factor appears. In dimension one the correct scaling is of order $\frac1n$ so that we can summarize 
\begin{equation*}
 C_{n,d} \sim \begin{cases}
               \frac1n, &d=1, \;\; \text{cf.\ }  \cite{BoLe16}\\
\frac{\log n}{n}, &d=2, \;\; \text{cf.\ }\cite{AKT84}\\
\frac{1}{n^{\frac2d}}, &d\geq 3,\;\; \text{cf.\ }\cite{BaBo13, Le17}.
              \end{cases}
\end{equation*}
Based on a linearization ansatz of the Monge-Amp\`ere equation suggested by \cite{CaLuPaSi14} in the physics literature, \cite{AmStTr16} significantly strengthened the two-dimensional case to
$$ \lim_{n\to\infty} \frac{n}{\log n} C_{n,2}=\frac{1}{4\pi}.$$
Additionally, it is  remarked  in \cite[Remark 4.7]{AmStTr16} that the mass concentrates around the mean.
Combining this concentration argument with  conditioning on the number of points of $\mu$ in $Q_L$ and a Borel-Cantelli argument we show Theorem \ref{theo:stoch intro}.

\subsubsection{The deterministic part}

As already alluded to, the deterministic ingredient is  one step of a Campanato scheme for solutions of the Monge-Amp\`ere equation with arbitrary right-hand side \eqref{eq:MAintro}. 
Since we believe that this far-reaching generalization of  \cite[Proposition 4.7]{GO} (see also \cite{GO18}) could have a large range of applications we prove it for arbitrary dimension $d\ge 2$. 
Let us point out that while \cite{GO} gives an alternative proof of the  partial regularity result for the Monge-Amp\`ere equation with ''regular'' data  previously obtained in \cite{FigKim,DePFig} (see \cite{DePFigIndam} for a nice informal presentation of this approach),
it is unclear  if  that other approach based on maximum principles could also be used in our context.

Given   for some $R>0$ a bounded set $\Omega\supset B_{2R}$  and an arbitrary measure $\mu$, denote by $T$ the optimal transport map between $\chi_\Omega$ and $\mu$  and let  $O\supset B_{2R}$ be an open set.
We have the following result: 
\begin{theorem}\label{theo:det intro}
 For every $0<\tau\ll1$, there exist positive constants  $\eps(\tau)$, $C(\tau)$,  and $0<\theta<1$ such that if  
 \begin{equation}\label{hypsmall}
  \frac{1}{R^{d+2}}\int_{B_{2R}}|T-x|^2 + \frac{1}{R^{d+2}} W_2^2\lt(\mu\LL O, \frac{\mu(O)}{|O|}\rt)\le \eps(\tau), 
 \end{equation}
  then there exists a symmetric matrix $B$ and a vector $b\in \R^d$ such that 
 \begin{equation*}
  |B-Id|^2+\frac{1}{R^2}|b|^2\les  \frac{1}{R^{d+2}}\int_{B_{2R}}|T-x|^2,
 \end{equation*}
and letting $\hat x:=B^{-1} x$, $\hat \Omega:= B^{-1} \Omega$ and then 
\begin{equation*}
 \hat{T}(\hat x):=B(T(x)-b) \qquad \textrm{and} \qquad \hat \mu:=\hat{T}\# \chi_{\hat \Omega} d \hat{x},
\end{equation*}
so that $\hat{T}$ is the optimal transport map between $\chi_{\hat\Omega}$ and $\hat \mu$, we have  
\begin{equation}\label{eq:mainestimateintro}
  \frac{1}{(\theta R)^{d+2}}\int_{B_{2\theta R}}|\hat{T}-\hat{x}|^2\le   \frac{\tau}{R^{d+2}}\int_{B_{2R}}|T-x|^2+ \frac{C(\tau)}{R^{d+2}} W_2^2\lt(\mu\LL O, \frac{\mu(O)}{|O|}\rt).
\end{equation}
\end{theorem}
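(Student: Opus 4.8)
The plan is to run one step of the \enquote{harmonic approximation} scheme underlying Campanato iterations for optimal transport, as developed in \cite{GO}, the new feature being that the target $\mu$ is an arbitrary measure rather than a regular density. By the rescaling $x\mapsto x/R$ (which sends $T$ to $T(R\,\cdot)/R$ and $\mu$ to $R^{-d}\mu(R\,\cdot)$) we may assume $R=1$; set $E:=\int_{B_2}|T-x|^2$ and $D:=W_2^2\bra{\mu\LL O,\frac{\mu(O)}{|O|}}$, so that \eqref{hypsmall} reads $E+D\le\eps(\tau)$, and write $T=\nabla\psi$ with $\psi$ convex. A preliminary and essentially routine step is a \enquote{sandwiching} of the supports: smallness of $E$ together with cyclical monotonicity of $T$ upgrades $L^2$-closeness of $T-x$ to $0$ on $B_2$ into a pointwise bound $|T(x)-x|\ll1$ for a.e.\ $x\in B_{3/2}$, so that $\mu\LL B_\rho$ has mass comparable to $|B_\rho|$ for every $\rho\in(1,2)$; averaging in $\rho$ then lets us fix a \emph{good radius} $r\in(1,2)$ along which the boundary-layer quantity $\int_{\partial B_r}|T-x|^2$ is $\les E$ and $\mu(\partial B_r)=0$.

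The heart of the argument is a quantitative \emph{harmonic approximation}: for every $\delta\in(0,1)$ there is a function $\Phi$ with $\Delta\Phi=0$ in $B_r$ such that
\begin{equation*}
\int_{B_r}\abs{(T-x)-\nabla\Phi}^2 \;\les\; \delta\,E \;+\; C_\delta\,D \;+\; (E+D)^{1+\alpha}
\end{equation*}
for some dimensional $\alpha>0$, the last term being $\le\eps(\tau)^\alpha(E+D)$ by \eqref{hypsmall}. I would obtain this by passing to the Eulerian picture: let $(\rho_t,j_t)_{t\in[0,1]}$ be the restriction to $B_r$ of the McCann geodesic from $\chi_\Omega$ to $\mu$; it solves the continuity equation in $B_r\times(0,1)$ with a lateral flux on $\partial B_r$, and the Benamou--Brenier identity gives $\int_0^1\!\int_{B_r}\frac{|j_t|^2}{\rho_t}\les E$. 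The time-averaged momentum $\bar j:=\int_0^1 j_t\dv t$ satisfies $\nabla\cdot\bar j=1-\mu$ in $B_r$, carries the right boundary flux, and differs from the Lagrangian displacement $(T-x)\chi_\Omega$ by an error that is quadratic in $E$ (controlled via the explicit flow $X_t=(1-t)x+tT(x)$; here the non-boundedness of $\mu$, hence of $\rho_t$ as $t\uparrow1$, is handled using the Jacobian bound $\rho_t\le(1-t)^{-d}$ together with a truncation in $t$). Solving $\Delta\Phi=0$ in $B_r$ with Neumann data matching the flux of $\bar j$ (up to a constant correction of size $\les\sqrt D$ needed for solvability), the gap between this harmonic $\nabla\Phi$ and the full Helmholtz projection of $\bar j$ is bounded by $\nor{\mu-\tfrac{\mu(B_r)}{|B_r|}}_{\dot H^{-1}(B_r)}^2\les W_2^2\bra{\mu\LL B_r,\tfrac{\mu(B_r)}{|B_r|}}\les D$, the last inequality being the localization property of the Wasserstein distance. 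By the orthogonality of the Helmholtz decomposition the remaining task is to show $\int_{B_r}|\bar j|^2\le\int_{B_r}|\nabla\Phi|^2+(\text{errors})$, which follows from testing the minimality of the geodesic against the \enquote{straightened} competitor flow whose momentum is the steady field $\nabla\Phi$, glued to the original flow in a boundary layer near $\partial B_r$ and near $t=1$; the resulting errors are exactly the good-radius boundary term and lower-order powers of $E+D$.

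Given $\Phi$, each component of $\nabla\Phi$ is harmonic with $\int_{B_r}|\nabla\Phi|^2\les E$, so interior estimates give, on $B_\theta$ with $\theta\le\tfrac14$, both $\abs{\nabla\Phi(x)-\nabla\Phi(0)-\nabla^2\Phi(0)x}\les\theta^2\sqrt E$ and $\abs{\nabla^2\Phi(0)}^2+\abs{\nabla\Phi(0)}^2\les E$, with $A:=\nabla^2\Phi(0)$ symmetric and trace-free. Set $b:=\nabla\Phi(0)$ and $B:=(\mathrm{Id}+A)^{-1/2}$, which is symmetric positive definite once $E$ is small and satisfies $|B-\mathrm{Id}|^2+|b|^2\les|A|^2+|b|^2\les E$ (the claimed bound after undoing the rescaling); since $T=\nabla\psi$ with $\psi$ convex and $B$ is symmetric positive definite, $\hat T$ is again the gradient of a convex function, hence the optimal map between $\chi_{\hat\Omega}$ and $\hat\mu$. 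Using $B(\mathrm{Id}+A)B=\mathrm{Id}$ one computes $\hat T(\hat x)-\hat x=B\,e(x)$ with $x=B\hat x$, where $e:=\bra{T-x-\nabla\Phi}+\bra{\nabla\Phi-\nabla\Phi(0)-\nabla^2\Phi(0)x}$ collects the harmonic-approximation error and the Taylor remainder; since $B\approx\mathrm{Id}$, changing variables and inserting the harmonic-approximation bound yields
\begin{equation*}
\frac{1}{\theta^{d+2}}\int_{B_{2\theta}}\abs{\hat T-\hat x}^2 \;\les\; \frac{1}{\theta^{d+2}}\int_{B_{3\theta}}|e|^2 \;\les\; \theta^2 E \;+\; \frac{1}{\theta^{d+2}}\bra{\delta E + C_\delta D + \eps(\tau)^\alpha(E+D)}.
\end{equation*}
Choosing first $\theta\sim\sqrt\tau$ so that $\theta^2 E\le\tfrac\tau4 E$, then $\delta$ so that $\theta^{-(d+2)}\delta\le\tfrac\tau4$, and finally $\eps(\tau)$ so small that $\theta^{-(d+2)}\eps(\tau)^\alpha\le\tfrac\tau4$, the $E$-terms are absorbed into $\tau E$ and the $D$-terms into $C(\tau)D$ with $C(\tau)\sim\theta^{-(d+2)}C_\delta$; undoing the rescaling gives \eqref{eq:mainestimateintro}.

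The main obstacle is the harmonic approximation, and specifically making it survive the roughness of $\mu$: there is no $L^\infty$ bound on $\mu$ or on the McCann densities $\rho_t$ to absorb the nonlinear error terms, so one must trade these bounds for the smallness of the data $D$ (via the $\dot H^{-1}$-versus-$W_2$ comparison and the localization of $W_2$ to $B_r$) and for the Jacobian bound $\rho_t\le(1-t)^{-d}$ near the endpoint of the interpolation. Keeping the competitor construction and the Lagrangian-to-Eulerian comparison robust enough that they cost only $\delta E+C_\delta D+(E+D)^{1+\alpha}$ --- rather than a term linear in $E$ with a large constant --- is the delicate point; once the harmonic approximation is in hand, the affine rescaling and the Campanato bookkeeping above are routine.
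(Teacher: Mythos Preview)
Your overall architecture---rescale, harmonic approximation at the Eulerian level, read off the affine correction $(B,b)$ from $\nabla\Phi(0),\nabla^2\Phi(0)$, then do the Campanato bookkeeping with $\theta\sim\sqrt\tau$---is exactly the paper's, and your final step (the computation of $E(\hat\mu,\hat T,\theta)$ and the choice of parameters) is essentially identical to the paper's proof. The difference, and the real gap, lies in your harmonic approximation.

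The claim $\nor{\mu-\tfrac{\mu(B_r)}{|B_r|}}_{\dot H^{-1}(B_r)}^2\lesssim W_2^2\bra{\mu\LL B_r,\tfrac{\mu(B_r)}{|B_r|}}$ is false when $\mu$ is singular: in $d\ge 2$ a Dirac mass is not even in $\dot H^{-1}$, whereas its $W_2$-distance to a constant is finite. The standard $\dot H^{-1}\lesssim W_2$ comparison needs an $L^\infty$ bound on the interpolating densities, and along the McCann geodesic from a constant to $\mu$ one only has $\rho_t\le(1-t)^{-d}$, so the estimate $\int_0^1\|\rho_t\|_\infty^{1/2}\,dt$ diverges. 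This is not a technicality; it is precisely the obstacle the paper is built to overcome. The paper therefore never solves $\Delta w=1-\mu$. Instead it defines $\phi$ by $\Delta\phi=\text{const}$ in $B_R$ with Neumann data $\bar\rho_+-\bar\rho_-$, where $\bar\rho_\pm$ are \emph{not} the raw fluxes $\bar f_\pm=\int_0^{1-\tau}f_\pm$ (whose $L^2$ bound degenerates as $\tau\to 0$) but carefully constructed $L^2$-bounded surrogates obtained by projecting trajectory endpoints onto $\partial B_R$ (Lemma~\ref{goodR}); the mismatch is controlled only in $W_2$, namely $W_{\partial B_1}^2(\bar\rho_\pm,\bar f_\pm)\lesssim E^{(d+3)/(d+2)}$. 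The role your $\dot H^{-1}$ term was meant to play is then taken over by two estimates at the \emph{truncated} time $1-\tau$: in the almost-orthogonality argument (Proposition~\ref{prop:almostorth}) one bounds $\int_{B_1}\phi\,\rho_{1-\tau}$ using $W_2^2(\rho_{1-\tau}\LL B_R,\text{const})\lesssim\tau^2E+D$ (Lemma~\ref{lem:distdata}), which is legitimate because $\rho_{1-\tau}$ \emph{is} bounded; and in the competitor construction (Proposition~\ref{prop:distjphiprop2}) the terminal layer $[1-\tau,1]$ is handled by a separate coupling that connects a constant to the residual measure $\mu'$ using $W_2^2(\mu',\text{const})\lesssim\tau^2E+D$. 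Your sketch mentions the truncation in $t$ but places it only in the Lagrangian-to-Eulerian comparison; the point is that it must also replace $\mu$ by $\rho_{1-\tau}$ everywhere the ``data'' enters, and that the boundary flux must be regularized in $L^2$ before one can even define $\phi$. A secondary issue is the localization $W_2^2(\mu\LL B_r,\text{const})\lesssim D$: this is not a free restriction (mass crosses $\partial B_r$) and in the paper is obtained only for $\rho_{1-\tau}$ and $\mu'$ via the same projection-onto-$\partial B_R$ machinery (Lemma~\ref{lem:distdata}).
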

The only reason for not taking $O=B_{2R}$ is that in our application, the control on the data term $\frac{1}{R^{d+2}} W_2^2\lt(\mu\LL O, \frac{\mu(O)}{|O|}\rt)$ will be given by Theorem \ref{theo:stoch intro} which is stated for cubes.
Let us stress that since  $\int_{B_{2R}}|T-x|^2$ behaves like a squared $H^1$ norm in terms of the potentials and since the squared Wasserstein distance behaves like a squared $H^{-1}$ norm (cf.\ \cite[Theorem 7.26]{Viltop}), 
 all quantities occur in the estimate \eqref{eq:mainestimateintro} as if we were dealing with a second order linear elliptic equation and looking at squared $L^2$-based quantities.

Since the estimates in Theorem \ref{theo:det intro} are scale-invariant, it is enough to prove it for $R=1$. We then let for notational simplicity
\[
 E:= \int_{B_2} |T-x|^2 \qquad \textrm{and} \qquad  D:= W_2^2\lt(\mu\LL O, \frac{\mu(O)}{|O|}\rt).
\]

The main ingredient for the proof of Theorem \ref{theo:det intro} is the following result, which is the counterpart of \cite[Proposition 4.6]{GO} and which states that if $E+D\ll1$, that is, if the  energy is small and if the data is close to a constant in the natural $W^2_2-$topology, then $T-x$ is quantitatively close to the gradient $\nabla\phi$ of a solution to a Poisson equation. This quantifies the well-known fact that
the Monge-Amp\`ere equation linearizes to the Poisson equation around the constant density. 

\begin{proposition}\label{prop:detintro}
 For every $0<\tau\ll1$, there exist positive constants $\eps(\tau)$ and  $C(\tau)$ such that if $E+D\le \eps(\tau)$, then there exists a function $\phi$ with harmonic gradient in $B_{\frac{1}{4}}$ and such that 
 \begin{equation*}
  \int_{B_{\frac{1}{4}}}|T-(x+\nabla \phi)|^2\les \tau E+ C(\tau) D
 \end{equation*}
and 
\begin{equation*}
  \int_{B_{\frac{1}{4}}}|\nabla \phi|^2\les E.
\end{equation*}
\end{proposition}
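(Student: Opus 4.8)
The plan is to construct $\phi$ via the $W_2$-geodesic/flow interpolation between $\chi_\Omega\,dx$ and $\mu$ restricted to the relevant ball, following the scheme of \cite[Proposition 4.6]{GO}. First I would work with the Eulerian (Benamou--Brenier) description of the transport from $\chi_\Omega$ to $\mu\LL O$: let $(\rho_t,j_t)_{t\in[0,1]}$ be the density-flux pair associated to the geodesic, solving $\partial_t\rho_t+\nabla\cdot j_t=0$ with $\rho_0=\chi_\Omega$, $\rho_1=\mu\LL O$ (appropriately normalized), and $\int_0^1\int |j_t|^2/\rho_t\,dt= W_2^2(\dots)$ up to the mass factors. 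The key point is that when $E+D\ll1$, the density $\rho_t$ stays close to the constant $\chi$ on the bulk of $B_{1/2}$ and the flux $j_t$ carries the relevant $H^{-1}$-small information. I would then define $\phi$ by solving the Poisson equation $-\Delta\phi = c$ in a slightly smaller ball with Neumann data encoding $\int_0^1 j_t\,dt\cdot\nu$, or equivalently set $\nabla\phi$ to be the Helmholtz/$L^2$-orthogonal projection of the time-averaged flux $\bar j:=\int_0^1 j_t\,dt$ onto gradient fields with harmonic gradient. Harmonicity of $\nabla\phi$ in $B_{1/4}$ comes from choosing $\phi$ to solve the Poisson equation with constant (or affine) right-hand side there, which is exactly the linearization of Monge--Amp\`ere around the constant density.

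The two estimates are then obtained as follows. The bound $\int_{B_{1/4}}|\nabla\phi|^2\les E$: since $\nabla\phi$ is essentially a projection of $\bar j$, and $\bar j$ is controlled by both $D$ (through the Benamou--Brenier energy) and by $E$ (through the fact that $T-x$ generates part of the flux when moving mass from $\chi_\Omega$ toward $\mu$), one has $\int|\nabla\phi|^2\les \int|\bar j|^2\les E+D\les E$ using $D\le\eps(\tau)\le E+D$; more carefully one splits the transport into the ``$\chi_\Omega\to\frac{\mu(O)}{|O|}$'' part (cost $\les D$) and the part genuinely seen by $T$, and absorbs. The main estimate $\int_{B_{1/4}}|T-(x+\nabla\phi)|^2\les\tau E+C(\tau)D$: here I would test the two flows against each other. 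The displacement $T-x$ and the harmonic corrector $\nabla\phi$ differ, at leading order, only by a divergence-free field plus the error in linearizing $\det(Id+\nabla(T-x)-Id)=\rho$; the divergence-free part is killed by choosing $\phi$'s gradient as the gradient-part of $\bar j$, while the nonlinear error is quadratically small in $E$. To make this quantitative, I expect to need an ``almost-orthogonality'' / comparison-of-energies argument: compare the Benamou--Brenier cost of the actual flow with the cost of the competitor flow built from $\nabla\phi$, use minimality, and use that on a good set (where $\rho_t\approx\chi$) the costs agree up to $O(\tau E + C(\tau)D)$, and on the bad set (small measure by the smallness hypothesis and a Chebyshev/energy argument) pay $C(\tau)D$. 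A harmonic-function interior estimate ($\|\nabla\phi\|_{L^\infty(B_{1/4})}\les \|\nabla\phi\|_{L^2(B_{1/2})}\les E^{1/2}$) is used to control the bad-set contribution and to propagate smallness inward (this is why one loses from $B_{1/2}$ to $B_{1/4}$).

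The main obstacle, I expect, is controlling the nonlinear error in the linearization on the set where the interpolating density $\rho_t$ is \emph{not} close to the constant --- a priori $\mu$ is an arbitrary measure (in the application, a Poisson cloud, so $\rho_t$ genuinely concentrates near points), so one cannot pointwise linearize Monge--Amp\`ere. The resolution is that such ``bad'' regions have small \emph{mass} and small \emph{measure}, quantitatively controlled by $D$ (and a bit of $E$) via the Benamou--Brenier energy: the flux needed to spread a concentrated piece of mass out to near-constant density is exactly what $D$ measures. So the strategy is to never linearize pointwise but instead compare transport \emph{energies} globally, using minimality of the true flow, the explicit harmonic competitor, and an $L^\infty$ bound on $\nabla\phi$, with all error terms funneled into either a $\tau E$ term (by choosing the intermediate radius and $\theta$ appropriately, exploiting harmonicity) or a $C(\tau)D$ term (the data term, allowed to blow up as $\tau\to0$). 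A secondary technical point is passing rigorously between the Lagrangian map $T$ and the Eulerian flow and handling the restriction to $O$ versus $B_{2R}$ at the boundary; this is routine but needs the hypothesis $\Omega\supset B_{2R}$ and $O\supset B_{2R}$ to have room to localize.
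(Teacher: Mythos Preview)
Your overall framework is right and matches the paper's: pass to the Eulerian/Benamou--Brenier picture, define $\phi$ as the solution of a Poisson equation with constant right-hand side and Neumann data coming from the flux of $j$ through $\partial B_R$, prove an almost-orthogonality estimate, and construct a competitor to exploit minimality. The translation from the Eulerian estimate back to the Lagrangian one is indeed routine (the paper just cites \cite[Proposition 4.6]{GO}, Lemma \ref{lem:Linftybound}, and harmonicity of $\nabla\phi$).

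However, there is a genuine gap in how you propose to handle the singularity of $\mu$. You suggest taking as Neumann data $\int_0^1 j_t\cdot\nu\,dt$ and dealing with the concentration of $\rho_t$ by arguing that the ``bad'' spatial set has small measure controlled by $D$. This does not work: the issue is not a bad \emph{spatial} set but the fact that $\rho_t$ blows up like $(1-t)^{-d}$ as $t\to 1$, so the full-time flux $\int_0^1 j\cdot\nu$ has no $L^2(\partial B_R)$ bound uniform in the singularity of $\mu$, and hence your $\phi$ need not even have $\nabla\phi\in L^2$. No amount of ``small bad set'' reasoning fixes this, because the problem lives on the boundary and in the terminal time layer.

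The paper's resolution, which is the main new idea compared to \cite{GO}, is twofold. First, introduce a terminal time layer: work with $\Bf=\int_0^{1-\tau} j\cdot\nu$ instead of the full integral, so that $\rho_t\le\tau^{-d}$ on the relevant time interval. Second, even this $\Bf$ has $L^2$ bounds that degenerate as $\tau\to 0$, so one replaces the cumulated incoming/outgoing fluxes $\Bf_\pm$ by new densities $\Brho_\pm$ on $\partial B_R$ obtained by radially projecting the \emph{initial} positions (at $t=0$, where the density is identically $1$) of particles that cross $\partial B_R$; this gives $\int_{\partial B_R}\Brho_\pm^2\lesssim E$ uniformly in $\tau$, together with $W_2^2(\Brho_\pm,\Bf_\pm)\lesssim E^{(d+3)/(d+2)}$ (Lemma \ref{goodR}). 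One then sets $\partial_\nu\phi=\Brho_+-\Brho_-$. Both the almost-orthogonality argument and the competitor construction must then be redone with separate treatments of $(0,1-\tau)$ and $(1-\tau,1)$, and with additional boundary constructions (Lemma \ref{lem:defboundaryflux}) to connect $\Brho_\pm$ back to $\Bf_\pm$; the $\tau E$ term in the final estimate is exactly the cost of the terminal layer, and $C(\tau)$ absorbs the various $\tau^{-\alpha}$ factors that appear.
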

As in \cite{GO}, Proposition \ref{prop:detintro} is actually proven at the Eulerian (or Benamou-Brenier) level. That is, if we let for $t\in[0,1]$, $T_t:= (1-t)Id+tT$, $\rho:=T_t\# \chi_{\Omega}$, and $j:= T_t\# (T-x)\chi_{\Omega}$, the couple $(\rho,j)$ solves
\begin{equation}\label{eq:BBintro}
\min_{(\rho,j)} \lt\{ \int_{\Omega} \int_0^1 \frac{1}{\rho}|j|^2 \ : \ \partial_t \rho+\nabla \cdot j=0, \ \rho_0=\chi_{\Omega}, \ \rho_1=\mu \rt\} 
\end{equation}
and we show
\begin{proposition}\label{prop:intromaineulerian}
For every $0<\tau\ll1$, there exist positive constants $\eps(\tau)$ and  $C(\tau)$ such that if $E+D\le \eps(\tau)$, then there exists a function $\phi$ with harmonic gradient in $B_{1}$ such that  
 \begin{equation}\label{eq:introdistjphiprop3}
  \int_{B_{\frac{1}{2}}}\int_0^1\frac{1}{\rho}|j-\rho\nabla \phi|^2\les \tau E+ C(\tau) D
 \end{equation}
and
\begin{equation*}
  \int_{B_{1}}|\nabla \phi|^2\les E.
\end{equation*}
\end{proposition}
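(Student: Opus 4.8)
The plan is to prove Proposition~\ref{prop:intromaineulerian} by a compactness/contradiction argument, in the spirit of the harmonic approximation lemmas of De Giorgi and of \cite{GO}. Suppose the statement fails: then there is $\tau_0>0$ and a sequence of data $(\Omega_k,\mu_k)$ with $E_k+D_k=:\eps_k\to 0$, with associated geodesics $(\rho_k,j_k)$ solving \eqref{eq:BBintro}, such that for no harmonic-gradient $\phi$ can the two conclusions hold simultaneously. The natural normalization is to rescale by $\sqrt{\eps_k}$, setting $\bar\rho_k:=\eps_k^{-1/2}(\rho_k-\chi_\Omega)$ and $\bar j_k:=\eps_k^{-1/2}j_k$ (roughly; one has to be careful since $\rho_k$ is a measure and the kinetic energy is quadratic). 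The first task is to establish compactness: from $\int\int \frac1{\rho_k}|j_k|^2=E_k$ one gets that $\bar j_k$ is bounded in $L^2$ (in space-time, after controlling $\rho_k$ from below and above on $B_{1/2}$, say, via the fact that $E_k\to0$ forces $\rho_k\to 1$ in a suitable averaged sense — this is where one needs a bound on how far $T$ can move mass, exploiting $E_k$ small together with the $W_2$-control $D_k$ on $\mu$). One then passes to a limit $\bar j_k\weaklim \bar j$, $\bar\rho_k\weaklim \bar\rho$, and the linearized continuity equation $\partial_t\bar\rho+\nabla\cdot\bar j=0$ holds in the limit, with $\bar\rho_0=0$ and $\bar\rho_1=$ (limit of $\eps_k^{-1/2}(\mu_k-1)$), which vanishes in $H^{-1}$ because $D_k/\eps_k$ is controlled along the contradiction sequence only if we also send $D_k/E_k\to 0$ — so one should split into the two regimes $D_k\gtrsim \eps_k$ (trivial, absorb into the $C(\tau)D$ term) and $D_k\ll E_k$.

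Assuming the regime $D_k\ll E_k$, the second step is to identify the limit. By lower semicontinuity of the Benamou--Brenier functional under weak convergence (here with the background density $1$), $(\bar\rho,\bar j)$ minimizes $\int\int |\bar j|^2$ among solutions of $\partial_t\bar\rho+\nabla\cdot\bar j=0$ with the given (vanishing) endpoints, hence $\bar j$ is time-independent and curl-free, $\bar j=\nabla\phi$ with $\phi$ harmonic: this is exactly the assertion that the linearized problem is the Poisson equation, and it uses the convexity of the functional plus the Euler--Lagrange / duality characterization of geodesics. The bound $\int_{B_1}|\nabla\phi|^2\les E$ after unscaling follows from weak lower semicontinuity of the $L^2$ norm applied to $\bar j_k$. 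Then the estimate \eqref{eq:introdistjphiprop3} must be derived from a \emph{quantitative} improvement: strong $L^2$ convergence of $\bar j_k$ to $\nabla\phi$ on the interior ball $B_{1/2}$, which gives $\int_{B_{1/2}}\int_0^1 \frac1{\rho_k}|j_k-\rho_k\nabla\phi|^2 = o(\eps_k) = o(E_k)$, contradicting the failure of \eqref{eq:introdistjphiprop3} for $k$ large (choosing the same $\phi$, truncated/mollified so its gradient is genuinely harmonic on $B_1$ — one may need to pass from $B_{1/2}$ to $B_1$ by interior regularity of harmonic functions and a Caccioppoli-type argument, or simply prove harmonicity on $B_1$ directly from the limiting minimization on the larger ball).

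The main obstacle, I expect, is the compactness step and in particular obtaining the \emph{strong} convergence $\bar j_k\to\nabla\phi$ in $L^2_{\mathrm{loc}}$ rather than merely weak convergence, together with controlling the density $\rho_k$: one has no a priori $L^\infty$ or even $L^1_{\mathrm{loc}}$ bound on $\mu_k$, so the kinetic term $\frac1{\rho_k}|j_k|^2$ could degenerate where $\rho_k$ is small. The way around this, following \cite{GO}, is to work on a slightly smaller ball and exploit that the geodesic $\rho_t$ interpolates between $\chi_\Omega$ and $\mu$, so for $t$ bounded away from $1$ the density stays comparable to $1$ on $B_{1/2}$ once $E$ and $D$ are small (a quantitative McCann-type or monotonicity estimate), and to upgrade weak to strong convergence one compares the energy $\int\int\frac1{\rho_k}|j_k|^2$ with the energy of the limit via the minimality: $\liminf \eps_k^{-1}\int\int\frac1{\rho_k}|j_k|^2 \ge \int\int|\nabla\phi|^2$ by lsc, while an upper bound $\limsup \le \int\int|\nabla\phi|^2$ comes from using $(\rho_k, \rho_k\nabla\phi + \text{correction})$ as a competitor in \eqref{eq:BBintro} and estimating the correction needed to fix the endpoint $\mu_k$ by $D_k\ll E_k$; matching liminf and limsup forces the defect measure to vanish, giving strong convergence. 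Packaging these estimates carefully — especially the competitor construction and the lower bound on $\rho_k$ — is the technical heart of the argument.
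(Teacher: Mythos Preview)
Your compactness/contradiction framework is a genuinely different route from the paper, which is entirely constructive: $\phi$ is defined explicitly as the solution of the Neumann problem \eqref{def:phi} with boundary data given by the regularized fluxes $\Brho_\pm$ of Lemma~\ref{goodR}, and the estimate \eqref{eq:introdistjphiprop3} is then proved directly by combining an almost-orthogonality inequality (Proposition~\ref{prop:almostorth}) with a competitor construction (Proposition~\ref{prop:distjphiprop2}). No limit is taken and no sequence appears.

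The gap in your proposal is precisely the step you yourself call the ``technical heart'': the energy-matching upper bound $\limsup_k E_k^{-1}\int_{B_R}\int_0^1\frac{1}{\rho_k}|j_k|^2 \le \int|\nabla\phi|^2$ needed to upgrade weak to strong convergence. This requires, for each $k$, a competitor $(\tilde\rho_k,\tilde j_k)$ for the \emph{local} Benamou--Brenier problem \eqref{eq:locmin} that (i) starts from the constant density on $B_R$, (ii) ends at the singular measure $\mu_k\LL B_R$, (iii) matches the boundary flux $f_k^R$ on $\partial B_R\times[0,1]$, and (iv) has energy $\le E_k\int|\nabla\phi|^2 + o(E_k)$. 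Your sketch ``$(\rho_k,\rho_k\nabla\phi+\text{correction})$, with the correction estimated by $D_k$'' does not address (ii): advecting the constant density along $\nabla\phi$ produces another smooth density, not $\mu_k$, and bridging to the singular $\mu_k$ is the entire difficulty --- the $W_2$-control $D_k$ measures the distance from $\mu_k$ to a \emph{constant}, not to whatever your flow produces at $t=1$. This construction is exactly Proposition~\ref{prop:distjphiprop2}, and it is not short: it needs the terminal-layer parameter $\tau$ to separate $[0,1-\tau]$ from $[1-\tau,1]$, the regularized fluxes $\Brho_\pm$ (because the raw flux $f^R$ is only in $L^2$ with a $\tau^{-d}$-constant, cf.\ \eqref{L2boundf} versus \eqref{L2boundrho}), and the estimate \eqref{distdata} on $W_2^2(\mu'_R,\text{const})$ to connect to $\mu_k$ in the terminal layer. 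The compactness framework does not bypass this machinery; it only relocates it to the competitor step.

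A secondary issue: your claim that $\rho_k$ is ``comparable to $1$'' on $B_{1/2}$ for $t$ away from $1$ overstates what is available --- only the upper bound $\rho_t\le(1-t)^{-d}$ of \eqref{Linfty} holds, and no pointwise lower bound is established (or used) in the paper.
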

As in \cite{GO}, this is proven by first choosing a good radius where the flux of $j$ is well controlled in order to define $\phi$, then obtaining the almost-orthogonality estimate 
\begin{equation}\label{eq:introalmorth}
 \int_{B_{\frac{1}{2}}}\int_{0}^1 \frac{1}{\rho}|j-\rho \nabla \phi|^2 \les \lt(\int_{B_1}\int_0^1 \frac{1}{\rho} |j|^2-\int_{B_1} |\nabla \phi|^2\rt) +\tau E +C(\tau) D
\end{equation}
and finally constructing a competitor and using the minimality of $(\rho,j)$ for \eqref{eq:BBintro} in order to estimate the term inside the brackets in \eqref{eq:introalmorth}. However, each of these steps is considerably harder than in \cite{GO}.
This becomes quite clear considering that by analogy with \cite{GO}, letting for $R>0,$ $\Bf:=\int_0^1 j\cdot \nu$ where $\nu$ is the outward normal to $B_R$, one would like to define $\phi$ as a solution of 
\[
 \begin{cases}
   \Delta \phi= 1-\mu & \textrm{in } B_R\\
   \frac{\partial \phi}{\partial \nu} = \Bf &\textrm{on } \partial B_R
 \end{cases}
\]
for some well chosen $R\in (\frac{1}{2},\frac{3}{2})$.  However if $\mu$ is a singular measure $\nabla \phi$ will typically not be in $L^2$ since this  would require $L^2$  bounds (actually $H^{-\frac12}$ would be enough) on $\Bf$ which cannot be obtained from the energy through a Fubini type argument since  
the $L^\infty$ norm of $\rho_t$ typically blows up as $t\to 1$. Similar issues were tackled in \cite{AmStTr16, Le17}  by mollification of $\mu$ with smooth kernels (the heat and the Mehler kernel respectively). 
Here instead we introduce a small time-like parameter $\tau$ and work separately in $(0,1-\tau)$ and in the terminal layer $(1-\tau,1)$.\\
In $(0,1-\tau)$, we take care of the flux going through $\partial B_R$ in that time interval. We first modify the definition of $\Bf$ and let $\Bf:=\int_0^{1-\tau} j\cdot \nu$, and  then change $\phi$ so that it connects in $B_R$ the constant density equal to $1$  to the constant density equal to $1-\frac{1}{|B_R|} \int_{\partial B_R}  \Bf$, i.e. $\phi$ solves
\begin{equation}\label{defphi1}
 \Delta \phi = \frac{1}{|B_R|} \int_{\partial B_R}  \Bf \qquad \textrm{ in }B_R.
\end{equation}
Regarding the Neumann boundary conditions, we face here the problem that even though $\rho\in L^\infty(B_2\times(0,1-\tau))$, its $L^\infty$ bound blows up as $\tau \to 0$. This leads to $L^2$ bounds on $\Bf$ which are not uniform in $\tau$. To overcome this difficulty, we need to replace $\Bf$ by a better behaved density on $\partial B_R$. 
This is the role of Lemma \ref{goodR}. Treating separately the incoming and outgoing fluxes $\Bf_\pm$, we construct densities $\Brho_\pm$ on $\partial B_R$ with
\begin{equation}\label{eq:modifyprho}
 \int_{\partial B_R} \Brho_\pm^2\les E \qquad \textrm{and } \qquad W_2^2(\Brho_\pm,\Bf_\pm)\les E^{\frac{d+3}{d+2}}.
\end{equation}
The densities $\Brho_\pm$ can be seen as rearrangements of $\Bf_\pm$ through projections on $\partial B_R$. Considering the time-dependent version of the Lagrangian problem, since $E\ll 1$ the particles hitting $\partial B_R$ in $(0,1-\tau)$ must come from a small neighborhood of $\partial B_R$ at time $0$. 
A key point in deriving \eqref{eq:modifyprho} is that at time $0$ the density is well-behaved (since it is constant) and thus the number of particles coming from such a small neighborhood of $\partial B_R$ is under  control. The estimate in \eqref{eq:modifyprho} on the Wasserstein distance between
$\Brho_\pm$ and $\Bf_\pm$ is important in view of the construction of a competitor. Indeed, this indicates that if we know how to construct a
competitor having $\Brho_\pm$ as boundary fluxes, we will then be able to modify it into a competitor with the correct $\Bf_\pm$ boundary conditions (see in particular Lemma \ref{lem:defboundaryflux}).   We then complement \eqref{defphi1} with the boundary conditions
\[
 \frac{\partial \phi}{\partial \nu} =\Brho_+-\Brho_- \qquad \textrm{on } \partial B_R.
\]
The almost-orthogonality estimate \eqref{eq:introalmorth} is proven in Proposition \ref{prop:almostorth}. It is readily seen that assuming for simplicity that $R=1$ is the good radius,
\begin{multline}\label{eq:almostorthsketch}
 \int_{B_{\frac{1}{2}}}\int_{0}^1 \frac{1}{\rho}|j-\rho \nabla \phi|^2 \les \lt(\int_{B_1}\int_0^1 \frac{1}{\rho} |j|^2-\int_{B_1} |\nabla \phi|^2\rt)\\
 + \int_{B_1} \lt(\int_0^{1-\tau} \rho -1\rt) |\nabla \phi|^2 +\int_{B_1} \phi \rho_{1-\tau} +\int_{\partial B_1} \phi \lt[ \Bf -(\Brho_+-\Brho_-)\rt]+ \tau E.
\end{multline}
While in \cite{GO} the first term in the second line was easily estimated since in that case, up to a small error we had $\rho\le 1$,  we need here a more delicate argument. In order to estimate the second term, we use that, again up to choosing a good radius, we may assume that 
\begin{equation}\label{eq:estimrho1moinstau}
 W_2^2\lt(\rho_{1-\tau}\LL B_1, \frac{\rho_{1-\tau}(B_1)}{|B_1|} \chi_{B_1}\rt)\les \tau^2E+D,
\end{equation}
see Lemma \ref{lem:distdata}. Ignoring issues coming from the flux through $\partial B_1$, and thus assuming that $\frac{\rho_{1-\tau}(B_1)}{|B_1|} = \frac{\mu(B_1)}{|B_1|}=\frac{\mu(O)}{|O|}$ (where $O\supset B_2$ is the open set in the definition of  $D$), 
\eqref{eq:estimrho1moinstau} follows from $W_2^2(\rho_{1-\tau}\LL B_1, \mu\LL B_1)\les \tau^2 E$ by displacement interpolation, $W_2^2(\mu\LL B_1, \frac{\mu(B_1)}{|B_1|})\les D$ by definition and triangle inequality. 
The last term in \eqref{eq:almostorthsketch} is  estimated thanks to the $W_2^2$ estimate given in \eqref{eq:modifyprho}.\\
Let us finally describe the construction of the competitor given in Proposition \ref{prop:distjphiprop2}. As explained in the beginning of this discussion, we employ a different strategy for the time intervals $(0,1-\tau)$ and $(1-\tau,1)$. 
In $(0,1-\tau)$, forgetting the issue of connecting $\Brho_{\pm}$ to $\Bf_\pm$, we mostly take as competitor
\[
 (\tilde \rho,\tilde j)= \lt(1-\frac{t}{1-\tau}\frac{1}{|B_1|}\int_{\partial B_1} \Bf   ,\nabla \phi\rt)+ (s,q),
\]
where $(s,q)$ are supported in  the annulus $B_1\backslash B_{1-r} \times(0,1-\tau)$ for some boundary layer size $r\ll 1$ and satisfy the continuity equation,  $s_0=s_{1-\tau}=0$ and $q\cdot \nu =j\cdot \nu -\Bf$ on $\partial B_1\times (0,1-\tau)$. The existence of such a couple $(s,q)$ satisfying the appropriate energy estimates is given by \cite[Lemma 3.4]{GO} which in turn is inspired by a similar construction from \cite{ACO}. 
In the terminal time layer $(1-\tau,1)$, we connect the constant $1- \frac{1}{|B_1|}\int_{\partial B_1} \Bf$ to the measure $\mu$. The outgoing flux is easily treated by pre-placing on $\partial B_1$ the particles which should leave the domain in $(1-\tau,1)$. For the incoming flux, we use the corresponding part of $(\rho,j)$ as competitor. Finally, the remaining part of the measure
$\mu'\le \mu$ which was not coming from particles entering $\partial B_1$ in $(1-\tau,1)$ is connected to $1- \frac{1}{|B_1|}\int_{\partial B_1} \Bf$ thanks to the estimate
\[
 W_2^2\lt(\mu',\frac{\mu'(B_1)}{|B_1|}\rt)\les \tau^2E+D,
\]
which is obtained as \eqref{eq:estimrho1moinstau} in Lemma \ref{lem:distdata}.

\subsection{Outline}
The plan of the paper is the following. In Section \ref{sec:prelim} we first set up some notation and then prove a few more or less standard elliptic estimates. 
We then write in Section \ref{sec:optimaltransp} a quick reminder on optimal transportation.  In Section \ref{sec:Poi}, we give the definition and first properties of the Poisson point process and then prove our main concentration estimates (see Theorem \ref{theo:estimDrstar} and Theorem \ref{theo:estimDrstarper}).  
Section \ref{sec:main} is the central part of the paper and contains the proof of Theorem \ref{theo:det intro}. We first explain in Section \ref{sec:goodR} how to choose a good radius before proving Proposition \ref{prop:intromaineulerian}. This is a consequence of Proposition \ref{prop:almostorth} and Proposition \ref{prop:distjphiprop2} which contain the proof of the
almost-orthogonality property \eqref{eq:introalmorth} and the construction of a competitor  respectively. In  Section \ref{sec:application}, we combine the stochastic and deterministic ingredients to perform the Campanato iteration and prove both the quantitative bounds of Theorem \ref{theo:mainresult intro} and perform the construction
of the locally optimal coupling between Lebesgue and Poisson given in Theorem \ref{theo:limit intro}.

\subsection*{Acknowledgements}
 MH gratefully acknowledges partial support by the DFG through the CRC 1060 ``The Mathematics of Emerging Effects'' and by the Hausdorff Center for Mathematics. MG and MH thank the Max Planck Institute MIS for its warm hospitality.

\section{Preliminaries}\label{sec:prelim}
\subsection{Notation}

In this paper we will use the following notation. The symbols $\sim$, $\ges$, $\les$ indicate estimates that hold up to a global constant $C$,
which typically only depends on the dimension $d$.
For instance, $f\les g$ means that there exists such a constant with $f\le Cg$,
$f\sim g$ means $f\les g$ and $g\les f$. An assumption of the form $f\ll1$ means that there exists $\eps>0$, typically only
depending on dimension, such that if $f\le\eps$, 
then the conclusion holds.  

We write $\log$ for the natural logarithm. We denote by $\H^k$ the $k-$dimensional Hausdorff measure. 
For a set $E$,  $\nu_E$ will always denote the external normal to $E$. When clear from the context we will drop the explicit dependence on the set. 
We write $|E|$ for the  Lebesgue measure of a set $E$ and $\chi_E$ for the indicator function of $E$.
When no confusion is possible, we will drop the integration measures in the integrals. Similarly, we will often identify, if possible, measures with their densities with respect to the Lebesgue measure.
For $R>0$ and $x_0\in \R^d$, $B_R(x_0)$ denotes the ball of radius $R$ centered in $x_0$. 
When $x_0=0$, we will simply write $B_R$ for $B_R(0)$. We denote the gradient (resp.\ the Laplace-Beltrami operator) on $\partial B_R$ by $\nabbdr$ (resp.\ $\Delta^{\rm bdr}$).
For $L>0$, we denote by $Q_L:=\lt[-\frac{L}{2},\frac{L}{2}\rt)^d$ the cube of side length $L$.

\subsection{Elliptic estimates}
We start by collecting a few more or less standard elliptic estimates which we will need later on.
\begin{lemma}
 For $f\in L^2(\partial B_1)$  let $\phi$ be the (unique) solution of 
 \[\begin{cases}
   \Delta \phi=\frac{1}{|B_1|}\int_{\partial B_1} f &\textrm{in } B_1\\[8pt]
   \frac{\partial \phi}{\partial \nu}= f &\textrm{on } \partial B_1,
  \end{cases}\]
  with $\int_{\partial B_1} \phi=0$, then letting $p:=\frac{2d}{d-1}$,
\begin{equation}\label{CalZyg}
 \lt(\int_{B_1} |\nabla \phi|^p\rt)^{\frac1p}\les \lt(\int_{\partial B_1} f^2\rt)^{\frac12}.
\end{equation}
Moreover, for every $0<r\le 1$,
\begin{equation}\label{estimphiAr}
 \int_{B_1\backslash B_{1-r}} |\nabla \phi|^2\les r\int_{\partial B_1} f^2.
\end{equation}

\end{lemma}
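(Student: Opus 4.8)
The plan is to treat the two estimates by two standard but complementary techniques: a Calder\'on--Zygmund/Sobolev argument for \eqref{CalZyg}, and an explicit energy estimate localized near the boundary for \eqref{estimphiAr}. For \eqref{CalZyg}, first I would normalize: set $\bar f:=\frac{1}{|\partial B_1|}\int_{\partial B_1}f$ and note that the compatibility condition is automatically satisfied since $\int_{B_1}\Delta\phi=|B_1|\cdot\frac{1}{|B_1|}\int_{\partial B_1}f=\int_{\partial B_1}f$, so $\phi$ is well-defined up to a constant, fixed by $\int_{\partial B_1}\phi=0$. The natural route is to split $\phi=\phi_1+\phi_2$ where $\phi_1$ solves the Neumann problem with zero interior data and boundary data $f-\bar f$ (mean zero), and $\phi_2$ is the explicit radial solution of $\Delta\phi_2=\frac{1}{|B_1|}\int_{\partial B_1}f$ with $\frac{\partial\phi_2}{\partial\nu}=\bar f$, which one computes by hand as a quadratic polynomial in $|x|$, giving $\|\nabla\phi_2\|_{L^\infty(B_1)}\les|\bar f|\les(\int_{\partial B_1}f^2)^{1/2}$ by Cauchy--Schwarz. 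For $\phi_1$: by elliptic regularity for the Neumann problem, $\phi_1\in W^{1,p}(B_1)$ with $\|\nabla\phi_1\|_{L^p(B_1)}\les\|f-\bar f\|_{W^{-1/p,p}(\partial B_1)}\les\|f-\bar f\|_{L^2(\partial B_1)}$, where the last inequality uses the Sobolev embedding $L^2(\partial B_1)\hookrightarrow W^{-1/p,p}(\partial B_1)$ on the $(d-1)$-dimensional manifold $\partial B_1$; with $p=\frac{2d}{d-1}$ one checks $\frac1p=\frac{d-1}{2d}=\frac12-\frac{1}{2d}$ matches the embedding $W^{s,2}\hookrightarrow L^q$ thresholds on $\partial B_1$ exactly, which is precisely why this exponent is chosen. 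Alternatively, and perhaps cleaner, I would represent $\nabla\phi_1$ via the Neumann Green's function / single-layer-type potential on $\partial B_1$ and invoke the mapping properties of the relevant boundary integral operator from $L^2(\partial B_1)$ to $L^p(B_1)$.

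For \eqref{estimphiAr}, the plan is a direct energy estimate on the annular shell $A_r:=B_1\setminus B_{1-r}$. Test the equation $\Delta\phi=\bar f$ (writing $\bar f$ for the constant interior right-hand side $\frac{1}{|B_1|}\int_{\partial B_1}f$, and recalling $|\bar f|\les(\int_{\partial B_1}f^2)^{1/2}$) against $\phi\,\eta^2$ where $\eta$ is a cutoff equal to $1$ on $A_{r/2}$, supported in $A_r$, with $|\nabla\eta|\les 1/r$. Integration by parts produces the boundary term $\int_{\partial B_1}f\phi$ (since $\eta=1$ there), the bulk term $-\int|\nabla\phi|^2\eta^2$, a cross term $-2\int\phi\eta\nabla\phi\cdot\nabla\eta$ absorbed by Young's inequality at the cost of $\int_{A_r}\phi^2|\nabla\eta|^2\les r^{-2}\int_{A_r}\phi^2$, and the right-hand side term $\bar f\int\phi\eta^2$. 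Controlling $\int_{A_r}\phi^2$ by $r^2\int_{\partial B_1}f^2$ requires a Poincar\'e-type inequality in the shell; here I would use that $\phi$ has zero average on $\partial B_1$ together with a trace/Poincar\'e inequality on the thin annulus $A_r$ — essentially $\int_{A_r}\phi^2\les r(\int_{\partial B_1}\phi^2)+r^2\int_{A_r}|\nabla\phi|^2$ — combined with the global $L^2(\partial B_1)$ trace bound $\int_{\partial B_1}\phi^2\les\int_{B_1}|\nabla\phi|^2\les\int_{\partial B_1}f^2$ coming from \eqref{CalZyg} (H\"older, $p\ge 2$) and the trace theorem with the mean-zero normalization. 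Bootstrapping these and absorbing, one arrives at $\int_{A_{r/2}}|\nabla\phi|^2\les r\int_{\partial B_1}f^2$, and replacing $r$ by $2r$ finishes it.

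The main obstacle I anticipate is getting the \emph{sharp} exponent $p=\frac{2d}{d-1}$ in \eqref{CalZyg} with the $L^2(\partial B_1)$ boundary norm on the right — this is a borderline Sobolev/trace estimate where the numerology has to line up perfectly, and a naive $W^{1,p}$ elliptic estimate costing a full derivative of regularity from $\partial B_1$ would lose too much. The resolution is that only ``half a derivative'' is lost in the Neumann problem (data in $W^{-1/p,p}(\partial B_1)$ gives $W^{1,p}$), and that $L^2(\partial B_1)$ embeds into $W^{-1/p,p}(\partial B_1)$ for this specific $p$ on a $(d-1)$-manifold; making this precise (either via interpolation of the Neumann Dirichlet-to-Neumann map or via explicit layer potentials on the sphere, where one can even use spherical harmonics to compute everything) is the technical heart. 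The second estimate \eqref{estimphiAr}, by contrast, is a routine Caccioppoli-type argument once \eqref{CalZyg} is in hand, the only care being the Poincar\'e inequality in a shrinking shell with the boundary normalization, which gives the linear-in-$r$ gain.
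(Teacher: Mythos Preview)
Your approach to \eqref{CalZyg} is different from the paper's but should work. The paper instead first reduces to mean-zero $f$, then uses Pohozaev's identity to obtain $\int_{\partial B_1}|\nabla\phi|^2\les\int_{\partial B_1}f^2$, and finally exploits the harmonicity of $\nabla\phi$ to reduce \eqref{CalZyg} to the statement that for any harmonic $u$ with $\int_{\partial B_1}u=0$ one has $(\int_{B_1}|u|^p)^{1/p}\les(\int_{\partial B_1}u^2)^{1/2}$; this last is proved by an explicit extension to the cylinder $B_1\times(0,\infty)$ via spherical-harmonic decomposition combined with a Sobolev trace inequality. Your route through Neumann $W^{1,p}$ regularity and the boundary embedding $L^2(\partial B_1)\hookrightarrow W^{-1/p,p}(\partial B_1)$ is more black-box but equally valid; the paper's argument is more self-contained and makes the role of the critical exponent $p=\frac{2d}{d-1}$ transparent through the extension.

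Your plan for \eqref{estimphiAr}, however, has a genuine gap. In the Caccioppoli identity you obtain, after testing with $\phi\eta^2$, the boundary term $\int_{\partial B_1}f\phi$ and the cutoff term $r^{-2}\int_{A_r}\phi^2$. Neither comes with the required factor of $r$: the boundary term is bounded by Cauchy--Schwarz and the trace estimate as $\les\int_{\partial B_1}f^2$, with no smallness in $r$; and your thin-shell Poincar\'e $\int_{A_r}\phi^2\les r\int_{\partial B_1}\phi^2+r^2\int_{A_r}|\nabla\phi|^2$ turns $r^{-2}\int_{A_r}\phi^2$ into $r^{-1}\int_{\partial B_1}f^2+\int_{A_r}|\nabla\phi|^2$, which blows up as $r\to 0$ and whose second part is over $A_r$, not $A_{r/2}$, so cannot be absorbed. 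A Caccioppoli estimate is designed to trade a gradient in a small set for the function in a larger set; here you need the opposite, namely that the gradient is genuinely small near the boundary, and no bootstrapping will manufacture the missing factor of $r$.

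The paper's argument is both simpler and uses the right structural fact: since $\Delta\phi$ is constant, $\nabla\phi$ is harmonic, hence $|\nabla\phi|^2$ is subharmonic, and the monotonicity of spherical means gives
\[
\int_{\partial B_s}|\nabla\phi|^2\le\int_{\partial B_1}|\nabla\phi|^2\quad\text{for all }0<s\le 1.
\]
Integrating in $s$ over $(1-r,1)$ yields $\int_{B_1\setminus B_{1-r}}|\nabla\phi|^2\le r\int_{\partial B_1}|\nabla\phi|^2$, and Pohozaev gives $\int_{\partial B_1}|\nabla\phi|^2\les\int_{\partial B_1}f^2$. The harmonicity of $\nabla\phi$ is the key point you are missing.
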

\begin{proof}
Replacing $\phi$ by $\phi -\frac{|x|^2}{2d|B_1|}\int_{\partial B_1} f$, we may assume that $\int_{\partial B_1} f=0$. \\
 By Pohozaev's identity (see \cite{GO}) and Poincar\'e's inequality, we have that $\phi\in H^{1}(\partial B_1)$ with 
 \begin{align}\label{ao02}
 \int_{\partial B_1} |\nabla \phi|^2\les \int_{\partial B_1} f^2.
 \end{align}
Estimate \eqref{estimphiAr} follows from \eqref{ao02} together with the sub-harmonicity of $|\nabla \phi|^2$ in the form 
 \[
  \int_{\partial B_r} |\nabla \phi|^2\le \int_{\partial B_1} |\nabla \phi|^2 \qquad \textrm{for } r\le 1.
 \]

 We are just left to prove (\ref{CalZyg}). Since by (\ref{ao02}) we have
$\int_{\partial B_1}|\nabla\phi|^2\lesssim\int_{\partial B_1}f^2$, and since $\nabla \phi$ is harmonic, it suffices to show that for every harmonic function $\phi$ with $\int_{\partial B_1}\phi=0$ and thus $\int_{B_1}\phi=0$,
\begin{align}\label{ao03}
\bra{\int_{B_1}|\phi|^p}^\frac{1}{p}\lesssim\bra{\int_{\partial B_1}\phi^2}^\frac{1}{2}.
\end{align}
The argument for (\ref{ao03}) roughly goes as follows: By $L^2$-based regularity theory,
the $H^\frac{1}{2}(B_1)$-norm of $\phi$ is estimated by the $L^2(\partial B_1)$-norm of its Dirichlet data,
so that (\ref{ao03}) reduces to a  fractional Sobolev inequality on $B_1$. If one 
wants to avoid fractional Sobolev norms, in view of their various definitions on bounded domains,
one needs to construct an extension $\bar\phi$ of $\phi$ to the 
(semi-infinite) cylinder $B_1\times(0,\infty)$, preserving $\int_{B_1}\bar\phi=0$
and with the $H^1(B_1\times(0,\infty))$-norm of $\bar\phi$ estimated by the 
$H^\frac{1}{2}(B_1)$-norm of $\phi$, which combines to
\begin{align}\label{ao01}
\bra{\int_{B_1}\int_0^\infty|\nabla\bar\phi|^2+(\partial_t\bar\phi)^2}^\frac{1}{2}
\lesssim\bra{\int_{\partial B_1}\phi^2}^\frac{1}{2}.
\end{align}
It then remains to appeal to the (Sobolev-type) trace estimate 
\begin{align}\label{ao07}
\bra{\int_{B_1}|\phi|^p}^\frac{1}{p}\lesssim
\bra{\int_{B_1}\int_0^\infty|\nabla\bar\phi|^2+(\partial_t\bar\phi)^2}^\frac{1}{2}.
\end{align}

\medskip

For the sake of completeness, we now give the arguments for (\ref{ao01}) and (\ref{ao07}). 
Starting with (\ref{ao01}), we first argue that
it suffices to consider the case where $\phi_{|\partial B_1}$ is an eigenfunction of
the Laplace-Beltrami operator $-\Delta^{\rm bdr}$, say for eigenvalue $\lambda\ge 0$. Then we have
\begin{align}\label{ao05}
\phi(x)=r^\alpha\phi(\hat x)\quad\mbox{with}\;r:=|x|,\;\hat x:=\frac{x}{r},\;\alpha(d-2+\alpha)=\lambda,
\end{align}
which follows from $\Delta$ $=\frac{1}{r^{d-1}}\partial_rr^{d-1}\partial_r+\frac{1}{r^2}\Delta^{\rm bdr}$,
and define the extension
\begin{align*}
\bar\phi(x,t)=\exp(-\alpha t)\phi(x).
\end{align*}
With $\bar\phi'$ being another function of this form we have with  $\nabbdr \phi$ denoting the tangential part of the gradient of $\phi$
\begin{align*}
\lefteqn{(\nabla\bar\phi\cdot\nabla\bar\phi'+\partial_t\bar\phi\partial_t\bar\phi')(x,t)}\nonumber\\
&=\exp(-(\alpha+\alpha')t)r^{\alpha+\alpha'-2}
\big(\nabla^{\rm bdr}\phi\cdot\nabla^{\rm bdr}\phi'+\alpha\alpha'(1+r^2)\phi\phi'\big)(\hat x),
\end{align*}
and thus by integration by parts on $\partial B_1$ we have for every $r, t$
\begin{align*}
\lefteqn{\int_{\partial B_r\times\{t\}}\nabla\bar\phi\cdot\nabla\bar\phi'+\partial_t\bar\phi\partial_t\bar\phi'}\nonumber\\
&=\exp(-(\alpha+\alpha')t)r^{\alpha+\alpha'-2}(\lambda+\alpha\alpha'(1+r^2))\int_{\partial B_1}\phi\phi'.
\end{align*}
From this we learn that the $L^2(\partial B_1)$-orthogonality of the eigenspaces transmits to the extensions; hence
we may indeed restrict to an eigenfunction. Integrating in $(r,t)$ the last identity for $\phi'=\phi$ we obtain
\begin{align*}
\int_{B_1}\int_0^\infty|\nabla\bar\phi|^2+(\partial_t\bar\phi)^2
=\frac{4\alpha+\lambda+\frac{\lambda}{2\alpha}}{4\alpha^2-1}\int_{\partial B_1}\phi^2.
\end{align*}
Since (\ref{ao05}) implies that   $\lambda\les \alpha^2 +1$, we get (\ref{ao01}).

\medskip

We now turn to (\ref{ao07}), which we deduce from the  (mean-value zero) Sobolev estimate
\begin{equation}\label{sobolev}
\bra{\int_{B_1}\int_0^\infty|\bar\phi|^q}^\frac{1}{q}
\lesssim \bra{\int_{B_1}\int_0^\infty|\nabla\bar\phi|^2+(\partial_t\bar\phi)^2}^\frac{1}{2}\quad
\mbox{where}\;q=\frac{2(d+1)}{d-1},
\end{equation}
which holds because the analogue estimate holds on every $B_1\times(n-1,n)$, $n\in\mathbb{N}$, 
since $\int_{B_1\times(n-1,n)}\bar\phi=0$. From $|\frac{d}{dt}\int_{B_1}|\bar\phi|^p|$ $\le p\int_{B_1}|\bar\phi|^{p-1}|\partial_t\bar\phi|$ and Cauchy-Schwarz's inequality (using $2(p-1)=q$)
we have
\begin{align*}
\int_{0}^{\infty}\left|\frac{d}{dt}\int_{B_1}|\bar\phi|^p\right|
\le p\bra{\int_{B_1}\int_0^\infty|\bar\phi|^q}^\frac{1}{2}
\bra{\int_{B_1}\int_0^\infty(\partial_t\bar\phi)^2}^\frac{1}{2}.
\end{align*}
Therefore, using that $\int_{B_1}|\bar\phi|^p\to0$ as $t\to \infty$, we get
\begin{multline*}
 \lt(\int_{B_1}|\phi|^p\rt)^{\frac{1}{p}}\les \bra{\int_{B_1}\int_0^\infty|\bar\phi|^q}^\frac{1}{2p}
\bra{\int_{B_1}\int_0^\infty(\partial_t\bar\phi)^2}^\frac{1}{2p}\\
\stackrel{\eqref{sobolev}}{\les}  \bra{\int_{B_1}\int_0^\infty|\nabla\bar\phi|^2+(\partial_t\bar\phi)^2}^{\frac{q}{4p}+\frac{1}{2p}}= \bra{\int_{B_1}\int_0^\infty|\nabla\bar\phi|^2+(\partial_t\bar\phi)^2}^{\frac{1}{2}},
\end{multline*}
that is \eqref{ao07}.

\end{proof}

For the choice of a good radius (see Lemma \ref{lem:distdata} below) we will need the following not totally standard elliptic estimate.
\begin{lemma}\label{lem:elliptic}
 For every $c>0$ and  every $(z,f)$, with $0\le z \le c$ and $\spt z \subset \overline{B}_1\backslash B_{\frac{1}{2}}$, the   unique solution $\phi$  (up to additive constants) of 
 \[\begin{cases}
   \Delta \phi=z -\frac{1}{|B_1|}\lt(\int_{B_1} z-\int_{\partial B_1} f\rt) &\textrm{in } B_1\\[8pt]
    \frac{\partial \phi}{\partial \nu}= f &\textrm{on } \partial B_1
  \end{cases}\]
  satisfies
  \[
   \int_{B_1}|\nabla \phi|^2\les \int_{\partial B_1} f^2+ c \int_{B_1} (1-|x|) z.
  \]

\end{lemma}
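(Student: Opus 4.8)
The plan is to test the equation with $\phi$ itself and run a standard energy (Caccioppoli-type) argument, the only subtlety being to extract the \emph{weighted} term $c\int_{B_1}(1-|x|)z$ rather than the crude $c\int_{B_1} z$. Integrating the equation against $\phi$ and using the Neumann condition, $\int_{B_1}|\nabla\phi|^2 = -\int_{B_1}\phi\,\Delta\phi + \int_{\partial B_1} f\phi = -\int_{B_1}\phi\,z + \frac{1}{|B_1|}\bigl(\int_{B_1} z - \int_{\partial B_1} f\bigr)\int_{B_1}\phi + \int_{\partial B_1} f\phi$. Normalizing by $\int_{B_1}\phi = 0$ kills the middle term, leaving $\int_{B_1}|\nabla\phi|^2 = -\int_{B_1}\phi\, z + \int_{\partial B_1} f\phi$. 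The boundary term is controlled by the trace estimate $\|\phi\|_{L^2(\partial B_1)}\les \|\nabla\phi\|_{L^2(B_1)}$ (valid once $\int_{B_1}\phi=0$, hence $\int_{\partial B_1}\phi$ is comparable to an average and Poincar\'e applies), giving $\int_{\partial B_1} f\phi \le \tfrac12\int_{B_1}|\nabla\phi|^2 + C\int_{\partial B_1} f^2$ by Young, and this gets absorbed.

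The heart of the matter is the term $-\int_{B_1}\phi\, z$. Here one uses $0\le z\le c$ and, crucially, $\spt z\subset \overline B_1\setminus B_{1/2}$, so $z$ lives in the annulus near $\partial B_1$ where $1-|x|$ is small. I would write $z = z\cdot\mathbf 1_{\{z>0\}}$ and bound $\bigl|\int_{B_1}\phi z\bigr| \le \bigl(\int_{B_1} \frac{\phi^2}{1-|x|}\, z\bigr)^{1/2}\bigl(\int_{B_1}(1-|x|)z\bigr)^{1/2} \le c^{1/2}\bigl(\int_{B_1}\frac{\phi^2}{1-|x|}\bigr)^{1/2}\bigl(\int_{B_1}(1-|x|)z\bigr)^{1/2}$, where in the last step I replaced one factor of $z$ by its bound $c$ but kept the weight. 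So I need a Hardy-type inequality near the boundary of the ball: $\int_{B_1}\frac{\phi^2}{1-|x|}\les \int_{B_1}|\nabla\phi|^2 + \int_{\partial B_1}\phi^2$, or, after using the trace estimate, $\les \int_{B_1}|\nabla\phi|^2$. Combining, $\bigl|\int_{B_1}\phi z\bigr|\le C c^{1/2}\|\nabla\phi\|_{L^2(B_1)}\bigl(\int_{B_1}(1-|x|)z\bigr)^{1/2}$, and Young's inequality splits this into $\tfrac14\int_{B_1}|\nabla\phi|^2 + C\,c\int_{B_1}(1-|x|)z$, exactly the desired form. Absorbing the two $\tfrac14$-terms yields the claim.

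The main obstacle is establishing the boundary Hardy inequality $\int_{B_1}\frac{\phi^2}{1-|x|}\les \|\nabla\phi\|_{L^2(B_1)}^2 + \|\phi\|_{L^2(\partial B_1)}^2$ cleanly. In fact one does not even need the full strength of $\frac{1}{1-|x|}$ singularity: since $\spt z$ is contained in $\{1/2\le |x|\le 1\}$, it suffices to control $\int_{B_1\setminus B_{1/2}}\frac{\phi^2}{1-|x|}$, which is a one-sided Hardy inequality on the shell and follows from a fundamental-theorem-of-calculus argument in the radial variable: writing $\phi(r\hat x) = \phi(\hat x) - \int_r^1 \partial_s\phi(s\hat x)\,ds$, one gets $\phi(r\hat x)^2 \les \phi(\hat x)^2 + (1-r)\int_r^1 (\partial_s\phi)^2\,ds$, and dividing by $1-r$ and integrating $\int_{1/2}^1 dr$ against the surface measure produces $\int_{\partial B_1}\phi^2$ plus $\int_{B_1\setminus B_{1/2}}|\partial_r\phi|^2 \le \int_{B_1}|\nabla\phi|^2$ (the $\int_r^1(1-r)^{-1}\mathbf 1_{\{s>r\}}dr$ integral over $r\in(1/2,1)$ is uniformly bounded in $s$). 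This is the one genuinely non-routine computation; everything else is Poincar\'e, trace, and Young. A minor point to be careful about is that the normalization "$\int_{\partial B_1}\phi = 0$" versus "$\int_{B_1}\phi = 0$" — either works, one just picks the additive constant to make the unwanted linear term vanish and the trace/Poincar\'e estimates available.
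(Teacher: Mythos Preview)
Your approach has a genuine gap: the boundary Hardy inequality you rely on,
\[
\int_{B_1\setminus B_{1/2}}\frac{\phi^2}{1-|x|}\les \int_{B_1}|\nabla\phi|^2+\int_{\partial B_1}\phi^2,
\]
is \emph{false}. Take $\phi$ equal to a nonzero constant on $B_1\setminus B_{1/2}$ (adjusted inside $B_{1/2}$ to have zero mean if you like): the left-hand side is $\int_{1/2}^1\frac{r^{d-1}}{1-r}dr=+\infty$ while the right-hand side is finite. Your own fundamental-theorem-of-calculus sketch reveals exactly where this breaks: after dividing by $1-r$ you obtain a term $\frac{\phi(\hat x)^2}{1-r}$, and $\int_{1/2}^1\frac{dr}{1-r}$ diverges logarithmically. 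What \emph{does} follow from the FTC argument is only the subtracted version
\[
\int_{B_1\setminus B_{1/2}}\frac{\bigl(\phi(x)-\phi(x/|x|)\bigr)^2}{1-|x|}\les\int_{B_1}|\nabla\phi|^2,
\]
which is not enough by itself, since after the decomposition $\phi=(\phi-\phi(\hat x))+\phi(\hat x)$ you are still left with the boundary term $\int_{\partial B_1}\phi(\hat x)\,\bar z(\hat x)$ where $\bar z(\omega):=\int_0^1 z(r\omega)r^{d-1}dr$.

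The missing idea is a bathtub-type bound exploiting $0\le z\le c$ (not just $z\le c$): for each direction $\omega$, among densities $0\le\psi\le 1$ with $\int_0^1\psi=\bar z(\omega)$, the minimum of $\int_0^1(1-r)\psi$ is $\ges\bar z(\omega)^2$, attained when the mass piles up against $r=1$. This yields
\[
\int_{\partial B_1}\bar z^2\les c\int_{B_1}(1-|x|)z,
\]
which is exactly the \emph{squared} $L^2(\partial B_1)$ control on $\bar z$ needed to close the estimate on $\int_{\partial B_1}\phi\,\bar z$ via the trace inequality. The paper's proof proceeds along these lines: it splits $\int_{B_1}\phi z$ into the boundary piece $\int_{\partial B_1}\phi\,\bar z$ and the radial-oscillation piece $\int(\phi(r\omega)-\phi(\omega))z$, controlling both through the bathtub estimate on $\bar z$. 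Your weighted Cauchy--Schwarz idea is natural but cannot work without this ingredient, because the weight $1/(1-|x|)$ is genuinely non-integrable.
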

\begin{proof}
 Without loss of generality, we may assume that $\int_{B_1} \phi=0$ and that $\int_{\partial B_1} f^2+ \int_{B_1} (1-|x|) z<\infty$, otherwise there is nothing to prove. Moreover, by scaling we may assume that $c=1$.
 Using integration by parts, the trace inequality for Sobolev functions together with the Poincar\'e inequality for functions of mean zero,
 \begin{align*}
  \int_{B_1}|\nabla \phi|^2&=\int_{\partial B_1} \phi f -\int_{B_1} \phi \Delta \phi\\
  &\le \lt(\int_{\partial B_1} \phi^2\rt)^{\frac12} \lt(\int_{\partial B_1} f^2\rt)^{\frac12} +\lt|\int_{B_1} z \phi\rt|\\
  &\les \lt(\int_{ B_1} |\nabla \phi|^2\rt)^{\frac12} \lt(\int_{\partial B_1} f^2\rt)^{\frac12} +\lt|\int_{B_1} z \phi\rt|.
 \end{align*}
Using Young's inequality, it is  now enough to prove that 
\begin{equation}\label{eq:toproveelliptic}
 \lt|\int_{B_1} z \phi\rt|\les\lt(\int_{ B_1} |\nabla \phi|^2\rt)^{\frac12} \lt(\int_{B_1} (1-|x|) z\rt)^{\frac12}. 
\end{equation}
For $\omega\in \partial B_1$, letting  
\[
 \overline{z}(\omega):=\int_0^{1} z(r \omega) r^{d-1} dr,
\]
 we claim that 
\begin{equation}\label{claim:overz}
 \int_{\partial B_1} \overline{z}^2\les \int_{B_1} (1-|x|)z.
\end{equation}
Indeed, momentarily fixing $\omega\in \partial B_1$ and setting $\psi(r):= r^{d-1} z(r\omega)$ for $r\in [0,1]$, we have $0\le \psi\le 1$ and 
\[
 \int_{0}^1 \psi = \overline{z}(\omega),
\]
so that for almost every $\omega\in \partial B_1$,
\[
 \int_{0}^1 (1-r) r^{d-1} z(r\omega)\ge \min_{\stackrel{0\le \tilde\psi\le 1}{\int \tilde\psi =\overline{z}(\omega)}} \int_{0}^1 (1-r) \tilde\psi(r) \ges \overline{z}^2(\omega),
\]
where the last inequality follows since the minimizer of 
\[
 \min_{\stackrel{0\le \tilde\psi\le 1}{\int \tilde\psi =\overline{z}(\omega)}} \int_{0}^1 (1-r) \tilde\psi(r)
\]
is given  by the characteristic function of $(1-\overline{z}(\omega),1)$.
Using that by hypothesis $\spt z \subset \overline{B}_1\backslash B_{\frac{1}{2}}$, we can thus write 
\begin{align*}
 \lt|\int_{B_1} z \phi\rt| &\le \lt|\int_{\partial B_1} \phi \overline{z}\rt|+\lt|\int_{\partial B_1} \int_{\frac{1}{2}}^1 (\phi(r \omega)-\phi(\omega)) z(r\omega) r^{d-1} dr d\omega\rt|\\
 &\les \lt(\int_{\partial B_1} \phi^2\rt)^{\frac12} \lt(\int_{\partial B_1} \overline{z}^2\rt)^{\frac12} +\int_{\partial B_1} \int_{\frac{1}{2}}^1 |\phi(r \omega)-\phi(\omega)| z(r\omega) r^{d-1} dr d\omega\\
 &\stackrel{\eqref{claim:overz}}{\les} \lt(\int_{ B_1} |\nabla \phi|^2\rt)^{\frac12} \lt(\int_{B_1} (1-|x|) z\rt)^{\frac12}\\
 & \qquad \qquad+\int_{\partial B_1} \int_{\frac{1}{2}}^1 |\phi(r \omega)-\phi(\omega)| z(r\omega) r^{d-1} dr d\omega,
\end{align*}
where in the last line we  used once more  that $\int_{\partial B_1} \phi^2\les \int_{B_1} |\nabla \phi|^2$. Since for $r\in(\frac{1}{2},1)$
\[
 |\phi(r\omega)-\phi(\omega)|\le \lt(\int_{\frac{1}{2}}^1 |\partial_r \phi(s \omega)|^2  ds \rt)^{\frac12}\les  \lt(\int_{0}^1 |\partial_r \phi(s \omega)|^2 s^{d-1} ds \rt)^{\frac12},
\]
estimate \eqref{eq:toproveelliptic} follows from 
\begin{align*}
& \int_{\partial B_1} \int_{\frac{1}{2}}^1 |\phi(r \omega)-\phi(\omega)| z(r\omega) r^{d-1} dr d\omega\\
&\les \int_{\partial B_1} \int_{\frac{1}{2}}^1 \lt(\int_{0}^1 |\partial_r \phi(s \omega)|^2 s^{d-1} ds\rt)^{\frac12}z(r\omega) r^{d-1} dr d\omega\\
 &=\int_{\partial B_1}  \lt(\int_{0}^1 |\partial_r \phi(s \omega)|^2 s^{d-1}ds \rt)^{\frac12} \overline{z}(\omega) d\omega\\
 &\le \lt(\int_{\partial B_1}\int_{0}^1 |\partial_r \phi(s \omega)|^2 s^{d-1}ds\rt)^{\frac12} \lt(\int_{\partial B_1}\overline{z}^2\rt)^{\frac12}\\
 &\stackrel{\mathclap{\eqref{claim:overz}}}{\les} \lt(\int_{B_1}|\nabla \phi|^2\rt)^{\frac12} \lt(\int_{B_1} (1-|x|) z\rt)^{\frac12}.
\end{align*}
\end{proof}

\subsection{The optimal transport problem}\label{sec:optimaltransp}
In order to set up notation, let us quickly recall some well known facts about optimal transportation. Much more can be found for instance in the books \cite{Viltop,VilOandN,AGS,Santam} to name just a few.
We will always work here with transportation between (multiples) of characteristic functions and arbitrary measures so that we restrict our presentation to this setting.

For a measure $\Pi$ on $\R^d\times\R^d$ we denote its marginals by $\Pi_1$ and $\Pi_2$, i.e.\ $\Pi_1(A)=\Pi(A\times\R^d), \Pi_2(A)=\Pi(\R^d\times A).$
For a given bounded set $\Omega$, a positive constant $\Lambda$ and a measure $\mu$
with compact support and such that $\mu(\R^d)=\Lambda |\Omega|$ any measure  $\Pi$ on $\R^d\times\R^d$ with marginals $\Pi_1=\Lambda\chi_\Omega$ and $\Pi_2=\mu$ is called a transport plan or coupling between $\Lambda\chi_\Omega$ and $\mu$. We define the Wasserstein distance between $\Lambda \chi_\Omega$ and $\mu$ as 
\begin{equation}\label{euclwass}
 W_2^2(\Lambda \chi_{\Omega}, \mu):=\min_{ \Pi_1=\Lambda \chi_\Omega, \, \Pi_2=\mu}\int_{\R^d\times \R^d} |x-y|^2 d\Pi=\min_{T\# \Lambda\chi_\Omega=\mu} \int_{\Omega}  |T-x|^2 \Lambda dx.
\end{equation}
By Brenier's Theorem \cite[Theorem 2.12]{Viltop}, the minimizer of the right-hand side of \eqref{euclwass} exists, is called optimal transport map, and is uniquely defined a.e.\ on $\Omega$ as the gradient of a convex map $\psi$. Conversely, for every convex map $\psi$, every $\Lambda>0$ and 
every  bounded set $\Omega$, $\nabla \psi$ is the solution of \eqref{euclwass} for $\mu:= \nabla \psi \# \Lambda \chi_{\Omega}$ (see \cite[Theorem 2.12]{Viltop} again). \\

By \cite[Theorem 5.5]{Viltop}, we have the time-dependent representation of optimal transport
\begin{equation}\label{dynwass}
 W_2^2(\Lambda \chi_{\Omega}, \mu)=\min_{X} \lt\{ \int_{\Omega}\int_0^1 |\dot{X}(x,t)|^2 \Lambda dx \ : \ X(x,0)=x, \ X(\cdot,1)\# \Lambda \chi_\Omega=\mu\rt\}
\end{equation}
and  if $T$ is the solution of \eqref{euclwass}, the minimizer of \eqref{dynwass} is given by $X(x,t)=T_t(x):= (1-t)x+tT(x)$ which are straight lines. We will often drop the argument $x$ and write $X(t)=X(x,t)$.

As in \cite{GO}, a central point for our analysis is the Eulerian version of optimal transportation, also known as the Benamou-Brenier formulation (see for instance \cite[Theorem 8.1]{Viltop} or \cite[Chapter 8]{AGS}). It states that 
\begin{equation}\label{BBwass}
 W_2^2(\Lambda \chi_{\Omega}, \mu)=\min_{(\rho,j)} \lt\{  \int_{\R^d}\int_0^1 \frac{1}{\rho}|j|^2 \ : \ \partial_t \rho+ \Div j=0, \ \rho(0)=\Lambda \chi_\Omega \textrm{ and } \rho(1)=\mu\rt\},
\end{equation}
where the continuity equation and the boundary data are understood in the distributional sense, i.e.\ for every $\zeta\in C^1_c(\R^d\times [0,1])$,
\begin{equation}\label{conteqprel}
 \int_{\R^d}\int_0^1 \partial_t \zeta \rho +\nabla \zeta \cdot j=\int_{\R^d} \zeta (x,1) d\mu- \int_{\R^d} \zeta(x,0) \Lambda \chi_{\Omega}(x) dx, 
\end{equation}
and where 
\[
 \int_{\R^d}\int_0^1 \frac{1}{\rho}|j|^2= \int_{\R^d}\int_0^1 |v|^2 d\rho
\]
if $j\ll \rho$ with $\frac{dj}{d\rho}=v$ and infinity otherwise (see \cite[Theorem 2.34]{AFP}). Let us point out that in particular, the admissible measures for \eqref{BBwass} are allowed to
contain singular parts with respect to the Lebesgue measure. We also note that if $K\subset \R^d$ is a compact set and if $(\rho,j)$ are measures on $K\times[0,1]$, then we have the equality (see \cite[Proposition 5.18]{Santam}) 
\begin{equation}\label{dualBB}
 \int_{K}\int_{0}^1 \frac{1}{2\rho} |j|^2=\sup_{\xi\in C^0(K\times[0,1],\R^d)} \ \int_{K}\int_0^1 \xi\cdot j-\frac{|\xi|^2}{2} \rho.
\end{equation}

If we let for $t\in [0,1]$, 
\begin{equation}\label{defrhojt}
 \rho_t:=T_t\# \Lambda \chi_{\Omega} \qquad \textrm{and} \qquad j_t:= T_t\#[(T-Id)\Lambda \chi_{\Omega}],
\end{equation}
then $j$ is absolutely continuous with respect to $\rho$ and $(\rho,j)$ is the minimizer of \eqref{BBwass} (see \cite[Theorem 8.1]{Viltop} or \cite[Chapter 8]{AGS} and \cite[Proposition 5.32]{Santam} for the uniqueness). Notice that by \cite[Proposition 5.9]{Viltop}, for $t\in [0,1)$, $\rho_t$ and $j_t$ are absolutely continuous with respect to the Lebesgue measure
and $ \frac{1}{\rho}|j|^2$ agrees with its pointwise definition. By Alexandrov's Theorem \cite[Theorem 14.25]{VilOandN}, $T$ is differentiable a.e. and by \cite[Theorem 4.8]{Viltop} for $t\in[0,1)$, the Jacobian equation
\begin{equation}\label{jacobian}
 \rho_t(T_t(x))\det \nabla T_t(x)=\Lambda \chi_\Omega(x)
\end{equation}
holds a.e.  We say that a map $T$ is monotone if for a.e. $(x,y)$, $(T(x)-T(y))\cdot(x-y)\ge 0$. In particular, since the optimal transport map for \eqref{euclwass} is the gradient of a convex function, it is a monotone map. 
 Let us recall the following $L^\infty$
bound for monotone maps proven in\footnote{This bound was proven there for optimal transport maps but a quick inspection of the proof shows that only monotony is used. Similarly, it is stated there for $R=1$ but a simple rescaling gives the present version of the estimate.} \cite[Lemma 4.1]{GO}.
\begin{lemma}\label{lem:Linftybound}
 Let $T$ be a monotone map. Let $R>0$ be such that  $\frac{1}{R^{d+2}}\int_{B_{2R}} |T-x|^2\ll1$. Then
 \begin{equation}\label{LinftyboundT}
  \sup_{B_{\frac{7R}{4}}} |T-x|+ \sup_{B_{\frac{3R}{2}}}\, \dist(y,T^{-1}(y))\les R \lt( \frac{1}{R^{d+2}}\int_{B_{2R}} |T-x|^2\rt)^{\frac1{d+2}}.
 \end{equation}
Moreover, letting for $t\in [0,1]$, $T_t= (1-t) Id +t T$, 
\begin{equation}\label{inclTt}
 T_t^{-1}(B_{\frac{3R}{2}})\subset B_{2R}.
\end{equation}

\end{lemma}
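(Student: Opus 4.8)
The statement is scale-invariant, so the plan is to first reduce to $R=1$ and write $\varepsilon:=\big(\int_{B_2}|T-x|^2\big)^{1/(d+2)}$, so that the hypothesis becomes $\varepsilon\ll 1$ and one has to show $\sup_{B_{7/4}}|T-x|+\sup_{B_{3/2}}\dist(y,T^{-1}(y))\les\varepsilon$ together with \eqref{inclTt}. This is essentially \cite[Lemma 4.1]{GO}: only the monotonicity of $T$ is used, and the general $R$ follows from the case $R=1$ by scaling, so the task is really to recall that proof.

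For the interior $L^\infty$ bound I would argue pointwise. Fix a point of approximate continuity $x_0\in B_{7/4}$, set $M:=|T(x_0)-x_0|$ and, assuming $M>0$, $e:=(T(x_0)-x_0)/M$. The monotonicity inequality $(T(y)-T(x_0))\cdot(y-x_0)\ge 0$, valid for a.e.\ $y$, rewrites as $(T(y)-y)\cdot(y-x_0)\ge M\,e\cdot(y-x_0)-|y-x_0|^2$. Evaluating it on a ball of radius $\sim M$ centered at the midpoint $x_0+\tfrac12(T(x_0)-x_0)$, the right-hand side is $\gtrsim M^2$ while the left-hand side is $\le|T(y)-y|\,|y-x_0|\les M|T(y)-y|$, so $|T(y)-y|\gtrsim M$ on that ball. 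As soon as this ball is contained in $B_2$, integrating gives $\int_{B_2}|T-x|^2\gtrsim M^{d+2}$, i.e.\ $M\les\varepsilon$. Containment in $B_2$ needs the a priori bound $M\les 1$, which I would obtain by the same computation localized closer to $x_0$ (replacing $\tfrac{M}{2}e$ by a fixed small multiple of $e$): if $M$ exceeded a dimensional constant this would produce a ball of fixed size on which $|T-x|\gtrsim 1$, contradicting $\int_{B_2}|T-x|^2=\varepsilon^{d+2}\ll 1$. Taking the essential supremum over $x_0$ then bounds $\sup_{B_{7/4}}|T-x|$.

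For the inverse bound, the previous step says $T|_{B_{7/4}}$ is $C\varepsilon$-close to the identity, hence maps $\partial B_{7/4}$ into $\{|y|>7/4-C\varepsilon\}$, which encircles $B_{3/2}$ once $\varepsilon\ll 1$; a degree/surjectivity argument for monotone maps (in the application $T=\nabla\psi$ for a convex $\psi$, and one checks that for $y\in B_{3/2}$ the concave function $x\mapsto x\cdot y-\psi(x)$ attains its maximum over $\R^d$ inside $B_{7/4}$, because on $\partial B_{7/4}$ it is smaller by a fixed amount) then gives $B_{3/2}\subseteq T(B_{7/4})$. Any $x\in B_{7/4}$ with $T(x)=y$ satisfies $\dist(y,T^{-1}(y))\le|x-y|=|T(x)-x|\les\varepsilon$ by the first step. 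Finally, for \eqref{inclTt} I would note that $T_t=(1-t)\,\mathrm{id}+tT$ is again monotone and $\frac{1}{R^{d+2}}\int_{B_{2R}}|T_t-x|^2=t^2\frac{1}{R^{d+2}}\int_{B_{2R}}|T-x|^2\ll 1$, so the first two estimates apply to $T_t$ as well; for $t<1$, $T_t$ is strongly monotone hence injective, so $T_t^{-1}(y)$ is a single point which, by the inverse bound for $T_t$, lies in $B_{3R/2+CR\varepsilon}\subseteq B_{2R}$ whenever $y\in B_{3R/2}$, i.e.\ $T_t^{-1}(B_{3R/2})\subseteq B_{2R}$; the case $t=1$ follows by letting $t\to1^-$ and using that $B_{3R/2}$ is open.

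The bulk of the work — and the only genuinely delicate point — is the interior $L^\infty$ bound, in particular controlling the component of $y-x_0$ orthogonal to $e$ in the monotonicity inequality and establishing the a priori boundedness of $M$; the inverse estimate and the inclusion \eqref{inclTt} are short consequences of it together with standard facts about monotone maps.
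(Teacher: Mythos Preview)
Your proposal is correct and, as you note, reconstructs the argument of \cite[Lemma 4.1]{GO}. The paper itself does not prove this lemma: it merely recalls it as a result from \cite{GO}, observing in a footnote that the proof there uses only monotonicity and that the case of general $R$ follows by scaling. So there is no alternative approach in the paper to compare against; your sketch is precisely the proof the paper is citing.

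One small comment: for the inclusion \eqref{inclTt} at $t=1$, your limiting argument is fine, but it is even quicker to observe directly that if $T(x)\in B_{3R/2}$ then by the first estimate applied to $T$ itself one has $|x-T(x)|\les R\varepsilon$ whenever $x\in B_{7R/4}$, and a short contradiction argument (if $|x|\ge 2R$ then the segment from $x$ to $T(x)$ crosses $\partial B_{7R/4}$, and monotonicity propagates the bound) gives $x\in B_{2R}$ without needing injectivity of $T_t$.
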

Let us show how together with displacement convexity this implies  an $L^\infty$ bound for $(\rho_t,j_t)$.
\begin{lemma}\label{lem:Linftyboundslice}
 Let $\Lambda=1$ and assume that $B_2\subset \Omega$ and  $E:=\int_{B_2} |T-x|^2\ll 1$, where $T$ is the optimal transport map for \eqref{euclwass}. Then, for a.e.  $0<t<1$,  if $(\rho_t,j_t)$ is given by \eqref{defrhojt},
\begin{equation}\label{Linfty}
 \sup_{B_{\frac{3}{2}}}\rho_t\le \frac{1}{(1-t)^d} \qquad\textrm{and}\qquad  \sup_{B_{\frac{3}{2}}}|j_t|\les E^{\frac1{d+2}} \frac{1}{(1-t)^d}.
\end{equation}
Moreover,
\begin{equation}\label{Linftyboundslice}
 \int_{B_{\frac{3}{2}}} \frac{1}{\rho_t}|j_t|^2\le E \qquad \textrm{and} \qquad \int_{B_{\frac{3}{2}}} \rho_t\les 1.
\end{equation}
\end{lemma}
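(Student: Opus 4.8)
The plan is to deduce the pointwise bounds \eqref{Linfty} from the Jacobian equation \eqref{jacobian} together with displacement convexity (concavity of $t\mapsto(\det\nabla T_t)^{1/d}$ along the McCann interpolation), and then obtain the integrated bounds \eqref{Linftyboundslice} by a change of variables using the inclusion \eqref{inclTt} from Lemma~\ref{lem:Linftybound}. First I would fix a point of differentiability $x$ of $T$ with $T_t(x)\in B_{3/2}$; by \eqref{inclTt} such $x$ lies in $B_2$, so the smallness $\tfrac{1}{R^{d+2}}\int_{B_{2R}}|T-x|^2\ll1$ of Lemma~\ref{lem:Linftybound} (with $R=1$) applies and $T-x$ is small on $B_{7/4}$. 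Since $T=\nabla\psi$ for a convex $\psi$, the symmetric matrix $\nabla T(x)=D^2\psi(x)$ is positive semidefinite, hence $\nabla T_t(x)=(1-t)Id+t\nabla T(x)$ has eigenvalues $1-t+t\lambda_i\ge 1-t$ with $\lambda_i\ge0$ the (a.e.-defined, nonnegative) eigenvalues of $\nabla T(x)$. Therefore $\det\nabla T_t(x)\ge(1-t)^d$, and \eqref{jacobian} with $\Lambda=1$, $\chi_\Omega(x)=1$ gives
\[
\rho_t(T_t(x))=\frac{1}{\det\nabla T_t(x)}\le\frac{1}{(1-t)^d},
\]
which is the first bound in \eqref{Linfty}. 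For the flux, recall $j_t=T_t\#[(T-Id)\chi_\Omega]$, so at the point $y=T_t(x)\in B_{3/2}$ one has $j_t(y)=\rho_t(y)\,(T(x)-x)$, whence $|j_t(y)|\le\rho_t(y)\,|T(x)-x|\le(1-t)^{-d}\sup_{B_{7/4}}|T-x|$; the $L^\infty$ bound \eqref{LinftyboundT} of Lemma~\ref{lem:Linftybound} then yields $\sup_{B_{7/4}}|T-x|\les E^{1/(d+2)}$, giving the second bound in \eqref{Linfty}.

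For the integrated estimates \eqref{Linftyboundslice}, I would use the push-forward structure directly rather than the pointwise bounds. Since $\tfrac1\rho|j|^2=|v|^2\,d\rho$ with $j=\rho v$, and $(\rho_t,j_t)$ comes from the Lagrangian flow $T_t$, we have
\[
\int_{B_{3/2}}\frac{1}{\rho_t}|j_t|^2=\int_{T_t^{-1}(B_{3/2})}|T(x)-x|^2\,dx\le\int_{B_2}|T-x|^2=E,
\]
where the inclusion $T_t^{-1}(B_{3/2})\subset B_2$ is exactly \eqref{inclTt}. Similarly $\int_{B_{3/2}}\rho_t=|T_t^{-1}(B_{3/2})|\le|B_2|\les1$. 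This disposes of \eqref{Linftyboundslice} cleanly.

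The only genuinely delicate point is the justification of the chain "Alexandrov differentiability of $T$ $\Rightarrow$ validity of the Jacobian equation a.e. for $t\in[0,1)$ $\Rightarrow$ the pointwise identity $\rho_t(T_t(x))=(\det\nabla T_t(x))^{-1}$", together with the fact that this pointwise formula genuinely describes the density $\rho_t$ that appears in the Benamou--Brenier minimizer — but both are quoted in the excerpt (Alexandrov's theorem \cite[Theorem 14.25]{VilOandN}, the Jacobian equation \cite[Theorem 4.8]{Viltop}, and the identification of $(\rho_t,j_t)$ with the minimizer together with $\tfrac1\rho|j|^2$ agreeing with its pointwise value for $t<1$ via \cite[Proposition 5.9]{Viltop}), so I would simply invoke them. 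The secondary subtlety is that the supremum in \eqref{Linfty} is an essential supremum over $B_{3/2}$: one must check that a.e.\ $y\in B_{3/2}$ is of the form $T_t(x)$ for a differentiability point $x$ of $T$ — this follows since $T_t$ is (for $t\in[0,1)$) a bi-Lipschitz-in-measure change of variables pushing a positive-density set onto $\rho_t$, so null sets are preserved, and the exceptional set where $T$ fails to be differentiable is null and maps to a null set under $T_t$. No new ideas beyond Lemma~\ref{lem:Linftybound} and displacement concavity of the Jacobian are needed.
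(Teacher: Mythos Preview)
Your proof is correct and follows essentially the same route as the paper: displacement convexity (via the Jacobian equation and nonnegativity of $D^2\psi$) for the $\rho_t$ bound, the $L^\infty$ estimate \eqref{LinftyboundT} from Lemma~\ref{lem:Linftybound} for the $j_t$ bound, and the inclusion \eqref{inclTt} together with the push-forward structure for the integrated estimates \eqref{Linftyboundslice}. The only stylistic difference is that for $j_t$ you argue pointwise via $j_t(y)=\rho_t(y)\,(T(T_t^{-1}y)-T_t^{-1}y)$, whereas the paper tests against $\xi\in C_c(B_{3/2},\R^d)$ and passes through the change of variables in integrated form; the latter sidesteps the (minor) need to justify a.e.\ invertibility of $T_t$, but both arguments are valid.
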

\begin{proof}
We start by proving \eqref{Linfty}. The estimate on $\rho$ is a direct consequence of displacement convexity: By concavity of $\mathrm{det}^{\frac1d}$ on positive symmetric matrices,
\begin{equation}\label{detconcave}
 \mathrm{det}^{\frac1d} (\nabla T_t)\ge (1-t) \mathrm{det}^{\frac1d} Id + t\mathrm{det}^{\frac1d} \nabla T\ge (1-t)
\end{equation}
and thus by \eqref{jacobian},
\begin{equation}\label{eq:intermed}
 \rho_t(x)=\frac{1}{\det \nabla T_t(T_t^{-1}(x))}\le \frac{1}{(1-t)^d}.
\end{equation}
We turn to  the  estimate on $j$. For $\xi\in C_c(B_{\frac{3}{2}},\R^d)$, and $t\in(0,1)$
\begin{align*}
 \int_{B_{\frac{3}{2}}} \xi\cdot  j_t &\stackrel{\eqref{defrhojt}}{=}\int_{T_t^{-1}(B_{\frac{3}{2}})} \xi(T_t)\cdot (T-x)\\
 &\le \sup_{T_t^{-1}(B_{\frac{3}{2}})} |T-x| \int_{T_t^{-1}(B_{\frac{3}{2}})} |\xi( T_t)|\\
 &\stackrel{\eqref{inclTt}\& \eqref{LinftyboundT}}{\les}  E^{\frac1{d+2}} \int_{B_{\frac{3}{2}}} |\xi| \rho_t \\
 &\stackrel{\eqref{eq:intermed}}{\les} E^{\frac1{d+2}} \frac{1}{(1-t)^d}\int_{B_{\frac{3}{2}}} |\xi|.
\end{align*}
 Estimate \eqref{Linftyboundslice} then follows from \eqref{inclTt}:
\begin{multline*}
 \int_{B_{\frac{3}{2}}} \frac{1}{\rho_t}|j_t|^2\stackrel{\eqref{defrhojt}}{=}\int_{T_t^{-1}(B_{\frac{3}{2}})}|T-x|^2\le \int_{B_{2}}|T-x|^2=E \\
 \textrm{and} \qquad \int_{B_{\frac{3}{2}}} \rho_t =|T_t^{-1}(B_{\frac{3}{2}})|\les 1.
\end{multline*}
\end{proof}

Before closing this section, let us spend a few words about optimal transportation on the torus and on the sphere since both problems will appear later on. Let us start with the periodic setting. For $L>0$, we let $Q_L:=[-\frac{L}{2},\frac{L}{2})^d$
be the centered cube of side length $L$.  We say that a measure $\mu$ on $\R^d$ is $Q_L-$periodic\footnote{when it is clear from the context we will simply call them periodic measures} if for every $z\in (L\Z)^d$, and every measurable set $A$, 
\[
 \mu(A+z)=\mu(A)
\]
 so that we may identify measures on the flat torus of size $L>0$ with $Q_L-$periodic measures on $\R^d$. If $\mu$ is a  $Q_L-$periodic measure  and $\Lambda:=\frac{\mu(Q_L)}{L^d}$, then we can define
\begin{equation}\label{perwass}
\Wper(\Lambda,\mu):=  \min_{T\# \Lambda=\mu} \int_{Q_L}|T-x|_{\per}^2\, \Lambda dx,
\end{equation}
where $|\cdot|_\per$ denotes the distance on $\T_L$ i.e. $|x-y|_\per=\min_{z\in (L\Z)^d} |x-y+z|$. 

 By \cite[Theorem 2.47]{Viltop} (see also \cite{cordero,ACDF} for a simpler proof), 
there exists a unique (up to  additive constants) convex function $\psi$ on $\R^d$ such that if $T$ is  the unique solution  of \eqref{perwass}, then for $x\in Q_L$, $T(x)=\nabla \psi(x)$ and
for $(x,z)\in \R^d\times (L\Z)^d$,
\begin{equation}\label{Cordero}
  \nabla \psi(x+z)=\nabla \psi(x)+z.
\end{equation}
\begin{remark}\label{rem:selection}
We will often identify  $T$ and $\nabla \psi$. Let us point out that although $T$ is defined  only Lebesgue a.e., it will be sometimes useful
 to consider a pointwise defined map which we then take to be an arbitrary but fixed measurable selection of the subgradient $\partial \psi$ of $\psi$.
\end{remark}

 Notice that of course for every $Q_L-$periodic measure $\mu$, 
\[
 \Wper(\Lambda,\mu)\le W_2^2\lt(\Lambda \chi_{Q_L}, \mu \restr Q_L\rt).
\]

Finally for $f_1$ and $f_2$ two non-negative densities on $\partial B_1$ with $\int_{\partial B_1} f_1=\int_{\partial B_1} f_2$, we define 
\begin{equation}\label{spherewass}
 W_{\partial B_1}^2(f_1,f_2):=\min_{T\# f_1=f_2}\int_{\partial B_1} d_{\partial B_1}^2(T(x),x) df_1,
\end{equation}
where $d_{\partial B_1}$ is the geodesic distance on $\partial B_1$. Let us point out that a minimizer exists by McCann's extension of Brenier's Theorem \cite[Theorem 2.47]{Viltop}.
Notice that $W_{\partial B_1}^2(f_1,f_2)$ is comparable to the Wasserstein distance in $\R^d$ between $f_1 \H^{d-1}\LL \partial B_1$ and $f_2 \H^{d-1}\LL \partial B_1$, that is 
\begin{equation}\label{W2compare}
 W_2^2(f_1,f_2)\le W_{\partial B_1}^2(f_1,f_2)\les W_2^2(f_1,f_2).
\end{equation}
As for \eqref{dynwass}, we have the time-dependent formulation \cite[Theorem 5.6]{Viltop}
\begin{equation}\label{dynsphere}
 W_{\partial B_1}^2(f_1,f_2)=\min_{X}  \lt\{ \int_{\partial B_1}\int_0^1 |\dot{X}(x,t)|^2 df_1 \ : \ X(x,0)=x, \ X(\cdot,1)\# f_1=f_2\rt\},
\end{equation}
and the Benamou-Brenier formulation \cite[Theorem 13.8]{VilOandN}
\begin{equation}\label{BBsphere}
 W_{\partial B_1}^2(f_1,f_2)=\min_{(\rho,j)} \lt\{ \int_{\partial B_1}\int_{0}^1 \frac{1}{\rho} |j|^2 \ : \ \partial_t \rho+ \nabla^\bdr \cdot j=0, \ \rho(0)= f_1 \textrm{ and } \rho(1)=f_2\rt\},
\end{equation}
where we stress that $j$ is tangent to $\partial B_1$. Even though it is more delicate than in the Euclidean case, the analog of \eqref{jacobian} also holds in this case.
Indeed, by  \cite[Theorem 13.8]{VilOandN} and \cite[Theorem 11.1]{VilOandN} (see also \cite[Theorem 4.2]{CorderoMCannS}  and \cite[Lemma 6.1]{CorderoMCannS} for more details) 
if\footnote{here $\exp_x$ denotes the exponential map on $\partial B_1$} $T(x)=\exp_x(\nabla \psi(x))$ is the minimizer of \eqref{spherewass}, letting for $t\in [0,1]$, 
$T_t(x):=\exp_x(t\nabla \psi(x))$  and then $\rho_t:= T_t\# f_1$, we have that $\rho_t$ is a minimizer of \eqref{BBsphere} and 
for every  $t\in[0,1]$ and a.e. $x\in \partial B_1$, the Jacobian equation
\begin{equation}\label{jacobiansphere}
 \rho_t(T_t(x)) J_t(x)= f_1(x)
\end{equation}
holds, with $J_t$ the Jacobian determinant (see for instance \cite[Lemma 6.1]{CorderoMCannS} for its definition).  The main point for us is that similarly to \eqref{detconcave}, it satisfies by \cite[Theorem 14.20]{VilOandN} (see also \cite[Lemma 6.1]{CorderoMCannS} for a  statement closer to ours)
\begin{equation}\label{displconvsphere}
 J_t^{\frac{1}{d-1}}\ge (1-t) + t J_1^{\frac{1}{d-1}},
\end{equation}
where we used  the fact that since  the sphere has positive Ricci curvature, its volume distortion coefficients are larger than one.

We finally prove a simple lemma which will be useful in the construction of competitors in Proposition \ref{prop:distjphiprop2} below. 
\begin{lemma}\label{lem:defboundaryflux}
 Let $f_1$ and $f_2$ be two non-negative densities on $\partial B_1$ of equal mass. For every $0\le a< b\le 1$ and $0\le c\le d\le 1$ with $a<c$ and $b<d$, there exists $(\rho,j)$ supported on $\partial B_1 \times [a,d]$ such that for every $\zeta\in C^1(\partial B_1\times[0,1])$
 \begin{equation}\label{conteqbdrconstruct}
  \int_{ \partial B_1}\int_0^1 \partial_t \zeta \rho+\nabbdr \zeta\cdot j= \frac{1}{d-c}\int_{\partial B_1}\int_{c}^d  \zeta df_2-\frac{1}{b-a} \int_{\partial B_1}\int_a^b \zeta df_1
 \end{equation}
and 
\begin{equation}\label{estimW2construct}
\int_{\partial B_1}\int_0^1 \frac{1}{\rho}|j|^2\les \frac{W^2_{\partial B_1}(f_1,f_2)}{(d-b)-(c-a)}\log \frac{d-b}{c-a},  
\end{equation}
with the understanding that for $c=d$,
\[
 \frac{1}{d-c}\int_{\partial B_1}\int_{c}^d  \zeta df_2=\int_{\partial B_1} \zeta(\cdot,c) df_2
\]
and for $d-b=c-a$,
\begin{equation}\label{dmoinsb}
 \frac{1}{(d-b)-(c-a)}\log \frac{d-b}{c-a}=\frac{1}{c-a}.
\end{equation}
\end{lemma}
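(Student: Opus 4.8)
The plan is to build the pair $(\rho,j)$ by composing two elementary building blocks: a time-reparametrization that spreads a measure uniformly over a time window, and the "cone-like" interpolation along geodesics on $\partial B_1$ that realizes $W_{\partial B_1}^2(f_1,f_2)$. First I would handle the degenerate reading of the right-hand side of \eqref{conteqbdrconstruct}: for $c<d$ the source $\frac{1}{d-c}\int_c^d \zeta\, df_2$ is the measure $f_2$ smeared uniformly over $[c,d]$, and for $c=d$ it is $f_2$ at the single time $c$; similarly $\frac{1}{b-a}\int_a^b\zeta\, df_1$ is $f_1$ smeared over $[a,b]$. So the object to construct must, in the distributional sense, emit mass $f_1$ over the window $[a,b]$ and absorb mass $f_2$ over $[c,d]$, all transport taking place on $\partial B_1$ within the time interval $[a,d]$.

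The construction I would use: let $X(x,t)=T_t(x)=\exp_x(t\nabla\psi(x))$ be the geodesic interpolation from $f_1$ to $f_2$ given by McCann's theorem, with $(\hat\rho_t,\hat\jmath_t)$ the associated Benamou-Brenier pair on $\partial B_1\times[0,1]$ realizing $W_{\partial B_1}^2(f_1,f_2)$ (as recalled around \eqref{BBsphere}--\eqref{jacobiansphere}). Now reparametrize time: choose a monotone surjective $\sigma:[a,d]\to[0,1]$ which is identically $0$ on $[a,b]$... no — rather, one wants the \emph{emission} to be distributed over $[a,b]$ and the \emph{absorption} over $[c,d]$. Concretely, set $\tilde\rho(t)=\int \rho\,(\ldots)$ by letting a mass element that will travel along the geodesic from $x$ to $T(x)$ wait at $x$ during a random sub-interval of $[a,b]$, then move during some middle portion, then wait at $T(x)$ during a random sub-interval of $[c,d]$. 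Averaging the departure time uniformly over $[a,b]$ and the arrival time uniformly over $[c,d]$ (independently) produces a pair $(\rho,j)$ supported in $\partial B_1\times[a,d]$ satisfying \eqref{conteqbdrconstruct}. The kinetic cost of a single particle that departs at time $s\in[a,b]$, arrives at time $s'\in[c,d]$ and travels at constant speed along its geodesic of length $d_{\partial B_1}(x,T(x))$ is $d_{\partial B_1}^2(x,T(x))/(s'-s)$; averaging $\frac{1}{(b-a)(d-c)}\int_a^b\int_c^d \frac{ds\,ds'}{s'-s}$ gives, after the elementary integral, a factor of the form $\frac{1}{(d-b)-(c-a)}\log\frac{d-b}{c-a}$ (with the limiting value \eqref{dmoinsb} when $(d-b)=(c-a)$, since $\frac{1}{h}\log(1+h/u)\to 1/u$). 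Integrating in $x$ against $f_1$ and using $\int_{\partial B_1} d_{\partial B_1}^2(x,T(x))\,df_1=W_{\partial B_1}^2(f_1,f_2)$ yields \eqref{estimW2construct}.

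To make this rigorous I would phrase it via the dynamic (Lagrangian) formulation \eqref{dynsphere}: define on the probability-like space $\partial B_1\times[a,b]\times[c,d]$ (with weight $\frac{1}{(b-a)(d-c)}df_1\,ds\,ds'$) the curve $t\mapsto Y(x,s,s',t)$ equal to $x$ for $t\le s$, equal to $T_{\frac{t-s}{s'-s}}(x)$ for $s\le t\le s'$, and equal to $T(x)$ for $t\ge s'$; then take $\rho_t$ to be the push-forward of this weighted measure under $Y(\cdot,t)$ and $j_t$ its momentum. The continuity equation \eqref{conteqbdrconstruct} is immediate from this Lagrangian description, and the energy bound follows from Jensen as above; the convexity inequality \eqref{displconvsphere} for the Jacobian is only needed to guarantee $\rho_t$ is a genuine density (so $\frac1\rho|j|^2$ has its pointwise meaning) but is not essential for the stated inequality. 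The main obstacle is purely bookkeeping: getting the endpoint identifications in \eqref{conteqbdrconstruct} exactly right (the $\frac{1}{d-c}$ and $\frac{1}{b-a}$ normalizations, and the collapse to a time-slice when $c=d$, which corresponds to degenerating the arrival window to a point so that $s'=c$ and the integral $\int_a^b \frac{ds}{c-s}$ gives $\log\frac{c-a}{c-b}=\log\frac{d-b}{c-a}$, consistent with \eqref{estimW2construct}), together with checking measurability/integrability of $Y$ so that the push-forward construction is legitimate. None of this is deep, but it must be done carefully; everything else is the one-line averaging estimate.
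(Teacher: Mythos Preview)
Your Lagrangian superposition idea is the right one and matches the paper's strategy, but there is a genuine gap in your construction: you propose to average the departure time $s$ uniformly over $[a,b]$ and the arrival time $s'$ uniformly over $[c,d]$ \emph{independently}. This only works if $[a,b]$ and $[c,d]$ are disjoint. The hypotheses of the lemma only require $a<c$ and $b<d$, so the two windows may overlap (and in the applications in Proposition~\ref{prop:distjphiprop2} they do: e.g.\ $a=2\tau$, $b=1-2\tau$, $c=\tfrac12$, $d=1-\tau$). When $c<b$, a positive fraction of the pairs $(s,s')$ satisfy $s'\le s$, so the particle would have to arrive before it departs; and even restricting to $s'>s$, your energy upper bound
\[
\frac{1}{(b-a)(d-c)}\int_a^b\!\int_c^d \frac{ds\,ds'}{s'-s}
\]
diverges because of the logarithmic singularity along $s'=s$. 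So the ``one-line averaging estimate'' actually fails precisely in the regime needed.

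The paper fixes this by \emph{coupling} departure and arrival times monotonically rather than independently: it sets $s'=\psi(s)$ where $\psi:[a,b]\to[c,d]$ is the affine bijection with $\psi(a)=c$, $\psi(b)=d$. Then $\psi(s)-s$ interpolates affinely between $c-a>0$ and $d-b>0$, so it stays uniformly positive, and the single integral
\[
\frac{1}{b-a}\int_a^b\frac{ds}{\psi(s)-s}=\frac{1}{(d-b)-(c-a)}\log\frac{d-b}{c-a}
\]
is exactly the factor in \eqref{estimW2construct}. With this one change (define $X_s(x,t)=X\bigl(x,\tfrac{t-s}{\psi(s)-s}\bigr)$ for $t\in[s,\psi(s)]$ and average only over $s\in[a,b]$) your argument goes through verbatim: the continuity equation is checked by the fundamental theorem of calculus along each path, and the energy bound follows from the dual formula \eqref{dualBB}. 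The case $c=d$ is then just the degenerate affine map $\psi\equiv c$, and your computation $\int_a^b\frac{ds}{c-s}$ is recovered.
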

\begin{proof}
 For $x\in \partial B_1$ and $t\in[0,1]$, let $X(x,t)\in \partial B_1$ be the minimizer of \eqref{dynsphere}, i.e. for $\zeta\in C^0(\partial B_1)$, 
\begin{equation}\label{def:X}
 \int_{\partial B_1} \zeta(X(x,1)) d f_1= \int_{\partial B_1} \zeta d f_2 \qquad \textrm{and} \qquad W_{\partial B_1}^2(f_1,f_2)=\int_{\partial B_1}\int_0^1 |\dot{X}|^2 df_1.
\end{equation}
Let $\psi$ be the affine function defined on $[a,b]$ through
\[
 \psi(a)=c \qquad \textrm{and } \qquad \psi(b)=d
\]
and let then for $t\in [s,\psi(s)]$
\[
 X_s(x,t):=X\lt(x,\frac{t-s}{\psi(s)-s}\rt),
\]
so that 
\begin{equation}\label{boundaryX}
 X_s(x,s)=x \qquad \textrm{and }\qquad X_s(x,\psi(s))=X(x,1).
\end{equation}
We then let $\rho$ be the non-negative measure on $\partial B_1\times[0,1]$ defined for $\zeta\in C^0(\partial B_1\times[0,1])$ by 
\[
 \int_{\partial B_1}\int_0^1 \zeta d \rho:=\frac{1}{b-a}\int_{\partial B_1} \int_{a}^{b} \int_{s}^{\psi(s)} \zeta(X_s(x,t),t) dt ds df_1
\]
and $j$ the $\R^d-$valued measure defined for $\xi\in C^0(\partial B_1\times[0,1],\R^d)$  by 
\[
 \int_{\partial B_1}\int_0^1 \xi\cdot d j:=\frac{1}{b-a}\int_{\partial B_1} \int_{a}^{b} \int_{s}^{\psi(s)} \xi(X_s(x,t),t)\cdot \dot{X_s}(x,t) dt ds df_1.
\]
It is readily seen that with this definition $j\ll \rho$. Let us establish \eqref{conteqbdrconstruct}. For $\zeta\in C^1(\partial B_1\times [0,1])$ a test function,
\begin{align*}
 \int_{\partial B_1}\int_0^{1} \partial_t \zeta \rho+ \nabbdr \zeta\cdot j&= \frac{1}{b-a}\int_{\partial B_1} \int_{a}^{b} \int_{s}^{\psi(s)} \partial_t \zeta(X_s,t)+ \nabla \zeta(X_s,t)\cdot \dot{X_s}dt ds df_1 \\
 &=\frac{1}{b-a}\int_{\partial B_1} \int_{a}^{b} \zeta(X_s(x,\psi(s)),\psi(s))-\zeta(X_s(x,s),s) ds df_1\\
 &\stackrel{\eqref{boundaryX}}{=}\frac{1}{b-a}\int_{\partial B_1} \int_{a}^{b} \zeta(X(x,1),\psi(s))-\zeta(x,s)  dsdf_1\\
 &\stackrel{\eqref{def:X}}{=}\int_{\partial B_1} \int_{a}^{b}\frac{1}{b-a} \zeta(x,\psi(s)) df_2 ds -\int_{\partial B_1} \int_{a}^{b} \frac{1}{b-a} \zeta(x,s) dsdf_1\\
 &=\int_{\partial B_1} \int_{c}^{d}\frac{1}{d-c} \zeta d f_2 d \hat{s} -\int_{\partial B_1} \int_{a}^{b} \frac{1}{b-a}\zeta  dsdf_1,
\end{align*}
where we made the change of variables $\hat{s}=\psi(s)$ in the  last equality.\\
We now turn to \eqref{estimW2construct}. Using \eqref{dualBB} and the definition of $(\rho,j)$,
\begin{align*}
 \int_{\partial B_1}\int_0^1 \frac{1}{2\rho}|j|^2&=\sup_{\xi\in C^0(\partial B_1\times[0,1],\R^d)} \int_{\partial B_1} \int_0^1 \xi\cdot j-\frac{|\xi|^2}{2}\rho\\
 &=\sup_{\xi\in C^0(\partial B_1\times[0,1],\R^d)}\frac{1}{b-a}\int_{\partial B_1}\int_{a}^{b} \int_s^{\psi(s)} \xi(X_s(x,t),t)\cdot \dot{X}_s(x,t)\\
 &\hspace{7cm} -\frac{|\xi(X_s(x,t),t)|^2}{2}dt ds df_1\\
 &\le \frac{1}{b-a}\int_{\partial B_1}\int_{a}^{b} \int_s^{\psi(s)}\sup_{\xi\in \R^d} \lt( \xi\cdot\dot{X}_s(x,t)-\frac{1}{2} |\xi|^2 \rt) dt ds df_1\\
 &= \frac{1}{b-a}\int_{\partial B_1}\int_{a}^{b} \int_s^{\psi(s)} \frac{1}{2}|\dot{X_s}(x,t)|^2 dt ds df_1\\
 &\stackrel{\eqref{boundaryX}}{=}\frac{1}{b-a}\int_{\partial B_1}\int_{a}^{b} \int_s^{\psi(s)} \frac{1}{(\psi(s)-s)^2}\lt|\dot{X}\lt(x,\frac{t-s}{\psi(s)-s}\rt)\rt|^2dt ds df_1\\
 &\stackrel{\hat{t}=\frac{t-s}{\psi(s)-s}}{=}\frac{1}{b-a}\int_{\partial B_1}\int_{a}^{b}\int_0^1 \frac{1}{\psi(s)-s}|\dot{X}(x,\hat t)|^2 d\hat t ds df_1\\
 &=\frac{W^2_{\partial B_1}(f_1,f_2)}{b-a}\int_{a}^{b} \frac{ds}{\psi(s)-s}.
\end{align*}
Let us finally estimate $\frac{1}{b-a}\int_{a}^{b} \frac{1}{\psi(s)-s}$. By definition of $\psi$,
\begin{align*}
 \frac{1}{b-a}\int_a^b \frac{1}{\psi(s)-s}&=\frac{1}{b-a}\int_a^b \frac{1}{c-s+\frac{d-c}{b-a}(s-a)}\\
 &\stackrel{s=(1-t)a+tb}{=}\int_0^1 \frac{1}{(1-t)(c-a)+t(d-b)}\\
 &=\frac{1}{(d-b)-(c-a)}\log \frac{d-b}{c-a},
\end{align*}
where we take as convention \eqref{dmoinsb} if $d-b=c-a$.

\end{proof}

\subsection{Poisson process, optimal matching, and concentration}\label{sec:Poi}
\subsubsection{The Euclidean problem}\label{sec:PoisEuc}
Let  $\Gamma$ be the set of all locally finite counting measures on $\R^d$
$$\Gamma=\cur{\mu : \mu=\sum_i \delta_{y_i}, y_i\in \R^d, \mu(K)<\infty, \forall \, K \text{ compact }}, $$
where $\Gamma$ is equipped with the $\sigma$-field $\F$ generated by the mappings $\mu\mapsto \mu(A)$ for Borel sets $A\subset \R^d$. We say that $(\mu, \PP)$ (or simply $\mu$) 
is a Poisson point process with intensity measure Lebesgue (or simply a Poisson point process) if $\PP$ is a probability measure on $\Gamma$ such that 
\begin{itemize}
 \item[(i)] If $A_1,\ldots,A_k$ are disjoint Borel sets, then $\mu(A_1),\ldots,\mu(A_k)$ are independent integer valued random variables;
\item[(ii)] for any  Borel set $A$ with $|A|<\infty$, the random variable $\mu(A)$ has a Poisson distribution with parameter $|A|$ i.e. for every $n\in \N$,
\begin{equation}\label{def:Poisson}
 \PP\lt[ \mu(A)=n\rt]= \exp(-|A|) \frac{|A|^n}{n!}.
\end{equation}

\end{itemize}
For a set $\Omega\subset \R^d$, we define the Poisson point process on $\Omega$ as the restriction of the Poisson point process on $\R^d$ to $\Omega$. It could be equivalently defined through properties $(i)$ and $(ii)$ above restricted to subsets of $\Omega$.\\

We let $\theta : \R^d\times \Gamma\to \Gamma$ be the shift operator, that is for $(z,\mu)\in \R^d\times \Gamma$ and a Borel set $A\subset \R^d$,
\begin{equation}\label{eq:equivariance}
\theta_z\mu(A):=\mu(A+z),\end{equation}
which we write shortly as $\theta_z\mu(\cdot)=\mu(\cdot + z)$. Moreover, we note that $\PP$ is stationary in the sense that it is invariant under the action of $\theta,$ i.e. $\PP\circ \theta = \PP$.
\\

For $\ell>0$, we recall that $Q_\ell=[-\frac{\ell}{2},\frac{\ell}{2})^d$. The optimal matching problem consists in understanding the behavior as $\ell\to \infty$ of\footnote{in order to have shift-invariance properties, we will actually consider periodic variants of \eqref{tocontrol}, see below.} 
\begin{equation}\label{tocontrol}
 W_2^2\lt(\mu\restr Q_\ell, \frac{\mu(Q_\ell)}{\ell^d}\chi_{Q_\ell}\rt)
\end{equation}
together with the properties of the corresponding optimal transport maps. We will use as shorthand notation $\mu_\ell:=\mu\restr Q_\ell$ and when it is clear from the context we will identify a constant $\Lambda>0$ with the measure $\Lambda \chi_{Q_\ell}$. As explained in the introduction, it is known that $ \EE\sqa{W_2^2\lt(\mu_\ell, \frac{\mu(Q_\ell)}{\ell^d}\rt)}\sim \ell^d$ for $d\geq 3$ whereas in dimension two there is an extra logarithmic 
factor in the scaling of \eqref{tocontrol} resulting from larger shifts of the mass. 
In particular, the transport cost per unit volume diverges  logarithmically. Our aim is to investigate the behavior of the associated transport maps. Although our techniques also allow us to say something for the case of $d\geq 3$ 
we focus on the  case $d=2$ where an additional renormalization of the maps is needed in order to be able to pass to the limit. Hence, we assume from now on $d=2$.
\\
The main stochastic ingredient is a control at every scale  $1\ll\ell< \infty$ of  \eqref{tocontrol}.  
This estimate is a quite direct consequence  of a result of Ambrosio, Stra and Trevisan \cite{AmStTr16} which we now recall.
  
  Since the results of \cite{AmStTr16} are not stated for the Poisson point process but rather for a deterministic number $n\to \infty$ of uniform iid random variables $X_i$ on a given domain $Q_\ell$, we need to introduce some more notation. For a given $n\in \N$, we let the probability $\PP_n$ on $\Gamma$   be defined as 
  \[
   \PP_n\sqa{F}:= \frac{\PP\sqa{F\cap\{ \mu_\ell(Q_\ell)=n\}}}{\PP\sqa{\mu_\ell(Q_\ell)=n}}, 
  \]
and let $\EE_n$ be the associated expectation. Note that by \eqref{def:Poisson}, we have
\begin{align}\label{eq:pn}
 p_n:=\PP\sqa{\mu_\ell(Q_\ell)=n}=\exp(-\ell^2)\frac{\ell^{2n}}{n!}.
\end{align}
Equipped with this probability measure, $\mu_\ell$ can be identified with $n$ uniformly iid random variables $X_i$ on $Q_\ell$. 
A simple rescaling shows that
\begin{align*}
 \frac{1}{\ell^2\log n} \EE_n\sqa{W_2^2\bra{\mu_\ell,\frac{\mu(Q_\ell)}{\ell^2} }}
 \end{align*}
is independent of $\ell$ and \cite[Theorem 1.1]{AmStTr16} states that,
 \begin{align}\label{eq:AST16}
 \lim_{n\to\infty} \frac{1}{\ell^2\log n} \EE_n\sqa{W_2^2\bra{\mu_\ell,\frac{\mu(Q_\ell)}{\ell^2} }} = \frac1{{4}\pi}.
 \end{align}
Arguing as in \cite[Remark 4.7]{AmStTr16} and using the fact that the uniform measure on $[0,1]$ satisfies a log-Sobolev inequality to replace exponential bounds by Gaussian bounds,
this gives that  there exists $c_0>0$ such that for every $M\ge 1$ (since we pass from a deviation to a tail estimate) and $n$ large enough uniformly\footnote{Notice that the left-hand side of \eqref{eq:concAmStTr} actually does not depend on $\ell$.} in $\ell$,

\begin{equation}\label{eq:concAmStTr}
 \PP_n\sqa{\frac{1}{\ell^2 \log n} W_2^2\lt(\mu_\ell, \frac{\mu(Q_\ell)}{\ell^2}\rt)\ge M}\le \exp(-c_0 M \log n).
\end{equation}
Let us now show how \eqref{eq:AST16} and \eqref{eq:concAmStTr} translate into our setting. 
\begin{proposition}\label{prop:concentration}
Let $\mu$ be a Poisson point process on $\R^2$. Then,
\begin{equation}\label{eq:convexpectation}
 \lim_{\ell\to \infty} \frac{1}{\ell^2 \log \ell}\EE\sqa{W_2^2\lt(\mu_\ell, \frac{\mu(Q_\ell)}{\ell^2}\rt)}= \frac{1}{2\pi}
\end{equation}
and there exists a universal constant $c$ independent of $\ell$ and $M$ such that for $\ell\ge 2, M \ge 1$,
\begin{align}\label{eq:concentration}
 \PP\sqa{\frac{1}{\ell^2 \log \ell} W_2^2\left(\mu_\ell, \frac{\mu(Q_\ell)}{\ell^2}\right) \geq M} \leq \exp(-c M \log \ell)\ . 
\end{align}
\end{proposition}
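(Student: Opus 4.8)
The plan is to transfer the concentration estimate \eqref{eq:concAmStTr} and the limit \eqref{eq:AST16}, both of which are conditioned on the number of points $n=\mu_\ell(Q_\ell)$, to the unconditioned Poisson setting by averaging over $n$ against the Poisson weights $p_n$ from \eqref{eq:pn}. The key probabilistic fact is that a Poisson random variable with parameter $\ell^2$ concentrates around its mean $\ell^2$ with Gaussian-type fluctuations of size $\ell$; in particular $\log n = \log \ell^2 + o(\log \ell) = 2\log\ell\,(1+o(1))$ on an event of overwhelming probability, which explains the factor $\frac1{2\pi} = \frac{2}{4\pi}$ appearing in \eqref{eq:convexpectation} versus the $\frac1{4\pi}$ in \eqref{eq:AST16}.

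For the concentration bound \eqref{eq:concentration}, first I would write, using $\PP = \sum_n p_n \PP_n$,
\[
 \PP\sqa{\frac{1}{\ell^2\log\ell} W_2^2\lt(\mu_\ell,\tfrac{\mu(Q_\ell)}{\ell^2}\rt)\ge M} = \sum_{n} p_n\,\PP_n\sqa{\frac{1}{\ell^2\log\ell}W_2^2\lt(\mu_\ell,\tfrac{\mu(Q_\ell)}{\ell^2}\rt)\ge M}.
\]
Then I would split the sum into a ``good'' range of $n$, say $\frac12\ell^2\le n\le 2\ell^2$ (or more generously $|n-\ell^2|\le \ell^2/2$), and its complement. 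On the good range, $\log n \sim \log\ell$ uniformly, so \eqref{eq:concAmStTr} applies and gives a bound of the form $\exp(-c_0 M\log n)\le \exp(-c M\log\ell)$ after adjusting constants; summing against $\sum_n p_n\le 1$ preserves this. On the complementary range, I bound $\PP_n[\cdots]\le 1$ and use the standard Poisson tail estimate $\PP[\,|\mu_\ell(Q_\ell)-\ell^2|\ge \ell^2/2\,]\le \exp(-c\ell^2)$, which (since $M\ge 1$ and $\ell\ge 2$) is dominated by $\exp(-cM\log\ell)$. A small technical point is that \eqref{eq:concAmStTr} is stated for ``$n$ large enough'': for the finitely many small dyadic $\ell$ (equivalently bounded $n$) where it may fail one absorbs everything into the universal constant, since $W_2^2$ is bounded for bounded configurations on a fixed cube, or one simply restricts attention to $\ell$ large and handles small $\ell$ by enlarging $C$.

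For the expectation limit \eqref{eq:convexpectation}, I would again condition: $\EE\sqa{W_2^2(\mu_\ell,\frac{\mu(Q_\ell)}{\ell^2})} = \sum_n p_n\,\EE_n\sqa{W_2^2(\mu_\ell,\frac{\mu(Q_\ell)}{\ell^2})}$. By the rescaling remark preceding \eqref{eq:AST16}, $\EE_n[\cdots] = \ell^2\log n\cdot a_n$ where $a_n\to \frac1{4\pi}$ as $n\to\infty$ (independently of $\ell$), and moreover $a_n$ is bounded — a uniform bound $a_n\les 1$ comes either from \eqref{eq:concAmStTr} by integrating the tail, or from classical upper bounds on the matching cost. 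Writing $\log n = 2\log\ell + \log(n/\ell^2)$ and using that $n/\ell^2$ concentrates at $1$ with $\EE|\log(n/\ell^2)|$ of order $1/\log\ell$ or smaller (again Poisson concentration), the dominant contribution is $\sum_n p_n\,\ell^2\cdot 2\log\ell\cdot a_n$, and since $a_n\to\frac1{4\pi}$ and the Poisson mass escapes every fixed finite range of $n$ as $\ell\to\infty$, dominated convergence gives $\frac1{\ell^2\log\ell}\EE[\cdots]\to 2\cdot\frac1{4\pi} = \frac1{2\pi}$. One must check the error terms: the contribution of the bad range $|n-\ell^2|\ge\ell^2/2$ is $\les \ell^2\log\ell\,\cdot\PP[\text{bad}]\les \ell^2\log\ell\,e^{-c\ell^2}$, negligible after dividing by $\ell^2\log\ell$; and on the good range $\log n/( 2\log\ell)\to 1$ uniformly.

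The main obstacle is the bookkeeping around the ``$n$ large enough'' caveat in \eqref{eq:concAmStTr} together with making the splitting uniform in $\ell$: one needs that for the relevant ranges of $n$ (comparable to $\ell^2\to\infty$) the Ambrosio–Stra–Trevisan estimates are in force, and that the finitely many exceptional small $\ell$ can be swept into the universal constant $c$ (for the tail bound) without affecting the limit (for the expectation, where only $\ell\to\infty$ matters). A secondary, more routine, obstacle is justifying the passage of the expectation through the conditioning and the uniform-in-$\ell$ boundedness of $a_n$; both follow from integrating the Gaussian-type tail \eqref{eq:concAmStTr} (which needs the log-Sobolev-to-Gaussian upgrade already invoked in the text) and from the standard Poisson concentration inequality.
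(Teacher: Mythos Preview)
Your approach is essentially the paper's: condition on $n=\mu_\ell(Q_\ell)$, split into the ``good'' range $n\in[\ell^2/2,2\ell^2]$ and its complement, apply \eqref{eq:concAmStTr} and \eqref{eq:AST16} on the good range, and control the bad range by Poisson concentration.

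There is, however, one genuine gap in your concentration argument. You claim that the bad-range contribution $\PP[\,|\mu_\ell(Q_\ell)-\ell^2|\ge\ell^2/2\,]\le\exp(-c\ell^2)$ is dominated by $\exp(-cM\log\ell)$ ``since $M\ge1$ and $\ell\ge2$''. This is false once $M\gtrsim\ell^2/\log\ell$: then $M\log\ell\gtrsim\ell^2$ and the inequality goes the wrong way. The paper handles this by treating the regime $M\gg\ell^2/\log\ell$ separately, using the crude transport bound $W_2^2\bigl(\mu_\ell,\tfrac{\mu(Q_\ell)}{\ell^2}\bigr)\le\ell^2\mu(Q_\ell)$ to reduce the event $\{W_2^2\ge M\ell^2\log\ell\}$ to $\{\mu(Q_\ell)/\ell^2\ge M\log\ell/\ell^2\}$, and then invoking the \emph{upper} Poisson tail bound $\PP[\mu(Q_\ell)/\ell^2\ge M']\le\exp(-c\ell^2 M')$ for $M'\gg1$, which yields exactly $\exp(-cM\log\ell)$. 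For $M\lesssim\ell^2/\log\ell$ your argument is fine; you just need this second case.

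A smaller point: for the expectation, your bound on the bad-range contribution ``$\lesssim\ell^2\log\ell\cdot\PP[\text{bad}]$'' implicitly assumes $\EE_n[W_2^2]\lesssim\ell^2\log\ell$ uniformly in $n$, which is not what boundedness of $a_n$ gives (you get $\ell^2\log n$, and $n$ is unbounded on the bad range). The paper instead uses $W_2^2\le\ell^2\mu(Q_\ell)$ so that the bad-range expectation is controlled by $\sum_{n\notin[\ell^2/2,2\ell^2]} n\,p_n$, which is handled by the Poisson tail. This is routine but should be done correctly.
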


\begin{proof} 
We first start by noting that by Cram\'er-Chernoff's bounds for the Poisson distribution with intensity $\ell^2$ (see \cite{BouLuMa}), there exists a constant $c$ such that
\begin{equation}\label{chernoff}
 \PP\sqa{\frac{\mu(Q_\ell)}{\ell^2}\notin \sqa{\frac{1}{2},2}}\le \exp(-c \ell^2),
\end{equation}
and for $M\gg 1$
\begin{equation}\label{chernoff2}
 \PP\sqa{\frac{\mu(Q_\ell)}{\ell^2}\ge M}\le \exp(-c \ell^2 M).
\end{equation}
Let us now prove \eqref{eq:convexpectation}. 
Recall $p_n$ from \eqref{eq:pn}.
 By definition of  $\EE_n$ we have  
\begin{multline}\label{convexpectstep1}
 \frac{1}{\ell^2 \log \ell}\EE\sqa{W_2^2\lt(\mu_\ell, \frac{\mu(Q_\ell)}{\ell^2}\rt)}=\frac{1}{\ell^2 \log \ell} \sum_{n\notin[\ell^2/2, 2 \ell^2]} p_n \EE_n\sqa{W_2^2\lt(\mu_\ell, \frac{\mu(Q_\ell)}{\ell^2}\rt)}\\
  +\frac{1}{\ell^2 \log \ell} \sum_{n\in[\ell^2/2,2 \ell^2]} p_n \EE_n\sqa{W_2^2\lt(\mu_\ell, \frac{\mu(Q_\ell)}{\ell^2}\rt)}.
\end{multline}
Using the crude transport estimate $W_2^2\lt(\mu_\ell, \frac{\mu(Q_\ell)}{\ell^2}\rt)\le  \ell^2 \mu(Q_\ell)$ together with \eqref{eq:pn} we can estimate the first term as 
\begin{align*}
 \frac{1}{\ell^2 \log \ell} \sum_{n\notin[\ell^2/2, 2 \ell^2]} p_n \EE_n\sqa{W_2^2\lt(\mu_\ell, \frac{\mu(Q_\ell)}{\ell^2}\rt)}&\le \frac{1}{ \log \ell} \sum_{n\notin[\ell^2/2, 2 \ell^2]} n p_n\\
 &=  \frac{1}{ \log \ell} \sum_{n\notin[\ell^2/2, 2 \ell^2]} \exp(-\ell^2) \frac{\ell^{2n}}{(n-1)!}\\
 &\les \frac{\ell^2}{\log \ell} \PP\sqa{\frac{\mu(Q_\ell)}{\ell^2}\notin[\frac{1}{2},2]}\\
 &\stackrel{\eqref{chernoff}}{\les} \frac{\ell^2 \exp(-c \ell^2)}{\log \ell}, 
\end{align*}
which goes to zero as $\ell\to \infty$. The second term in \eqref{convexpectstep1} can be rewritten as 
\begin{multline*}
 \frac{1}{\ell^2 \log \ell} \sum_{n\in[\ell^2/2,2 \ell^2]} p_n \EE_n\sqa{W_2^2\lt(\mu_\ell, \frac{\mu(Q_\ell)}{\ell^2}\rt)}\\
 = \sum_{n\in[\ell^2/2,2 \ell^2]} p_n \frac{\log n}{\log \ell} \lt( \frac{1}{\ell^2 \log n}\EE_n\sqa{W_2^2\lt(\mu_\ell, \frac{\mu(Q_\ell)}{\ell^2}\rt)}\rt). 
\end{multline*}
Since by \eqref{eq:AST16}, $\frac{1}{\ell^2 \log n}\EE_n\sqa{W_2^2\lt(\mu_\ell, \frac{\mu(Q_\ell)}{\ell^2}\rt)}$ converges uniformly in $\ell$ to $\frac{1}{4\pi}$, it is enough to show that 
\begin{equation*}\label{toproveconvexpect}
 \lim_{\ell\to \infty}  \sum_{n\in[\ell^2/2,2 \ell^2]} p_n \frac{\log n}{\log \ell}=2.
\end{equation*}
This is a simple consequence of \eqref{chernoff} and the fact that 
\[
 \frac{ 2\log \ell -\log 2}{\log \ell} \sum_{n\in[\ell^2/2,2 \ell^2]} p_n\le \sum_{n\in[\ell^2/2,2 \ell^2]} p_n \frac{\log n}{\log \ell}\le \frac{ 2\log \ell +\log 2}{\log \ell} \sum_{n\in[\ell^2/2,2 \ell^2]} p_n.
\]
We now turn to \eqref{eq:concentration}. For $1\le M\les \frac{\ell^2}{\log \ell}$, by definition of $\PP_n$ 
\begin{align*}
 \lefteqn{\PP\sqa{\frac{1}{\ell^2 \log \ell} W_2^2\left(\mu_\ell, \frac{\mu(Q_\ell)}{\ell^2}\right) \geq M}}\\
 &= \sum_{n\notin [\ell^2/2,2 \ell^2]} p_n \PP_n\sqa{\frac{1}{\ell^2 \log \ell} W_2^2\left(\mu_\ell, \frac{\mu(Q_\ell)}{\ell^2}\right) \geq M}\\
 &\qquad \qquad +\sum_{n\in[\ell^2/2,2 \ell^2]} p_n \PP_n\sqa{\frac{1}{\ell^2 \log \ell} W_2^2\left(\mu_\ell, \frac{\mu(Q_\ell)}{\ell^2}\right) \geq M}\\
 &\le \PP\sqa{\frac{\mu(Q_\ell)}{\ell^2}\notin\sqa{\frac{1}{2},2}}+\sum_{n\in[\ell^2/2,2 \ell^2]} p_n \PP_n\sqa{\frac{1}{\ell^2 \log \ell} W_2^2\left(\mu_\ell, \frac{\mu(Q_\ell)}{\ell^2}\right) \geq M}\\
 &\stackrel{\eqref{chernoff}}{\le}  \exp(-c\ell^2) + \sum_{n\in[\ell^2/2,2 \ell^2]} p_n\PP_n\sqa{\frac{1}{\ell^2 \log n} W_2^2\left(\mu_\ell, \frac{\mu(Q_\ell)}{\ell^2}\right) \geq \frac{\log \ell}{ 2\log \ell +2}M}\\
 &\stackrel{\eqref{eq:concAmStTr}}{\le}\exp(-c\ell^2) + \sum_{n\in[\ell^2/2,2 \ell^2]} p_n \exp(-\frac{c_0}{4} M \log n)\\
 &\le \exp(-c\ell^2) +\exp(-\frac{c_0}{4} M \log \ell) \le \exp(-c_1 M \log \ell),
\end{align*}
while for $M\gg \frac{\ell^2}{\log \ell}$, using once more the estimate $W_2^2\lt(\mu_\ell, \frac{\mu(Q_\ell)}{\ell^2}\rt)\le \ell^2 \mu(Q_\ell)$ together with \eqref{chernoff2}, we obtain
\[
 \PP\sqa{\frac{1}{\ell^2 \log \ell} W_2^2\left(\mu_\ell, \frac{\mu(Q_\ell)}{\ell^2}\right) \geq M}\le \PP\sqa{\frac{\mu(Q_\ell)}{\ell^2}\ge  M \frac{\log \ell}{\ell^2}}\le \exp(-c M \log \ell).
\]

\end{proof}
By a Borel-Cantelli  argument we can now strengthen \eqref{eq:concentration} into a supremum bound.

\begin{theorem}\label{theo:estimDrstar}
 Let $\mu$ be a Poisson point process on $\R^2$. Then, there exist a universal constant $c$  and a  random variable\footnote{notice that we keep implicit here the dependence on $\mu$} $r_*=r_*(\mu)\ge 1$ with $\textstyle\EE\sqa{\exp\bra{\frac{c r_*^2}{\log(2r_*)}}}<\infty$ such that for every  dyadic $\ell$ with $ 2 r_*\le \ell$,
 \begin{equation}\label{eq:concentrationrstar}
  \frac{1}{\ell^4}W_2^2\lt(\mu_\ell, \frac{\mu(Q_\ell)}{\ell^2}\rt)\le \frac{ \log\bra{\frac{\ell}{r_*}}}{\lt(\frac{\ell}{r_*}\rt)^2}.
 \end{equation}
\end{theorem}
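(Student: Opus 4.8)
The plan is to upgrade the pointwise-in-$\ell$ concentration estimate \eqref{eq:concentration} into a bound that holds simultaneously for all dyadic scales above some random threshold $r_*$, via a Borel--Cantelli argument. First I would fix a large dyadic scale $\ell$ and ask for the ``bad event''
\[
B_\ell:=\left\{\frac{1}{\ell^4}W_2^2\left(\mu_\ell,\frac{\mu(Q_\ell)}{\ell^2}\right)>\frac{\log(\ell/r)}{(\ell/r)^2}\right\},
\]
where for the moment $r\ge 1$ is a free dyadic parameter. Rewriting the threshold as $\frac{1}{\ell^2\log\ell}W_2^2>M_\ell$ with $M_\ell=M_\ell(r):=\frac{\log(\ell/r)}{r^2\log\ell}$, Proposition \ref{prop:concentration} gives $\PP[B_\ell]\le\exp(-cM_\ell\log\ell)=\exp\bigl(-\frac{c}{r^2}\log(\ell/r)\bigr)=(r/\ell)^{c/r^2}$ (for $\ell\ge 2$, and checking $M_\ell\ge 1$ is not needed since \eqref{eq:concentration} holds for all $M\ge1$; for $M_\ell<1$ one simply uses the estimate at $M=1$, which is even stronger).

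Next I would define $r_*$ essentially as the smallest dyadic $r\ge1$ such that \eqref{eq:concentrationrstar} holds for every dyadic $\ell\ge 2r$; concretely, one can take $r_*(\mu):=\max\{1,\ \sup\{\ell \text{ dyadic}: B_\ell \text{ occurs}\}\}$ — so that by construction, for dyadic $\ell$ with $\ell> r_*$ (hence $\ell\ge 2r_*$ by dyadicity), $B_\ell$ fails, which is exactly \eqref{eq:concentrationrstar} read with $r=r_*$. It then remains to control the tail of $r_*$. For a dyadic $r\ge 1$,
\[
\PP[r_*\ge r]\le \sum_{\ell \text{ dyadic},\ \ell\ge r}\PP[B_\ell^{(r')}\text{ for some }r'\le r]\ \le\ \sum_{\ell\ge r}\Bigl(\tfrac{r}{\ell}\Bigr)^{c/r^2},
\]
using monotonicity of the threshold in $r'$ (a smaller $r'$ gives a larger, hence harder-to-violate, right-hand side in \eqref{eq:concentrationrstar}), so that $r_*\ge r$ forces $B_\ell$ with parameter $r$ for some dyadic $\ell\ge r$. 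Summing the geometric-type series in $\ell=r,2r,4r,\dots$ gives $\PP[r_*\ge r]\lesssim \sum_{k\ge0}2^{-kc/r^2}\lesssim \frac{r^2}{c}$ for $r$ large — which is useless as stated, so one must be slightly more careful: the first term of the sum is $1$, but the point is that the \emph{effective} constant $c/r^2$ in the exponent degrades like $1/r^2$, which is precisely why the Orlicz norm that survives is $\exp(c r_*^2/\log(2r_*))$ and not $\exp(cr_*^2)$. The honest computation is $\PP[r_*\ge r]\le\sum_{k\ge1}2^{-k c/r^2}=\frac{2^{-c/r^2}}{1-2^{-c/r^2}}\lesssim \frac{r^2}{c}2^{-c/r^2}\le \exp\bigl(-\frac{c'}{r^2}\log 2\cdot(1+o(1))\bigr)$; wait — this still only gives a bound like $\exp(-c'/r^2)$, which does not integrate against $\exp(cr_*^2/\log(2r_*))$.

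So the argument I actually need is the following sharper bookkeeping, and this is the main obstacle: one should not fix $M_\ell$ to make the right-hand side of \eqref{eq:concentrationrstar} exactly saturate, but rather exploit that \eqref{eq:concentration} holds for \emph{all} $M\ge 1$ with a clean exponential rate $M\log\ell$. I would instead directly estimate, for dyadic $\ell$,
\[
\PP\left[\frac{1}{\ell^4}W_2^2\left(\mu_\ell,\tfrac{\mu(Q_\ell)}{\ell^2}\right)> s\right]\le \exp\bigl(-c\,(s\,\ell^2/\log\ell)\log\ell\bigr)=\exp(-c\,s\,\ell^2)
\]
for $s\ge \log\ell/\ell^2$, and set $r_*^2:=1+\sup_{\ell \text{ dyadic}} \ell^2\cdot \frac{1}{\ell^4}W_2^2(\mu_\ell,\tfrac{\mu(Q_\ell)}{\ell^2})/\text{(something)}$ — i.e.\ choose the random variable so that \eqref{eq:concentrationrstar} becomes a \emph{definition-tautology} and the tail of $r_*$ follows from a union bound over dyadic $\ell$ of the $\exp(-c s\ell^2)$ estimates, where the logarithmic correction $\log(2r_*)$ in the exponent of the Orlicz norm arises exactly from summing $\sum_{\ell \text{ dyadic}}$ (a sum over $\sim\log\ell$ many scales of each magnitude). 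Concretely: given a threshold level, $\{r_*\ge \lambda\}$ is contained in a union over dyadic $\ell$ of events of probability $\le\exp(-c\lambda^2\log(\ell)/\log(2\lambda))$ roughly, and carrying out this sum and optimizing yields $\EE[\exp(c r_*^2/\log(2r_*))]<\infty$ for a possibly smaller universal $c$. I would carry out the steps in the order: (1) record the pointwise tail $\PP[\ell^{-4}W_2^2>s]\le\exp(-cs\ell^2)$ for $s\ge\log\ell/\ell^2$ and $\le\exp(-c\log\ell\cdot\text{(const)})$ for smaller $s$ from Proposition \ref{prop:concentration}; (2) define $r_*$ as the smallest dyadic number such that \eqref{eq:concentrationrstar} holds for all dyadic $\ell\ge 2r_*$, noting $r_*\ge1$ always and that this is a bona fide $\mathcal F$-measurable random variable; (3) bound $\PP[r_*\ge 2^m]$ by summing the pointwise tails over dyadic $\ell\ge 2^m$, being careful to track that the threshold in \eqref{eq:concentrationrstar} at scale $\ell$ with parameter $r=2^m$ corresponds to $s=\log(\ell/2^m)/(2^{2m}\log\ell)$ so the tail is $\exp(-c\log(\ell/2^m)/2^{2m})$; (4) sum the resulting geometric-in-$\log$ series and read off the Orlicz bound. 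The delicate point throughout is the interplay between the $1/r_*^2$ degradation of the rate and the extra $\log$ one picks up from the scale-sum, which is exactly what forces the $\log(2r_*)$ in the denominator.
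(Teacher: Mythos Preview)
Your overall plan---define $r_*$ directly as the minimal dyadic threshold making \eqref{eq:concentrationrstar} hold for all larger dyadic $\ell$, then bound its tail by a union bound over scales---is sound, but there is a crucial algebra slip that derails your entire computation. When you rewrite the bad event $\frac{1}{\ell^4}W_2^2>\frac{\log(\ell/r)}{(\ell/r)^2}$ in the normalization of \eqref{eq:concentration}, the correct threshold is
\[
M_\ell(r)=\frac{r^2\log(\ell/r)}{\log\ell},\qquad\text{not}\qquad \frac{\log(\ell/r)}{r^2\log\ell}.
\]
The $r^2$ is in the numerator, not the denominator. With this correction, \eqref{eq:concentration} gives $\PP[B_\ell^{(r)}]\le\exp(-cM_\ell\log\ell)=\exp(-c\,r^2\log(\ell/r))=(r/\ell)^{cr^2}$, and summing over dyadic $\ell\ge 2r$ yields $\PP[r_*>r]\le\sum_{j\ge 1}2^{-jcr^2}\lesssim\exp(-c'r^2)$. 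So there is no ``$1/r_*^2$ degradation of the rate''; quite the opposite, the rate \emph{improves} like $r^2$, and your direct approach actually delivers the stronger moment bound $\EE[\exp(c''r_*^2)]<\infty$, which implies the stated one. All the subsequent hand-wringing in your write-up stems from this single sign-of-exponent error. (One also needs $M_\ell\ge 1$ to invoke \eqref{eq:concentration}; since $M_\ell\ge r^2\log 2/\log(2r)$ for $\ell\ge 2r$, this holds for all $r\ge 1$.) Your ``concrete'' formula $r_*=\max\{1,\sup\{\ell:B_\ell\text{ occurs}\}\}$ is not well-defined as written because $B_\ell$ carries the free parameter $r$; just take $r_*$ to be the smallest dyadic $r\ge 1$ for which \eqref{eq:concentrationrstar} with $r$ in place of $r_*$ holds for every dyadic $\ell\ge 2r$.

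For comparison, the paper proceeds differently and more indirectly: it sets $\Theta:=\sup_{k\ge 1}\frac{1}{\ell_k^2\log\ell_k}W_2^2(\mu_{\ell_k},\cdot)$, shows by a short union bound that $\EE[\exp(\bar c\,\Theta)]<\infty$, and then \emph{defines} $r_*$ via $\frac{r_*^2}{\log(2r_*)}=\frac{\Theta}{\log 2}$, checking by elementary monotonicity that this choice makes \eqref{eq:concentrationrstar} hold. This route is cleaner to write but, because it passes through the sup, yields only the weaker Orlicz bound on $r_*^2/\log(2r_*)$; your direct argument, once corrected, avoids this loss.
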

\begin{proof}
 We first prove that there exist a constant $\bar c>0$ and a    random variable $\Theta$  with $\textstyle\EE\sqa{\exp(\bar c\Theta)}<\infty$
  such that for all dyadic   $\ell$ with $ \ell\gg 1$,
\begin{equation}\label{eq:concentrationtheta}
  \frac{1}{\ell^2 \log \ell}  W_2^2\left(\mu_\ell, \frac{\mu(Q_\ell)}{\ell^2}\right)\leq \Theta  .
\end{equation}
For  $ k\ge 1$, let $\ell_k:= 2^k$ and put 
\begin{equation}\label{def:theta}
\Theta_k:= \frac{1}{\ell_k^2 \log \ell_k} W_2^2\left(\mu_{\ell_k}, \frac{\mu(Q_{\ell_k})}{\ell_k^2}\chi_{Q_{\ell_k}}\right), \quad \Theta:=\sup_{ k\geq 1}\Theta_k.
\end{equation}
We claim that the exponential moments of $\Theta_k$ given by Proposition \ref{prop:concentration} translate into exponential moments for $\Theta$. Indeed fix  $1\gg \bar c>0$. Then, we estimate
\begin{align*}
 \EE[\exp(\bar c \Theta)] &\leq \exp(\bar c) + \sum_{ M\in\N}  \PP[\Theta\geq M]\exp(\bar c (M+1)) \\
& \leq \exp(\bar c ) + \sum_{M\in \N} \exp(\bar c (M+1)) \sum_{k\geq 1} \PP[\Theta_k \geq M] \\
& \stackrel{\eqref{eq:concentration}}{\leq} \exp(\bar c ) +  \exp(\bar c)\sum_{k\geq 1} \sum_{M\in\N}  \exp(-M(c\log \ell_k-\bar c))\\
& \stackrel{\ell_k=2^k \& \, \bar c\ll1}{\lesssim} \exp(\bar c) + \exp(\bar c) \sum_{k\geq 1} \exp( -  c  k) <\infty.
\end{align*}
Hence, $\Theta$ has exponential moments and \eqref{eq:concentrationtheta} is satisfied for every large enough dyadic $\ell$.\\

Define $r_* \ge  1$ via the equation 
\begin{equation}\label{def:rstar} \frac{r_*^2}{\log \lt( 2r_*\rt)}  = \frac{\Theta}{\log 2} ,\end{equation}
which has a solution since $r_*\mapsto r_*^2/\log \lt( 2r_*\rt)$ is monotone on $(e/2,\infty)$. 
Since $\ell\mapsto \frac{ \log \lt(\frac{\ell}{r_*}\rt)}{\log \ell}$ is an increasing function, we have for $\ell\ge 2 r_*$,
\[
  \log \ell\le \frac{ \log (2 r_*)}{ \log 2 } \log \lt(\frac{\ell}{r_*}\rt)
\]
which together with \eqref{def:rstar} gives for every dyadic $\ell\ge 2 r_*$,
\[
 \Theta  \log \ell\le r_*^2 \log \lt(\frac{\ell}{r_*}\rt),
\]
from which we see that \eqref{eq:concentrationtheta} implies \eqref{eq:concentrationrstar}.
\end{proof}

We remark that  $r_*$ inherits all stationarity properties of the Poisson process as a measurable function of the Poisson process (similarly  for $r_{*,L}$ in  Theorem \ref{theo:estimDrstarper}). We will not explicitly mention this in the sequel.

\subsubsection{The periodic problem}
Since our aim is to construct a coupling on $\R^2$ between Lebesgue and Poisson which keeps some of the shift-invariance properties of the Poisson point process,
it is more convenient to work for finite-size cubes also with a shift-invariant point process. For $L>1$ let us introduce the $Q_L-$periodic Poisson point process (which can be identified with the Poisson point process on the flat torus of size $L$). For $\mu\in \Gamma$, 
we let $\mu^\per_L$ be the $Q_L-$periodic extension of $\mu\restr Q_L$ and then 
\begin{equation}\label{def:PL}
 \PP_L:= \mu^\per_L\# \PP.
\end{equation}
We denote by $\EE_L$ the expectation with respect to $\PP_L$. We then call $(\mu,\PP_L)$ (or simply $\mu$ when it is clear from the context) a $Q_L-$periodic Poisson point process (or Poisson point process on the torus). Since $\PP$ is invariant under $\theta$, so is $\PP_L$. Notice that for $\ell\le L$ the restriction of a $Q_L-$periodic
Poisson point process to $Q_\ell$ is a Poisson point process on $Q_\ell$ in the sense of Section \ref{sec:PoisEuc}. 

For $\mu\in \Gamma$ and $L>1$, our main focus will be to understand at every scale $1\ll \ell\le L$ the structure of the optimal transport map $T_{\mu,L}$ on the torus
 between $\frac{\mu(Q_L)}{L^2}$ and $\mu$, i.e. $T_{\mu,L}$ is the unique minimizer of \eqref{perwass}. We will often identify $T_{\mu,L}$ with $\nabla \psi_{\mu,L}$ where $\psi_{\mu,L}$ is the convex potential given in \eqref{Cordero}. 
 When it is clear from the context, we drop the dependence of $T_{\mu,L}$ and $\psi_{\mu,L}$ on either $\mu$, $L$ or both.  Uniqueness of the optimal transport map solving \eqref{perwass}
 implies that  $T_\mu$ is covariant in the sense that   

\begin{equation}\label{eq:covariantliftT}
 T_\mu(x+z)= T_{\theta_z \mu}(x)+z \qquad x,z\in \R^2.
\end{equation}
For future reference, let us prove a corresponding stationarity property of the potentials.
\begin{lemma}\label{lem:stationa}
 Let $\mu$ be a $Q_L-$periodic Poisson point process and let $T_\mu=\nabla \psi_\mu$ be the optimal transport map between $\frac{\mu(Q_L)}{L^2}$ and $\mu$ on the torus. Then, for $z\in \R^2$,
 \begin{equation}\label{eq:statpsi}
  \psi_{\theta_z \mu}(x)=\psi_{\theta_z \mu}(0)-\psi_\mu(z)+\psi_\mu(x+z)-z\cdot x \qquad \forall x\in \R^2
 \end{equation}
and if $\psi^*$ is the convex conjugate of $\psi$, 
\begin{equation}\label{eq:statpsistar}
 \psi^*_{\theta_z \mu}(y)=\psi^*_{\theta_z \mu}(0)-\psi^*_\mu(z)+\psi^*_\mu(y+z)-z\cdot y \qquad \forall y\in \R^2.
\end{equation}
As a consequence, if we let for $h\in \R^2$, $D^2_h \psi(x):= \psi(x+h)+\psi(x-h)-2\psi(x)$,
\begin{equation}\label{D2hpsi}
 D^2_h \psi_{\theta_z \mu} (x)= D^2_h \psi_\mu(x+z) \qquad \textrm{and} \qquad  D^2_h \psi^*_{\theta_z \mu} (y)= D^2_h \psi^*_\mu(y+z).
\end{equation}

\end{lemma}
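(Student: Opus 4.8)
The plan is to derive \eqref{eq:statpsi} directly from the covariance relation \eqref{eq:covariantliftT} for the transport maps, then obtain \eqref{eq:statpsistar} by Legendre duality, and finally read off \eqref{D2hpsi} as an immediate consequence.

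\textbf{Step 1: the potentials.} First I would differentiate the claimed identity \eqref{eq:statpsi} in $x$: its gradient reads $\nabla\psi_{\theta_z\mu}(x) = \nabla\psi_\mu(x+z) - z$, which is exactly \eqref{eq:covariantliftT} rewritten, using $T_\mu = \nabla\psi_\mu$ and $T_{\theta_z\mu} = \nabla\psi_{\theta_z\mu}$ (applied at the point $x$, with $z$ replaced by $-z$, or equivalently rearranging the stated form). Hence the two sides of \eqref{eq:statpsi} have the same gradient for every $x$, so they differ by a constant; evaluating at $x=0$ shows that constant is zero, since the right-hand side at $x=0$ is $\psi_{\theta_z\mu}(0) - \psi_\mu(z) + \psi_\mu(z) - 0 = \psi_{\theta_z\mu}(0)$. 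One subtlety: $\psi$ is only defined up to an additive constant and $T$ only Lebesgue-a.e.\ (cf.\ Remark~\ref{rem:selection}), so strictly speaking the identity \eqref{eq:statpsi} fixes a normalization; one checks the argument is consistent with whatever pointwise selection of $\nabla\psi$ is used, using that $\psi$ is convex hence continuous, so equality of continuous functions a.e.\ gives it everywhere.

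\textbf{Step 2: the Legendre duals.} To get \eqref{eq:statpsistar} I would take the convex conjugate of \eqref{eq:statpsi}. Writing $\psi_{\theta_z\mu}(x) = \psi_\mu(x+z) - z\cdot x + c$ with $c := \psi_{\theta_z\mu}(0) - \psi_\mu(z)$, one computes, for $y\in\R^2$,
\begin{align*}
\psi^*_{\theta_z\mu}(y) &= \sup_x\ \bigl( x\cdot y - \psi_\mu(x+z) + z\cdot x - c\bigr) = \sup_x\ \bigl( x\cdot(y+z) - \psi_\mu(x+z)\bigr) - c\\
&= \sup_{w}\ \bigl( (w-z)\cdot(y+z) - \psi_\mu(w)\bigr) - c = \psi^*_\mu(y+z) - z\cdot(y+z) - c,
\end{align*}
with the substitution $w = x+z$. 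Expanding $-z\cdot(y+z) - c = -z\cdot y - |z|^2 - \psi_{\theta_z\mu}(0) + \psi_\mu(z)$, and using the identity $\psi^*_\mu(z) + \psi_\mu(z) = $ (something)\dots actually the cleaner route is to evaluate the just-derived formula at $y=0$ to solve for the constant: $\psi^*_{\theta_z\mu}(0) = \psi^*_\mu(z) - |z|^2 - c$, whence $-z\cdot(y+z) - c = -z\cdot y + \psi^*_{\theta_z\mu}(0) - \psi^*_\mu(z)$, giving exactly \eqref{eq:statpsistar}. (The relation $\psi^* \circ \nabla\psi + \psi = x\cdot\nabla\psi$ could also be invoked but is not needed.)

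\textbf{Step 3: second differences.} Finally, \eqref{D2hpsi} is immediate: in $D^2_h\psi_{\theta_z\mu}(x) = \psi_{\theta_z\mu}(x+h) + \psi_{\theta_z\mu}(x-h) - 2\psi_{\theta_z\mu}(x)$, substitute \eqref{eq:statpsi}; all the affine-in-$x$ terms ($-z\cdot x$ and the additive constant) cancel under the second-difference operator, leaving $\psi_\mu(x+z+h) + \psi_\mu(x+z-h) - 2\psi_\mu(x+z) = D^2_h\psi_\mu(x+z)$. The identical computation with \eqref{eq:statpsistar} in place of \eqref{eq:statpsi} gives the statement for $\psi^*$. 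I do not anticipate a genuine obstacle here; the only point requiring a little care is the normalization/a.e.\ issue in Step~1, i.e.\ making sure the equality of gradients plus one point value legitimately upgrades to the stated pointwise identity given that $\psi$ is determined only up to a constant --- this is handled by continuity of convex functions and by noting both sides are pinned down once $\psi_{\theta_z\mu}(0)$ appears explicitly.
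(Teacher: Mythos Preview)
Your proof is correct and follows essentially the same route as the paper: \eqref{eq:statpsi} from the covariance \eqref{eq:covariantliftT} of the gradients (the paper simply calls this a ``direct consequence'' without writing out the gradient-matching and evaluation at $0$ that you spell out), then the Legendre computation with the substitution $w=x+z$ and evaluation at $y=0$ to identify the constant, which is line-for-line the paper's argument. The second-difference identities are not written out in the paper either; your observation that the affine terms cancel is exactly the point.
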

\begin{proof}
 Equation \eqref{eq:statpsi} is a direct consequence of \eqref{eq:covariantliftT} so that we just need to prove that it implies \eqref{eq:statpsistar}. By definition,
 \begin{align*}
  \psi^*_{\theta_z \mu}(y)&=\sup_x \lt[x\cdot y-\psi_{\theta_z \mu}(x)\rt]\\
  &\stackrel{\eqref{eq:statpsi}}{=} \sup_x \lt[x\cdot y-\psi_{\theta_z \mu}(0)+\psi_\mu(z)-\psi_\mu(x+z)+ z\cdot x\rt]\\
  &= -\psi_{\theta_z \mu}(0)+\psi_\mu(z)-y\cdot z-|z|^2+\sup_x \lt[(x+z)\cdot(y+z)-\psi_\mu(x+z)\rt]\\
  &=-\psi_{\theta_z \mu}(0)+\psi_\mu(z)-y\cdot z-|z|^2+ \psi^*_\mu(y+z).
 \end{align*}
 Applying this to $y=0$, we obtain
 \[
  \psi^*_{\theta_z \mu}(0)-\psi^*_\mu(z)=-\psi_{\theta_z \mu}(0)+\psi_\mu(z)-|z|^2,
 \]
 so that \eqref{eq:statpsistar} follows.
\end{proof}

Let us finally translate the result of Theorem \ref{theo:estimDrstar} into the periodic setting.  In particular, the following result contains Theorem \ref{theo:stoch intro}.
\begin{theorem}\label{theo:estimDrstarper}
 There exists a universal constant $c$ such that for $L=2^k$, $k\in \N$, dyadic and $\mu$  a $Q_L-$periodic Poisson point process, there exists a family of   random variables $r_{*,L}\ge 1$
 with $\sup_L \EE_L\sqa{\exp\lt(\frac{c r_{*,L}^2}{\log (2r_{*,L})}\rt)}<\infty$ such that if $ 2r_{*,L}\le L$ we have
 \begin{equation}\label{eq:goodboundmu}
  \frac{\mu(Q_L)}{L^2}\in \lt[\frac{1}{2},\, 2\rt], \qquad \spt \mu \cap B_{r_{*,L}}\neq \emptyset
 \end{equation}
and for every dyadic $\ell$ with $2 r_{*,L}\le \ell\le L$,
 \begin{equation}\label{eq:concentrationrstarper}
  \frac{1}{\ell^4}\Wper\lt(\mu_\ell, \frac{\mu(Q_\ell)}{\ell^2}\rt)\le \frac{1}{\ell^4}W_2^2\lt(\mu_\ell, \frac{\mu(Q_\ell)}{\ell^2}\rt)\le \frac{ \log\bra{\frac{\ell}{r_{*,L}}}}{\lt(\frac{\ell}{r_{*,L}}\rt)^2}.
 \end{equation}
\end{theorem}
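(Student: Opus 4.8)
The plan is to \emph{transfer} Theorem~\ref{theo:estimDrstar} to the periodic setting and then enlarge the random scale so as to also accommodate the two conditions in \eqref{eq:goodboundmu}. Recall from \eqref{def:PL} that a $Q_L$-periodic Poisson point process is of the form $\mu^\per_L$ with $\mu\sim\PP$, and that $\mu^\per_L\restr Q_L=\mu\restr Q_L$; hence, for every $\ell\le L$, $\mu_\ell:=\mu^\per_L\restr Q_\ell=\mu\restr Q_\ell$ is a Poisson point process on $Q_\ell$, and (since $B_{L/2}\subset Q_L$) $\mu(B_r)$ is, for $r\le L/2$, Poisson distributed with parameter $\pi r^2$. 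In particular the Chernoff bound \eqref{chernoff} and the concentration estimate \eqref{eq:concentration} of Proposition~\ref{prop:concentration} hold with $\PP$ replaced by $\PP_L$ for every dyadic $\ell\le L$. Since $\PP_L=\mu^\per_L\#\PP$, it suffices to exhibit, for each Euclidean sample $\mu\sim\PP$, a scale $r_{*,L}\ge1$ depending on $\mu$ only through $\mu\restr Q_L$, to bound $\EE[\exp(c\,r_{*,L}^2/\log(2r_{*,L}))]$ uniformly in $L$, and to verify \eqref{eq:goodboundmu}--\eqref{eq:concentrationrstarper} for $\mu^\per_L$ whenever $2r_{*,L}\le L$ (all the random variables below being functions of $\mu\restr Q_L$, their $\EE_L$-expectations equal the corresponding $\EE$-expectations for $\mu\sim\PP$).

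Following the proof of Theorem~\ref{theo:estimDrstar}, set $\ell_k:=2^k$ and $\Theta_L:=\sup_{k\ge1,\ 2^k\le L}\frac{1}{\ell_k^2\log\ell_k}W_2^2\bra{\mu_{\ell_k},\frac{\mu(Q_{\ell_k})}{\ell_k^2}}$. Because the supremum runs over scales $\le L$ only, $\Theta_L$ coincides with $\sup_{2^k\le L}\Theta_k$, with $\Theta_k$ as in \eqref{def:theta}; hence $\Theta_L\le\Theta$ and $\EE_L[\exp(\bar c\,\Theta_L)]=\EE[\exp(\bar c\sup_{2^k\le L}\Theta_k)]\le\EE[\exp(\bar c\,\Theta)]<\infty$ for $\bar c\ll1$, \emph{uniformly in $L$}. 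Define $\rho_L\ge1$ by $\rho_L^2/\log(2\rho_L)=\max(\Theta_L,1)/\log2$ (as in \eqref{def:rstar}); since $r\mapsto r^2/\log(2r)$ is increasing on $[1,\infty)$ and $\Theta_L\le\Theta$, we get $\rho_L\le r_*$, whence $\sup_L\EE_L[\exp(c\,\rho_L^2/\log(2\rho_L))]\le\EE[\exp(c\,r_*^2/\log(2r_*))]<\infty$. To take care of \eqref{eq:goodboundmu} I would then introduce a correction scale $\kappa_L\in[1,L]$ which equals $L$ on the event $\{\mu(Q_L)/L^2\notin[\frac12,2]\}$ and otherwise equals $\max\bigl(1,2\,\dist(0,\spt(\mu\restr Q_L))\bigr)$ truncated at $L$; by \eqref{chernoff} and $\PP_L[\mu(B_r)=0]=e^{-\pi r^2}$ for $r\le L/2$ one obtains $\PP_L[\kappa_L\ge r]\le e^{-cL^2}+e^{-\pi r^2/4}$ for $2\le r\le L$ and $\PP_L[\kappa_L>L]=0$, so $\sup_L\EE_L[\exp(c\,\kappa_L^2/\log(2\kappa_L))]<\infty$ for $c\ll1$. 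Finally set $r_{*,L}:=\max(\rho_L,\kappa_L)$; using again that $r\mapsto r^2/\log(2r)$ is increasing on $[1,\infty)$, $\exp(c\,r_{*,L}^2/\log(2r_{*,L}))\le\exp(c\,\rho_L^2/\log(2\rho_L))+\exp(c\,\kappa_L^2/\log(2\kappa_L))$, so the desired uniform exponential bound follows.

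It then remains to verify the conclusions when $2r_{*,L}\le L$. In that case $\kappa_L\le r_{*,L}\le L/2<L$, so by construction $\mu(Q_L)/L^2\in[\frac12,2]$ and there is a point of $\mu$ in $Q_L$ at distance $\le\frac12 r_{*,L}$ from the origin, whence \eqref{eq:goodboundmu} (a fortiori for $\mu^\per_L$). For \eqref{eq:concentrationrstarper}, fix a dyadic $\ell$ with $2r_{*,L}\le\ell\le L$; since $\ell\ge2r_{*,L}\ge2$ we have $\ell=2^k$ with $k\ge1$, so by definition of $\Theta_L$, $\frac{1}{\ell^2}W_2^2\bra{\mu_\ell,\frac{\mu(Q_\ell)}{\ell^2}}\le\Theta_L\log\ell$. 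Exactly as in the proof of Theorem~\ref{theo:estimDrstar}, using that $\ell\mapsto\log(\ell/\rho_L)/\log\ell$ is nondecreasing and $\ell\ge2\rho_L$, we get $\Theta_L\log\ell\le\max(\Theta_L,1)\log\ell\le\rho_L^2\log(\ell/\rho_L)$; and since $\rho\mapsto\rho^2\log(\ell/\rho)$ is nondecreasing on $(0,\ell/2]$ while $\rho_L\le r_{*,L}\le\ell/2$, this is $\le r_{*,L}^2\log(\ell/r_{*,L})$. Dividing by $\ell^2$ yields the second inequality in \eqref{eq:concentrationrstarper}, and the first is immediate from $|x-y|_\per\le|x-y|$, i.e.\ $\Wper\le W_2^2$.

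I expect the only genuinely delicate point to be the \emph{uniformity in $L$} of the exponential moment, and this is what dictates the construction: the supremum defining $\Theta_L$ must be taken over scales $\le L$ only, so that it coincides with the restricted supremum $\sup_{2^k\le L}\Theta_k$ of the underlying Euclidean process and therefore stays below the $L$-independent variable $\Theta$; and the correction scale $\kappa_L$ must equal $L$ exactly on the bad events, so that on the one hand its tail inherits the $e^{-cL^2}$ decay of \eqref{chernoff} and of the Poisson tail, and on the other hand the hypothesis $2r_{*,L}\le L$ of the statement automatically rules those events out. Beyond this, no estimate is needed that is not already contained in Proposition~\ref{prop:concentration} and Theorem~\ref{theo:estimDrstar}.
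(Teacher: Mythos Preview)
Your proposal is correct and follows essentially the same approach as the paper: both transfer Theorem~\ref{theo:estimDrstar} by observing that $\mu^\per_L\restr Q_\ell=\mu\restr Q_\ell$ for $\ell\le L$ (so the periodic scale is dominated by the Euclidean $r_*$, giving the uniform-in-$L$ moment bound), and then enlarge the scale to force the density and support conditions, using the Chernoff bound \eqref{chernoff} and the Poisson vacancy probability to control the correction. The paper packages this slightly differently---defining $\tilde r_{*,L}$ directly as an infimum and setting $r_{*,L}=L/2$ on the bad density event versus your route through $\Theta_L$, $\rho_L$, $\kappa_L$---but the substance is the same; the only minor wrinkle in your write-up is the inference $\rho_L\le r_*$ from $\Theta_L\le\Theta$, which need not hold literally when $\Theta<1$ (the paper is equally informal here), but this is harmless since $\EE_L[\exp(c\,\rho_L^2/\log(2\rho_L))]=\EE[\exp((c/\log2)\max(\Theta_L,1))]\le e^{c/\log2}\,\EE[\exp(\bar c\,\Theta)]$ directly.
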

\begin{proof}
 Let first { $\tilde{r}_{*,L}=\tilde{r}_{*,L}(\mu)$ be defined by  } 
 \[
  \tilde{r}_{*,L}:=\inf\lt\{  r \, : \, \frac{1}{\ell^4}W_2^2\lt(\mu_\ell, \frac{\mu(Q_\ell)}{\ell^2}\rt)\le \frac{ \log\bra{\frac{\ell}{r}}}{\lt(\frac{\ell}{r}\rt)^2}, \quad \textrm{for all dyadic $\ell$ with }   2r\le \ell\le L\rt\},
 \]
where we take the convention that $\tilde{r}_{*,L}=L/2$ if the set on the right-hand side is empty. We then { define $r_{*,L}=r_{*,L}(\mu)$ by}
\[
 r_{*,L}:=\begin{cases}
            \frac{L}{2}  \qquad  & \textrm{if } \frac{\mu(Q_L)}{L^2}\notin \lt[\frac{1}{2}, \, 2\rt]  \\
            \max(\tilde{r}_{*,L}, \min_{\spt \mu} |y|) \qquad  & \textrm{otherwise}.
          \end{cases}
\]
Since $\PP_L\sqa{\frac{\mu(Q_L)}{L^2}\notin \lt[\frac{1}{2}, \, 2\rt]}\le \exp(-c_1L^2)$ for some universal $c_1>0$ (cf.\ \eqref{chernoff}) and  for $r\le L/2$, 
$\PP_L( \spt \mu\cap B_{r}=\emptyset)=\exp(- r^2)$, it is enough to prove that $\tilde{r}_{*,L}$ satisfies $\sup_L \EE_L\sqa{\exp\lt(\frac{c_2 \tilde{r}_{*,L}^2}{\log (2\tilde{r}_{*,L})}\rt)}<\infty$ for some $c_2>0$. Now this follows directly
from \eqref{eq:concentrationrstar} and the fact that  for general potentially non-periodic $\mu\in \Gamma$, $\tilde{r}_{*,L}(\mu^{\per}_L)\le r_*(\mu)$. 
The first inequality in \eqref{eq:concentrationrstarper} follows from the fact that $\Wper\le W_2^2$. 
\end{proof}

\begin{remark}
 To avoid confusion between periodic and Euclidean objects, we would like to stress a few things which will be important in Section \ref{sec:application}. While the map $T_{\mu}$ is defined through the periodic optimal transport problem, estimate \eqref{eq:concentrationrstarper} gives a bound on the  Euclidean Wasserstein distance between the
 restrictions $\mu_\ell$ and the corresponding multiples of the Lebesgue measure on $Q_\ell$. The reason for the two different transport problems is that  on the one hand we want
 to work with a map which has good stationarity properties and on the other hand, for the iteration argument below,  it is more natural to have bounds on the Euclidean Wasserstein distances between $\mu_\ell$ and $\frac{\mu(Q_\ell)}{\ell^2}\chi_{Q_\ell}$.\\
Presently, conditions \eqref{eq:goodboundmu} come out of the blue but they will be very useful in Section \ref{sec:application}. Similarly, the first inequality in \eqref{eq:concentrationrstarper} will allow us to initialize the iteration argument in Theorem \ref{theo:mainresult intro}.
\end{remark}

\section{The main regularity result}\label{sec:main}
In this section we prove our main regularity result which states that  for every dimension $d\ge 2$, given an optimal transport map $T$ between a bounded set $\Omega$ and a measure $\mu$, if at some scale $R>0$ both the excess energy
\begin{equation}\label{def:E}
 E(\mu,T,R):=\frac{1}{R^{d+2}}\int_{B_{2R}} |T-x|^2
\end{equation}
and the local squared Wasserstein distance of $\mu$  in $O\supset B_{2R}$  to a constant 
\begin{equation}\label{def:D}
 D(\mu,O,R):=\frac{1}{R^{d+2}} W_2^2\lt(\mu\restr O, \frac{\mu(O)}{|O|}\chi_O\rt)
\end{equation}
are small, then on $B_R$, $T$ is quantitatively close (in terms of  $E$ and $D$) to  an harmonic gradient field.
This is similar to \cite[Proposition 4.6]{GO} with the major difference that here the measure $\mu$ is arbitrary and can be in particular singular. Let us point out that
we allow for $O\neq B_{2R}$ only because of the application we have in mind to the optimal matching problem where
we have good control on cubes instead of balls  (see Theorem \ref{theo:estimDrstar} and  the proof of Theorem \ref{theo:mainresult intro}). \\
By scaling we will mostly
work here with $R=1$ and will use the notation $E$ for $E(\mu,T,1)$ and similarly for $D$.  The global strategy is similar
to the one used in \cite{GO} and goes through an estimate at the Eulerian level \eqref{BBwass}. However, as opposed  to \cite{GO}, if $(\rho,j)$
is the minimizer of \eqref{BBwass} it does not satisfy a global $L^\infty$ bound (see Lemma \ref{lem:Linftyboundslice}). We will thus need to introduce a terminal layer. 
In \cite{AmStTr16}, regularization by the heat flow is used as an alternative approach to tackle  this issue.  \\
For $\tau>0$, let 
\[
 \Brho:=\int_0^{1-\tau} \rho_t \, dt.
\]
By \eqref{Linfty}, we have
\begin{equation}\label{Linftyrhobar}
  \Brho\les \tau^{-(d-1)} \qquad \textrm{in }  \ B_{\frac{3}{2}}.
\end{equation}
As in \cite{GO}, we would like to use the flux of $j$ as boundary data for the solution of the Poisson equation we will consider. This requires choosing a good radius  $R$ for which $j$
satisfies good estimates on $\partial B_R$. In our setting, this is much more complex than in \cite{GO} and is the purpose of the next section.
Let us point out that since the estimates we want to use are on the $L^2$ scale, we would need that the flux of $j$ through $\partial B_R$ is well controlled in $L^2$. Since this
is in general not the case, we will also need to replace this flux by a more regular one (see Lemma \ref{goodR} below).   

\subsection{Choice of a good radius}\label{sec:goodR}
 
Let  $X(x,t)=T_t(x)$ be the solution of the time-dependent version of optimal transport \eqref{dynwass}. Let us recall that the corresponding trajectories $t\to T_t(x)$ are straight segments and that we  
often drop the dependence in $x$ when it is not necessary to specify it. For $R>0$, and a given trajectory $X$ passing through $B_R$ we let (see Figure \ref{fig:fbar})
\begin{figure}\begin{center}
 \resizebox{9.5cm}{!}{\input{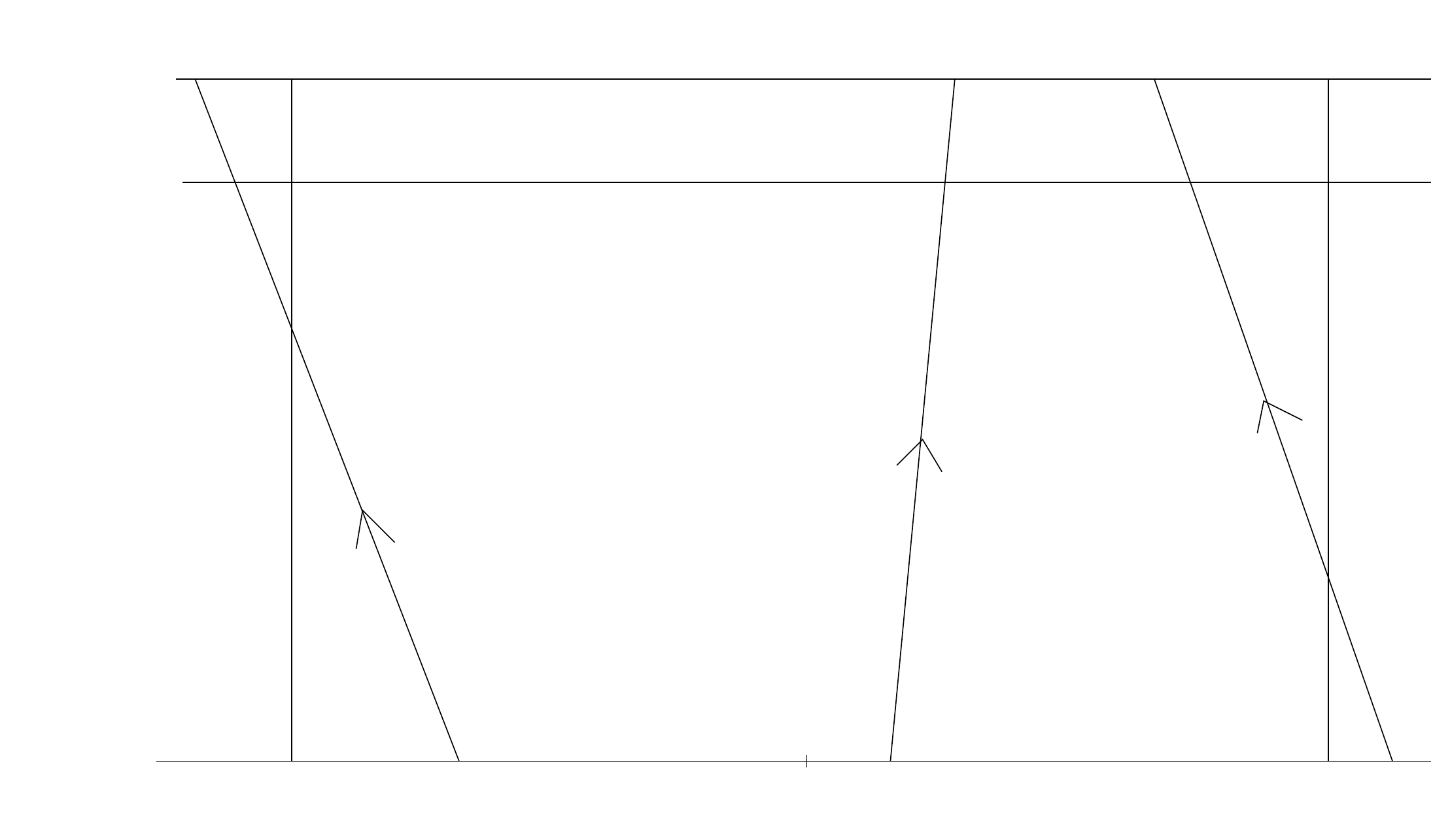_t}} 
   \caption{The definition of $f_\pm^R$. } \label{fig:fbar}
 \end{center}
 \end{figure}
\[
 t_-^R:=\min \{t\in[0,1] \ : \ X(t)\in \overline{B}_R\} \qquad \textrm{and} \qquad t^R_+:=\max \{t\in[0,1] \ : \ X(t)\in \overline{B}_R\}
\]
be the entrance  and exit times. If $X(t)$ does not intersect $\overline{B}_R$, we set $t_-^R=1$ and $t_+^R=0$. Notice that for $t_-^R<1$ we have $X(t_-^R)\in \partial B_R$ and likewise $t_+^R>0$ implies $X(t_+^R)\in \partial B_R$. We now define the flux $f^R$ of $j$
through $\partial B_R$ (formally $f^R=j\cdot \nu^{B_R}$, where $\nu^{B_R}$
denotes the outward normal to $B_R$) by its action on functions $\zeta\in C^0_c(\R^d\times[0,1])$ as
\begin{equation}\label{def:fR}
  \int_{\R^d}\int_0^1 \zeta d f^R:=\int_{\Omega} \chi_{0\le t_-^R<t^R_+<1}(X) \zeta(X(t^R_+),t^R_+)-\int_{\Omega} \chi_{0 <t_-^R<t_+^R\le 1}(X) \zeta(X(t_-^R),t_-^R).
\end{equation}
Note that the measure $f^R$ is supported on $\partial B_R\times [0,1]$. The integration in \eqref{def:fR} is with respect to the Lebesgue measure $dx$ (the integrand depends on $x$ since $X$ and $t_\pm^R$ depend on $x$).
Our first lemma states that $f^R$ really acts like boundary values for $(\rho,j)$.
\begin{lemma}
 Let $(\rho,j)$ be a minimizer of \eqref{BBwass} and let $f^R$ be defined by \eqref{def:fR}. Then, for every $\zeta\in C^1_c(\R^d\times [0,1])$,
\begin{equation}\label{conteqloc}
 \int_{B_R}\int_{0}^1 \partial_t \zeta \rho +\nabla \zeta \cdot j=\int_{B_R} \zeta_1 d\mu -\int_{B_R}\zeta_0 +\int_{\R^d}\int_0^1  \zeta df^R.
\end{equation}
As a consequence, $(\rho,j)$ is a local minimizer of \eqref{BBwass} in the sense that for every $(\tilde \rho, \tilde j)$ with $\spt (\tilde \rho, \tilde j)\subset \overline{B}_R\times[0,1]$ and  satisfying \eqref{conteqloc}, 
\begin{equation}\label{eq:locmin}
 \int_{B_R}\int_0^1  \frac{1}{\rho}|j|^2\le \int_{\overline{B}_R}\int_0^1 \frac{1}{\tilde \rho} |\tilde j|^2.
\end{equation}

\end{lemma}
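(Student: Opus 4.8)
\emph{Proof strategy.} By uniqueness of the minimizer of \eqref{BBwass} (see \cite{Santam}), the pair $(\rho,j)$ is the Lagrangian flow of \eqref{defrhojt}: writing $X(x,t):=T_t(x)$ for the (straight) trajectory through $x$, so that $\dot X(x,t)=T(x)-x$, one has $\int_{\R^d}\varphi\,d\rho_t=\int_\Omega\varphi(X(x,t))\,dx$ and $\int_{\R^d}\xi\cdot dj_t=\int_\Omega\xi(X(x,t))\cdot\dot X(x,t)\,dx$ for every $t\in[0,1]$ and every bounded Borel $\varphi,\xi$. To prove \eqref{conteqloc} I would fix $\zeta\in C^1_c(\R^d\times[0,1])$ and, since the relevant integrand is bounded and supported in a bounded set (recall $\Omega$ bounded and $\mu$ compactly supported), apply Fubini to get
\[
\int_{B_R}\int_0^1\partial_t\zeta\,\rho+\nabla\zeta\cdot j
=\int_\Omega\left(\int_{\{t\,:\,X(x,t)\in B_R\}}\frac{d}{dt}\big[\zeta(X(x,t),t)\big]\,dt\right)dx .
\]
Since $t\mapsto X(x,t)$ is affine and $B_R$ is open and convex, $\{t:X(x,t)\in B_R\}$ is an open interval whose closure, by convexity of $\overline B_R$, is exactly $[t_-^R,t_+^R]$; so the inner integral equals $\zeta(X(t_+^R),t_+^R)-\zeta(X(t_-^R),t_-^R)$ when $t_-^R<t_+^R$ and vanishes otherwise (trajectories missing $\overline B_R$, or merely tangent to $\partial B_R$, thus contribute nothing, consistently with the conventions $t_-^R=1$, $t_+^R=0$).

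The remaining step is bookkeeping: I would split each surviving term $\pm\zeta(X(t_\pm^R),t_\pm^R)$ according to whether $t_\pm^R$ is an endpoint of $[0,1]$ or an interior time. Using $X(x,0)=x$ together with $\Omega\supset B_R$, the terms with $t_-^R=0$ sum to $-\int_{B_R}\zeta(x,0)\,dx=-\int_{B_R}\zeta_0$; using $\mu=T\#\chi_\Omega$ and $X(x,1)=T(x)$, the terms with $t_+^R=1$ sum to $\int_{B_R}\zeta(T(x),1)\,dx=\int_{B_R}\zeta_1\,d\mu$; and the terms with $0<t_\pm^R<1$ are, by the very definition \eqref{def:fR}, equal to $\int_{\R^d}\int_0^1\zeta\,df^R$. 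This matching is where most of the care lies — and, I expect, the main obstacle: trajectories with $x\in\partial B_R$ (a null set) and tangential ones contribute $0$ on both sides, while the set of trajectories \emph{ending} exactly on $\partial B_R$ has to be dealt with by a short limiting argument as $t\uparrow1$ (equivalently, it is genuinely absent as soon as $\mu(\partial B_R)=0$, which is all that is used in the sequel).

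For the local minimality \eqref{eq:locmin} I would cut and paste. Let $(\tilde\rho,\tilde j)$ be supported in $\overline B_R\times[0,1]$ and satisfy \eqref{conteqloc} (if its energy is infinite there is nothing to prove), and put $\hat\rho:=\rho\,\chi_{\R^d\setminus B_R}+\tilde\rho$, $\hat j:=j\,\chi_{\R^d\setminus B_R}+\tilde j$. Subtracting \eqref{conteqloc} from the global continuity equation \eqref{conteqprel} shows that the restriction of $(\rho,j)$ to $\R^d\setminus B_R$ carries flux $-f^R$ across $\partial B_R$, while the hypothesis on $(\tilde\rho,\tilde j)$ supplies the complementary flux $+f^R$; adding the two identities, $(\hat\rho,\hat j)$ satisfies \eqref{conteqprel} and is therefore admissible in \eqref{BBwass}. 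As the two pieces have essentially disjoint supports,
\[
\int_{\R^d}\int_0^1\frac{1}{\hat\rho}|\hat j|^2=\int_{\R^d\setminus\overline B_R}\int_0^1\frac{1}{\rho}|j|^2+\int_{\overline B_R}\int_0^1\frac{1}{\tilde\rho}|\tilde j|^2 ,
\]
so that minimality of $(\rho,j)$, which gives $\int_{\R^d}\int_0^1\tfrac1\rho|j|^2=W_2^2(\chi_\Omega,\mu)<\infty$, yields \eqref{eq:locmin} after cancelling the common finite term $\int_{\R^d\setminus\overline B_R}\int_0^1\tfrac1\rho|j|^2$.
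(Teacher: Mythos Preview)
Your proof is correct and follows essentially the same approach as the paper: use the Lagrangian representation $\rho_t=T_t\#\chi_\Omega$, $j_t=T_t\#[(T-x)\chi_\Omega]$ to rewrite $\int_{B_R}\int_0^1\partial_t\zeta\,\rho+\nabla\zeta\cdot j$ as $\int_\Omega\int_{t_-^R}^{t_+^R}\frac{d}{dt}\zeta(X,t)$, then split the boundary terms according to whether $t_\pm^R\in\{0,1\}$ or not, and for \eqref{eq:locmin} do the obvious cut-and-paste. Your extra care about trajectories with $X(1)\in\partial B_R$ is a point the paper simply glosses over (implicitly identifying $\chi_{\{t_+^R=1\}}$ with $\chi_{B_R}(X(1))$), so your treatment is if anything slightly more thorough.
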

 \begin{proof}
 Once \eqref{conteqloc} is established, local minimality of $(\rho,j)$ follows from the fact that $(\hat{\rho},\hat{j})$ defined as $(\tilde \rho,\tilde j)$ in $\overline{B}_R\times[0,1]$ and
 $(\rho,j)$ outside, is admissible for \eqref{BBwass}. We thus only need to prove that \eqref{conteqloc} holds.
 By \eqref{def:fR}, for every $\zeta\in  C^1_c(\R^d\times [0,1])$,
 \begin{align*}
   \int_{\R^d}\int_0^1  \zeta df^R&=\int_{\Omega} \chi_{0\le t_-^R<t^R_+<1}(X) \zeta(X(t^R_+),t^R_+)-\int_{\Omega} \chi_{0<t_-^R< t_+^R\le 1}(X) \zeta(X(t_-^R),t_-^R)\\
    &=\int_{\Omega} \lt[\int_{t_-^R}^{t_+^R} \frac{d}{dt} \zeta(X,t)\rt] +\chi_{ t_-^R=0}(X) \zeta(X(t_-^R),t_-^R)-\chi_{ t_+^R=1}(X) \zeta(X(t_+^R),t_+^R)\\
  &= \int_{\Omega} \int_0^1\chi_{B_R}(X)\lt[ \partial_t \zeta(X,t) +\nabla \zeta(X,t)\cdot \dot{X}\rt]\\
   &\qquad +\int_{\Omega}\chi_{B_R}(X(0))\zeta(X(0),0)-\chi_{B_R}(X(1))\zeta(X(1),1).
  \end{align*}
Since $\rho_t= T_t\# \chi_{\Omega}$ and $j_t= T_t\# (T-x)\chi_{\Omega}$, and since $X(x,t)= T_t(x)=(1-t)x+tT(x)$, for every $\zeta\in C^1_c(\R^d\times[0,1])$,
  \[
   \int_{B_R}\int_0^1 \partial_t \zeta d\rho= \int_{\Omega}\int_0^1 \chi_{B_R}(X)\partial_t\zeta(X,t)
  \] and 
  \[
    \int_{B_R}\int_0^1 \nabla \zeta \cdot dj= \int_{\Omega}\int_0^1 \chi_{B_R}(X)\nabla \zeta(X,t)\cdot \dot{X}.
  \]
This together with $X(0)=Id$ and $X(1)=T$ concludes the proof.
 \end{proof}
 We now prove that $f^R$ satisfies a bound analog to \eqref{Linfty}.
 \begin{lemma}
  The measure $f^R$ is absolutely continuous with respect to the measure $\H^{d-1}\restr \partial B_R\otimes dt $ and for $t\in(0,1)$, 
  \begin{equation}\label{Linftyf}
   \sup_{\partial B_R} |f^R_t|\les E^{\frac{1}{d+2}}\frac{1}{(1-t)^d}. 
  \end{equation}
 \end{lemma}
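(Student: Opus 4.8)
The plan is to derive an explicit formula for the density of $f^R$ with respect to $\H^{d-1}\restr\partial B_R\otimes dt$ from the Lagrangian (straight-line) structure of the transport, and then to estimate that density by the pointwise bounds of Lemma~\ref{lem:Linftyboundslice}.

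First I would split $f^R=f^R_+-f^R_-$ according to the two terms in \eqref{def:fR}, i.e.\ into the outgoing and incoming fluxes, and treat $f^R_+$ (the argument for $f^R_-$ being symmetric in time). On the set $A_+:=\{x\in\Omega:\ 0\le t^R_-(x)<t^R_+(x)<1\}$ the trajectory $t\mapsto X(x,t)=x+t(T(x)-x)$ lies inside $B_R$ on the nonempty open interval $(t^R_-(x),t^R_+(x))$ and exits $\partial B_R$ \emph{transversally} at time $t^R_+(x)$; in particular $T(x)\neq x$ there, and $t^R_+(x)$, being the larger root of $|x+t(T(x)-x)|^2=R^2$, is differentiable at every $x\in A_+$ where $T$ is differentiable, hence a.e.\ by Alexandrov's theorem. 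Consider $\Psi_+\colon A_+\to\partial B_R\times(0,1)$, $\Psi_+(x):=(X(x,t^R_+(x)),t^R_+(x))$; since $T_t=(1-t)Id+tT$ is the gradient of the strictly convex map $\tfrac{1-t}{2}|x|^2+t\psi$ for $t<1$, it is injective, so $\Psi_+$ is a.e.\ injective with a.e.\ nonvanishing Jacobian. Differentiating $y=X(x(y,t),t)$ in $t$ and tangentially along $\partial B_R$, with $x=x(y,t):=\Psi_+^{-1}(y,t)$, one computes that the Jacobian of $(y,t)\mapsto x(y,t)$ equals $|\det\nabla T_t(x)|^{-1}\,|(T(x)-x)\cdot\nu^{B_R}(y)|$, which by the Jacobian equation \eqref{jacobian} and the identity $j_t(y)=\rho_t(y)\,(T(x)-x)$ (with $x=T_t^{-1}(y)$) is exactly $|j_t(y)\cdot\nu^{B_R}(y)|$. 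Inserting this change of variables into \eqref{def:fR} shows that $f^R_+$, and likewise $f^R_-$, is absolutely continuous with respect to $\H^{d-1}\restr\partial B_R\otimes dt$, with density at $(y,t)$ equal to $|j_t(y)\cdot\nu^{B_R}(y)|$ on the time-dependent portion of $\partial B_R$ through which mass is respectively leaving or entering $B_R$, and $0$ elsewhere; hence $f^R$ itself is absolutely continuous with density $j_t\cdot\nu^{B_R}$ on the union of these two (disjoint) portions.

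It then only remains to estimate $|f^R_t|\le|j_t\cdot\nu^{B_R}|\le|j_t|$ pointwise on $\partial B_R$. Since the good radius satisfies $R<\tfrac32$ we have $\partial B_R\subset B_{3/2}$, and therefore \eqref{Linfty} yields, for a.e.\ $t\in(0,1)$,
\[
\sup_{\partial B_R}|f^R_t|\ \le\ \sup_{B_{3/2}}|j_t|\ \lesssim\ E^{\frac1{d+2}}\,\frac1{(1-t)^d},
\]
which is \eqref{Linftyf}. The step I expect to be the main obstacle is the rigorous justification of the change of variables: one has to verify that $t^R_\pm$ are differentiable a.e.\ on $A_\pm$, that the area formula applies to the merely a.e.-differentiable map $\Psi_\pm$ (which is legitimate via Alexandrov's theorem, the injectivity of $T_t$ for $t<1$, and the absolute continuity of $\rho_t$ and $j_t$ recorded before \eqref{jacobian}), and that the resulting Jacobian combines correctly with \eqref{jacobian}; everything afterwards is a direct substitution and an appeal to \eqref{Linfty}.
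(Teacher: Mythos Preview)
Your argument is correct and arrives at the same bound, but it is a genuinely different proof from the one in the paper. The paper does not compute the density at all: it fixes $\zeta\in C^1_c(\R^d\times(0,1))$, tests \eqref{conteqloc} with $\zeta\eta_r$ for a radial cutoff $\eta_r$ supported in $B_R\setminus B_{R-r}$, uses the pointwise bounds \eqref{Linfty} on $\rho$ and $j$ in the resulting annulus integral, and then sends $r\to 0$ to obtain $\big|\int\zeta\,df^R\big|\lesssim\int_{\partial B_R}\int_0^1 E^{1/(d+2)}(1-t)^{-d}|\zeta|$, which gives both absolute continuity and \eqref{Linftyf} by duality.

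Your Lagrangian change of variables is more informative: you actually identify the density of $f^R$ as $j_t\cdot\nu^{B_R}$ (up to sign on the $\pm$ parts), which is exactly what one expects heuristically and is consistent with the informal statement ``$f^R=j\cdot\nu^{B_R}$'' preceding \eqref{def:fR}. The price is the technical overhead you already flagged---differentiability of $t^R_\pm$ a.e., transversality of the exit, and the area formula for the merely a.e.-differentiable map $\Psi_\pm$---none of which is needed in the paper's soft duality argument. Both routes ultimately rest on the same estimate \eqref{Linfty} for $|j_t|$ on $B_{3/2}$, so the final step is identical.
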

\begin{proof}
 Let $\zeta\in C^1_c( \R^d\times (0,1))$ be fixed and for $0<r\ll R$, let $\eta_r$ be a smooth radial function such that  $\eta_r(x)=0$ 
 if $|x|\le R-r$, $\eta_r(x)=1$ for $|x|\ge R$ and $\sup |\nabla \eta_r|\les r^{-1}$. Testing \eqref{conteqloc} with $\zeta \eta_r$, we obtain since $f^R$ is supported  on $ \partial B_R\times [0,1]$,
 \begin{align*}
  \lt|\int_{\R^d}\int_0^1  \zeta df^R\rt|&\les \int_{B_R\backslash B_{R-r}}\int_0^1 |\partial_t \zeta| \rho + |\nabla \zeta| |j| + r^{-1}|\zeta| |j| \\
  &\stackrel{\eqref{Linfty}}{\les} \int_{B_R\backslash B_{R-r}}\int_0^1 (|\partial_t \zeta|+ E^{\frac{1}{d+2}}|\nabla \zeta|) \frac{1}{(1-t)^d} + r^{-1} E^{\frac{1}{d+2}} \frac{| \zeta|}{(1-t)^d}.
 \end{align*}
 Letting $r\to 0$, we obtain 
 \[
  \lt|\int_{\R^d}\int_0^1  \zeta df^R\rt|\les \int_{\partial B_R}\int_0^1 E^{\frac{1}{d+2}} \frac{| \zeta|}{(1-t)^d},
 \]
from which the claim follows.
\end{proof}

We define the outgoing and incoming fluxes as  (see Figure \ref{fig:fbar})
 \begin{multline}\label{def:fpm}
  \int_{\R^d}\int_0^1 \zeta d f_+^R:=\int_{\Omega} \chi_{0\le t_-^R<t^R_+<1}(X) \zeta(X(t^R_+),t^R_+) \\
  \textrm{and }  \int_{\R^d}\int_0^1 \zeta d f_-^R:=\int_{\Omega} \chi_{0<t_-^R< t_+^R\le 1}(X) \zeta(X(t_-^R),t_-^R),
 \end{multline}
so that $f^R=f^R_+-f^R_-$. Now for a given layer size $0<\tau<1$, we define the cumulated fluxes
 \begin{multline}\label{def:fbarpm}
 \int_{\R^d} \zeta d\Bf^R_{+}:=\int_{\Omega} \chi_{0\le t^R_-<t^R_{+}<1-\tau}(X)\,  \zeta(X(t^R_+)) \\
  \textrm{and }  \int_{\R^d} \zeta d\Bf^R_{-}:=\int_{\Omega} \chi_{t_-^R<t_+^R}(X)\chi_{0<t^R_{-}<1-\tau}(X)\,  \zeta(X(t^R_-)) ,
 \end{multline}
and then let 
\begin{equation}\label{defBfR}
 \Bf^R(x):=\int_0^{1-\tau} f^R(x,t) dt \qquad \textrm{so that } \qquad \Bf^R=\Bf^R_+-\Bf^R_-.
\end{equation}
\begin{lemma}\label{lem:linftybarf}
 There holds $\Bf^R_+ \perp \Bf^R_-$ and
 \begin{equation}\label{LinftyBarf}
 \sup_{\partial B_R} \Bf^R_{\pm}\les E^{\frac{1}{d+2}} \tau^{-(d-1)}.
\end{equation}
\end{lemma}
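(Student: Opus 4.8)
The plan is to reduce everything to the time-resolved bound \eqref{Linftyf} on $f^R$ that was just proved, by integrating in time over the terminal-free interval $(0,1-\tau)$. First I would record that, comparing \eqref{def:fbarpm} with \eqref{def:fpm} (as in \eqref{defBfR}), the cumulated fluxes are exactly the time integrals of the instantaneous ones, $\Bf^R_\pm=\int_0^{1-\tau}f^R_\pm(\cdot,t)\,dt$ as measures on $\partial B_R$: the constraint $t^R_+<1$ in \eqref{def:fpm} is automatic once $t^R_+<1-\tau$, and the constraint $0<t^R_-<1-\tau$ defining $\Bf^R_-$ is precisely what survives after restricting to $t\in(0,1-\tau)$. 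Consequently $\sup_{\partial B_R}\Bf^R_\pm\le\int_0^{1-\tau}\sup_{\partial B_R}f^R_\pm(\cdot,t)\,dt$, and the whole point is to bound each of the two nonnegative densities $f^R_\pm(\cdot,t)$ separately by $|f^R_t|$.

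The separation of $f^R_+(\cdot,t)$ and $f^R_-(\cdot,t)$ is the orthogonality asserted in the lemma, and it comes from the non-crossing of displacement interpolants. Fix $t\in(0,1)$. By \eqref{def:fpm}, $f^R_+(\cdot,t)$ is carried by $\{X(x,t):t^R_+(x)=t,\ t^R_-(x)<t\}$ (particles leaving $\overline{B}_R$ exactly at time $t$) and $f^R_-(\cdot,t)$ by $\{X(x,t):t^R_-(x)=t,\ t^R_+(x)>t\}$ (particles entering exactly at time $t$); these two defining conditions cannot hold for one and the same $x$, so a point $\omega$ lying in both carriers would force $X(x_1,t)=X(x_2,t)=\omega$ for some $x_1\neq x_2$. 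But for $t<1$ the map $x\mapsto X(x,t)=(1-t)x+tT(x)$ is injective: writing $v_i:=T(x_i)-x_i$, the equality $X(x_1,t)=X(x_2,t)$ gives $x_1-x_2=t(v_2-v_1)$, and monotonicity of $T=\nabla\psi$ yields $0\le(T(x_1)-T(x_2))\cdot(x_1-x_2)=-t(1-t)|v_1-v_2|^2$, hence $v_1=v_2$ and $x_1=x_2$. Thus $f^R_+(\cdot,t)$ and $f^R_-(\cdot,t)$ have disjoint supports for every $t<1$, i.e.\ $\Bf^R_+\perp\Bf^R_-$ at the level of the instantaneous fluxes, and since both are nonnegative this gives the pointwise identity $|f^R_t|=f^R_+(\cdot,t)+f^R_-(\cdot,t)$. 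In particular $f^R_\pm(\cdot,t)\le|f^R_t|$; by the preceding lemma $f^R$ is absolutely continuous with respect to $\HH\restr\partial B_R\otimes dt$, so the same holds for $f^R_\pm$, with, for a.e.\ $t\in(0,1)$,
\[
\sup_{\partial B_R}f^R_\pm(\cdot,t)\ \le\ \sup_{\partial B_R}|f^R_t|\ \les\ E^{\frac1{d+2}}\frac{1}{(1-t)^d}.
\]

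It then remains only to integrate. Using $\Bf^R_\pm=\int_0^{1-\tau}f^R_\pm(\cdot,t)\,dt$ and the substitution $s=1-t$,
\begin{align*}
\sup_{\partial B_R}\Bf^R_\pm &\ \le\ \int_0^{1-\tau}\sup_{\partial B_R}f^R_\pm(\cdot,t)\,dt\ \les\ E^{\frac1{d+2}}\int_\tau^1\frac{ds}{s^d}\\
&\ =\ E^{\frac1{d+2}}\,\frac{\tau^{-(d-1)}-1}{d-1}\ \les\ E^{\frac1{d+2}}\,\tau^{-(d-1)},
\end{align*}
which is \eqref{LinftyBarf} (the last step using $d\ge 2$ and $\tau\le 1$).

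I expect the orthogonality/non-crossing step to be the only delicate point: one has to be sure that the incoming and outgoing pieces of the flux do not sit on the same part of $\partial B_R$ at a given instant, so that the inequality $f^R_\pm(\cdot,t)\le|f^R_t|$ is legitimate (note this is genuinely a time-resolved statement — a point of $\partial B_R$ may well be an exit point at one time and an entrance point at another). Everything else is the elementary estimate $\int_\tau^1 s^{-d}\,ds\les\tau^{-(d-1)}$ together with bookkeeping of the definitions \eqref{def:fpm}--\eqref{defBfR}.
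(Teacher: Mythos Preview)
Your proof is correct and follows essentially the same route as the paper's. Both arguments establish the time-resolved orthogonality $f^R_+\perp f^R_-$ (equivalently, disjointness of the supports of $f^R_+(\cdot,t)$ and $f^R_-(\cdot,t)$ for each $t<1$) via the injectivity of $T_t$ coming from monotonicity of $T$, and then obtain \eqref{LinftyBarf} by integrating the slice bound \eqref{Linftyf} over $(0,1-\tau)$. The paper packages the orthogonality step by exhibiting a space-time set $A$ with $f^R_+(A^c)=f^R_-(A)=0$, while you argue slice by slice; these are the same argument in different clothing, and your explicit computation $(1-t)|x_1-x_2|^2+t(T(x_1)-T(x_2))\cdot(x_1-x_2)=0$ makes the injectivity step perhaps more transparent. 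Your closing remark that the orthogonality is genuinely time-resolved (and need not descend to $\Bf^R_\pm$ on $\partial B_R$) is well taken and matches what the paper actually proves and uses.
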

\begin{proof}
 Let us prove that $f^R_+ \perp f^R_-$. Estimate \eqref{LinftyBarf} will then follow from \eqref{Linftyf}.  

To this end we consider the space-time points on the cylinder through which particles exit 
\[
 A:=\{ (y,t)\in \partial B_R\times(0,1) \ : \ \exists x\in \Omega \ \textrm{such that } t_-^R<t_+^R \textrm{ and } (X(t_+^R),t_+^R)=(y,t)\}
\]
and the original positions of particles that enter at the same space-time where another particle exits
\[
 B:=\lt\{x\in \Omega \ :  0<t_-^R< t_+^R, \,   \exists \tilde{x}\in \Omega \textrm{ with }  \tilde{t}_-^R<\tilde{t}_+^R=t_-^R \textrm{ and } X(x,t_-^R)=X(\tilde{x},\tilde{t}_+^R)\rt\}.
\]
We claim that $B=\emptyset$.  

Recall (cf.\ Remark \ref{rem:selection}) that $T$ is given as a measurable selection of the subgradient of a convex function.
In particular, we have $(T(x)-T(\tilde x))\cdot(x-\tilde x)\ge 0$ for all $x,\tilde x$. Hence, if $x$ is such that $0<t_-^R< t_+^R$ and 
$\tilde x$ such that $\tilde{t}_-^R<\tilde{t}_+^R=t_-^R$ we cannot have $X(x,t_-^R)=T_{t_-^R}(x)=T_{t_{-}^R}(\tilde x)=X(\tilde x, \tilde t_+^R)$ and thus $B=\emptyset$.\\
Now since 
\[
 f^R_+(A^c)\stackrel{\eqref{def:fpm}}{=}\int_{\Omega} \chi_{0\le t_-^R<t^R_+<1}(X) \chi_{A^c}(X(t^R_+),t^R_+),
\]
we  have $f^R_+(A^c)=0$ by definition of $A$. Similarly,
\[
 f^R_-(A)\stackrel{\eqref{def:fpm}}{=} \int_{\Omega} \chi_{0<t_-^R< t_+^R\le 1}(X) \chi_{A}(X(t_-^R),t_-^R)=|B|=0,   
\]
which proves that $f^R_+ \perp f^R_-$.
\end{proof}

We will also need the analog of $\Bf^R_{+}$ for the outgoing flux in $(1-\tau,1)$. Let $\Bf^{R,\lay}_{+}$ be the measures defined for $\zeta\in C^{0}_c(\R^d)$, by
 \begin{equation}\label{defBflay+}
 \int_{\R^d} \zeta d\Bf^{R,\lay}_{+}:=\int_{\Omega} \chi_{t^R_-<1-\tau<t_+^R<1}(X)\,  \zeta(X(t^R_+)).
 \end{equation}
Notice that we   consider only  the particles which leave  $B_R$ after $t=1-\tau$ but were already inside $B_R$ at time $1-\tau$. For later use, let also
 \begin{equation}\label{defflay+}
 \int_{\R^d}\int_0^1 \zeta df^{R,\lay}_{+}:=\int_{\Omega} \chi_{t^R_-<1-\tau<t_+^R<1}(X)\,  \zeta(X(t^R_+),t^R_+).
 \end{equation}
We can now show that there exists a good radius $R$.
\begin{lemma}\label{goodR}
Assume that $E+D\ll1$, then there exists $R\in(\frac{1}{2},\frac{3}{2})$ such that
 \begin{equation}\label{L2boundf}
  \int_{\partial B_R} \int_0^{1-\tau} (f^R)^2\les \tau^{-d} E
 \end{equation}
and there exist densities $\Brho_\pm^R$ and $\Brhop^{R,\lay}$ on $\partial B_R$ such that 
 \begin{equation}\label{L2boundrho}
  \int_{\partial B_R} (\Brho_\pm^R)^2\les E \qquad \textrm{and} \qquad W_{\partial B_1}^2(\Brho^R_\pm, \Bf^R_{\pm})\les E^{ \frac{d+3}{d+2}},
 \end{equation}
 and 
  \begin{equation}\label{L2boundrholay}
  \int_{\partial B_R} (\Brhop^{R,\lay})^2\les \tau^2 E+D \qquad \textrm{and} \qquad W_{\partial B_1}^2(\Brhop^{R,\lay}, \Bf_+^{R,\lay})\les \tau^3 E^{\frac{d+3}{d+2}} +\tau E^{\frac{1}{d+2}}D.
 \end{equation}
Moreover,
\begin{equation}\label{Linftyboundrho}
 \sup_{\partial B_R}\, \Brho_\pm^R\, \les E^{\frac{1}{d+2}}.
\end{equation}

\end{lemma}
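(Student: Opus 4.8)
\emph{Strategy and proof of \eqref{L2boundf}.}
The plan is to fix a short list of quantities, each of which has small average over $R\in(\frac12,\frac32)$, and then pick $R$ outside the (small-measure) union of their ``large sets''; since $|(\frac12,\frac32)|=1$ such an $R$ exists by Chebyshev's inequality. For \eqref{L2boundf} the relevant average is obtained by a co-area argument in the spirit of \cite{GO}. Since for $t<1$ the flow $T_t=(1-t)\mathrm{Id}+tT$ is injective (it is the gradient of a strictly convex function), on $\partial B_R$ one has, in the sense of traces and for a.e.\ $R$, the pointwise bound $\frac1{\rho_t}(f^R_t)^2\le\frac1{\rho_t}|j_t|^2$, because $f^R_t$ only records the single entrance/exit crossing of each trajectory through $\partial B_R$. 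Integrating over $R$, using the co-area formula and \eqref{Linftyboundslice},
\[
 \int_{\frac12}^{\frac32}\lt(\int_{\partial B_R}\int_0^{1-\tau}\frac1{\rho_t}(f^R_t)^2\rt)\dv R\le\int_0^{1-\tau}\lt(\int_{B_{3/2}}\frac1{\rho_t}|j_t|^2\rt)\dv t\le E.
\]
For $R$ in the good set this gives $\int_{\partial B_R}\int_0^{1-\tau}\frac1\rho(f^R)^2\les E$, and since $\rho_t\le(1-t)^{-d}\le\tau^{-d}$ on $B_{3/2}$ for $t\le1-\tau$ by \eqref{Linfty}, we obtain \eqref{L2boundf} upon multiplying through by $\rho_t$.

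\emph{Construction of $\Brho^R_\pm$ and the bounds \eqref{Linftyboundrho} and $W^2_{\partial B_1}(\Brho^R_\pm,\Bf^R_\pm)\les E^{(d+3)/(d+2)}$.}
By \eqref{LinftyboundT}, when $E\ll1$ every trajectory meeting $B_{3/2}$ has length $|T(x)-x|\les E^{1/(d+2)}$ and starts in $B_2$; hence a particle whose trajectory enters and leaves $\overline B_R$ before time $1-\tau$ has its starting point $x$ in the thin shell $\{\,||x|-R|\le|T(x)-x|\,\}$. Writing $S^R_\pm\subset B_2$ for the set of starting points of the particles counted by $\Bf^R_\pm$ (cf.\ \eqref{def:fbarpm}), I define $\Brho^R_\pm$ as the push-forward of $\chi_{S^R_\pm}\dv x$ under the radial projection $\pi(x):=Rx/|x|$. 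Non-degeneracy of $\pi$ together with the shell-width bound $\les E^{1/(d+2)}$ immediately give \eqref{Linftyboundrho}. For the Wasserstein estimate, the (admissible) coupling $\pi(x)\leftrightarrow X(x,t^R_\pm)$ between $\Brho^R_\pm$ and $\Bf^R_\pm$ has displacement at most $||x|-R|+|X(x,t^R_\pm)-x|\le2|T(x)-x|$, so $W^2_{\partial B_1}(\Brho^R_\pm,\Bf^R_\pm)\les\int_{S^R_\pm}|T-x|^2$; choosing $R$ good for
\[
 \int_{\frac12}^{\frac32}\lt(\int_{\{||x|-R|\le|T(x)-x|\}}|T-x|^2\rt)\dv R\le2\int_{B_2}|T-x|^3\les E^{\frac1{d+2}}E
\]
yields the second bound in \eqref{L2boundrho}.

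\emph{The $L^2$-bound $\int_{\partial B_R}(\Brho^R_\pm)^2\les E$ --- the main point.}
The crude estimate $\int(\Brho^R_\pm)^2\le\|\Brho^R_\pm\|_{L^\infty}\cdot\Brho^R_\pm(\partial B_R)$ only gives a power of $E$ strictly below one, so a different argument is needed, modeled on the proof of Lemma \ref{lem:elliptic}. On $\partial B_R$ the density of $\Brho^R_\pm$ is comparable to $\bar z_\pm(\omega):=\int\chi_{S^R_\pm}(r\omega)\,r^{d-1}\dv r$, and the one-dimensional rearrangement inequality from that proof (the minimizer of $\int(R-r)\psi$ among $0\le\psi\les1$ with fixed mass is an indicator adjacent to $\{r=R\}$) gives $\bar z_\pm(\omega)^2\les\int_{\text{radial ray}}\dist(x,\partial B_R)\,\chi_{S^R_\pm}$, hence $\int_{\partial B_R}(\Brho^R_\pm)^2\les\int_{S^R_\pm}\big||x|-R\big|\dv x$. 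The key observation is that for fixed $x$ the indicator $\chi_{S^R_\pm}(x)$ is non-zero only for $R$ in an interval of length $\le|T(x)-x|$ around $|x|$ (the radii swept by the trajectory), and on that interval $\big||x|-R\big|\le|T(x)-x|$ as well; therefore
\[
 \int_{\frac12}^{\frac32}\lt(\int_{S^R_\pm}\big||x|-R\big|\dv x\rt)\dv R\le\int_{B_2}|T(x)-x|^2\dv x=E,
\]
so for $R$ in the good set $\int_{\partial B_R}(\Brho^R_\pm)^2\les E$.

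\emph{The terminal layer \eqref{L2boundrholay} and the main obstacle.}
The density $\Brhop^{R,\lay}$ is built in the same way from the particles counted by $\Bf^{R,\lay}_+$ (cf.\ \eqref{defBflay+}): such a particle is within $\tau|T(x)-x|\les\tau E^{1/(d+2)}$ of $\partial B_R$ at time $1-\tau$, so radially projecting its time-$(1-\tau)$ position yields a density whose relevant shell is a factor $\tau$ thinner, and every occurrence of $|T-x|$ in the previous estimates is then gained with an extra factor $\tau$ --- this accounts for the powers $\tau^3E^{(d+3)/(d+2)}$ and $\tau^2E$ in \eqref{L2boundrholay}. The difference is that the reference density is now $\rho_{1-\tau}$ rather than the constant $\rho_0=1$: one first transports $\rho_{1-\tau}$ to a constant by its near-optimal map, paying $W_2^2\les\tau^2E+D$ thanks to \eqref{eq:estimrho1moinstau}, and then runs the rearrangement argument; the interplay of this correction with the thinner shell produces the remaining term $\tau E^{1/(d+2)}D$. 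Finally, collecting the finitely many averaged inequalities used above and choosing $R\in(\frac12,\frac32)$ outside the union of their (small-measure) large sets gives all the claimed estimates simultaneously. The main obstacle is precisely this terminal-layer analysis: the reference density $\rho_{1-\tau}$ is only close to --- not equal to --- a constant and its $L^\infty$ bound degenerates like $\tau^{-d}$ as $\tau\to0$, so the $\tau$- and $D$-dependence has to be tracked with care throughout the rearrangement.
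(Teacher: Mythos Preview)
Your treatment of \eqref{L2boundf}, \eqref{Linftyboundrho}, and both parts of \eqref{L2boundrho} is essentially the paper's argument: the radial-projection construction of $\Brho^R_\pm$, the $L^\infty$ bound from the shell width, the rearrangement inequality for the $L^2$-bound, and the averaging over $R\in(\tfrac12,\tfrac32)$ are all identical. Your co-area/pointwise argument for \eqref{L2boundf} is a mild variant of the paper's duality argument, but rests on the same ingredients and gives the same estimate.

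The gap is in the terminal layer. You propose to radially project the time-$(1-\tau)$ position of a layer particle and then ``transport $\rho_{1-\tau}$ to a constant'' via \eqref{eq:estimrho1moinstau}. This does not work as stated. If you project $X(1-\tau)$ directly, the unprojected reference density is $\rho_{1-\tau}$, whose only available $L^\infty$ bound is $\tau^{-d}$; the one-dimensional rearrangement inequality then carries this constant and produces $\int(\Brho^{R,\lay}_+)^2\les\tau^{-d}\cdot\tau^2 E=\tau^{2-d}E$, far from $\tau^2E+D$. If instead you first apply a near-optimal map $S_R:\rho_{1-\tau}\restr B_R\to\text{const}$ and project $S_R(X(1-\tau))$, then $S_R$ depends on $R$, so it cannot be pulled outside the $R$-average and your Fubini step breaks down. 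Citing \eqref{eq:estimrho1moinstau} (which is Lemma~\ref{lem:distdata}) does not resolve this: that estimate is itself proved by a good-radius selection and must be established for the \emph{same} $R$, alongside the present bounds, not imported.

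The paper's remedy is to project from a \emph{fixed} constant-density space independent of $R$: one goes all the way to time $1$ (so $y=T(x)\in O$), glues the coupling $(x,y)$ with the optimal coupling $\Pi_{23}$ for $D=W_2^2(\mu\restr O,\tfrac{\mu(O)}{|O|}\chi_O)$ via the Gluing Lemma to obtain a three-marginal $\Pi(x,y,z)$, and sets $\Brho^{R,\lay}_+$ to be the radial projection of the $z$-variable. Since $z$ has density $\tfrac{\mu(O)}{|O|}\sim1$ independently of $R$, the rearrangement argument runs with no $\tau$-loss, and the displacement $|X(t_+^R)-z|\le|X(t_+^R)-y|+|y-z|$ splits cleanly into a piece $\les\tau|T-x|$ and a piece controlled by $D$ through $\Pi_{23}$; averaging over $R$ then gives exactly $\tau^2E+D$ for the $L^2$-bound and $\tau^3E^{(d+3)/(d+2)}+\tau E^{1/(d+2)}D$ for the Wasserstein bound. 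The idea you are missing is to anchor the projection at time $1$, where the data term $D$ lives, rather than at time $1-\tau$.
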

\begin{proof}
Let us start by \eqref{L2boundf}. For this, given $\zeta\in C^1_c( \R^d\times (0,1-\tau))$, integrating \eqref{conteqloc} in $R\in(\frac{1}{2},\frac{3}{2})$, we obtain
\[
 \int_{\frac{1}{2}}^{\frac{3}{2}} \int_{\R^d}\int_0^{1-\tau} \zeta df^R= \int_{\frac{1}{2}}^{\frac{3}{2}} \int_{B_R}\int_0^{1-\tau} \partial_t \zeta \rho+ \nabla \zeta \cdot j.
\]
Letting $\omega(x):=\int_{\frac{1}{2}}^{\frac{3}{2}} \chi_{B_R}(x) dR$ and using Fubini, we obtain 
\begin{align*}
 \int_{\frac{1}{2}}^{\frac{3}{2}} \int_{\R^d}\int_0^{1-\tau} \zeta df^R&= \int_{\R^d}\int_0^{1-\tau} \omega \lt( \partial_t \zeta \rho+ \nabla \zeta \cdot j\rt) \\
 &= \int_{\R^d}\int_0^{1-\tau} \zeta \nabla \omega \cdot j, 
\end{align*}
where in the second line we used the fact that $(\rho,j)$ satisfies the continuity equation on $\R^d\times (0,1)$. By the Cauchy-Schwarz inequality together with the estimate
on $\rho$ given by \eqref{Linfty} and by \eqref{Linftyboundslice}, we thus obtain 
\begin{align*}
\lt|\int_{\frac{1}{2}}^{\frac{3}{2}} \int_{\R^d}\int_0^{1-\tau} \zeta df^R\rt|&\les \lt(\int_{B_{\frac{3}{2}}}\int_0^{1-\tau} \rho \zeta^2\rt)^{\frac12} \lt( \int_{B_{\frac{3}{2}}} \int_0^{1-\tau}\frac{1}{\rho}|j|^2\rt)^{\frac12}\\
&\les \tau^{-\frac{d}{2}} E^{\frac12} \lt(\int_{B_{\frac{3}{2}}}\int_0^{1-\tau} \zeta^2\rt)^{\frac12},
\end{align*}
from which we obtain by duality
\begin{equation}\label{L2boundfR}
 \int_{\frac{1}{2}}^{\frac{3}{2}}  \int_{\partial B_R} \int_0^{1-\tau}(f^R)^2\les \tau^{-d}E.
\end{equation}
 We now turn to \eqref{L2boundrho} and \eqref{Linftyboundrho}. Notice that by \eqref{W2compare}, it is enough to prove \eqref{L2boundrho} for $W_2^2$ instead of $W_{\partial B_1}^2$.
 Let $\Brho_{\pm}^R$ be the measures supported on $\partial B_R$ such that for $\zeta\in C^{0}_c(\R^d)$ (see Figure \ref{fig:rhobar})
 \begin{figure}\begin{center}
 \resizebox{6.cm}{!}{\input{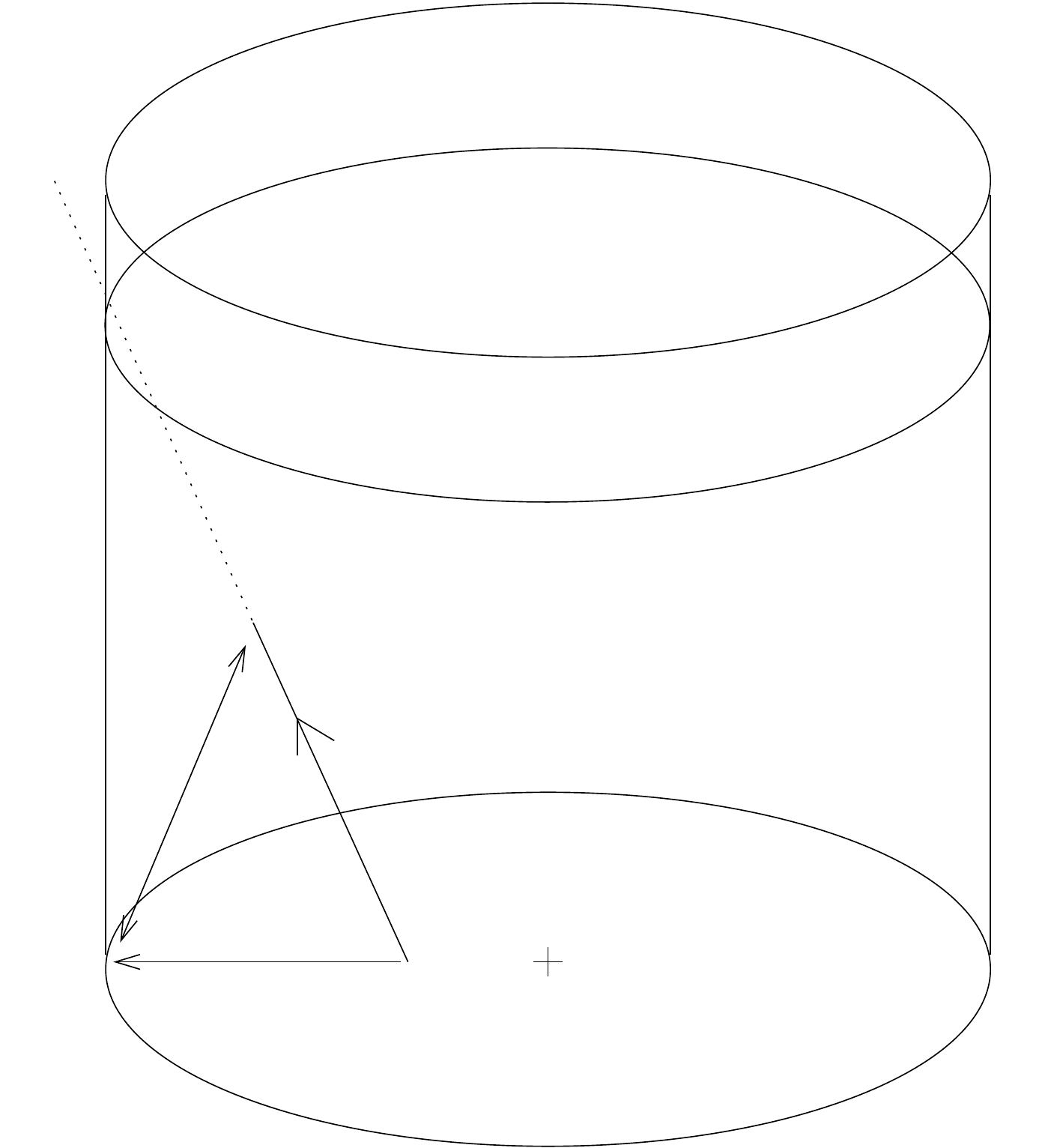_t}} 
   \caption{The definition of $\Brho_+^R$.} \label{fig:rhobar}
 \end{center}
 \end{figure}
\begin{multline}\label{defrhopm}
 \int_{\partial B_R} \zeta d\Brho_{+}^R:= \int_{\Omega}  \chi_{t_-^R<t^R_{+}<1-\tau}(X) \zeta\lt(R \frac{X(0)}{|X(0)|}\rt) 
 \\ \textrm{and}\quad  \int_{\partial B_R} \zeta d\Brhom^R:= \int_{\Omega}   \chi_{t_-^R<t^R_+}(X)\chi_{0<t^R_{-}<1-\tau}(X) \zeta\lt(R \frac{X(0)}{|X(0)|}\rt).
\end{multline}
 Since the proofs are almost identical for $\Brhom^R$, we focus for brevity on $\Brhop^R$. We start with the $L^2$ bound. We introduce the measure $\rho^R_+$ of all original particles that spend time in $B_R$ but exit before $1-\tau$, that is
 \begin{equation}\label{defrho+}
  \int_{\R^d} \zeta d\rho^R_+:=\int_{\Omega} \chi_{t_-^R<t^R_+<1-\tau}(X) \zeta\lt( X(0)\rt).
 \end{equation}
Let us point out that  $\rho^R_+\le \chi_\Omega$  and note that $\Brhop^R$ is nothing else than the push-forward of $\Brho_+^R$ under the map $x\to R \frac{x}{|x|}$ so that on the level of densities, we have for $x\in \partial B_R$ 
\[
 \Brhop^R(x)=\int_0^{+\infty}\lt(\frac{r}{R}\rt)^{d-1} \rho^R_+\lt(r \frac{x}{|x|}\rt) dr.
\]
Notice that because of the $L^\infty$ bound  \eqref{LinftyboundT} on $T-x$, the integral above can be restricted to $(R-C E^{\frac{1}{d+2}},R+C E^{\frac{1}{d+2}})\subset (R/2, 4R/3)$. This directly implies \eqref{Linftyboundrho}. Arguing as for \eqref{claim:overz}, we obtain 
\[
 \int_{\partial B_R} (\Brhop^R)^2\les \int_{\R^d} |R-|x|| d\rho^R_+,
\]
so that  we are just left to prove that 
\begin{equation}\label{toproveL2}
 \int_0^{\frac{3}{2}} \int_{\R^d} |R-|x|| d\rho^R_+ \les E.
\end{equation}
Note that since $X$ are straight lines, $\rho^R_+$ a.s.\ we have $|R-|X(0)||\le|X(1)-X(0)|$ so that
\begin{align*}
 \int_0^{\frac{3}{2}} \int_{\R^d} |R-|x|| d\rho^R_+ &\stackrel{\eqref{defrho+}}{=}\int_0^{\frac{3}{2}}\int_{\Omega}  \chi_{t_-^R<t^R_+<1-\tau}(X) |R-|X(0)|| \\
 &\le \int_0^{\frac{3}{2}}\int_{\Omega}  \chi_{t_-^R<t^R_+<1-\tau}(X) |X(1)-X(0)|\\
 &=\int_{\Omega}|X(1)-X(0)|\int_0^{\frac{3}{2}} \chi_{t_-^R<t^R_+<1-\tau}(X)\\
 &\le \int_{\Omega} \chi_{|X(0)|< 2} |X(1)-X(0)|^2\stackrel{\eqref{def:E}}{\les} E,
\end{align*}
where we used again the $L^\infty$ bound \eqref{LinftyboundT} for $T-x$ and the fact that for every $x$ such that $X(0)\neq X(1)$ and every $t_1,t_2\in[0,1]$, 
\begin{equation}\label{estimH1}
 \H^1(R \ : \ \exists t\in [t_1,t_2] \textrm{ with } X(t)\in \partial B_R)\le |X(t_1)-X(t_2)|,
\end{equation}
to obtain that for given $x$ such that $X(1)\neq X(0)$,
\begin{multline*}
 \int_0^{\frac{3}{2}}  \chi_{t_-^R<t^R_+<1-\tau}(X)\le  \chi_{|X(0)|<2}  \H^1(R \ : \ \exists t\in(0,1-\tau) \textrm{ with } X(t)\in \partial B_R)\\
 \le    \chi_{|X(0)|<2} |X(0)-X(1)|
\end{multline*}
This shows \eqref{toproveL2} and thus
\begin{equation}\label{L2boundrhoR}
 \int_{\frac{1}{2}}^{\frac{3}{2}}\int_{\partial B_R} (\Brhop^R)^2\les E.
\end{equation}
Let us now turn to the $W_2^2$ estimate in \eqref{L2boundrho}. We consider the coupling $\Pi$ between $\Bf^R_+$ (recall \eqref{def:fbarpm}) and $\Brhop^R$ defined for $\zeta\in C^0_c(\R^d\times \R^d)$ by  (see Figure \ref{fig:rhobar})
\[
 \int_{\R^d\times \R^d} \zeta d\Pi:=\int_{\Omega} \chi_{t_-^R<t_+^R<1-\tau}(X) \zeta\lt(X(t^R_+), R \frac{X(0)}{|X(0)|}\rt).
\]
Using that for $|X|\ges R$ (which $\Pi$ a.e. is the case by the $L^\infty$ bound on $T-x$) the radial projection on $\partial B_R$ is Lipschitz continuous and thus $|X(t^R_+)-  R \frac{X(0)}{|X(0)|}|\les |X(t^R_+)-X(0)|$, we get
\begin{align*}
 W_2^2(\Brhop^R,\Bf^R_+)&\le \int_{\Omega} \chi_{t^+_R<1-\tau}(X) \lt|X(t^R_+)-  R \frac{X(0)}{|X(0)|}\rt|^2\\
 &\les\int_{\Omega} \chi_{t^+_R<1-\tau}(X)|X(t^R_+)-X(0)|^2\\
 &\le \int_{\Omega} \chi_{t^+_R<1-\tau}(X)|X(1)-X(0)|^2,
\end{align*}
where in the last step  we used once again that $X$ is a straight line. Integrating in $R$ and arguing as above, we obtain
\begin{align}
 \int_{0}^{\frac{3}{2}} W_2^2(\Brhop^R,\Bf^R_+)&\les\int_{\Omega} \chi_{|X(0)|< {\frac{7}{4}}} |X(1)-X(0)|^3\nonumber\\
 &\les \sup_{ B_{\frac{7}{4}}} |T-x| \int_{\Omega}  \chi_{|X(0)|<2} |X(1)-X(0)|^2\nonumber\\
 &\les E^{ \frac{d+3}{d+2}},\label{L2boundrhoRbis}
\end{align}
where in the last step we have used once more Lemma \ref{lem:Linftybound} and definition \eqref{def:E}.\\
We finally turn to \eqref{L2boundrholay}, the  proof of which  is similar to the one of \eqref{L2boundrho}.   As above,  thanks to  \eqref{W2compare}, it is enough to prove \eqref{L2boundrholay} for $W_2^2$ instead of $W_{\partial B_1}^2$. The definition of  $\Brho_+^{R,\lay}$ is a little bit more complex than the one of $\Brho_+^R$,
since we need to couple the trajectories $X$ to the ones given by the optimal coupling for
$D=W_2^2(\mu\restr O,\frac{\mu(O)}{|O|}\chi_{O})$. Consider first the coupling $\Pi_{12}(x,y):= \chi_{O}(y) (Id\times T)\# \chi_{\Omega}(x,y)$ so that for $\zeta\in C^0_c(\Omega\times O)$
\[
 \int_{\Omega\times O} \zeta d\Pi_{12}=\int_{\Omega} \chi_{O}(T(x))\zeta(x,T(x)).
\]
In particular, the second marginal of $\Pi_{12}$ is equal to $\mu\restr O$. Let then $\Pi_{23}$ be the optimal coupling between $\mu\restr O$ and $\frac{\mu(O)}{|O|} \chi_{O}$. By the Gluing Lemma (see \cite[Lemma 7.6]{Viltop}), there exists a measure $\Pi$ on $\Omega\times O\times O$ with marginals $\Pi_{12}$ on $\Omega\times O$ and $\Pi_{23}$ on $O\times O$.
We now define $\Brho_+^{R,\lay}$ in analogy to \eqref{defrhopm} by (see Figure \ref{fig:rhobarlay})
 \begin{figure}\begin{center}
 \resizebox{13.cm}{!}{\input{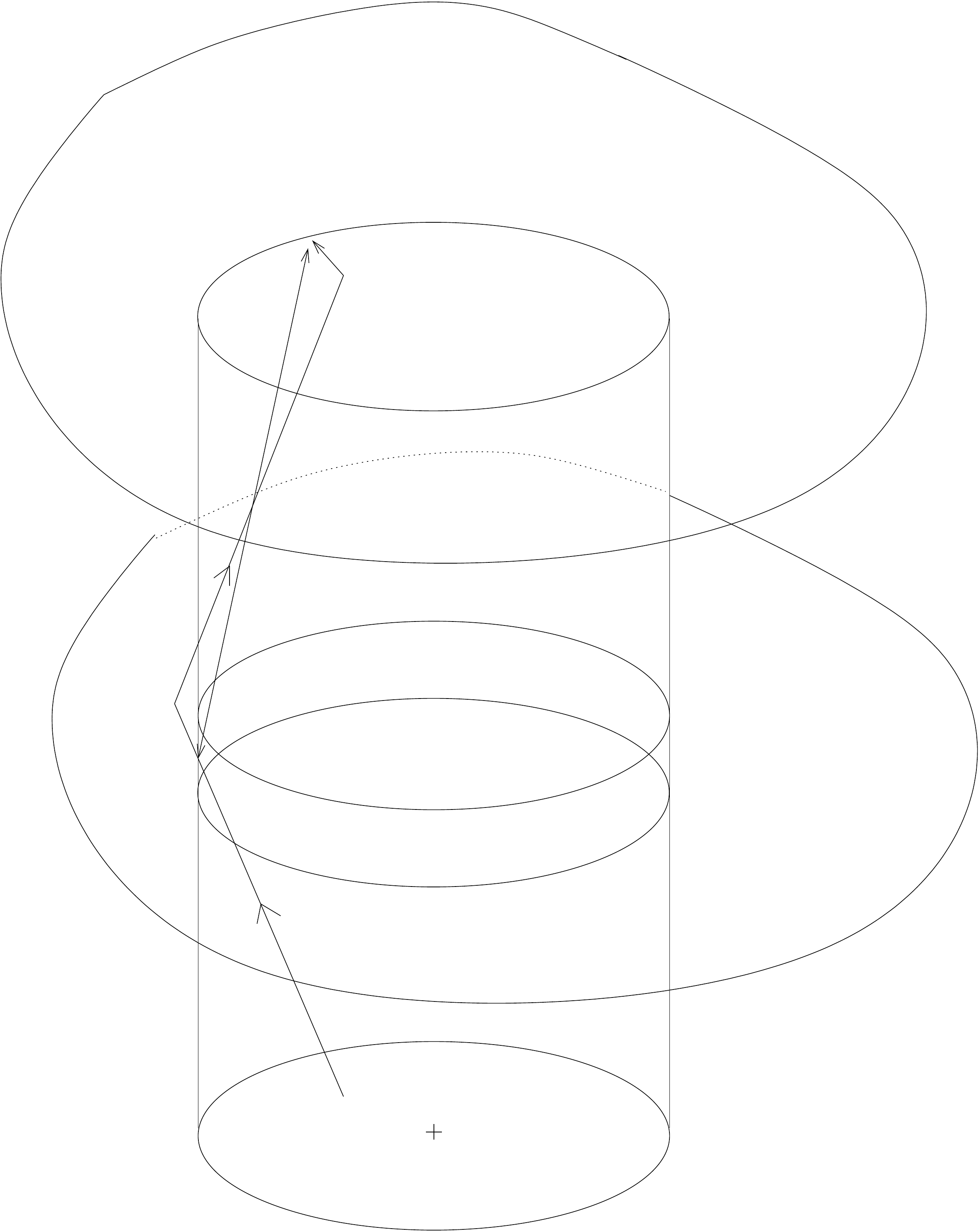_t}} 
   \caption{The definition of $\Brho_+^{R,\lay}$.} \label{fig:rhobarlay}
 \end{center}
 \end{figure}
\[
  \int_{\R^d} \zeta d\Brhop^{R,\lay}:= \int_{\Omega\times O\times O}   \chi_{t^R_-<1-\tau<t^R_+<1}(X) \zeta\lt(R \frac{z}{|z|}\rt) d\Pi(x,y,z).
\]
We also define the unprojected density
\[
  \int_{\R^d} \zeta d\rho_+^{R,\lay}:= \int_{\Omega\times O\times O}   \chi_{t^R_-<1-\tau<t^R_+<1}(X) \zeta\lt( z\rt) d\Pi(x,y,z).
\]
 By the  argument used for \eqref{claim:overz}, 
we have
\begin{align*}
\int_0^{\frac{3}{2}} \int_{\partial B_R} \lt(\Brho_+^{R,\lay}\rt)^2&\les \int_0^{\frac{3}{2}} \int_{\R^d} |R-|x|| d\rho^{R,\lay}_+\,\\
&= \int_0^{\frac{3}{2}}\int_{\Omega\times O\times O}  \chi_{t^R_-<1-\tau<t^R_+<1}(X)  |R-|z|| d\Pi \\
 &\le \int_0^{\frac{3}{2}}\int_{\Omega\times O\times O}\chi_{t^R_-<1-\tau<t^R_+<1}(X)  \lt(|X(t^R_+)-y|+|y-z|\rt) d\Pi. 
 \end{align*}
 By definition of $\Pi$ and since the trajectories of $X$ are straight lines,  we have 
 \begin{multline*}
   \int_{\Omega\times O\times O}\chi_{t^R_-<1-\tau<t^R_+<1}(X)  |X(t^R_+)-y| d\Pi=\int_{\Omega\times O\times O}\chi_{t^R_-<1-\tau<t^R_+<1}(X)  |X(t^R_+)-X(1)| d\Pi\\
   \le  \int_{\Omega} \chi_{t^R_-<1-\tau<t^R_+<1}(X)  |X(1-\tau)-X(1)|,
 \end{multline*}
which  by \eqref{estimH1} and \eqref{LinftyboundT} leads to
 \[
  \int_0^{\frac{3}{2}}\int_{\Omega\times O\times O}\chi_{t^R_-<1-\tau<t^R_+<1}(X)  |X(t^R_+)-y| d\Pi\le \int_{\Omega} \chi_{|X(0)|<2} |X(1-\tau)-X(1)|^2.
 \]
Since  \eqref{estimH1} and \eqref{LinftyboundT} also yield
\begin{align*}
 \int_0^{\frac{3}{2}}\int_{\Omega\times O\times O}\chi_{t^R_-<1-\tau<t^R_+<1}(X)|y-z| d\Pi &= \int_{\Omega\times O\times O}|y-z| \lt[ \int_0^{\frac{3}{2}} \chi_{t^R_-<1-\tau<t^R_+<1}(X)\rt] d\Pi\\
 &\le \int_{\Omega\times O\times O}|y-z| \chi_{|X(0)|<2} |X(1-\tau)-X(1)| d\Pi\\
 &\stackrel{\textrm{Young}}{\les} \int_{\Omega} \chi_{|X(0)|<2}|X(1-\tau)-X(1)|^2\\
 &\qquad +\int_{\Omega\times O\times O}|y-z|^2 d\Pi\\
 &= \int_{\Omega} \chi_{|X(0)|<2}|X(1-\tau)-X(1)|^2\\
 &\qquad +\int_{O\times O}|y-z|^2 d\Pi_{23},
\end{align*}
 we have using that $|X(t)-X(1)|=(1-t)|X(0)-X(1)|$,
\begin{align}
\int_0^{\frac{3}{2}} \int_{\partial B_R} \lt(\Brho_+^{R,\lay}\rt)^2 
 &\les \tau^2  \int_{\Omega} \chi_{|X(0)|<2}|X(0)-X(1)|^2 +\int_{O\times O} |y-z|^2 d\Pi_{23}\nonumber\\
 &\le \tau^2E +D. \label{L2boundrholayR}
\end{align}
 In order to obtain the second estimate in \eqref{L2boundrholay}, we consider the coupling $\widehat{\Pi}$ between $\Bf_+^{R,\lay}$ (recall \eqref{defBflay+}) and $\Brhop^{R,\lay}$ given by
 \[
  \int_{\R^d\times\R^d} \zeta d\widehat{\Pi}:= \int_{\Omega\times O\times O}   \chi_{t^R_-<1-\tau<t^R_+<1}(X) \zeta\lt( X(t_+^R),R \frac{z}{|z|}\rt) d\Pi.
 \]
 This is indeed a coupling between $\Bf_+^{R,\lay}$ and $\Brhop^{R,\lay}$ since for $x\in \Omega$ such that $t^R_-<1-\tau<t^R_+<1$, we have $X(1)\in B_2\subset O$ and therefore,
 \begin{multline*}
  \int_{\R^d\times\R^d} \zeta(x) d\widehat{\Pi}= \int_{\Omega\times O\times O}   \chi_{t^R_-<1-\tau<t^R_+<1}(X) \zeta\lt( X(t_+^R)\rt) d\Pi\\
  =\int_{\Omega} \chi_{t^R_-<1-\tau<t^R_+<1}(X) \chi_{O}(X(1))\zeta\lt( X(t_+^R)\rt)=\int_{\R^d} \zeta d \Bf_+^{R,\lay}.
 \end{multline*}
Arguing as for \eqref{L2boundrhoRbis} we have
\begin{align*}
 \int_0^{\frac{3}{2}} W_2^2(\Bf_+^{R,\lay},\Brhop^{R,\lay})&\le \int_0^{\frac{3}{2}} \int_{\Omega\times O\times O}   \chi_{t^R_-<1-\tau<t^R_+<1}(X) \lt| X(t_+^R)-R \frac{z}{|z|}\rt|^2 d\Pi\\
 &\le \int_0^{\frac{3}{2}} \int_{\Omega\times O\times O}   \chi_{t^R_-<1-\tau<t^R_+<1}(X) \lt| X(t_+^R)-z\rt|^2 d\Pi\\
 &\les \int_0^{\frac{3}{2}} \int_{\Omega} \chi_{t^R_-<1-\tau<t^R_+<1}(X) \lt| X(t_+^R)-X(1)\rt|^2\\
 &\qquad + \int_0^{\frac{3}{2}} \int_{\Omega\times O\times O}   \chi_{t^R_-<1-\tau<t^R_+<1}(X) \lt| y-z\rt|^2 d\Pi\\
 &\les \int_0^{\frac{3}{2}} \int_{\Omega} \chi_{t^R_-<1-\tau<t^R_+<1}(X) \lt| X(1-\tau)-X(1)\rt|^2\\
 &\qquad + \int_0^{\frac{3}{2}} \int_{\Omega\times O\times O}   \chi_{t^R_-<1-\tau<t^R_+<1}(X) \lt| y-z\rt|^2 d\Pi\\
 &\les \int_{\Omega} \chi_{|X(0)|<\frac{7}{4}}\lt|X(1-\tau)-X(1)\rt|^3\\
& \qquad + \int_{\Omega\times O\times O} \chi_{|X(0)|<\frac{7}{4}} \lt| y-z\rt|^2 |X(1-\tau)-X(1)|d\Pi\\
 &\les \tau^3  \sup_{B_{\frac{7}{4}}} |T-x| \int_{\Omega} \chi_{|X(0)|< 2} |X(1)-X(0)|^2 \\
 &\qquad + \tau \sup_{B_{\frac{7}{4}}}|T-x| \int_{ O\times O}  \lt| y-z\rt|^2 d\Pi_{23} \\
 &\les \tau^3 E^{\frac{d+3}{d+2}} +\tau E^{\frac{1}{d+2}} D. 
\end{align*}
Putting this together with \eqref{L2boundfR}, \eqref{L2boundrhoR}, \eqref{L2boundrhoRbis} and \eqref{L2boundrholayR}, we see that we may choose $R\in (\frac{1}{2},\frac{3}{2})$ such that \eqref{L2boundf}, \eqref{L2boundrho} and \eqref{L2boundrholay} hold.
\end{proof}
Let $\mu'_R$ be the part of $\mu\restr B_R$ coming from trajectories which were inside $B_R$ before the time $1-\tau$. That is, for $\zeta\in C_c^0(\R^d)$ 
\begin{equation}\label{defmu'}
 \int_{\R^d} \zeta d\mu'_R:=\int_{\Omega} \chi_{t_-\le 1-\tau}(X)\chi_{t_+=1}(X) \zeta(X(1)).
\end{equation}
We then have
\begin{lemma}\label{lem:distdata}
 Assume that $E+D\ll 1$, then there exists $R\in (\frac{1}{2},\frac{3}{2})$ such that the conclusions of Lemma \ref{goodR} hold and 
 \begin{equation}\label{distdata}
  W_2^2\lt(\rho_{1-\tau}\restr B_R, \frac{\rho_{1-\tau}(B_R)}{|B_R|} \chi_{B_R}\rt)+W_2^2\lt(\mu'_R, \frac{\mu'_R(B_R)}{|B_R|} \chi_{B_R}\rt)\les \tau^2 E +D.
 \end{equation}

\end{lemma}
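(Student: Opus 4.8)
The plan is to keep the radius $R\in(\tfrac12,\tfrac32)$ produced by Lemma~\ref{goodR}, but to strengthen the underlying Fubini-in-$R$ argument so that two more families of quantities are controlled, and then to realise both Wasserstein distances in \eqref{distdata} at the Eulerian (Benamou--Brenier) level by gluing together the terminal part of the transport geodesic $(\rho,j)$ on the time interval $(1-\tau,1)$ with the geodesic realising $D$, each restricted to $B_R$ and corrected in a thin boundary layer $B_R\setminus B_{R-r}$ so as to kill the flux through $\partial B_R$. Concretely, let $(\sigma,k)$ be the Benamou--Brenier geodesic realising $D=W_2^2(\mu\restr O,\frac{\mu(O)}{|O|}\chi_O)$; besides the estimates of Lemma~\ref{goodR} I would add to the integration over $R\in(\tfrac12,\tfrac32)$ the bounds $\int_{1/2}^{3/2}\big(\int_0^1\int_{\partial B_R}\tfrac1\sigma|k\cdot\nu|^2\big)\,dR\lesssim D$ and $\int_{1/2}^{3/2}\mu\big(\overline B_R\setminus B_{R-\delta}\big)\,dR\lesssim \delta\,\mu(B_{3/2})\lesssim\delta$ (the last using \eqref{inclTt}, i.e.\ $\mu(B_{3/2})=|T^{-1}(B_{3/2})|\lesssim1$), with $\delta$ a fixed small multiple of $\tau E^{\frac1{d+2}}$. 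Choosing $R$ outside the exceptional set, all conclusions of Lemma~\ref{goodR} hold and, in addition, the flux of the $D$-geodesic through $\partial B_R$ is controlled in $L^2$ and $\mu$ charges the shell $\overline B_R\setminus B_{R-\delta}$ by at most $\lesssim\delta$.

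The first ingredient is the localisation estimate $W_2^2\big(\mu\restr B_R,\tfrac{\mu(B_R)}{|B_R|}\chi_{B_R}\big)\lesssim D$. Restricting $(\sigma,k)$ to $B_R$ gives a curve joining $\mu\restr B_R$ to $\tfrac{\mu(O)}{|O|}\chi_{B_R}$ of energy $\le D$ but carrying the flux $k\cdot\nu$ through $\partial B_R$; after replacing this flux near the singular endpoint of the $D$-geodesic by a well-behaved surrogate density --- exactly as Lemma~\ref{goodR} does for $(\rho,j)$, using the $L^2$ control just selected --- a boundary-layer correction $(s,q)$ of the type of \cite[Lemma~3.4]{GO} (and of Lemma~\ref{lem:defboundaryflux}) removes the flux at cost $\lesssim D$, and the leftover net mass discrepancy $\tfrac{\mu(O)}{|O|}|B_R|-\mu(B_R)$, being itself a net flux, is absorbed into the same layer.

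The second ingredient connects $\rho_{1-\tau}\restr B_R$ to $\mu\restr B_R$. By \eqref{inclTt} the straight trajectories $X(t)=T_t$ lying in $B_R$ at time $1-\tau$ start in $B_2$, and they move a distance $\tau|T-x|$ between times $1-\tau$ and $1$; running them backwards therefore couples $\mu'_R$ (the part of $\mu\restr B_R$ coming from particles already inside $B_R$ at time $1-\tau$) with a submeasure of $\rho_{1-\tau}\restr B_R$ at energy $\lesssim\tau^2\int_{B_2}|T-x|^2=\tau^2E$. The particles crossing $\partial B_R$ during $(1-\tau,1)$ are precisely those generating $\Bf_+^{R,\lay}$ and the analogous incoming object; I would re-route them inside $B_R\setminus B_{R-r}$ via the surrogate densities $\overline{\rho}^{R,\lay}_{\pm}$, $\overline{\rho}^R_{\pm}$ of Lemma~\ref{goodR} and the construction of Lemma~\ref{lem:defboundaryflux}, so that by \eqref{L2boundrho} and \eqref{L2boundrholay} the boundary-layer cost is $\lesssim\tau^2E+D$ (the $W_2$-corrections being of lower order, $\lesssim\tau^3E^{\frac{d+3}{d+2}}+\tau E^{\frac1{d+2}}D$). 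This produces an Eulerian competitor realising $W_2^2(\rho_{1-\tau}\restr B_R,\mu\restr B_R)\lesssim\tau^2E+D$, once masses are matched through the layer; chaining it with the localisation estimate gives the first term of \eqref{distdata}.

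For the second term, $\mu\restr B_R-\mu'_R$ is the image at time $1$ of the particles that enter $B_R$ only after time $1-\tau$, and a short geometric argument (each such trajectory moves at most $\tau|T-x|\lesssim\tau E^{\frac1{d+2}}$ after entering) shows it is supported within $\delta\lesssim\tau E^{\frac1{d+2}}$ of $\partial B_R$ and, by the shell bound built into the choice of $R$, has mass $\lesssim\delta$; hence it too is absorbed into the boundary layer rather than transported across $B_R$, and the estimate for $\mu'_R$ follows from the one for $\rho_{1-\tau}\restr B_R$ by the triangle inequality. The main obstacle is exactly the flux through $\partial B_R$: both $(\rho,j)$ near $t=1$ and the $D$-geodesic near its singular endpoint have densities that blow up, so these fluxes are not in $L^2$, and a crude correction --- spreading the $O(\sqrt{\tau^2E+D})$ of misrouted mass into the bulk over distance $O(1)$ --- would only give the worthless bound $\sqrt{\tau^2E+D}$. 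The resolution is precisely the replacement of these fluxes by the surrogate densities of Lemma~\ref{goodR} (with their $L^2$ and $W_2$ bounds \eqref{L2boundrho}--\eqref{L2boundrholay}) together with the logarithmically weighted boundary construction of Lemma~\ref{lem:defboundaryflux}, which allows the correction to be performed at a cost compatible with $\tau^2E+D$.
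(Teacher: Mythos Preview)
Your Eulerian strategy --- localise the $D$-geodesic $(\sigma,k)$ to $B_R$, correct the flux via \cite[Lemma~3.4]{GO} and surrogate boundary densities, then chain with the trajectories $X$ on $(1-\tau,1)$ --- is genuinely different from the paper's route, which stays Lagrangian throughout. The paper glues the coupling $(Id,T)$ with the optimal $D$-coupling $\Pi_{23}$ via the Gluing Lemma, pushes the relevant particles to the constant-density side to obtain a bounded sub-density $g\le\Gamma\chi_O$, projects the overshoot $g\restr B_R^c$ radially onto $\partial B_R$, and then closes the estimate $W_2^2(\hat g,\Lambda)$ by the trick $W_2(\hat g,\Lambda)\lesssim W_2(\tfrac12(\hat g+\Lambda),\Lambda)$ (so that the interpolating density is bounded below by $\Lambda/2$) together with the tailored elliptic Lemma~\ref{lem:elliptic}, whose weighted term $\int_{B_R}(R-|x|)\,g_-$ is exactly what integrates to $\lesssim D$ after Fubini in $R$. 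The paper thus never needs the intermediate localisation $W_2^2(\mu\restr B_R,\text{const})\lesssim D$, nor a time layer for the $D$-geodesic.

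Two concrete issues with your sketch. First, the localisation step relies on \cite[Lemma~3.4]{GO}, which needs the ambient density bounded below so that the correction $|s|\le\tfrac12$ keeps it non-negative; near the singular endpoint of the $D$-geodesic the density $\sigma_t$ vanishes on most of $B_R$, and the surrogate construction of Lemma~\ref{goodR} only replaces the \emph{boundary} flux --- it does not furnish an interior competitor on the time layer near $t=0$. In Proposition~\ref{prop:distjphiprop2} that interior piece is built precisely by invoking Lemma~\ref{lem:distdata}, so transplanting the same scheme here would be circular; you need instead an analogue of the paper's $\tfrac12(\hat g+\Lambda)$ trick plus Lemma~\ref{lem:elliptic} (or a new argument). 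Second, your passage from the $\rho_{1-\tau}\restr B_R$ bound to the $\mu'_R$ bound ``by the triangle inequality'' does not give the right order: the discrepancy $\mu\restr B_R-\mu'_R$ has mass $\lesssim\delta=\tau E^{\frac{1}{d+2}}$ by your shell bound, and absorbing it into a layer of width $\sim\delta$ costs $\sim\delta$ (mass $\delta$ moved a distance $O(1)$ against the constant, or even $\delta^3$ if confined to the shell), neither of which is $\lesssim\tau^2E+D$ for fixed $\tau$ and small $E$. The paper avoids this by proving the two halves of \eqref{distdata} independently with the same Lagrangian argument, not by deducing one from the other.
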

\begin{proof}
 We are only going to show that 
 \begin{equation}\label{distdata1}
  \int_{\frac{1}{2}}^{\frac{3}{2}}W_2^2\lt(\rho_{1-\tau}\restr B_R, \frac{\rho_{1-\tau}(B_R)}{|B_R|} \chi_{B_R}\rt)\les \tau^2 E +D,
 \end{equation}
 since the estimate for $W_2^2\lt(\mu'_R, \frac{\mu'_R(B_R)}{|B_R|} \chi_{B_R}\rt)$ is similarly obtained.
For notational simplicity, { in this proof we will often drop the  $R$ dependence in our notation.}  Put $\Lambda:= \frac{\rho_{1-\tau}(B_R)}{|B_R|}$ and $\Gamma:=\frac{\mu(O)}{|O|}$. We will not distinguish between $\Lambda$ and the function $\Lambda\chi_{B_R}$ and similarly for $\Gamma$. 
 Since $E\ll1$,  \eqref{LinftyboundT} and \eqref{inclTt} imply that $\Lambda\sim \Gamma\sim 1$.
Let $X$ be the optimal trajectories for $W_2^2(\chi_{\Omega},\mu)$ and $\Pi_{23}$ be the optimal coupling for $D=W_2^2(\mu\restr O,\Gamma \chi_{O})$. Let us recall that  $\Pi_{12}$ is the measure defined on $\Omega\times O$ by
\[
 \int_{\Omega\times O} \zeta d\Pi_{12}:= \int_\Omega \chi_{O}(T(x)) \zeta(x,T(x))
\]
and let $\tilde{\mu}\le \mu$ be the measure defined by (see Figure \ref{fig:muprime})
 \begin{figure}\begin{center}
 \resizebox{10.cm}{!}{\input{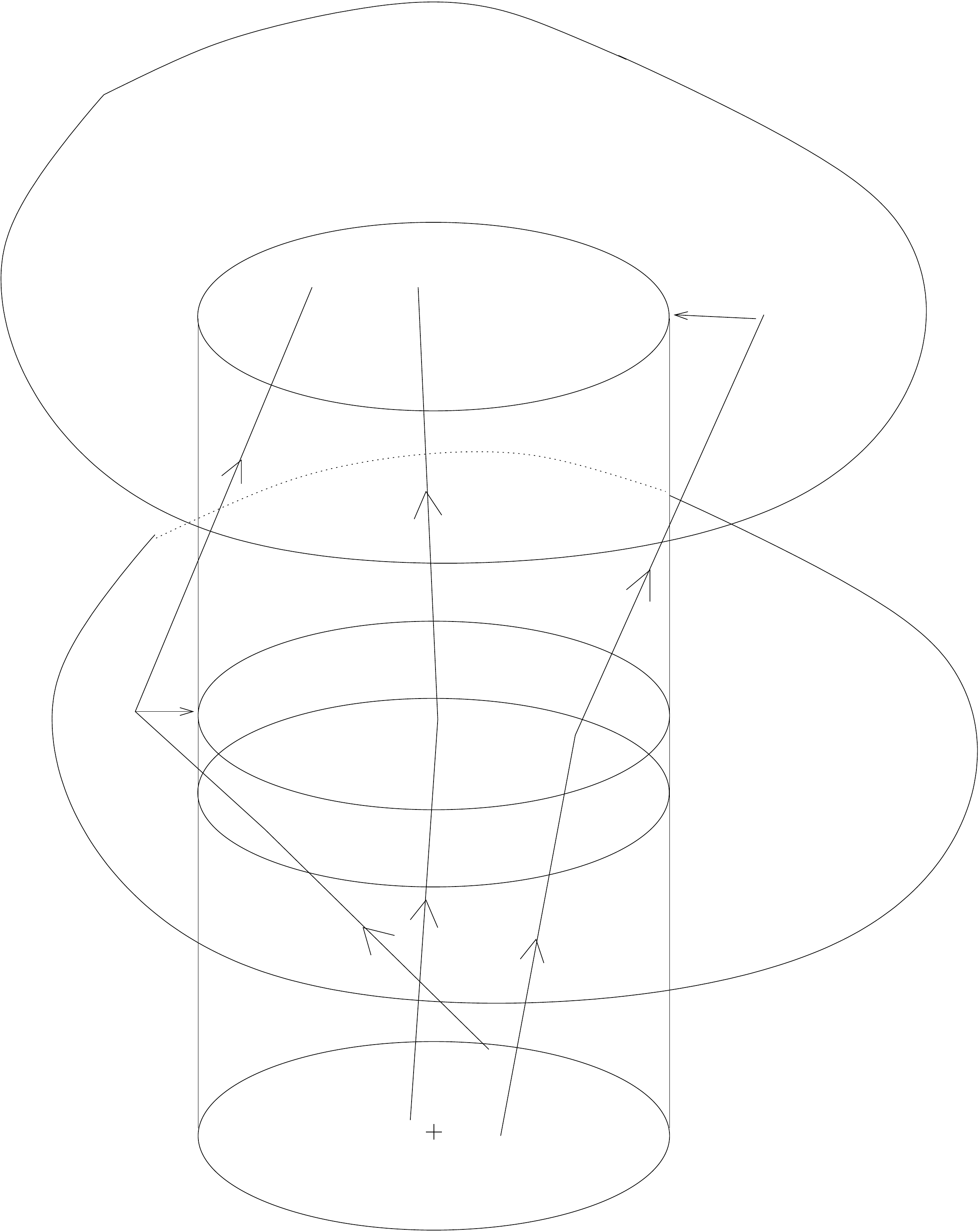_t}} 
   \caption{The definition of $\tilde{\mu}$, $g$, $f_{\tilde{\mu}}$ and $f_{g}$.} \label{fig:muprime}
 \end{center}
 \end{figure}
\[
 \int_{\R^d} \zeta d\tilde{\mu}:=\int_{\Omega} \chi_{|X(1-\tau)|\le R}(X) \zeta(X(1))=\int_{\Omega\times O} \chi_{|X(1-\tau)|\le R}(X) \zeta(y) d\Pi_{12},
\]
or in words, $\tilde{\mu}$ is the part of $\mu$ which originates from $\rho_{1-\tau}\restr B_R$ along $X$. Notice that $\tilde{\mu}\LL B_R=\mu'_R$ (recall \eqref{defmu'}). Recall that if $\Pi$ is the coupling obtained by the Gluing Lemma applied to $\Pi_{12}$ and $\Pi_{23}$, 
we can then define $g\le \Gamma \chi_{O}$ by
\[
 \int_{\R^d} \zeta dg:= \int_{\Omega\times O\times O}  \chi_{|X(1-\tau)|\le R}(X)  \zeta(z) d\Pi(x,y,z),
\]
which is the part of $\Gamma \chi_{O}$ which originates from $\tilde{\mu}$ through $\Pi_{23}$. We then project the parts of $\tilde \mu$ and $g$ outside $B_R$ onto $\partial B_R$: 
\[
 \int_{\R^d} \zeta df_{\tilde{\mu}}:= \int_{\R^d} \chi_{B_R^{c}}(y)\zeta\lt(R \frac{y}{|y|}\rt)  d\tilde{\mu}= \int_{\Omega\times O} \chi_{|X(1-\tau)|\le R}(X) \chi_{B_R^c}(y) \, \zeta\lt(R \frac{y}{|y|}\rt) d\Pi_{12}
 \]
 and 
 \begin{multline*}
 \int_{\R^d} \zeta df_{g}:=\int_{O}  \chi_{B_R^c}(z)\, \zeta\lt(R\frac{z}{|z|}\rt) d g
 =\int_{\Omega\times O\times O}  \chi_{|X(1-\tau)|\le R}(X) \chi_{B_R^c}(z) \zeta\lt(R\frac{z}{|z|}\rt) d\Pi.
\end{multline*}
 Since $g\le \Gamma\les 1$ we can argue as for  \eqref{claim:overz} to obtain\footnote{notice that we cannot assert the same thing for $f_{\tilde{\mu}}$}
 \begin{align}
 \int_0^{\frac{3}{2}}\int _{\partial B_R} f_{g}^2 &\les \int_0^{\frac{3}{2}} \int_{\Omega\times O \times O} |R-|z|| \chi_{|X(1-\tau)|\leq R}(X) \chi_{B_R^c}(z) d\Pi \nonumber\\
&\stackrel{\eqref{estimH1}}{\le} \int_{\Omega\times O \times O} |X(1-\tau)-z|^2 \chi_{|X(0)|<2 }(X)  d\Pi \nonumber\\
& \les  \int_{\Omega\times O \times O} |X(1-\tau)-y|^2 \chi_{|X(0)|<2 }(X)  d\Pi +  \int_{\Omega\times O \times O} |z-y|^2 d\Pi\nonumber\\
&\le \tau^2 E+D \label{L2boundfz'}.
\end{align}
We then let 
\[\hat{\mu}:= \tilde{\mu}\restr B_R +f_{\tilde \mu}\qquad \textrm{ and } \qquad \hat{g}:= g\restr B_R+ f_{g}.\]
Since projecting from outside $B_R$ reduces the distances
\begin{multline*}
 W^2_2(\rho_{1-\tau} \restr B_R,\hat{\mu})\le W^2_2(\rho_{1-\tau}\restr{B_R},\tilde \mu)\\
 \le \int_{\Omega} \chi_{|X(0)|<2} |X(1-\tau)-X(1)|^2=\tau^2\int_{B_2} |T-x|^2= \tau^2 E.
\end{multline*}
For the same reason, we also have
\[
 W^2_2(\hat{\mu},\hat{g})\le W^2_2(\tilde \mu,g)\le D. 
\]
Therefore by triangle inequality
\begin{equation}\label{triangleineq}
 W_2^2(\rho_{1-\tau} \restr B_R, \Lambda)\les W_2^2(\rho_{1-\tau} \restr B_R, \hat{\mu})+ W^2_2(\hat{\mu},\hat{g})+W_2^2(\hat{g}, \Lambda)\les \tau^2E+D +W_2^2(\hat{g}, \Lambda).
\end{equation}
We are thus left with the estimate of $W_2^2(\hat{g}, \Lambda)$. For this we first claim that 
\begin{equation*}
 W_2(\hat{g},\Lambda)\les W_2\lt(\frac{1}{2}(\hat{g}+\Lambda),\Lambda\rt).
\end{equation*}
Indeed, by triangle inequality and monotonicity of the transport cost
 \begin{align*}
 W_2(\Lambda,s)&\le W_2\lt(\Lambda, \frac{1}{2}(\Lambda+s)\rt)+ W_2\lt( \frac{1}{2}(\Lambda+s),s\rt)\\
 &\le W_2\lt(\Lambda, \frac{1}{2}(\Lambda+s)\rt) +W_2\lt( \frac{1}{2}\Lambda,\frac{1}{2}s\rt)\\
 &= W_2\lt(\Lambda, \frac{1}{2}(\Lambda+s)\rt) + \frac{1}{\sqrt{2}} W_2(\Lambda,s).
\end{align*}
Now let $\phi^{g}$ be the solution of 
\begin{equation}\label{eq:defphiz'}\begin{cases}
   \Delta \phi^{g} =\Lambda-g &\textrm{in } B_R\\[8pt]
   \frac{\partial \phi^{g}}{\partial \nu}= f_{g} &\textrm{on } \partial B_R,
  \end{cases}\end{equation}
 with $\int_{B_R} \phi^{g}=0$. Notice that since by definition of $\Lambda$ and $g$,  $\Lambda=\frac{1}{|B_R|} g(\R^d)$ so that by definition of $f_g$,
 \[
  \int_{B_R} (\Lambda-g)= g(B_R^c)=\int_{\partial B_R} f_{g},
 \]
so that this equation is indeed solvable. Let  
\[
 \tilde\rho:= (1-t)\Lambda+t \frac{1}{2} (\Lambda+ \hat{g}) \qquad \textrm{and} \qquad \tilde j:= \frac{1}{2} \nabla \phi^{g}.
\]
The pair $(\tilde \rho,\tilde j)$ is admissible for the Benamou-Brenier formulation  \eqref{BBwass} of $W^2_2(\Lambda, \frac{1}{2}(\Lambda+\hat{g}))$ since \eqref{eq:defphiz'} implies in a  distributional sense
\[
 \nabla\cdot \tilde j=\frac{1}{2}\lt(\Lambda-g-f_{g}\rt)=\frac{1}{2}\lt(\Lambda-\hat{g}\rt) \qquad \textrm{in } \R^d,
\]
where we think of $\tilde j$ as being extended by zero from $B_R$ to $\R^d$. Hence, as desired,
\[
 \partial_t \tilde\rho+ \nabla\cdot \tilde j=0 \qquad \textrm{in } \R^d\times(0,1) 
\]
in a distributional sense.
Noticing that 
\[
 \tilde\rho\ge\frac{1}{2} \Lambda,
\]
we thus have
\begin{equation}\label{estimW2Lambdaphi}
 W_2^2(\Lambda, \frac{1}{2}(\Lambda+\hat{g}))\les \frac{1}{\Lambda}\int_{B_R} |\nabla \phi^{g}|^2.
\end{equation}
Let $g_-:=(\Gamma-g)\chi_{B_R}$ so that by definition of $g$,
\[
 \int_{B_R} \zeta dg_-=\int_{\Omega\times O\times O } \chi_{ |X(1-\tau)|>R }(x)\chi_{B_R} (z) \zeta(z) d\Pi .
\]
Thanks to the $L^\infty$ bound  \eqref{LinftyboundT} on the transport, we have that $\spt g_-\subset \overline{B}_R\backslash B_{R/2}$. We can rewrite $\Delta \phi^{g}=\Lambda-g= g_--(\Gamma-\Lambda)$ so that by Lemma \ref{lem:elliptic}, 
\[
 \int_{B_R} |\nabla \phi^{g}|^2\les \int_{\partial B_R} f_g^2+\int_{B_R} (R-|x|) d g_-.
\]
Arguing as for \eqref{toproveL2}, we get
\[
 \int_{\frac{1}{2}}^{\frac{3}{2}}\int_{B_R} (R-|x|) d g_-\les D,
\]
so that using  \eqref{L2boundfz'}, \eqref{triangleineq} and \eqref{estimW2Lambdaphi} we obtain \eqref{distdata1}. From this we see that we may find $R\in (\frac{1}{2},\frac{3}{2})$ such that both the conclusions of Lemma \ref{goodR} and \eqref{distdata} hold. 
\end{proof}

\subsection{The main estimate}

To ease notation, we shall now assume that $R=1$ and we will drop the index $R$. The main goal of this section is to prove Proposition \ref{prop:intromaineulerian} 
which states that for every fixed $\tau\ll1$, there exists a constant $C(\tau)>0$ such that if $E$ and $D$ are small enough, then there exists an harmonic gradient field $\nabla \phi$ in $B_1$ such that 
\begin{equation}\label{aimeulerian}
 \int_{B_{\frac{1}{2}}}\int_0^1 \frac{1}{\rho}|j-\rho \nabla \phi|^2\les \tau E+ C(\tau)D.
\end{equation}
From this Eulerian estimate, Proposition \ref{prop:detintro}  which is the Lagrangian counterpart,  is readily obtained. This in turn leads to the proof of Theorem \ref{theo:det intro}, which is one step in a Campanato iteration scheme.

We now proceed with the definition of $\phi$. Recall $\Brho_\pm$ from Lemma \ref{goodR} and let $\phi$ be the (unique) solution of 
\begin{equation}\label{def:phi}\begin{cases}
   \Delta \phi=\frac{1}{|B_1|}\int_{\partial B_1} (\Brhop-\Brhom) &\textrm{in } B_1\\[8pt]
   \frac{\partial \phi}{\partial \nu}= \Brhop-\Brhom &\textrm{on } \partial B_1,
  \end{cases}\end{equation}
such that $\int_{B_1} \phi=0$. Notice that by \eqref{CalZyg}, the H\"older inequality  and \eqref{L2boundrho}
\begin{equation}\label{L2boundphi}
 \int_{B_1} |\nabla \phi|^2 \les \int_{\partial B_1} \Brhop^2+\Brhom^2\les E.
\end{equation}
Moreover, by Pohozaev, we also have 
\begin{equation}\label{pohozaev}
 \int_{\partial B_1} |\nabla \phi|^2\les E.
\end{equation}

The proof of \eqref{aimeulerian} is divided into two parts. The first is an almost orthogonality property (see \eqref{eq:distjphiprop}) and the second is a construction of a competitor to estimate 
\[
 \int_{B_1}\int_0^1 \frac{1}{\rho}|j|^2-\int_{B_1} |\nabla \phi|^2,
\]
see \eqref{eq:distjphiprop2}. We start with the almost orthogonality property.
\begin{proposition}\label{prop:almostorth}
 For every $0<\tau\ll1$, there exist constants $\eps(\tau)>0$ and $C(\tau)>0$  such that if  $E+D\le \eps(\tau)$, then letting $\phi$ be defined via \eqref{def:phi}, we have 
 \begin{equation}\label{eq:distjphiprop}
  \int_{B_{\frac{1}{2}}}\int_0^1\frac{1}{\rho}|j-\rho\nabla \phi|^2\les \lt(  \int_{B_1}\int_0^1 \frac{1}{\rho} |j|^2-\int_{B_1} |\nabla \phi|^2\rt)+\tau E+ C(\tau) D.
 \end{equation}

\end{proposition}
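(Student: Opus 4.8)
The plan is to expand the square, peel off a terminal time layer, and reduce \eqref{eq:distjphiprop} to the refined almost-orthogonality inequality \eqref{eq:almostorthsketch} advertised in the introduction; the bulk of the work is then to absorb three error terms.

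First I would split $\int_0^1=\int_0^{1-\tau}+\int_{1-\tau}^1$ and treat the terminal layer crudely: on $(1-\tau,1)$ use $\frac1\rho|j-\rho\nabla\phi|^2\le 2\frac1\rho|j|^2+2\rho|\nabla\phi|^2$, bound $\int_{B_{3/2}}\int_{1-\tau}^1\frac1\rho|j|^2\les\tau E$ by integrating \eqref{Linftyboundslice}, and, since $\nabla\phi$ is harmonic in $B_1$, use the interior estimate $\|\nabla\phi\|_{L^\infty(B_{1/2})}\les E^{1/2}$ (via \eqref{L2boundphi}) together with $\int_{B_{1/2}}\int_{1-\tau}^1\rho=\int_{1-\tau}^1\rho_t(B_{1/2})\,dt\les\tau$ (by \eqref{inclTt}) to get $\int_{B_{1/2}}\int_{1-\tau}^1\rho|\nabla\phi|^2\les\tau E$; hence the terminal layer contributes $\les\tau E$. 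On $(0,1-\tau)$ I would enlarge $B_{1/2}$ to $B_1$ and expand, which is legitimate since $\Brho:=\int_0^{1-\tau}\rho_t\,dt\les\tau^{-(d-1)}$ on $B_{3/2}$ by \eqref{Linftyrhobar}:
\begin{equation*}
\int_{B_1}\int_0^{1-\tau}\frac1\rho|j-\rho\nabla\phi|^2=\int_{B_1}\int_0^{1-\tau}\frac1\rho|j|^2-2\int_{B_1}\int_0^{1-\tau}j\cdot\nabla\phi+\int_{B_1}\Brho\,|\nabla\phi|^2 .
\end{equation*}
Integrating \eqref{conteqloc} in its differential-in-time form against the time-independent $\phi$ from $t=0$ to $t=1-\tau$, and using $\int_0^{1-\tau}f\,dt=\Bf$ (from \eqref{defBfR}), $\rho_0\equiv1$ on $B_1$, and the normalization $\int_{B_1}\phi=0$, gives $\int_{B_1}\int_0^{1-\tau}j\cdot\nabla\phi=\int_{B_1}\phi\,\rho_{1-\tau}+\int_{\partial B_1}\phi\,\Bf$, while \eqref{def:phi} and $\int_{B_1}\phi=0$ give $\int_{B_1}|\nabla\phi|^2=\int_{\partial B_1}\phi(\Brhop-\Brhom)$. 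Writing $\Brho=1+(\Brho-1)$ and $\Bf=\Bf_+-\Bf_-$, one is reduced to
\begin{equation*}
\Big|\int_{B_1}\phi\,\rho_{1-\tau}\Big|+\Big|\int_{\partial B_1}\phi\big[(\Brhop-\Bf_+)-(\Brhom-\Bf_-)\big]\Big|+\Big|\int_{B_1}(\Brho-1)|\nabla\phi|^2\Big|\les\tau E+C(\tau)D .
\end{equation*}

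For the boundary term I would use that $\Brho_\pm$ and $\Bf_\pm$ have equal mass, so $\phi$ may be replaced by $\phi$ minus its average on $\partial B_1$; Poincar\'e and Pohozaev \eqref{pohozaev} give $\int_{\partial B_1}|\nabbdr\phi|^2\les E$. Pairing along a McCann geodesic on $\partial B_1$ between $\Brho_\pm$ and $\Bf_\pm$, keeping the interpolating density $\les E^{1/(d+2)}\tau^{-(d-1)}$ by \eqref{displconvsphere}, \eqref{Linftyboundrho}, \eqref{LinftyBarf}, Cauchy--Schwarz and \eqref{L2boundrho} give a bound $\les\tau^{-(d-1)/2}E^{(d+3)/(d+2)}$, which is $\les\tau E$ since $(d+3)/(d+2)>1$ and $E\le\eps(\tau)$. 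The term $\int_{B_1}\phi\,\rho_{1-\tau}$ is treated in the same spirit: using $\int_{B_1}\phi=0$ to subtract the mean of $\rho_{1-\tau}$, the Calder\'on--Zygmund bound \eqref{CalZyg}, the displacement-interpolation duality between $W^{1,p}(B_1)$ and $W_2$, and \eqref{distdata} (which carries the decisive factor $\tau^2E$), one arrives at a product of powers of $E$ and of $\tau^2E+D$ absorbed into $\tau E+C(\tau)D$ by Young's inequality and the smallness of $E$. Finally, for the time-averaged density I would first sharpen \eqref{distdata} to $W_2^2\big(\rho_t\restr B_1,\tfrac{\rho_t(B_1)}{|B_1|}\big)\les t^2E+D$ for all $t\le1-\tau$ (identical proof), use convexity of $W_2^2$ under time-averaging to see that $\Brho$ is $W_2$-close to a constant, write $\Brho-1=\int_0^{1-\tau}(\rho_t-1)\,dt-\tau$ (the $-\tau$ contributing exactly $-\tau\int_{B_1}|\nabla\phi|^2\les\tau E$), and pair the remainder against $|\nabla\phi|^2$ using \eqref{CalZyg} again.

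The main obstacle is the last two error terms, i.e.\ controlling the time-averaged density against $|\nabla\phi|^2$ and the terminal slice $\rho_{1-\tau}$ against $\phi$: in contrast to \cite{GO}, $\rho$ has no $L^\infty$ bound uniform in $\tau$ (only \eqref{Linftyrhobar}) and $\mu$ is genuinely singular, so plain H\"older estimates are too lossy and one must exploit that, precisely because of the good-radius selection of Section \ref{sec:goodR} (Lemmas \ref{goodR} and \ref{lem:distdata}), the quantities $\Brho-1$, $\rho_{1-\tau}\restr B_1-\mathrm{const}$ and $\Brho_\pm-\Bf_\pm$ are small in suitable negative or weighted norms, with smallness polynomial in $E$ of exponent strictly larger than one or carrying extra factors of $\tau$, so that for $E\le\eps(\tau)$ these errors beat any fixed negative power of $\tau$. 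This is exactly where the analysis is ``considerably harder than in \cite{GO}''.
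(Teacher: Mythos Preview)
Your overall architecture---peel off the terminal layer, expand on $(0,1-\tau)$, and reduce to the three error terms $\int_{B_1}(\Brho-1)|\nabla\phi|^2$, $\int_{B_1}\phi\,\rho_{1-\tau}$, $\int_{\partial B_1}\phi[(\Bf_+-\Brhop)-(\Bf_--\Brhom)]$---matches the paper, and your treatment of the boundary term via McCann interpolation on $\partial B_1$ is essentially the paper's argument.

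The genuine gap is in your handling of $\int_{B_1}(\Brho-1)|\nabla\phi|^2$ (and, by extension, of $\int_{B_1}\phi\,\rho_{1-\tau}$). Knowing that $\Brho$ (or each $\rho_t$) is $W_2$-close to a constant does \emph{not} let you ``pair against $|\nabla\phi|^2$'': a $W_2$ bound controls integrals against Lipschitz test functions, but $|\nabla\phi|^2$ is only in $L^{p/2}(B_1)$ via \eqref{CalZyg} and has no uniform Lipschitz bound near $\partial B_1$. Moreover, even if you could pair, your proposed bound $W_2^2(\rho_t\restr B_1,\mathrm{const})\les t^2E+D$ integrates to $\les E+D$, which is only \emph{linear} in $E$ and cannot be absorbed into $\tau E$. (Also, Lemma~\ref{lem:distdata} was obtained by a Fubini choice of radius tailored to the single time $1-\tau$; getting it for all $t$ at the \emph{same} radius is not ``identical''.) The paper's mechanism is different and is really the heart of the proof: introduce a cutoff $\eta$ with $\chi_{B_{1-2r}}\le\eta\le\chi_{B_{1-r}}$ and use the identity
\[
\int_{B_1}(\Brho-(1-\tau))\eta|\nabla\phi|^2=\int_{B_1}\int_0^{1-\tau}(1-\tau-t)\,j\cdot\nabla(\eta|\nabla\phi|^2),
\]
obtained by testing \eqref{conteqloc} with $-(1-\tau-t)\eta|\nabla\phi|^2$. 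Cauchy--Schwarz with the $\rho$-weight, the pointwise bound $\rho_t\les(1-t)^{-d}$ from \eqref{Linfty} (which pairs favorably with $(1-\tau-t)^2$), and interior estimates $\int_{B_{1-r}}|\nabla(\eta|\nabla\phi|^2)|^2\les r^{-d}E^2$ (via the mean-value property and \eqref{CalZyg}) then give a term superlinear in $E$; the strip $B_1\setminus B_{1-2r}$ is handled by \eqref{Linftyrhobar} and \eqref{estimphiAr}. The term $\int_{B_1}\phi\,\rho_{1-\tau}$ is treated by writing it as $\int_{B_1}\int_0^1\nabla\phi\cdot\tilde j$ for the Benamou--Brenier pair $(\tilde\rho,\tilde j)$ realizing \eqref{distdata}, applying Cauchy--Schwarz, and then \emph{reusing the same cutoff/continuity-equation trick} to control $\int_{B_1}\int_0^1(\tilde\rho-\mathrm{const})|\nabla\phi|^2$. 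Your ``displacement-interpolation duality between $W^{1,p}$ and $W_2$'' is not a substitute for this step.
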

\begin{proof}
 {\textit Step 1.} Before starting, let us point out that since in $B_{\frac{1}{2}}$ the function $\nabla \phi$ is smooth, the measure $\rho \nabla \phi$ is well defined. Furthermore, since clearly $j-\rho \nabla \phi \ll \rho$ also the left-hand side of \eqref{eq:distjphiprop} is well defined. We start by noting that 
 \[
  \int_{B_{\frac{1}{2}}}\int_0^1\frac{1}{\rho}|j-\rho\nabla \phi|^2= \int_{B_{\frac{1}{2}}}\int_0^{1-\tau}\frac{1}{\rho}|j-\rho\nabla \phi|^2+  \int_{B_{\frac{1}{2}}}\int_{1-\tau}^1 \frac{1}{\rho}|j-\rho\nabla \phi|^2
 \]
and that since by harmonicity of $\nabla \phi$,  $\sup_{B_{\frac{1}{2}}} |\nabla \phi|^2\les \int_{B_1} |\nabla \phi|^2$,
\begin{align*}
 \int_{B_{\frac{1}{2}}}  \int_{1-\tau}^1 \frac{1}{\rho}|j-\rho\nabla \phi|^2&\les  \int_{B_{\frac{1}{2}}} \int_{1-\tau}^1 \frac{1}{\rho}|j|^2 +\sup_{B_{\frac{1}{2}}} |\nabla \phi|^2  \int_{B_{\frac{1}{2}}} \int_{1-\tau}^1 \rho\\
 &\stackrel{\eqref{Linftyboundslice}\& \eqref{L2boundphi}}{\les} \tau E. 
\end{align*}
Therefore,
\begin{equation}\label{eq:split1}
  \int_{B_{\frac{1}{2}}} \int_0^1 \frac{1}{\rho}|j-\rho\nabla \phi|^2\les  \int_{B_{1}} \int_0^{1-\tau} \frac{1}{\rho}|j-\rho\nabla \phi|^2+ \tau E
\end{equation} 
and we are left with bounding the first term on the right-hand side. Notice that since in $(0,1-\tau)$, $\rho$ and $j$ are bounded functions by \eqref{Linfty}, the right-hand side of \eqref{eq:split1} is well defined in a pointwise sense. Recalling that $\Brho=\int_0^{1-\tau} \rho$, we may now compute 
\begin{align}\label{eq:split2}
 \int_{B_{1}} \int_0^{1-\tau}\frac{1}{\rho}|j-\rho\nabla \phi|^2&=\int_{B_1}\int_0^{1-\tau}\frac{1}{\rho}|j|^2-\int_{B_1}|\nabla \phi|^2\\
 &\qquad -2\int_{B_1}\int_0^{1-\tau} \lt(j-\frac{1}{1-\tau}\nabla \phi\rt)\cdot \nabla \phi +\int_{B_1} (\Brho-1)|\nabla \phi|^2. \nonumber
\end{align}
\medskip

{\it Step 2.} In this step we show that 
\begin{equation}\label{eq:firsttermorth}
 \lt|\int_{B_1} (\Brho-1)|\nabla \phi|^2\rt|\les \lt(\tau^{-d(d-1)} \gamma_d(\tau)\rt)^{\frac{1}{d+2}} E^{\frac{d+3}{d+2}} +\tau E,
\end{equation}
where 
\begin{equation}\label{def:gammad}
 \gamma_d(\tau):=\begin{cases}
                1 & \textrm{for } d=2\\
                |\log \tau| & \textrm{for } d=3\\
                \tau^{-(d-3)} & \textrm{otherwise}.
               \end{cases}
\end{equation}
Let the boundary layer size $r\ll 1$ to be fixed later and let $\eta$ be a smooth cut-off function with $\chi_{B_{1-2r}}\le \eta\le\chi_{B_{1-r}}$ and $|\nabla \eta|\les r^{-1}$. We split the integral: 
\begin{equation}\label{firstfirsttermortho}
 \int_{B_1} (\Brho-1)|\nabla \phi|^2=\int_{B_1} (\Brho-1)(1-\eta) |\nabla \phi|^2+ \int_{B_1} (\Brho-1)\eta |\nabla \phi|^2.
\end{equation}
The first term may be estimated as follows
\begin{align}\nonumber
 \lt|\int_{B_1} (\Brho-1)(1-\eta) |\nabla \phi|^2\rt|&\le\int_{B_1\backslash B_{1-2r}} |\Brho-1| |\nabla \phi|^2\\ \nonumber 
 &\stackrel{\eqref{Linftyrhobar}}{\les} \tau^{-(d-1)} \int_{B_1\backslash B_{1-2r}} |\nabla \phi|^2\\ 
 &\stackrel{\eqref{estimphiAr}}{\les} r \tau^{-(d-1)} \int_{\partial B_1} \Brhop^2+\Brhom^2\stackrel{\eqref{L2boundrho}}{\les} r \tau^{-(d-1)} E \label{firsttermortho}.
\end{align}
 We  now turn to  the second term. By
\begin{multline*}
  \lt|\int_{B_1} (\Brho-1)\eta |\nabla \phi|^2\rt|\le  \lt|\int_{B_1} (\Brho-(1-\tau))\eta |\nabla \phi|^2\rt|+ \tau \int_{B_1} |\nabla \phi|^2\\
  \stackrel{\eqref{L2boundphi}}{\les} \lt|\int_{B_1} (\Brho-(1-\tau))\eta |\nabla \phi|^2\rt|+ \tau E, 
\end{multline*}
 it is enough to estimate $\lt|\int_{B_1} (\Brho-(1-\tau))\eta |\nabla \phi|^2\rt|$. To this purpose we give an alternative representation: since $-(1-\tau-t)\eta |\nabla \phi|^2\in C^\infty_c( B_1\times [0,1])$ we can extend it by zero for $t\in [1-\tau,1]$ and test \eqref{conteqloc} with it to obtain
\begin{multline*}
 \int_{B_1} \bar \rho \eta |\nabla \phi|^2= \int_{B_1}\int_0^{1-\tau} \rho \partial_t(-(1-\tau-t) \eta |\nabla \phi|^2)\\
 =\int_{B_1}\int_0^{1-\tau} (1-\tau-t) j\cdot \nabla(\eta |\nabla \phi|^2) +\int_{B_1} (1-\tau) \eta |\nabla \phi|^2.
\end{multline*}
Therefore,
\begin{align}
 \lt|\int_{B_1} (\Brho-(1-\tau))\eta |\nabla \phi|^2\rt|&=\lt|\int_{B_1}\int_0^{1-\tau} (1-\tau-t) j\cdot \nabla (\eta |\nabla \phi|^2)\rt|\nonumber\\
 &\le \lt(\int_{B_1}\int_0^{1-\tau}\frac{1}{\rho}|j|^2\rt)^{\frac12}\lt(\int_{B_1}\int_0^{1-\tau} (1-\tau-t)^2\rho  |\nabla (\eta |\nabla \phi|^2)|^2\rt)^{\frac12}  \nonumber\\
 &\stackrel{\eqref{Linfty}}{\les} E^{\frac12} \lt(\int_0^{1-\tau} \frac{1}{(1-t)^{d-2}} \int_{B_1} |\nabla (\eta |\nabla \phi|^2)|^2\rt)^{\frac12} \label{firsttermsecondorth}\\
 &\les   \gamma_d^{\frac12}(\tau)E^{\frac12} \lt(\int_{B_1} |\nabla (\eta |\nabla \phi|^2)|^2\rt)^{\frac12} \nonumber,
\end{align}
where we recall that $\gamma_d$ is defined in \eqref{def:gammad}. By Leibniz rule and Cauchy-Schwarz we have 
\begin{align*}
 \int_{B_1} |\nabla (\eta |\nabla \phi|^2)|^2&\les \frac{1}{r^2} \int_{B_{1-r}} |\nabla \phi|^4 +\int_{B_{1-r}} |\nabla \phi|^2|\nabla^2 \phi|^2\\
 &\les \frac{1}{r^2} \int_{B_{1-r}} |\nabla \phi|^4 +r^2 \int_{B_{1-r}} |\nabla^2 \phi|^4.
\end{align*}
By the mean value formula for $\nabla \phi$, for every $x\in B_{1-r}$,
\[
  |\nabla^2 \phi|(x)\les \frac{1}{r} \frac{1}{|B_{\frac{r}{2}}|}\int_{B_{\frac{r}{2}}(x)} |\nabla \phi|
\]
so that integrating, using Jensen inequality and Fubini,
\[
 r^2 \int_{B_{1-r}} |\nabla^2 \phi|^4\les \frac{1}{r^2} \int_{B_{1-\frac{r}{2}}} |\nabla \phi|^4
\]
from which the above estimate simplifies to 
\[
 \int_{B_1} |\nabla (\eta |\nabla \phi|^2)|^2\stackrel{\eqref{L2boundrho}}{\les} \frac{1}{r^2} \int_{B_{1-\frac{r}{2}}} |\nabla \phi|^4.
\]
Let $p=\frac{2d}{d-1}$. By the mean value formula for $\nabla \phi$ and Jensen's inequality,
\[\sup_{B_{1-\frac{r}{2}}} |\nabla \phi|\les \lt(\frac{1}{r^{d}}\int_{B_1} |\nabla \phi|^p\rt)^{\frac{1}{p}}\stackrel{\eqref{CalZyg}}{\les} r^{-\frac{d}{p}} \lt(\int_{\partial B_1} \Brhop^2+\Brhom^2\rt)^{\frac12}\les r^{-\frac{d}{p}} E^{\frac12}.\]
We then have
\begin{align*}
  \frac{1}{r^2} \int_{B_{1-\frac{r}{2}}} |\nabla \phi|^4&\le  \frac{1}{r^2} \sup_{B_{1-\frac{r}{2}}} |\nabla \phi|^{4-p} \int_{B_1} |\nabla \phi|^p\\
  &\les r^{-2} \lt(r^{-\frac{d}{p}} E^{\frac12}\rt)^{4-p}E^{\frac{p}{2}}=r^{-d} E^2.
\end{align*}
Collecting all the previous estimates we obtain
\[
 \lt|\int_{B_1} (\Brho-1)\eta |\nabla \phi|^2\rt|\les r^{-\frac{d}{2}} \gamma_d^{\frac12}(\tau)E^{\frac{3}{2}}+\tau E 
\]
and thus plugging this and \eqref{firsttermortho} into \eqref{firstfirsttermortho}, we get
\[
 \lt|\int_{B_1} (\Brho-1)|\nabla \phi|^2\rt|\les r \tau^{-(d-1)} E +r^{-\frac{d}{2}} \gamma_d^{\frac12}(\tau)E^{\frac{3}{2}} +\tau E.
\]
Optimizing in $r$ through $r=\lt(\tau^{2(d-1)}\gamma_d(\tau) E\rt)^{\frac{1}{d+2}}$ and using $\gamma_d^{\frac12}(\tau) E^{\frac12}\ll \tau^{-(d-1)}$ to ensure that $r\ll 1$, we obtain the aimed estimate \eqref{eq:firsttermorth}.

\medskip
{\it Step 3.} We now estimate 
\[
 \int_{B_1}\int_0^{1-\tau} \lt(j-\frac{1}{1-\tau}\nabla \phi\rt)\cdot \nabla \phi.
\]
For this we want to use \eqref{conteqloc}  for $\zeta= \chi_{(0,1-\tau)} \phi$. Notice first that since $\rho$, $j$ and $\Bf_{\pm}$ (recall the definition \eqref{def:fbarpm}) are bounded densities in
$(0,1-\frac{\tau}{2})$ (see Lemma \ref{lem:Linftyboundslice} and Lemma \ref{lem:linftybarf}), by density we can apply \eqref{conteqloc} to $\zeta\in H^{1}(B_1\times (0,1))$ with $\spt \zeta \subset \overline{B}_1\times [0, 1-\tau/2]$. 
 Let  $\phi_\delta\in C^0(B_1)$ be a mollification of $\phi$ so that by continuity of $t\to \rho_t$  in $W_2$, 
\begin{equation}\label{eq:phidelta}
\frac1\eps\int_{1-\tau}^{1-\tau+\eps}\int_{B_1}\phi_\delta \rho_t \to \int_{B_1} \phi_\delta\rho_{1-\tau}. 
\end{equation}
   Then, apply \eqref{conteqloc} to $\eta_\eps(t)\phi_\delta(x)$ where for $\eps >0$ 
\[
 \eta_\eps(t)=\begin{cases}
               1 &\textrm{for } t\in (0,1-\tau]\\
               1- \eps^{-1}(t-(1-\tau)) & \textrm{for } t\in(1-\tau,1-\tau +\eps)\\
               0 &\textrm{for } t\ge  1-\tau +\eps
              \end{cases}
\]
to obtain for $\eps\to 0$ using \eqref{eq:phidelta}
\[
 \int_{B_1}\int_0^{1-\tau}\nabla\phi_\delta\cdot j = \int_{B_1}\phi_\delta \rho_{1-\tau}-\int_{B_1}\phi_\delta + \int_{\R^d}\int_0^{1-\tau} \phi_\delta df.
\]
Letting $\delta \to 0$ using that $\Delta \phi=\textrm{constant}$ and $\int_{B_1} \phi=0$ and recalling the definition of $\phi$ in \eqref{def:phi} and \eqref{defBfR}, we thus obtain
\begin{equation}\label{toproveorthogonal}
 \int_{B_1}\int_0^{1-\tau} \lt(j-\frac{1}{1-\tau}\nabla \phi\rt)\cdot \nabla \phi=  \int_{B_1} \phi \rho_{1-\tau} +\int_{\partial B_1} \phi [(\Bf_+-\Brhop) -(\Bf_- -\Brhom)].
\end{equation}

Let us estimate the first term. Let $(\tilde{\rho},\tilde{j})$ be given by the Benamou-Brenier theorem and such that
\begin{equation}\label{eq:BBtilde}
 \int_{B_1}\int_{0}^1 \frac{1}{\tilde \rho}|\tilde j|^2 =W_2^2\lt( \frac{\rho_{1-\tau}(B_1)}{|B_1|} \chi_{B_1},\rho_{1-\tau}\restr B_1\rt)\stackrel{\eqref{distdata}}{\les} \tau^2 E+D.
\end{equation}
 If $\widetilde{T}$ is the optimal transport map between $ \frac{\rho_{1-\tau}(B_1)}{|B_1|} \chi_{B_1}$ and $\rho_{1-\tau}\restr B_1$, 
\begin{align*}
 \tilde \rho_t^{-\frac1d}&= \lt(\frac{\rho_{1-\tau}(B_1)}{|B_1|}\rt)^{-\frac{1}{d}} \mathrm{det}^{\frac{1}{d}} \nabla \tilde{T}_t(\tilde{T}_t^{-1})\\
 &\stackrel{\eqref{detconcave}}{\ge} \lt(\frac{\rho_{1-\tau}(B_1)}{|B_1|}\rt)^{-\frac{1}{d}} \lt( (1-t)+ t\mathrm{det}^{\frac{1}{d}} \nabla \tilde{T}(\tilde{T}_t^{-1})\rt)\\
 &= (1-t) \lt(\frac{\rho_{1-\tau}(B_1)}{|B_1|}\rt)^{-\frac1d}+ t \rho_{1-\tau}^{-\frac1d}(\tilde{T}_t^{-1})\\
 &\stackrel{\eqref{Linfty}}{\ge} (1-t) \lt(\frac{\rho_{1-\tau}(B_1)}{|B_1|}\rt)^{-\frac1d}+ t \tau
\end{align*}
and thus  since  $\frac{\rho_{1-\tau}(B_1)}{|B_1|}\sim 1$ thanks to \eqref{LinftyboundT},
\begin{equation}\label{argdisplconv}
 \tilde \rho\les ((1-t)+t\tau)^{-d}. 
\end{equation}
We then have because of $\int_{B_1} \phi=0$,
\begin{align}
 \lt|\int_{B_1} \phi \rho_{1-\tau}\rt|&= \lt|\int_{B_1}\int_0^1 \nabla \phi \cdot \tilde j\rt|\nonumber \\
 &\le  \lt(\int_{B_1}\int_0^1 \tilde \rho |\nabla \phi|^2\rt)^{\frac12}\lt(\int_{B_1}\int_0^1 \frac{1}{\tilde \rho} |\tilde j|^2\rt)^{\frac12}\nonumber\\
 &\stackrel{\eqref{eq:BBtilde}}{\les} \lt(\int_{B_1}\int_0^1 \lt(\tilde \rho-\frac{\rho_{1-\tau}(B_1)}{|B_1|}\rt) |\nabla \phi|^2 + \frac{\rho_{1-\tau}(B_1)}{|B_1|}\int_{B_1} |\nabla \phi|^2\rt)^{\frac12} \lt(\tau^2 E+D\rt)^{\frac12}\nonumber\\
 &\stackrel{\eqref{L2boundphi}}{\les} \lt(\int_{B_1}\int_0^1 \lt(\tilde \rho-\frac{\rho_{1-\tau}(B_1)}{|B_1|}\rt) |\nabla \phi|^2 + E\rt)^{\frac12} \lt(\tau^2 E+D\rt)^{\frac12}\nonumber\\
 &\les \tau \int_{B_1}\int_0^1 \lt(\tilde \rho-\frac{\rho_{1-\tau}(B_1)}{|B_1|}\rt) |\nabla \phi|^2 +\tau E +\tau^{-1} D \label{estimphirho},
\end{align}
where in the last line we used Young's inequality.
The term $\int_{B_1}\int_0^1 \lt(\tilde \rho-\frac{\rho_{1-\tau}(B_1)}{|B_1|}\rt) |\nabla \phi|^2$ is estimated as in Step 2. Indeed, choosing for $r\ll1$ a smooth cut-off function 
$\eta$ with $\chi_{B_{1-2r}}\le \eta\le \chi_{B_{1-r}}$, we obtain as in \eqref{firsttermortho} that 
\[
 \lt|\int_{B_1}\int_0^1 \lt(\tilde \rho-\frac{\rho_{1-\tau}(B_1)}{|B_1|}\rt) (1-\eta) |\nabla \phi|^2\rt|\les r \tau^{-(d-1)} E.
\]
Using that 
\[
 \tilde \rho-\frac{\rho_{1-\tau}(B_1)}{|B_1|}=\int_0^1(1-t) \partial_t \tilde{\rho},
\]
we obtain as in \eqref{firsttermsecondorth}
\begin{align*}
 \lt|\int_{B_1}\int_0^1 \lt(\tilde \rho-\frac{\rho_{1-\tau}(B_1)}{|B_1|}\rt) \eta |\nabla \phi|^2\rt|&\les \lt(\int_{B_1} \int_0^1 \frac{1}{\tilde \rho} |\tilde j|^2\rt)^{\frac12} \lt(\int_{B_1}\int_0^1 \frac{(1-t)^2}{ ((1-t)+t\tau)^{d}} |\nabla (\eta|\nabla \phi|^2)|^2\rt)^{\frac12}\\
 &\stackrel{\eqref{eq:BBtilde}}{\les} \gamma_d^{\frac12}(\tau) (\tau^2E+D)^{\frac12} \lt(\int_{B_1} |\nabla (\eta|\nabla \phi|^2)|^2\rt)^{\frac12}\\
 &\les  r^{-\frac{d}{2}} \gamma_d^{\frac12}(\tau) (\tau^2 E+D)^{\frac12} E.
\end{align*}
Optimizing in $r$, we get
\[
 \lt|\int_{B_1}\int_0^1 \lt(\tilde \rho-\frac{\rho_{1-\tau}(B_1)}{|B_1|}\rt) |\nabla \phi|^2\rt|\les \lt(\tau^{-d(d-1)} \gamma_d(\tau)\rt)^{\frac{1}{d+2}} (\tau^2 E+D)^{\frac{1}{d+2}} E.
\]
Plugging this into \eqref{estimphirho} we obtain for some $C(\tau)\gg1$,
\begin{equation}\label{secondtermfirstorth}
 \lt|\int_{B_1} \phi \rho_{1-\tau}\rt|\les  \lt(C(\tau) (\tau^2 E+D)^{\frac{1}{d+2}} +\tau\rt) E +\tau^{-1} D.
\end{equation}
We now turn to the second term in \eqref{toproveorthogonal}. It is enough to bound 
\[
 \int_{\partial B_1} \phi (\Bf_+-\Brhop)
\]
since the other term is treated analogously. Let $(\hat \rho, \hat j)$ be the minimizer of \eqref{BBsphere}, i.e.
\[
 \int_{\partial B_1}\int_0^1 \frac{1}{\hat \rho}|\hat j|^2= W_{\partial B_1}^2(\Brhop,\Bf_+)\stackrel{\eqref{L2boundrho}}{\les}E^{\frac{d+3}{d+2}}.
\]
Arguing as for \eqref{argdisplconv} but using \eqref{jacobiansphere} and \eqref{displconvsphere} together with  \eqref{Linftyboundrho} and  \eqref{LinftyBarf}, we obtain
\[
 \hat \rho^{-\frac{1}{d-1}}\ges  \lt((1-t)+ t\tau\rt) E^{-\frac{1}{(d-1)(d+2)}}
\]
so that 
\[
 \int_0^1 \hat \rho \les \tau^{-(d-2)} E^{\frac{1}{d+2}},
\]
with the convention that for $d=2$, $\tau^{-(d-2)}=|\log \tau|$.
By integration by parts we then have 
\begin{align*}
 \lt|\int_{\partial B_1} \phi (\Bf_+-\Brhop)\rt|&=\lt|\int_{\partial B_1} \int_0^1 \nabbdr \phi \cdot \hat j\rt|\\
 &\le \lt(\int_{\partial B_1} \int_0^1\hat \rho |\nabla \phi|^2\rt)^{\frac12}\lt(\int_{\partial B_1}\int_{0}^1 \frac{1}{\hat \rho}|\hat j|^2\rt)^{\frac12}\\
 &\stackrel{\eqref{pohozaev}}{\les} \lt(\tau^{-(d-2)} E^{\frac{1}{d+2}}\rt)^{\frac12} E^{\frac12} \lt(E^{\frac{d+3}{d+2}}\rt)^{\frac{1}{2}}\\
 &\les \tau^{-\frac{d-2}{2}} E^{\frac{d+3}{d+2}},
\end{align*}
 with the convention that for $d=2$, $\tau^{-\frac{d-2}{2}}=|\log \tau|^{\frac12}$. This estimate together with \eqref{secondtermfirstorth} yields
\begin{equation}\label{eq:secondtermmorth}
 \int_{B_1}\int_0^{1-\tau} \lt(j-\frac{1}{1-\tau}\nabla \phi\rt)\cdot \nabla \phi\les \lt(C(\tau)\lt[(\tau^2 E+D)^{\frac{1}{d+2}} +E^{\frac{1}{d+2}}\rt] +  \tau\rt) E+ \tau^{-1} D.
\end{equation}

Putting together \eqref{eq:split2}, \eqref{eq:firsttermorth} and \eqref{eq:secondtermmorth}, we conclude that
\begin{multline*}
  \int_{B_{\frac{1}{2}}}\int_0^1\frac{1}{\rho}|j-\rho\nabla \phi|^2- \lt(  \int_{B_1}\int_0^1 \frac{1}{\rho} |j|^2-\int_{B_1} |\nabla \phi|^2\rt)\\
  \les \lt(C(\tau)\lt[(\tau^2 E+D)^{\frac{1}{d+2}} +E^{\frac{1}{d+2}}\rt] +  \tau\rt) E+ \tau^{-1} D,
\end{multline*}
so that \eqref{eq:distjphiprop} follows if $E+D\le \eps(\tau)$ for some $\eps(\tau)$ small enough.
\end{proof}

We may now use the minimality of $(\rho,j)$ to estimate $ \int_{B_1}\int_0^1 \frac{1}{\rho} |j|^2-\int_{B_1} |\nabla \phi|^2$.

\begin{proposition}\label{prop:distjphiprop2}
 For every $0<\tau\ll 1$, there exist $\eps(\tau)$ and $C(\tau)$ such that if  $E+D\le \eps(\tau)$, then letting $\phi$ be defined via \eqref{def:phi}, we have
 \begin{equation}\label{eq:distjphiprop2}
    \int_{B_1}\int_0^1 \frac{1}{\rho} |j|^2-\int_{B_1} |\nabla \phi|^2\les \tau E+ C(\tau) D.
 \end{equation}
\end{proposition}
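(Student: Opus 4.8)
The plan is to exploit the local minimality \eqref{eq:locmin} of $(\rho,j)$ on $\overline B_1\times[0,1]$ by constructing an explicit competitor $(\tilde\rho,\tilde j)$ having the same boundary flux $f$ through $\partial B_1$ (in the cumulated sense that makes \eqref{conteqloc} admissible), then comparing $\int_{B_1}\int_0^1\frac1\rho|j|^2$ with $\int_{B_1}\int_0^1\frac1{\tilde\rho}|\tilde j|^2$ and showing that the latter is $\le\int_{B_1}|\nabla\phi|^2+\tau E+C(\tau)D$. As announced in the introduction, the competitor will be built separately on the two time slabs $(0,1-\tau)$ and $(1-\tau,1)$. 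On $(0,1-\tau)$ I take the bulk profile
\[
 (\tilde\rho,\tilde j)=\lt(1-\frac{t}{1-\tau}\frac{1}{|B_1|}\int_{\partial B_1}\Bf\ ,\ \nabla\phi\rt)+(s,q),
\]
where $(s,q)$ is supported in the thin annulus $(B_1\setminus B_{1-r})\times(0,1-\tau)$, solves the continuity equation with $s_0=s_{1-\tau}=0$ and carries the boundary correction $q\cdot\nu=j\cdot\nu-\Bf$ on $\partial B_1\times(0,1-\tau)$; its existence with the right energy estimate is \cite[Lemma 3.4]{GO}. On $(1-\tau,1)$ I connect the constant density $1-\frac1{|B_1|}\int_{\partial B_1}\Bf$ to $\mu\LL B_1$: the incoming flux is covered by re-using the corresponding part of $(\rho,j)$, the outgoing flux is absorbed by pre-depositing on $\partial B_1$ (using Lemma \ref{lem:defboundaryflux}) the particles scheduled to leave in $(1-\tau,1)$, and the leftover mass $\mu'$ is joined to the constant by a Benamou--Brenier geodesic whose cost is controlled by $W_2^2(\mu',\frac{\mu'(B_1)}{|B_1|})\les\tau^2E+D$ from Lemma \ref{lem:distdata}.

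The cost bookkeeping goes as follows. On $(0,1-\tau)$, expanding $\frac1{\tilde\rho}|\tilde j|^2$ and using $\tilde\rho=1+O(\tau^{-(d-1)})$ only on the boundary layer, the leading term is $\int_{B_1}\int_0^{1-\tau}|\nabla\phi|^2$ plus cross terms; the cross term $\int(\tilde\rho-1)|\nabla\phi|^2$ is handled exactly as in Step 2 of Proposition \ref{prop:almostorth}, giving $\les\tau E+C(\tau)D$ after optimizing the layer size $r$, and the contribution of $(s,q)$ is controlled by the $L^2$ bounds \eqref{L2boundf}, \eqref{L2boundrho} together with \eqref{estimphiAr}, again producing only $\tau E$-type errors after choosing $r=(\tau^{2(d-1)}E)^{1/(d+2)}$. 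The discrepancy between the prescribed fluxes $\Brho_\pm$ used in the definition of $\phi$ and the true $\Bf_\pm$ costs at most $W_{\partial B_1}^2(\Brho_\pm,\Bf_\pm)\les E^{(d+3)/(d+2)}$ by \eqref{L2boundrho}, which is $\le\tau E$ once $E\ll_\tau1$. On the terminal slab $(1-\tau,1)$, the re-used part of $(\rho,j)$ contributes $\le\int_{B_1}\int_{1-\tau}^1\frac1\rho|j|^2\les\tau E$ by \eqref{Linftyboundslice}, the pre-deposition costs $\les\tau^3E^{(d+3)/(d+2)}+\tau E^{1/(d+2)}D$ via \eqref{L2boundrholay} and the logarithmic factor in \eqref{estimW2construct}, and the geodesic for $\mu'$ costs $\les\tau^2E+D$; one absorbs the missing $\int_{1-\tau}^1|\nabla\phi|^2$ (which is $\les\tau E$) into the error. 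Summing, $\int_{B_1}\int_0^1\frac1{\tilde\rho}|\tilde j|^2\le\int_{B_1}|\nabla\phi|^2+\tau E+C(\tau)D$, and \eqref{eq:distjphiprop2} follows from \eqref{eq:locmin}.

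The main obstacle is making the competitor genuinely admissible: one must verify that $(\tilde\rho,\tilde j)$ is nonnegative (which forces $\tilde\rho\ge\frac12$, hence smallness conditions on $\frac1{|B_1|}\int_{\partial B_1}\Bf\les E^{1/(d+2)}$ and on the layer correction $s$), that its density at $t=1$ is exactly $\mu\LL B_1$ (requiring a careful decomposition of $\mu$ into the incoming-flux part, the pre-deposited outgoing part, and $\mu'$, consistent with the definitions \eqref{def:fpm}, \eqref{defBflay+}, \eqref{defmu'}), and that all the gluings along $t=1-\tau$ and along $\partial B_1$ are continuous in the distributional sense of \eqref{conteqloc} so that \eqref{eq:locmin} applies. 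A secondary technical point is that the normalization constants must match: $\frac{\rho_{1-\tau}(B_1)}{|B_1|}$, $1-\frac1{|B_1|}\int_{\partial B_1}\Bf$ and $\frac{\mu'(B_1)}{|B_1|}$ all differ from $1$ by $O(E^{1/(d+2)})$, and these mismatches, together with the flux through $\partial B_1$ in $(1-\tau,1)$, must be reconciled; this is where Lemma \ref{lem:defboundaryflux} and the displacement-convexity bounds \eqref{detconcave}, \eqref{argdisplconv} are used to keep every auxiliary density bounded below so that the Benamou--Brenier energies are finite.
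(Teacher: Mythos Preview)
Your proposal follows essentially the same competitor construction as the paper: the split into $(0,1-\tau)$ and $(1-\tau,1)$, the bulk profile $\nabla\phi+(s,q)$ via \cite[Lemma~3.4]{GO}, the boundary redistribution from $\Brho_\pm$ to $\Bf_\pm$ via Lemma~\ref{lem:defboundaryflux}, and the terminal-layer treatment of incoming flux, pre-deposited outgoing flux, and $\mu'$ via Lemma~\ref{lem:distdata}. Two small corrections to your bookkeeping: since the competitor $\tilde\rho$ is \emph{explicit} and constant up to $O(E^{1/2}+D^{1/2})$ on $B_{1-r}$, the term $\int(\tilde\rho-1)|\nabla\phi|^2$ does not require the continuity-equation argument of Step~2 of Proposition~\ref{prop:almostorth}---the paper simply takes $r\sim(\tau^{-d}E)^{1/(d+1)}$, which is the threshold for $|s|\le\tfrac12$ in \cite[Lemma~3.4]{GO}, and absorbs the resulting superlinear error $(\tau^{-d}E)^{(d+2)/(d+1)}$ into $\tau E$ for $E\le\eps(\tau)$; and the Benamou--Brenier geodesic connecting the constant to $\mu'$, once rescaled to $(1-\tau,1)$, costs $\tau^{-1}W_2^2\les\tau E+\tau^{-1}D$ rather than $\tau^2E+D$.
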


\begin{proof}
Recall the measure $f$ from  \eqref{def:fR}.
We are going to  construct a competitor $(\tilde \rho, \tilde j)$ supported in $\overline{B}_1\times[0,1]$ of the form  
\[
\tilde \rho_t= \rho_t^{\textrm{bdr}} \H^{d-1}\restr \partial B_1 +\rho_t^{\textrm{in}} dx\restr B_1 \quad \textrm{and }\quad \tilde j_t= j_t^{\textrm{bdr}} \H^{d-1}\restr \partial B_1 +j_t^{\textrm{in}} dx\restr B_1,
\]
where $j_t^{\textrm{bdr}}$ is tangent to $\partial B_1$, and such that for every $\zeta\in C^1(\R^d\times[0,1])$
\begin{multline}\label{eq:conteqtildrho}
\int_{\partial B_1} \int_0^1 \partial_t \zeta \rho^{\textrm{bdr}}+\nabbdr \zeta\cdot  j^{\textrm{bdr}}+\int_{B_1}\int_0^1  \partial_t \zeta \rho^{\textrm{in}}+\nabla \zeta\cdot  j^{\textrm{in}}\\
=\int_{B_1} \zeta_1d\mu-\int_{B_1}\zeta_0+\int_{\partial B_1}\int_0^1 \zeta f.
\end{multline}
By \eqref{eq:locmin}, we then have
\begin{equation}\label{eq:locmin2}
 \int_{B_1} \int_{0}^1 \frac{1}{\rho}|j|^2- \int_{B_1} |\nabla \phi|^2\le \int_{\R^d}\int_{0}^1 \frac{1}{\tilde \rho}|\tilde j|^2- \int_{B_1} |\nabla \phi|^2.
\end{equation}

\medskip
 For the construction we will decompose $(\tilde \rho, \tilde j)$ into (see Figures \ref{fig:rhobulk}-\ref{fig:rholay2})
\begin{align*}
\rho^{\textrm{in}}&:=\rho^{\textrm{bulk}} +  \rho^{\textrm{lay}}\\[6pt]
j^\textrm{in}&:= j^\bulk +j^{\textrm{lay}}\\[6pt]
\rho^{\textrm{bdr}}&:=\rho^{\textrm{bdr},\bulk} +\rho^{\textrm{bdr,lay}}\\[6pt]
 j^{\textrm{bdr}}&:=j^{\bdr,\bulk}+j^{\bdr,\lay}.
\end{align*}
The bulk terms will live in the time interval $(0,1-\tau)$ while the layer terms will allow to treat the boundary layer (in time) but will be defined for all $t\in(0,1)$.  
One of the crucial points for the estimate is that 
\begin{equation}\label{Linftyconstraint}
 \frac{1}{4}\le \rho^{\textrm{in}}\le 2 \qquad \textrm{for }\quad  t\in(0,1-\tau).
\end{equation}
Indeed, we will then have (recall that for $t\in (1-\tau,1)$, $\rho^{\textrm{in}}=\rho^{\textrm{lay}}$)
\begin{align*}
 \lefteqn{\int_{B_1}\int_0^{1} \frac{1}{\tilde{\rho}}|\tilde{j}|^2- \int_{B_1} |\nabla \phi|^2}\\
 &=\int_{B_1}\int_0^{1-\tau} \frac{1}{\rho^{\textrm{in}}}|j^{\textrm{in}}|^2-\int_{B_1}\int_0^{1} |\nabla \phi|^2 +\int_{\partial B_1}\int_{0}^1 \frac{1}{\rho^{\bdr}}|j^\bdr|^2 +\int_{B_1}\int_{1-\tau}^1\frac{1}{\rho^{\lay}}|j^{\lay}|^2 \\
 &\le  \int_{B_1}\int_0^{1-\tau} \frac{1}{\rho^{\textrm{in}}}\lt||j^{\textrm{in}}|^2-\rho^\textrm{in}|\nabla \phi|^2\rt| +\int_{\partial B_1}\int_{0}^1 \frac{1}{\rho^{\bdr}}|j^\bdr|^2+\int_{B_1}\int_{1-\tau}^1\frac{1}{\rho^{\lay}}|j^{\lay}|^2 \\
 &\stackrel{\eqref{Linftyconstraint}}{\les}\int_{B_1}\int_0^{1-\tau} \lt||j^\bulk|^2-\rho^\textrm{in}|\nabla \phi|^2\rt| +\lt(\int_{B_1}\int_0^{1-\tau} |j^\bulk|^2\rt)^{\frac12} \lt(\int_{B_1}\int_0^{1-\tau} |j^\lay|^2\rt)^{\frac12} \\
 &\qquad  +\int_{B_1}\int_0^{1-\tau} |j^\lay|^2
 +\int_{\partial B_1}\int_{0}^1 \frac{1}{\rho^{\bdr}}|j^\bdr|^2+\int_{B_1}\int_{1-\tau}^1\frac{1}{\rho^{\lay}}|j^{\lay}|^2.
\end{align*}
Since for $(\rho,j)$ compactly supported (recall \eqref{dualBB})
\[
 \int_{\R^d}\int_0^1 \frac{1}{2\rho}|j|^2=\sup_{\xi\in C^0(\R^d\times[0,1],\R^d)} \int_{\R^d}\int_0^1 \xi\cdot j-\frac{|\xi|^2}{2}\rho
\]
is subadditive, we have
\[
 \int_{\partial B_1}\int_{0}^1 \frac{1}{\rho^{\bdr}}|j^\bdr|^2\le \int_{\partial B_1}\int_{0}^1 \frac{1}{\rho^{\bdr,\bulk}}|j^{\bdr,\bulk}|^2+\int_{\partial B_1}\int_{0}^1 \frac{1}{\rho^{\bdr,\lay}}|j^{\bdr,\lay}|^2
\]
so that 
\begin{align}
 \int_{B_1}\int_0^{1} \frac{1}{\tilde{\rho}}|\tilde{j}|^2- \int_{B_1} |\nabla \phi|^2\les&\int_0^{1-\tau}\int_{B_1} \lt||j^\bulk|^2-\rho^\textrm{in}|\nabla \phi|^2\rt| \nonumber\\
 &+\lt(\int_{B_1}\int_0^{1-\tau} |j^\bulk|^2\rt)^{\frac12} \lt(\int_{B_1}\int_0^{1-\tau} |j^\lay|^2\rt)^{\frac12}  \nonumber\\
 &+\int_{\partial B_1}\int_{0}^1 \frac{1}{\rho^{\bdr,\bulk}}|j^{\bdr,\bulk}|^2+\int_{\partial B_1}\int_{0}^1 \frac{1}{\rho^{\bdr,\lay}}|j^{\bdr,\lay}|^2 \nonumber\\
 &+\int_{B_1}\int_0^{1-\tau} |j^\lay|^2+\int_{B_1}\int_{1-\tau}^1\frac{1}{\rho^{\lay}}|j^{\lay}|^2.\label{splitenergy}
\end{align}
We now define and estimate the various contributions to the energy.
\medskip

{\it Step 1.} We start by constructing and estimating $(\rho^\bulk,j^\bulk)$. The main estimate of this step is
\begin{equation}\label{eq:estimbulk}
 \int_{B_{1}}\int_0^{1-\tau} \lt||j^\bulk|^2-\rho^\textrm{in}|\nabla \phi|^2\rt|\les \tau E +\lt(\tau^{-d} E\rt)^{\frac{d+2}{d+1}}+D.
\end{equation}
Note that the first right-hand side term of \eqref{splitenergy} involves $\rho^\lay$ through $\rho^{\textrm{in}}=\rho^\bulk+\rho^\lay$. However, 
for its estimate in this substep we only need that for  $t\in (0,1-\tau)$, 
\begin{equation}\label{Linftyconstraintrholay}
 \rho^\lay\les \lt(\tau^2 E+D\rt)^{\frac12}\ll 1.
\end{equation}
Recalling the definition \eqref{defBflay+} of  $\Bf^\lay_+$   and that $\Bf_\pm$ are defined in \eqref{def:fbarpm} similarly, we let 
\begin{equation}\label{def:fhat}
 m_+^\lay:=\int_{\partial B_1}\Bf_+^\lay \qquad \textrm{and} \qquad \hat{f}:=\begin{cases}
           0 & \textrm{for } t\in [0,\tau)\\[8pt]
           -\frac{2}{1-2\tau}\Bf_- & \textrm{for } t\in [\tau,\frac{1}{2})\\[8pt]
           \frac{2}{1-2\tau}\Bf_+ & \textrm{for } t\in [\frac{1}{2}, 1-\tau).
          \end{cases}
\end{equation}
Notice that since $\int_{\partial B_1}\Bf_+^\lay=\int_{\partial B_1} \Brhop^\lay$, by Cauchy-Schwarz and \eqref{L2boundrholay},
\begin{equation}\label{boundsmhatf1}
m_+^\lay\les \lt(\tau^2 E+D\rt)^{\frac12}.
\end{equation}
Moreover,  since $\Bf_+ \perp \Bf_-$ (recall Lemma \ref{lem:linftybarf}),
\[
 \int_{\partial B_1}\int_0^{1-\tau} \hat{f}^2\les\int_{\partial B_1} \Bf_+^2+\Bf_-^2 \stackrel{\eqref{defBfR}}{=}\int_{\partial B_1} \Bf^2\les  \int_{\partial B_1}\int_0^{1-\tau} f^2,
\]
where in the last inequality we used Jensen's inequality. This yields
\begin{equation}\label{boundsmhatf2}
  \int_{\partial B_1}\int_0^{1-\tau} (f-\hat{f})^2 \les\int_{\partial B_1}\int_0^{1-\tau} f^2\stackrel{\eqref{L2boundf}}{\les}  \tau^{-d}E.
\end{equation}

For a boundary layer size $r\gg \lt(\tau^{-d}E\rt)^{\frac{1}{d+1}}\ges \lt(\int_{\partial B_1}\int_0^{1-\tau} (f-\hat{f})^2\rt)^{\frac{1}{d+1}}$ let $A_r:=B_1\backslash B_{1-r}$, and let $(s,q)$ be given
by \cite[Lemma 3.4]{GO} applied to $f-\hat{f}$ and to the time interval $(0,1-\tau)$ instead of $(0,1)$. We recall that $(s,q)$ is  such that 
it has support in $\overline{A}_r\times[0,1-\tau]$, $|s|\le \frac{1}{2}$ and for $\zeta\in C^1(\R^d\times[0,1])$, 
\[
\int_{B_1} \int_0^{1} \partial_t \zeta s + \nabla \zeta \cdot q = \int_{A_r}\int_0^{1-\tau} \partial_t \zeta s + \nabla \zeta \cdot q=\int_{\partial B_1}\int_0^{1-\tau} \zeta(f-\hat{f}).
\]
 In addition it satisfies the estimate 
\begin{equation}\label{estimsq}
 \int_{A_r}\int_0^{1-\tau} |q|^2\les r  \int_{\partial B_1}\int_0^{1-\tau} (f-\hat{f})^2\stackrel{\eqref{boundsmhatf2}}{\les} r \tau^{-d}E.
\end{equation}
We then let $(\rho^\bulk,j^\bulk)$, supported in $\overline{B}_1\times [0,1-\tau]$, be defined through (see Figure \ref{fig:rhobulk})
 \begin{figure}\begin{center}
 \resizebox{10.cm}{!}{\input{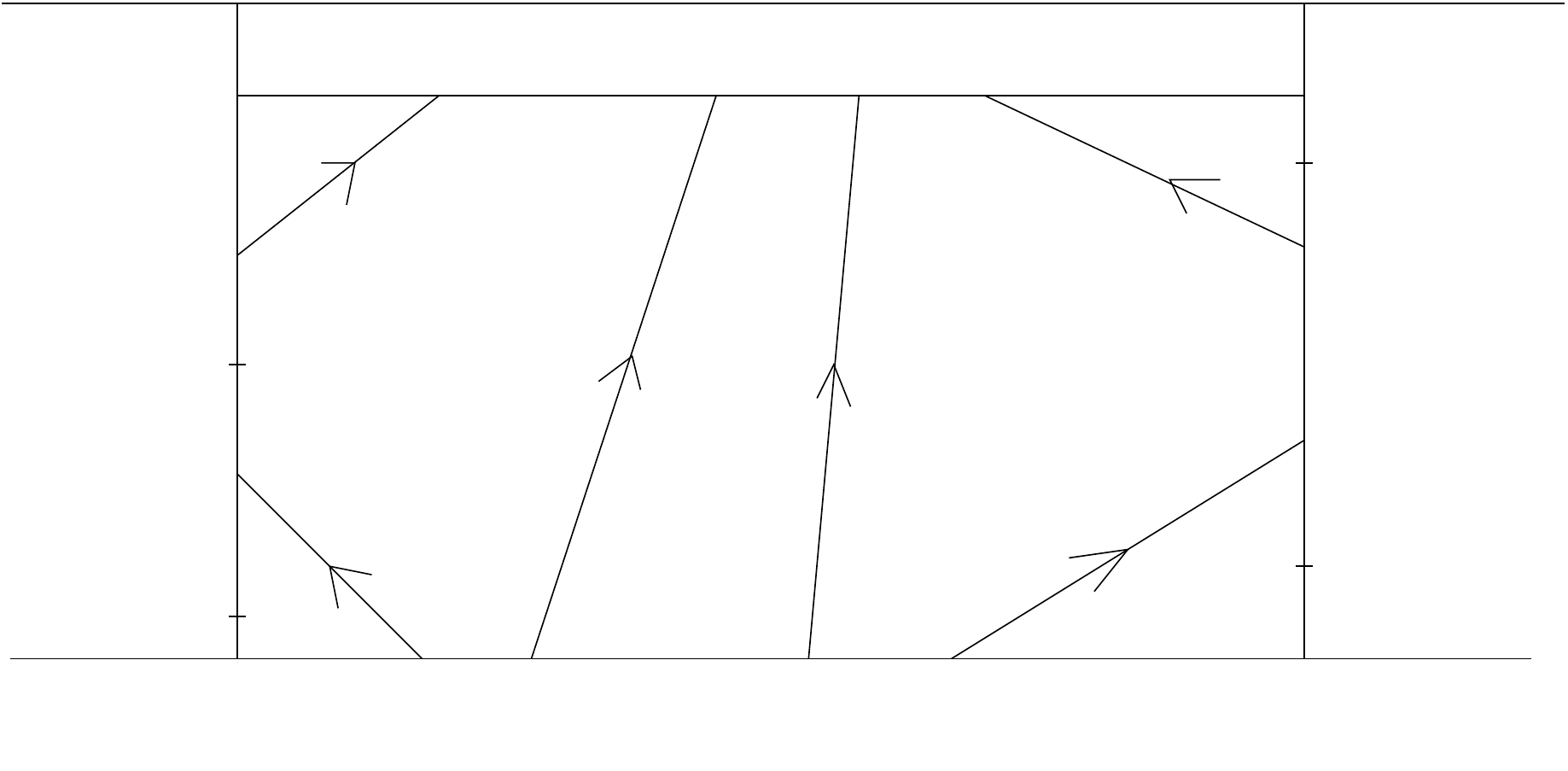_t}} 
   \caption{The definition of $\rho^\bulk$.} \label{fig:rhobulk}
 \end{center}
 \end{figure}

\[
 \rho^\bulk:= 1-\frac{m_+^\lay}{|B_1|}+s -\frac{1}{|B_1|}\int_{\partial B_1} (\Brhop-\Brhom)\times \begin{cases}
                                                                                             0 &\textrm{ for } t\in [0,2\tau]\\
                                                                                             \frac{t-2\tau}{1-4\tau} & \textrm{ for } t\in [2\tau,1-2\tau]\\
                                                                                             1 &\textrm{ for } t\in [1-2\tau,1-\tau]
                                                                                            \end{cases}
\]
and 
\[j^\bulk:=q+ \nabla \phi\times \begin{cases}
                                                                                             0 &\textrm{ for } t\in [0,2\tau]\\
                                                                                             \frac{1}{1-4\tau} & \textrm{ for } t\in [2\tau,1-2\tau]\\
                                                                                             0 &\textrm{ for } t\in [1-2\tau,1-\tau],
                                                                                            \end{cases}\]
so that by definition \eqref{def:phi} of $\phi$
\begin{multline}\label{conteqbulk}
 \int_{B_1}\int_0^{1} \partial_t \zeta \rho^\bulk + \nabla \zeta \cdot j^\bulk \\
 =\int_{B_1} \zeta_{1-\tau} \lt(1-\frac{m_+^\lay}{|B_1|}-\frac{1}{|B_1|}\int_{\partial B_1} (\Brhop-\Brhom)\rt) - \zeta_0\lt(1-\frac{m_+^\lay}{|B_1|}\rt) \\
 +  \int_{\partial B_1}\int_0^{1-\tau}\zeta\lt(\chi_{(2\tau,1-2\tau)}\frac{1}{1-4\tau} (\Brhop-\Brhom) + f-\hat{f}\rt) .
\end{multline}
Notice that thanks to \eqref{Linftyconstraintrholay} also \eqref{Linftyconstraint} is satisfied.\\
Now, we can  start  estimating 
\[
 \int_{B_{1-r}}\int_0^{1-\tau} \lt||j^\bulk|^2-\rho^\textrm{in}|\nabla \phi|^2\rt|.
\]
By definition of $\rho^\bulk$, since $s$ vanishes in $B_{1-r}\times(0,1-\tau)$,  
\begin{multline*}
 |1-\rho^\textrm{in}|\les\rho^\lay+ m_+^{\lay} +\int_{\partial B_1} (\Brhop+\Brhom)
 \stackrel{\eqref{Linftyconstraintrholay}\& \eqref{boundsmhatf1}}{\les} \lt(\tau^2 E +D\rt)^{\frac12} +\lt(\int_{\partial B_1} \Brho_+^2+\Brho_-^2\rt)^{\frac{1}{2}}\\
 \stackrel{\eqref{L2boundrho}}{\les}  D^{\frac12} +E^{\frac12}\ll 1 \qquad \qquad \textrm{in } \quad B_{1-r}\times(0,1-\tau).
\end{multline*}
Therefore, since $q$ also vanishes on $B_{1-r}\times(0,1-\tau)$,
\begin{align*}
  \int_{B_{1-r}} \int_0^{1-\tau}\lt||j^\bulk|^2-\rho^\textrm{in}|\nabla \phi|^2\rt|&\les (D^{\frac12} +E^{\frac12} +\tau) \int_{B_1} |\nabla \phi|^2\\
  &\stackrel{\eqref{L2boundphi}}{\les} E^{\frac{3}{2}}+ D^{\frac12}E +\tau E\les E^{\frac{3}{2}} +\tau E +D,
\end{align*}
where in the last line we used Young's inequality together with the fact that since $E\ll1$, $E^2\les E^{\frac{3}{2}}$.
Choosing $r$ to be a large multiple of $\lt(\tau^{-d} E\rt)^{\frac{1}{d+1}}$,  we have 
\begin{align*}
 \int_{A_r}\int_0^{1-\tau} \lt||j^\bulk|^2-\rho^\textrm{in}|\nabla \phi|^2\rt|&\stackrel{\eqref{Linftyconstraint}}{\les}\int_{A_r} |\nabla \phi|^2+ \int_{A_r}\int_0^{1-\tau} |q|^2\\
 &\stackrel{\eqref{estimphiAr}\&\eqref{estimsq}}{\les} r\lt( E +\tau^{-d} E\rt)\\
 &\stackrel{\eqref{L2boundf}}{\les} \lt(\tau^{-d} E\rt)^{\frac{d+2}{d+1}}.
\end{align*}
Combining these two estimates and taking into account that since $\tau\ll1$ and $E\ll1$, $E^{\frac{3}{2}}\les  \lt(\tau^{-d} E\rt)^{\frac{d+2}{d+1}}$, we find \eqref{eq:estimbulk}.
Notice also for further reference that using the same argument, we obtain
\begin{equation}\label{eq:estimjbulk}
 \int_{B_{1}}\int_0^{1-\tau} |j^\bulk|^2\les E +\lt(\tau^{-d} E\rt)^{\frac{d+2}{d+1}}.
\end{equation}

\medskip
{\it Step 2.} We now define $(\rho^{\textrm{bdr},\bulk},j^{\textrm{bdr},\bulk})$, supported in $ \partial B_1\times [0,1-\tau]$ so that 
\begin{equation}\label{conteqbdrbulk}
 \int_{\partial B_1}\int_0^{1} \partial_t \zeta \rho^{\textrm{bdr},\bulk}+ \nabbdr \zeta\cdot j^{\textrm{bdr},\bulk}  =\int_{\partial B_1}\int_0^{1-\tau} \zeta \lt( \hat{f} -\chi_{(2\tau,1-2\tau)}\frac{1}{1-4\tau} (\Brhop-\Brhom)\rt)
\end{equation}
holds and
\begin{equation}\label{step4.2}
 \int_{\partial B_1} \int_0^1 \frac{1}{\rho^{\bdr,\bulk}}|j^{\bdr,\bulk}|^2\les |\log \tau| E^{\frac{d+3}{d+2}}.
\end{equation}
Notice that combining \eqref{conteqbulk} and \eqref{conteqbdrbulk} yields
\begin{multline}\label{conteqbulk2}
 \int_{B_1}\int_0^{1} \partial_t \zeta \rho^\bulk + \nabla \zeta \cdot j^\bulk+\int_{\partial B_1}\int_0^{1} \partial_t \zeta \rho^{\textrm{bdr},\bulk}+ \nabbdr \zeta\cdot j^{\textrm{bdr},\bulk}\\
 =\int_{B_1}\zeta_{1-\tau} \lt(1-\frac{m_+^\lay}{|B_1|}-\frac{1}{|B_1|}\int_{\partial B_1}  (\Brhop-\Brhom)\rt) -\zeta_0\lt(1-\frac{m_+^\lay}{|B_1|}\rt) 
 +\int_{\partial B_1}\int_0^{1-\tau}\zeta f.
\end{multline}
  We make the ansatz $(\rho^{\bdr,\bulk},j^{\bdr,\bulk}):=(\rho^\bdr_1+\rho^\bdr_2, j^\bdr_1+j^\bdr_2)$ (see Figure \ref{fig:rhobulkbdr}) requiring that 
 \begin{figure}\begin{center}
 \resizebox{10.cm}{!}{\input{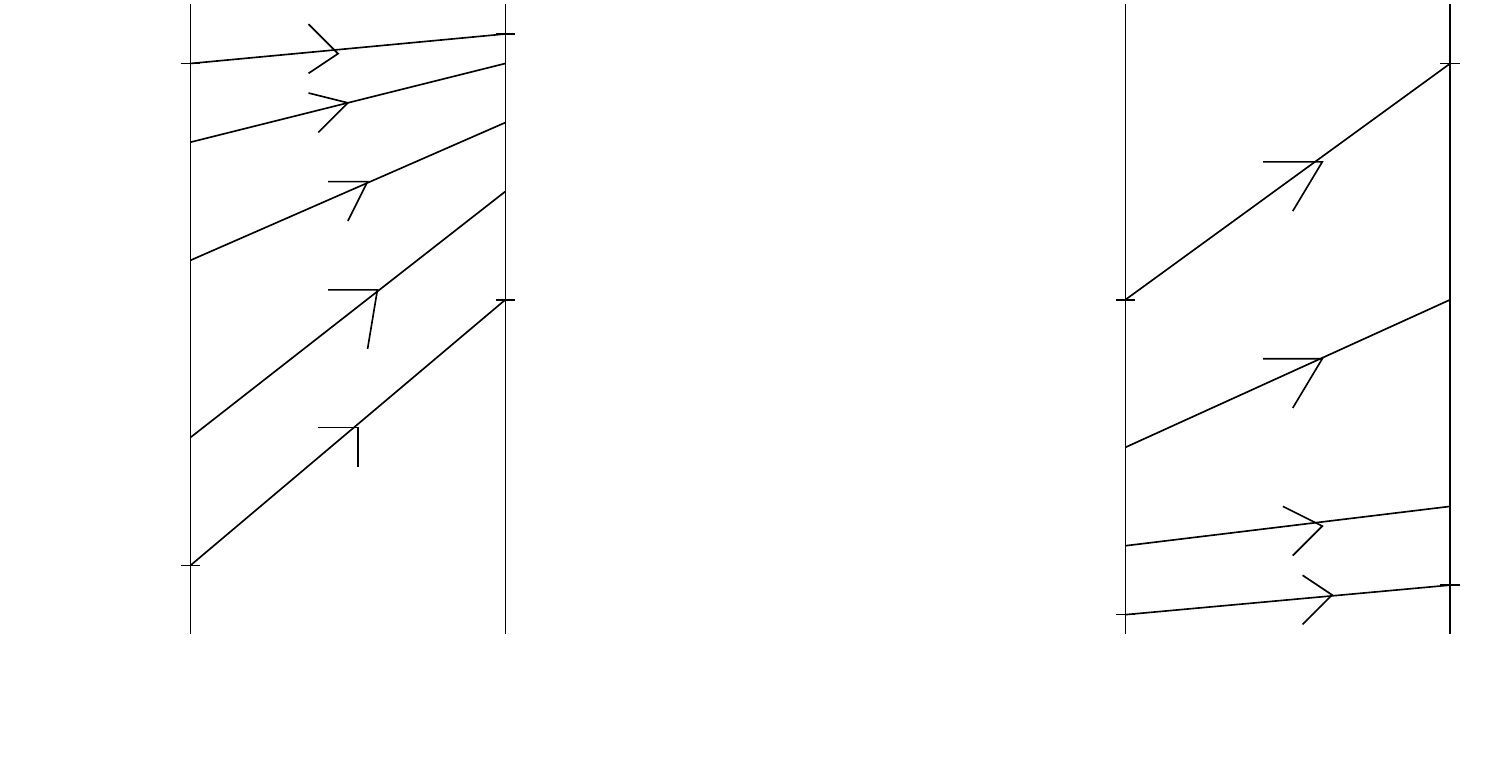_t}} 
   \caption{The definition of $\rho^{\bdr,\bulk}$.} \label{fig:rhobulkbdr}
 \end{center}
 \end{figure}
\begin{equation}\label{rhobdr1}
\int_{\partial B_1}\int_0^{1} \partial_t \zeta \rho^{\textrm{bdr}}_1+ \nabbdr \zeta\cdot j^{\textrm{bdr}}_1=\int_{\partial B_1}\int_\tau^{1-\tau}  \zeta\lt( \chi_{(\frac{1}{2},1-\tau)} \frac{2}{1-2\tau}\Bf_+-\chi_{(2\tau,1-2\tau)}\frac{1}{1-4\tau}\Brhop\rt) 
\end{equation}
and
\begin{equation}\label{rhobdr2}
\int_{\partial B_1}\int_0^{1} \partial_t \zeta \rho^{\textrm{bdr}}_2+ \nabbdr \zeta\cdot j^{\textrm{bdr}}_2=\int_{\partial B_1}\int_\tau^{1-\tau}\zeta \lt(\chi_{(2\tau,1-2\tau)}\frac{1}{1-4\tau}\Brhom- \chi_{(\tau,\frac{1}{2})}\frac{2}{1-2\tau} \Bf_-\rt) ,
\end{equation}
so that by definition \eqref{def:fhat} of $\hat f$, \eqref{conteqbdrbulk} holds. Let $(\rho^{\textrm{bdr}}_1,j^\bdr_1)$ be given by Lemma \ref{lem:defboundaryflux} for 
\[
 f_1=\Brhop,  \quad f_2= \Bf_+, \quad a=2\tau, \quad b=1-2\tau, \quad c=\frac{1}{2},\quad \textrm{and } \quad d=1-\tau 
\]
so that 
\[
 \frac{1}{(d-b)-(c-a)}\log \frac{d-b}{c-a}= -\frac{2}{1-6\tau}\log \frac{2\tau}{1-4\tau}\les |\log \tau|.
\]
Thanks to \eqref{conteqbdrconstruct}, we have \eqref{rhobdr1} and by \eqref{estimW2construct} combined with \eqref{L2boundrho}, we have 
\begin{equation}\label{estimW2rhobdr1}
 \int_{\partial B_1}\int_0^1 \frac{1}{\rho^\bdr_1}|j^\bdr_1|^2\les |\log \tau| E^{\frac{d+3}{d+2}}.
\end{equation}
Similarly, using Lemma \ref{lem:defboundaryflux} with 
\[
 f_1=\Bf_-,  \quad f_2= \Brhom, \quad a=\tau, \quad b=\frac{1}{2}, \quad c=2\tau,\quad \textrm{and } \quad d=1-2\tau 
\]
to define $(\rho^\bdr_2,j^\bdr_2)$, we obtain that \eqref{rhobdr2} holds and that \eqref{estimW2rhobdr1} is also satisfies by $ (\rho^\bdr_2,j^\bdr_2)$. By subadditivity this proves \eqref{step4.2}
\\

\medskip
{\it Step 3.} We now define and estimate the quantities related to the terminal layer (in time). In this step we deal with the construction in the time interval $[0,1-\tau]$  (see Figure \ref{fig:rholay1}) and  
define $(\rho^{\bdr,\lay},j^{\bdr,\lay})$ supported  $\partial B_1\times [0,1-\tau]$ and $(\rho^\lay,j^\lay)$ supported in $B_1\times[0,1-\tau]$ such that (recall the definition \eqref{defBflay+} of $\Bf_+^\lay$)
\begin{multline}\label{conteqlay1}
  \int_{B_1}\int_0^{1-\tau}\partial_t \zeta \rho^{\lay}+ \nabla \zeta \cdot j^{\lay} +\int_{\partial B_1}\int_0^{1-\tau} \partial_t \zeta \rho^{\bdr,\lay}+ \nabbdr \zeta \cdot j^{\bdr,\lay}\\
  = \int_{\partial B_1} \zeta_{1-\tau} \Bf^\lay_+-\int_{B_1} \zeta_0 \frac{m_+^\lay}{|B_1|}, 
\end{multline}
and 
\begin{equation}\label{Step4.3estim}
\int_{B_1}\int_{0}^{1-\tau} |j^\lay|^2+\int_{\partial B_1}\int_0^{1-\tau} \frac{1}{\rho^{\bdr,\lay}}|j^{\bdr,\lay}|^2
 \les \tau^2 E+D.
 \end{equation}
 Let $\phi^\lay$ be the solution of 
  \[\begin{cases}
   \Delta \phi^\lay =\frac{1}{|B_1|}\int_{\partial B_1} \Brhop^\lay=\frac{m_+^\lay}{|B_1|} &\textrm{in } B_1\\[8pt]
   \frac{\partial \phi^\lay}{\partial \nu}= \Brhop^\lay &\textrm{on } \partial B_1,
  \end{cases}\]
  with $\int_{\partial B_1} \phi^\lay=0$ (recall that $\Brhop^\lay$ was defined in Lemma \ref{goodR}). By \eqref{CalZyg} and H\"older's inequality combined with  \eqref{L2boundrholay},
  \begin{equation}\label{eq:estimenerphilay}
   \int_{B_1} |\nabla \phi^\lay|^2\les \tau^2 E +D.
  \end{equation}
We then let 
 \begin{figure}\begin{center}
 \resizebox{13.cm}{!}{\input{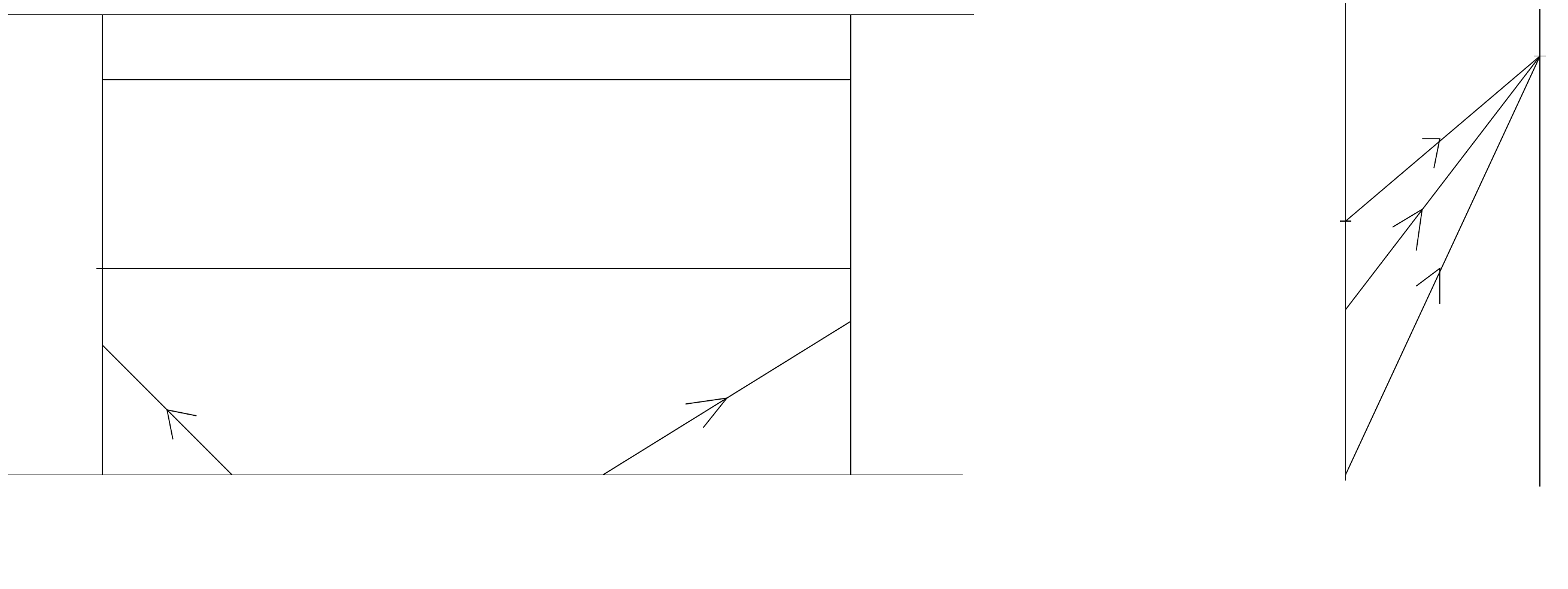_t}} 
   \caption{The definition of $\rho^{\lay}$ and $\rho^{\bdr,\lay}$.} \label{fig:rholay1}
 \end{center}
 \end{figure}
\[
 \rho^\lay:= (1-2t)\frac{m_+^\lay}{|B_1|} \qquad \textrm{and} \qquad j^\lay:= 2 \nabla \phi^\lay \qquad \textrm{for } \quad t\in (0,\frac{1}{2}),
\]
and extend them by zero for $t\in(\frac{1}{2},1-\tau)$.
Note that  \eqref{boundsmhatf1} automatically implies  the smallness hypothesis \eqref{Linftyconstraintrholay}. In view of the boundary value problem defining $\phi^\lay$ we have 
\begin{equation}\label{conteqlaybulk}
 \int_{B_1}\int_0^{1-\tau}\partial_t \zeta \rho^{\lay}+ \nabla \zeta \cdot j^{\lay}=\int_{\partial B_1}\int_0^{\frac12} 2\zeta \Brhop^\lay -\int_{B_1} \zeta_0 \frac{m^\lay_+}{|B_1|},  
\end{equation}
and  \eqref{eq:estimenerphilay} translates into
\begin{equation}\label{step 4.3lay}
 \int_{B_1}\int_{0}^{1-\tau} |j^\lay|^2 \les \tau^2 E + D.
\end{equation}
Let $(\rho^{\bdr,\lay},j^{\bdr,\lay})$ be defined by Lemma \ref{lem:defboundaryflux} with
\[
 f_1=\Brhop^\lay,  \quad f_2= \Bf_+^\lay, \quad a=0, \quad b=\frac{1}{2}, \quad  \textrm{and } \quad c=d=1-\tau 
\]
so that 
\[
 \frac{1}{(d-b)-(c-a)}\log \frac{d-b}{c-a}=\frac{1}{2}\log\frac{1-2\tau}{2(1-\tau)}\les 1.
\]
For these choices, \eqref{conteqbdrconstruct} turns into 
\begin{equation*}
 \int_{\partial B_1}\int_0^{1-\tau} \partial_t \zeta \rho^{\bdr,\lay}+\nabla \zeta\cdot j^{\bdr,\lay}=\int_{\partial B_1} \zeta_{1-\tau} \Bf_+^\lay-\int_{\partial B_1}\int_0^{\frac12} 2\zeta \Brhop^\lay 
\end{equation*}
so that combining with \eqref{conteqlaybulk} we find \eqref{conteqlaymoins}. By \eqref{estimW2construct} and \eqref{L2boundrholay} we also obtain
\begin{equation*}
 \int_{\partial B_1}\int_0^{1-\tau} \frac{1}{\rho^{\bdr,\lay}}|j^{\bdr,\lay}|^2\les \tau^3 E^{\frac{d+3}{d+2}}+ \tau E^{\frac{1}{d+2}} D,
\end{equation*}
which combined with \eqref{step 4.3lay} and the fact that $\tau^3 E^{\frac{d+3}{d+2}}+ \tau E^{\frac{1}{d+2}} D\les \tau^2E +D$ gives \eqref{Step4.3estim}.\\

\medskip
{\it Step 4.}
We are left with the construction in $[1-\tau,1]$. We define $(\rho^{\bdr,\lay},j^{\bdr,\lay})$ supported  $\partial B_1\times [1-\tau,1]$ and $(\rho^\lay,j^\lay)$ supported in $B_1\times[1-\tau,1]$ such that
\begin{multline}\label{conteq4.4}
 \int_{B_1}\int_{1-\tau}^1 \partial_t \zeta \rho^\lay+ \nabla \zeta \cdot j^\lay +\int_{\partial B_1}\int_{1-\tau}^1 \partial_t \zeta \rho^{\bdr,\lay}+ \nabbdr \zeta \cdot j^{\bdr,\lay} \\
 =\int_{B_1} \zeta_1 d\mu -\int_{B_1}\zeta_{1-\tau}\lt( 1-\frac{m_+^\lay}{|B_1|}-\frac{1}{|B_1|}\int_{\partial B_1} (\Brhop-\Brhom)\rt) -\int_{\partial B_1} \zeta_{1-\tau} \Bf^\lay_+ +\int_{\partial B_1} \int_{1-\tau}^1 \zeta f 
\end{multline}
and
\begin{equation}\label{estim4.4}
 \int_{B_1}\int_{1-\tau}^1 \frac{1}{\rho^{\lay}}|j^\lay|^2+ \int_{\partial B_1}\int_{1-\tau}^1 \frac{1}{\rho^{\bdr,\lay}}|j^{\bdr,\lay}|^2\les \tau E +\tau^{-1} D.
\end{equation}
Note that combining \eqref{conteqlay1} and  \eqref{conteq4.4}, we get
\begin{multline}\label{conteqlayfinal}
 \int_{B_1}\int_0^1  \partial_t \zeta \rho^{\lay}+\nabla \zeta \cdot j^\lay+\int_{\partial B_1}\int_0^1 \partial_t \zeta \rho^{\bdr,\lay}+\nabbdr \zeta \cdot j^{\bdr,\lay}=\int_{\partial B_1} \int_{1-\tau}^1 \zeta f \\
 +\int_{B_1} \zeta_1 d\mu -\int_{B_1}\zeta_{1-\tau}\lt(1-\frac{m_+^\lay}{|B_1|}-\frac{1}{|B_1|}\int_{\partial B_1} (\Brhop-\Brhom)\rt) -\zeta_0\frac{m_+^\lay}{|B_1|}. 
\end{multline}

The construction of $(\rho^{\bdr,\lay},j^{\bdr,\lay})$  takes care of the outgoing flux $f_+^\lay$ (recall \eqref{defflay+}) in $[1-\tau,1]$ by defining 
\[
 \rho^{\bdr,\lay}:= \int_t^1 f^\lay_+ \qquad \textrm{and} \qquad j^{\bdr,\lay}:=0 \qquad \textrm{on } \quad [1-\tau,1],
\]
and thus at no cost.
The construction of $(\rho^\lay,j^\lay)$ for $t\in (1-\tau,1)$ is done in several steps (see Figure \ref{fig:rholay2}).
\begin{figure}\begin{center}
 \resizebox{10.cm}{!}{\input{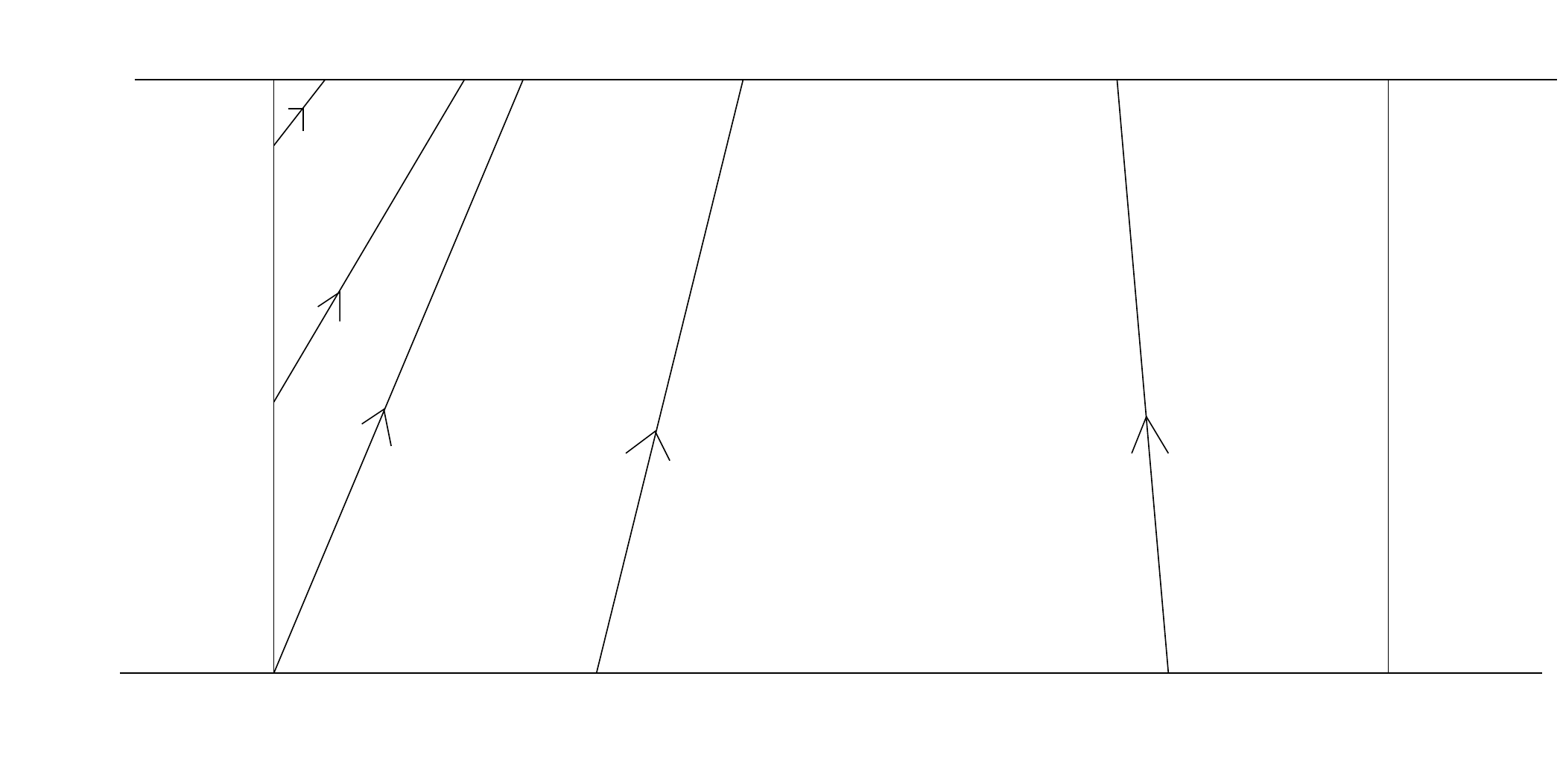_t}} 
   \caption{The definition of $\rho^{\lay}_-$ and $\rho^{\lay}_{\textrm{in}}$. } \label{fig:rholay2}
 \end{center}
 \end{figure}
 We first take care of the incoming flow $f_-$ (recall \eqref{def:fpm}) in $[1-\tau,1]$  and to this purpose take the corresponding bulk density from $X$ itself,
\[
 \int_{B_1} \int_{1-\tau}^1 \zeta d\rho^\lay_-:=\int_{\Omega} \chi_{1-\tau<t_-<t_+\le 1}(X) \int_{t_-}^{t_+} \zeta(X,t) 
\]
and bulk flux
\[
 \int_{B_1} \int_{1-\tau}^1 \xi\cdot dj^\lay_-:=\int_{\Omega} \chi_{1-\tau<t_-<t_+\le 1}(X) \int_{t_-}^{t_+} \xi(X,t)\cdot \dot{X}.
\]
With these definitions, it is readily seen that $j^\lay_-\ll \rho^\lay_-$. Let $\mu_-:= \rho_-^\lay (\cdot,1)$  and denote by $f_+^{\textrm{thr}}$ the flux coming from particles that enter and leave $B_1$ during $(1-\tau,1)$, i.e. the particles passing through $B_1$, 
\[
\int_{\partial B_1}\int_{1-\tau}^1 \zeta df_+^{\textrm{thr}}:=\int_{\Omega}\chi_{1-\tau\le t_-<t_+\le 1}(X)\zeta(X(t_+), t_+), 
\]
so that $f_+=f_+^\lay+f_+^{\textrm{thr}}$ on $\partial B_1\times [1-\tau,1]$ (recall the definitions \eqref{def:fpm} and \eqref{defflay+}).
By the same argument that led to  \eqref{conteqloc} we have
\begin{equation*}
 \int_{B_1}\int_{1-\tau}^1 \partial_t \zeta \rho^\lay_-+ \nabla \zeta \cdot j_-^\lay =\int_{B_1} \zeta_1 d\mu_- +\int_{\partial B_1} \int_{1-\tau}^1 \zeta (f_+^{\textrm{thr}}-f_-),
\end{equation*}
so that 
\begin{multline}\label{conteqlaymoins}
 \int_{B_1}\int_{1-\tau}^1 \partial_t \zeta \rho^\lay_-+ \nabla \zeta \cdot j_-^\lay +\int_{\partial B_1}\int_{1-\tau}^1 \partial_t \zeta \rho^{\bdr,\lay}+ \nabbdr \zeta \cdot j^{\bdr,\lay} \\
 =\int_{B_1} \zeta_1 d\mu_- -\int_{\partial B_1} \zeta_{1-\tau} \Bf^\lay_+ +\int_{\partial B_1} \int_{1-\tau}^1 \zeta f.
\end{multline}
Furthermore, by \eqref{dualBB}
\begin{align*}
 \int_{B_1}\int_{1-\tau}^1 \frac{1}{2\rho^\lay_-}|j^\lay_-|^2&=\sup_{\xi\in C^0(\overline{B}_1\times[1-\tau,1],\R^d)} \int_{B_1}\int_{1-\tau}^1 \xi\cdot j^{\lay_-}-\frac{|\xi|^2}{2} \rho^\lay_- \\
 &=\sup_{\xi\in C^0(\overline{B}_1\times[1-\tau,1],\R^d)} \int_{\Omega}  \chi_{1-\tau<t_-<t_+\le 1}(X)\int_{t_-}^{t_+} \xi(X,t) \cdot \dot{X} -\frac{|\xi|^2}{2}\\
 &\le \int_{\Omega}  \chi_{1-\tau<t_-<t_+\le 1}(X)\int_{t_-}^{t_+}\frac{1}{2} |\dot{X}|^2.
\end{align*}
Let us point out that using an approximation argument one could actually show that equality holds in the previous inequality. Using that the trajectories $X$ are straight lines, we have for $1-\tau<t_-<t_+\le 1$,  
\[\int_{t_-}^{t_+} |\dot{X}|^2=(t_+-t_-) |X(1)-X(0)|^2\le \tau |X(1)-X(0)|^2\]
so that 
\begin{multline}\label{eq:estimenerrhomoins}
 \int_{B_1}\int_{1-\tau}^1 \frac{1}{\rho^\lay_-}|j^\lay_-|^2\le \tau \int_{\Omega}  \chi_{1-\tau<t_-<t_+\le 1}(X)|X(1)-X(0)|^2\\
 \stackrel{\eqref{LinftyboundT}}{\le}\tau\int_{\Omega} \chi_{|X(0)|\le 2}(X) |X(1)-X(0)|^2=\tau E.
\end{multline}

It remains to connect  $\rho_{1-\tau}^\bulk=1-\frac{m_+^\lay}{|B_1|}-\frac{1}{|B_1|}\int_{\partial B_1} (\Brhop-\Brhom)=: \Lambda$, cf. \eqref{conteqbulk} to   
 what remains from the target measure $\mu$ after subtracting  $\rho^\lay_-(\cdot,1)=\mu_-$ i.e. we need to connect $\Lambda$ to $\mu':=\mu-\mu_-$. Since $\mu'$ coincides with the measure defined in \eqref{defmu'}, by \eqref{distdata}, 
\[
 W_2^2(\Lambda,\mu')\les\tau^2 E+D.
\]
We can thus use the Benamou-Brenier formulation of optimal transport  \eqref{BBwass} to  find $(\rho_{\textrm{in}}^\lay,j_{\textrm{in}}^\lay)$ rescaled from $[0,1]$ to $[1-\tau,1]$ and
such that, 
\begin{equation}\label{conteqlay2}
 \int_{B_1}\int^1_{1-\tau} \partial_t \zeta \rho_{\textrm{in}}^\lay+ \nabla \zeta \cdot j_{\textrm{in}}^\lay=\int_{B_1} \zeta_1 d\mu'-\int_{B_1}\zeta_{1-\tau}\lt( 1-\frac{m_+^\lay}{|B_1|}-\frac{1}{|B_1|}\int_{\partial B_1} (\Brhop-\Brhom)\rt) 
\end{equation}
and
\begin{equation}\label{eq:estimenerrhomoins2}
 \int_{B_1} \int_{1-\tau}^1 \frac{1}{\rho_{\textrm{in}}^\lay}| j_{\textrm{in}}^\lay|^2\les \tau E +\tau^{-1} D.
\end{equation}
We thus let for $t\in [1-\tau,1]$, $\rho^\lay:= \rho^\lay_-+\rho_{\textrm{in}}^\lay$ and $j^\lay:=j^\lay_-+j_{\textrm{in}}^\lay$. Combining \eqref{conteqlaymoins} and \eqref{conteqlay2} we obtain \eqref{conteq4.4}.
Moreover, using the subadditivity of $\int \frac{1}{\rho}|j|^2$,  \eqref{eq:estimenerrhomoins}, \eqref{eq:estimenerrhomoins2} and the fact that in $[1-\tau,1]$, $j^{\bdr,\lay}=0$ we conclude the proof of \eqref{estim4.4}.

\medskip

{\it Step 5.} Combining \eqref{conteqbulk2}  and \eqref{conteqlayfinal}, we see that \eqref{eq:conteqtildrho} holds. Plugging \eqref{eq:estimbulk}, \eqref{eq:estimjbulk},
\eqref{step4.2}, \eqref{Step4.3estim} and \eqref{estim4.4}, into \eqref{splitenergy}, we find

\begin{align*}
\int_{\R^d} \int_0^{1} \frac{1}{\tilde{\rho}}|\tilde{j}|^2- \int_{B_1} |\nabla \phi|^2&\les \tau E+(\tau^{-d} E)^{\frac{d+2}{d+1}} +D+\lt( E +(\tau^{-d} E)^{\frac{d+2}{d+1}}\rt)^{\frac12} \lt(\tau^2 E+D\rt)^{\frac12} \\
 &\qquad +\tau^2 E+D +|\log \tau| E^{\frac{d+3}{d+2}}  +\tau E +\tau^{-1} D\\
 &\les \tau E +(\tau^{-d} E)^{\frac{d+2}{d+1}}+|\log \tau| E^{\frac{d+3}{d+2}}+ \tau^{-1}D,
\end{align*}
where we used Young's inequality together with the fact that $\tau\ll1$ and $E+D\ll1$. Since $ (\tau^{-d} E)^{\frac{d+2}{d+1}}+|\log \tau| E^{\frac{d+3}{d+2}}$ is super-linear in $E$, 
there exists $0<\eps(\tau)\ll1$ such that if $E\le \eps(\tau)$, 
\[
 (\tau^{-d} E)^{\frac{d+2}{d+1}}+|\log \tau| E^{\frac{d+3}{d+2}}\les \tau E.
\]
Therefore, if $E+D\le \eps(\tau)$,
\[
 \int_{\R^d}\int_0^{1} \frac{1}{\tilde{\rho}}|\tilde{j}|^2- \int_{B_1} |\nabla \phi|^2\les \tau E +\tau^{-1}D,
\]
which  together with \eqref{eq:locmin2} proves \eqref{eq:distjphiprop2}.
\end{proof}
Combining \eqref{eq:distjphiprop} and \eqref{eq:distjphiprop2}, we obtain our main estimate,  Proposition \ref{prop:intromaineulerian} which we now recall for the reader's convenience.
\begin{proposition*}
For every $0<\tau\ll 1$, there exist positive  constants  $\eps(\tau)$ and  $C(\tau)$ such that if $E+D\ll \eps(\tau)$, then  letting $\phi$ be defined via \eqref{def:phi} we have 
 \begin{equation}\label{eq:distjphiprop3}
  \int_{B_{\frac{1}{2}}}\int_0^1\frac{1}{\rho}|j-\rho\nabla \phi|^2\les \tau E+ C(\tau) D.
 \end{equation}

\end{proposition*}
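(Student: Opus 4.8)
The plan is simply to combine the two preceding propositions, which together already contain all the work. Let $\eps_1(\tau),C_1(\tau)$ denote the constants furnished by Proposition \ref{prop:almostorth} and $\eps_2(\tau),C_2(\tau)$ those furnished by Proposition \ref{prop:distjphiprop2}, and set $\eps(\tau):=\min(\eps_1(\tau),\eps_2(\tau))$, $C(\tau):=\max(C_1(\tau),C_2(\tau))$. Assuming $E+D\le\eps(\tau)$, take $\phi$ to be the solution of \eqref{def:phi}; this is precisely the function appearing in both propositions, so there is no inconsistency in the choice.

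First I would invoke the almost-orthogonality estimate \eqref{eq:distjphiprop},
\begin{equation*}
\int_{B_{\frac12}}\int_0^1\frac{1}{\rho}|j-\rho\nabla\phi|^2\les\left(\int_{B_1}\int_0^1\frac{1}{\rho}|j|^2-\int_{B_1}|\nabla\phi|^2\right)+\tau E+C_1(\tau)D,
\end{equation*}
and then bound the bracketed quantity from above by means of \eqref{eq:distjphiprop2},
\begin{equation*}
\int_{B_1}\int_0^1\frac{1}{\rho}|j|^2-\int_{B_1}|\nabla\phi|^2\les\tau E+C_2(\tau)D.
\end{equation*}
No sign information on this bracket is needed: if it happens to be negative, the first displayed inequality already gives the claim since the left-hand side is non-negative, and otherwise one substitutes the second bound directly. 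In either case, absorbing the dimensional constants, one arrives at
\begin{equation*}
\int_{B_{\frac12}}\int_0^1\frac{1}{\rho}|j-\rho\nabla\phi|^2\les\tau E+C(\tau)D,
\end{equation*}
which is \eqref{eq:distjphiprop3}.

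I do not expect any genuine obstacle at this stage. All the difficulty has been pushed into Proposition \ref{prop:almostorth} — the selection of a good radius $R$ via Lemma \ref{goodR} and Lemma \ref{lem:distdata}, the definition of $\phi$ from the regularized fluxes $\Brho_\pm$, and the delicate control of the terms $\int_{B_1}(\Brho-1)|\nabla\phi|^2$, $\int_{B_1}\phi\rho_{1-\tau}$ and the boundary flux contributions — and into Proposition \ref{prop:distjphiprop2}, which builds the layered competitor $(\tilde\rho,\tilde j)$ and exploits the local minimality \eqref{eq:locmin}. The only mild bookkeeping point is that the smallness thresholds $\eps_i(\tau)$ must be intersected; in particular the superlinear-in-$E$ error terms produced in both arguments (such as $(\tau^{-d}E)^{\frac{d+2}{d+1}}$ and $|\log\tau|\,E^{\frac{d+3}{d+2}}$) have already been shown there to be $\les\tau E$ once $E\le\eps(\tau)$, so nothing further is required here.
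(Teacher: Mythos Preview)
Your proposal is correct and matches the paper's approach exactly: the paper states only that the result follows by ``Combining \eqref{eq:distjphiprop} and \eqref{eq:distjphiprop2}'', which is precisely what you do. Your added remark on the sign of the bracketed term and the intersection of the thresholds $\eps_i(\tau)$ is accurate bookkeeping that the paper leaves implicit.
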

Arguing exactly as in \cite[Proposition 4.6]{GO}, using the Benamou-Brenier formula \eqref{BBwass}, Lemma \ref{lem:Linftybound} and the harmonicity of $\nabla \phi$ (where $\phi$ is defined in \eqref{def:phi}), 
this result can be translated into Lagrangian terms, which gives Proposition \ref{prop:detintro}, i.e.
\begin{proposition*}
 For every $0<\tau\ll 1$, there exist positive  constants $\eps(\tau)$ and  $C(\tau)$ such that if $E+D\le \eps(\tau)$, then there exists a function $\phi$ with harmonic gradient in $B_{\frac{1}{4}}$ and such that 
 \begin{equation}\label{eq:distTphilag}
  \int_{B_{\frac{1}{4}}}|T-(x+\nabla \phi)|^2\les \tau E+ C(\tau) D
 \end{equation}
and 
\begin{equation}\label{estimphi}
  \int_{B_{\frac{1}{4}}}|\nabla \phi|^2\les E.
\end{equation}

\end{proposition*}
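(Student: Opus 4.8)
The plan is to deduce the Lagrangian statement, Proposition~\ref{prop:detintro}, from the Eulerian estimate \eqref{eq:distjphiprop3} of Proposition~\ref{prop:intromaineulerian}, following the argument of \cite[Proposition 4.6]{GO}. Recall that $(\rho,j)$ with $\rho_t=T_t\#\chi_\Omega$ and $j_t=T_t\#[(T-\mathrm{Id})\chi_\Omega]$ is the Benamou--Brenier minimizer for \eqref{BBwass}, so that $j=\rho\,v$ with $v(X(x,t),t)=T(x)-x$ along the (straight-line) trajectories $X(x,t)=T_t(x)$. The Eulerian estimate controls $\int_{B_{1/2}}\int_0^1\frac1\rho|j-\rho\nabla\phi|^2=\int_{B_{1/2}}\int_0^1|v-\nabla\phi|^2\,d\rho$, i.e.\ in Lagrangian coordinates, $\int_{\{T_t^{-1}(B_{1/2})\}}\int_0^1|(T(x)-x)-\nabla\phi(T_t(x))|^2\,dx\,dt$. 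Since $E\ll1$, Lemma~\ref{lem:Linftybound} gives the $L^\infty$ bound \eqref{LinftyboundT} on $T-x$ and the inclusion \eqref{inclTt}, which (applied at scale $R=1/2$, say) shows $T_t^{-1}(B_{1/4})\subset B_{1/2}$ for all $t$ after possibly shrinking radii; thus the trajectories starting in $B_{1/4}$ stay in $B_{1/2}$ and we may integrate the Eulerian integrand over $\{x: T_t(x)\in B_{1/4}\,\forall t\}\supset B_{1/4}$ (up to the usual bookkeeping of domains) to extract control of $\int_{B_{1/4}}\int_0^1|(T(x)-x)-\nabla\phi(T_t(x))|^2\,dt\,dx$.

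Next, I would replace $\nabla\phi(T_t(x))$ by $\nabla\phi(x)$ inside the integral. This is where harmonicity of $\nabla\phi$ in $B_1$ enters: $\nabla^2\phi$ is bounded on $B_{1/2}$ by $\les\bigl(\int_{B_1}|\nabla\phi|^2\bigr)^{1/2}\les E^{1/2}$ via \eqref{L2boundphi} and interior estimates, so $|\nabla\phi(T_t(x))-\nabla\phi(x)|\les E^{1/2}|T_t(x)-x|= E^{1/2}t|T(x)-x|\le E^{1/2}|T(x)-x|$ for $x\in B_{1/4}$. Hence
\[
\int_{B_{1/4}}\int_0^1|\nabla\phi(T_t(x))-\nabla\phi(x)|^2\,dt\,dx\les E\int_{B_{1/4}}|T-x|^2\les E\cdot E\ll \tau E
\]
once $E\le\eps(\tau)$. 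Combining with the previous step and the triangle inequality, and using that the integrand $|(T(x)-x)-\nabla\phi(x)|^2$ is $t$-independent (so $\int_0^1 dt=1$), yields $\int_{B_{1/4}}|T-(x+\nabla\phi)|^2\les \tau E+C(\tau)D$, which is \eqref{eq:distTphilag}; and \eqref{estimphi} is immediate from \eqref{L2boundphi} since $B_{1/4}\subset B_1$. Finally, harmonicity of $\nabla\phi$ in $B_{1/4}$ holds because $\nabla\phi$ was constructed harmonic in all of $B_1$ (the right-hand side of \eqref{def:phi} is a constant, so $\nabla\Delta\phi=0$).

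The main obstacle is the careful handling of the domains of integration: the Eulerian estimate lives on the Eulerian ball $B_{1/2}$ in the $y=T_t(x)$ variable, whereas the Lagrangian conclusion wants a Eulerian ball $B_{1/4}$ in the $x$ variable, and one must use the $L^\infty$-closeness of $T$ to the identity (Lemma~\ref{lem:Linftybound}, estimates \eqref{LinftyboundT}--\eqref{inclTt}) to reconcile the two and to ensure no mass escapes. This is precisely the place where the hypothesis $E+D\le\eps(\tau)$ (forcing $E\ll1$) is used, and it is the step that in \cite{GO} is done ``exactly as in Proposition 4.6''; all the remaining manipulations (replacing $\nabla\phi(T_t(x))$ by $\nabla\phi(x)$, absorbing the error into $\tau E$, dropping the $t$-integral) are routine given the interior regularity of the harmonic field $\nabla\phi$ and the bound \eqref{L2boundphi}.
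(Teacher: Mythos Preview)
Your proposal is correct and follows essentially the same approach as the paper, which simply refers to \cite[Proposition 4.6]{GO} and lists exactly the ingredients you use: the Benamou--Brenier representation \eqref{BBwass}, the $L^\infty$ bound of Lemma~\ref{lem:Linftybound}, and the harmonicity of $\nabla\phi$. The only cosmetic point is your invocation of \eqref{inclTt} ``at scale $R=1/2$'': what you actually need (and correctly state in words) is the forward inclusion $T_t(B_{1/4})\subset B_{1/2}$, which follows directly from the $L^\infty$ bound \eqref{LinftyboundT} at scale $R=1$ rather than from \eqref{inclTt}.
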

With this estimate at hand, we can now prove as in \cite[Proposition 4.7]{GO} one step of a Campanato iteration (recall that $E(\mu,T,R)$ and $D(\mu,O,R)$ are defined in \eqref{def:E} and \eqref{def:D}), i.e.\ Theorem \ref{theo:det intro} which we now recall.
 
\begin{theorem*}
 For every $0<\tau\ll 1$, there exist positive  constants $\eps(\tau)$,  $C(\tau)$ and $\theta>0$ such that if $E(\mu,T,R)+D(\mu,O,R)\le \eps(\tau)$, then there exists a symmetric matrix $B$ and a vector $b\in \R^d$ such that 
 \begin{equation}\label{eq:estimBb}
  |B-Id|^2+\frac{1}{R^2}|b|^2\les E(\mu,T,R),
 \end{equation}
and letting $\hat x:=B^{-1} x$, $\hat \Omega:= B^{-1} \Omega$ and then 
\begin{equation}\label{def:hat}
 \hat{T}(\hat x):=B(T(x)-b) \qquad \textrm{and} \qquad \hat \mu:=\hat{T}\# \chi_{\hat \Omega} d \hat{x},
\end{equation}
we have 
\begin{equation}\label{eq:mainonestep}
 E(\hat \mu,\hat T,\theta R)\le \tau E(\mu,T,R)+C(\tau) D(\mu,O,R).
\end{equation}

\end{theorem*}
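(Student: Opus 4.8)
The plan is to carry out one step of a Campanato iteration exactly in the spirit of \cite[Proposition 4.7]{GO}, feeding in the Lagrangian estimate of Proposition \ref{prop:detintro}. Since $E(\mu,T,R)$, $D(\mu,O,R)$ and the estimate \eqref{eq:mainonestep} are all invariant under the rescaling $x\mapsto x/R$, it suffices to treat the case $R=1$; I write $E:=E(\mu,T,1)$ and $D:=D(\mu,O,1)$. The key idea is to apply Proposition \ref{prop:detintro} not with the target parameter $\tau$, but with a much smaller parameter $\tau_0=\tau_0(\tau)\ll1$, to be fixed only at the very end, and then to gain the factor $\theta^2$ from the second-order Taylor expansion of the (interior-)harmonic field $\nabla\phi$.

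First I would invoke Proposition \ref{prop:detintro} with parameter $\tau_0$: this yields $\phi$ with $\nabla\phi$ harmonic in $B_{1/4}$ such that $\int_{B_{1/4}}|T-(x+\nabla\phi)|^2\les\tau_0 E+C(\tau_0)D$ and $\int_{B_{1/4}}|\nabla\phi|^2\les E$. By interior estimates for the harmonic function $\nabla\phi$ on $B_{1/4}$,
\[
 |\nabla\phi(0)|^2+|\nabla^2\phi(0)|^2+\sup_{B_{1/8}}|\nabla^3\phi|^2\les\int_{B_{1/4}}|\nabla\phi|^2\les E .
\]
I then set $b:=\nabla\phi(0)$ and $A:=Id+\nabla^2\phi(0)$. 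The matrix $A$ is symmetric, and since $|A-Id|^2\les E\ll1$ it is positive definite, so $B:=A^{-1/2}$ is a well-defined symmetric matrix with $|B-Id|^2\les|A-Id|^2\les E$ and $|b|^2\les E$; this is \eqref{eq:estimBb} (for general $R$ the vector $b$ carries the expected factor $R$). Next, from the bound on $\sup_{B_{1/8}}|\nabla^3\phi|$ and Taylor's formula, for every $0<\theta\le\frac1{24}$,
\[
 \int_{B_{3\theta}}\big|\nabla\phi(x)-\nabla\phi(0)-\nabla^2\phi(0)x\big|^2\,dx\les\theta^{d+4}\sup_{B_{1/8}}|\nabla^3\phi|^2\les\theta^{d+4}E .
\]

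The heart of the argument is then the affine change of variables $x=B\hat x$. Since $T=\nabla\psi$ with $\psi$ convex, $\hat T(\hat x)=B\,\nabla\psi(B\hat x)-Bb=\nabla_{\hat x}\big[\psi(B\hat x)-(Bb)\cdot\hat x\big]$ is the gradient of a convex function pushing $\chi_{\hat\Omega}$ onto $\hat\mu$, hence (by Brenier) it is the optimal transport map between $\chi_{\hat\Omega}$ and $\hat\mu$; moreover $\hat T(\hat x)-\hat x=B\big(T(x)-b-B^{-2}x\big)=B\big(T(x)-b-Ax\big)$ because $B^{-2}=A$. Writing
\[
 T(x)-b-Ax=\big(T(x)-x-\nabla\phi(x)\big)+\big(\nabla\phi(x)-\nabla\phi(0)-\nabla^2\phi(0)x\big),
\]
using $|B|\les1$, the comparability of the Jacobian $\det B^{-1}$ to $1$, the inclusion $B(B_{2\theta})\subset B_{3\theta}\subset B_{1/4}$ (valid since $|B-Id|\ll1$) and the inclusion $B_{2\theta}\subset\hat\Omega$ (which holds because $\hat\Omega=B^{-1}\Omega\supset B^{-1}(B_2)\supset B_1$), the two displays above give
\[
 E(\hat\mu,\hat T,\theta)=\frac1{\theta^{d+2}}\int_{B_{2\theta}}|\hat T-\hat x|^2\les\frac1{\theta^{d+2}}\big(\tau_0 E+C(\tau_0)D\big)+\theta^2E .
\]

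Finally I would fix the constants in the right order: choose $\theta=\theta(\tau)\in(0,1)$ small enough that the dimensional constant multiplying $\theta^2$ is at most $\tau/2$; then choose $\tau_0=\tau_0(\tau)\ll1$ small enough that the dimensional constant multiplying $\tau_0\theta^{-(d+2)}$ is at most $\tau/2$ (this, together with the smallness threshold of Proposition \ref{prop:detintro} applied with $\tau_0$ and with the requirement $E\ll1$ ensuring $B$ is invertible, determines $\eps(\tau)$); and set $C(\tau):=\theta^{-(d+2)}C(\tau_0)$. This yields $E(\hat\mu,\hat T,\theta)\le\tau E+C(\tau)D$, i.e.\ \eqref{eq:mainonestep}. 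The only genuinely delicate part of the write-up is the bookkeeping of the affine change of variables — the domain inclusions, the Jacobian bounds, and the verification that $\hat T$ is still an optimal transport map — and the order in which $\theta$ and $\tau_0$ are chosen; there is no new analytic difficulty beyond Proposition \ref{prop:detintro} itself.
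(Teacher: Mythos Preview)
Your proof is correct and follows essentially the same Campanato-step argument as the paper. The only difference is cosmetic: you set $B:=(Id+\nabla^2\phi(0))^{-1/2}$ so that $B^{-2}=Id+\nabla^2\phi(0)$ exactly, whereas the paper takes $B:=e^{-\nabla^2\phi(0)/2}$ and must carry the additional harmless error $|B^{-2}-Id-\nabla^2\phi(0)|^2\lesssim|\nabla^2\phi(0)|^4\lesssim E^2$; your choice is slightly cleaner but the structure of the argument (apply Proposition~\ref{prop:detintro} with an auxiliary small parameter, Taylor-expand $\nabla\phi$, then fix $\theta$ before the auxiliary parameter) is identical.
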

\begin{proof}
 The proof is analogous to the one of \cite[Proposition 4.7]{GO} with minor modifications. Still, we give the proof for the reader's convenience. By rescaling, we may assume that $R=1$
 and we then recall that $E=E(\mu,T,1)$ and $D=D(\mu,O,1)$. Let $\tau'$ to be fixed later on and let then $\phi$ be given by Proposition \ref{prop:detintro} for $\tau'$. We define $b:=\nabla \phi(0)$, $A:=\nabla^2\phi(0)$ 
 and then $B:=e^{-\frac{A}{2}}$ so that $B$ is symmetric. Since $\nabla \phi$ is harmonic, we obtain from \eqref{estimphi} and the mean value formula that \eqref{eq:estimBb} holds.  Defining $\hat T$ and $\hat \mu$ as in \eqref{def:hat},
 we get 
 \begin{align*}
  E(\hat \mu, \hat T, \theta )&=\frac{1}{\theta^{d+2}}\int_{B(B_{2\theta})} |\det B|^{-1}|B(T-b)-B^{-1} x|^2\\
  &\stackrel{\eqref{eq:estimBb}}{\les} \frac{1}{\theta^{d+2}}\int_{B_{4\theta}}|T-b-B^{-2} x|^2\\
  &\les  \frac{1}{\theta^{d+2}} \int_{B_{4\theta}}|T-(x+\nabla \phi)|^2 +\frac{1}{\theta^{d+2}} \int_{B_{4\theta}}|(B^{-2}-Id-A)x|^2 \\
  &\qquad +\frac{1}{\theta^{d+2}} \int_{B_{4\theta}}|\nabla \phi-b-Ax|^2\\
  &\les  \frac{1}{\theta^{d+2}} \int_{B_{4\theta}}|T-(x+\nabla \phi)|^2 +|B^{-2}-Id-A|^2  +\theta^{-2} \sup_{B_{4\theta}} |\nabla \phi-b-Ax|^2.
 \end{align*}
Recalling that $B=e^{-\frac{A}{2}}$, $b=\nabla \phi(0)$ and $A=\nabla^2 \phi(0)$, we conclude using again the mean value formula for $\nabla \phi$ 
\begin{align*}
 E(\hat \mu, \hat T, \theta )&\stackrel{\eqref{eq:distTphilag}}{\les} \theta^{-(d+2)}(\tau' E+C(\tau') D) +|\nabla^2 \phi(0)|^4+ \theta^2\sup_{B_{4\theta}} |\nabla^3 \phi|^2\\
 &\stackrel{\eqref{estimphi}}{\le} C\lt(  \theta^{-(d+2)}(\tau' E+C(\tau') D) +E^2 +\theta^2 E\rt).
\end{align*}
Choosing first $\theta$ small enough so that $C(E+\theta^2) \le\frac{\tau}{2}$ and then $\tau'$ small enough so that $C\theta^{-(d+2)} \tau'\le \frac{\tau}{2}$, we see that we can guarantee that \eqref{eq:mainonestep} is satisfied. 
\end{proof}

\section{Application to the optimal matching problem}\label{sec:application}
\subsection{Quantitative bounds on $T$}
We now turn back to the optimal matching problem and combine Theorem \ref{theo:estimDrstarper} and Theorem \ref{theo:det intro} to obtain the desired quantitative estimate on the transport map. Let us recall that we work here in dimension $d=2$. \\
Let us recall 
that for for every dyadic $L$,   we consider  $\mu$  a realization of
the $Q_L-$periodic Poisson point process (see Section \ref{sec:Poi}) and let $T=T_{\mu,L}$ be the optimal transport map between $\frac{\mu(Q_L)}{L^2}$ and $\mu$  for the periodic transport problem \eqref{perwass}, i.e.\ a minimizer of  $\Wper(\frac{\mu(Q_L)}{L^2},\mu)$. 
By Theorem \ref{theo:estimDrstarper}, there exist a constant $c>0$ and  a random variable $r_{*,L}$  such that  
$\sup_{L} \EE_L\sqa{\exp\lt(\frac{c r_{*,L}^2}{\log  2 r_{*,L}}\rt)}<\infty$ and such that  if $2 r_{*,L}\le L$, recalling \eqref{eq:goodboundmu},
\begin{equation}\label{eq:goodboundmu2}
  \frac{\mu(Q_L)}{L^2}\in \lt[\frac{1}{2},\, 2\rt], \qquad \spt \mu \cap B_{r_{*,L}}\neq \emptyset.
\end{equation}
 For $\mu$ such that $2 r_{*,L}\le L$, we  let $\hat{\mu}:= \frac{L^2}{\mu(Q_L)}\mu$ so that $T$ is also the optimal transport map (for the periodic transport problem \eqref{perwass}) between the Lebesgue measure and $\hat{\mu}$.
 By \eqref{eq:concentrationrstarper} of Theorem \ref{theo:estimDrstarper}, we have that for all dyadic $\ell$ with
 $2 r_{*,L} \le \ell \le L$,

 \begin{equation}\label{bounddata2}
 \frac{1}{\ell^4} W_2^2\lt(\hat{\mu}_{\ell}, \frac{\hat{\mu}(Q_{\ell})}{\ell^2}\rt)\les \frac{\log\lt(\frac{\ell}{r_{*,L}}\rt)}{\lt(\frac{\ell}{r_{*,L}}\rt)^2}
\end{equation}
so that  by \eqref{eq:concentrationrstarper} also
\begin{equation}\label{boundEinit2}
 \frac{1}{L^4} \Wper\lt(\hat{\mu}, 1\rt)\les \frac{\log\lt(\frac{L}{r_{*,L}}\rt)}{\lt(\frac{L}{r_{*,L}}\rt)^2}.
\end{equation}
Let us  recall (see Section \ref{sec:optimaltransp} and in particular Remark \ref{rem:selection} for more details) that  by \cite{cordero}, there exists a  convex function $\psi$ on $\R^2$ such that the map $T$ can be identified on $\R^2$ with a measurable selection of the subgradient $\partial \psi$ of $\psi$.

 Let   $\yL=\yL(\mu):=\argmin_{\spt \mu} |y|$ (which is uniquely defined $\PP_L-$a.e.) and $x_L$ the barycenter of its pre-image under $T$, i.e.
\[
 \xL=\xL(\mu):=\frac{1}{|T^{-1}(\yL)|}\int_{T^{-1}(\yL)} x dx,
\]
so that the map $\mu\to \xL$ is $\PP_L-$measurable. Let us show that  $T(x_L)=y_L$. 
By convexity of $\psi$ the set $(\partial \psi)^{-1}(y_L)=\partial \psi^*(y_L)$ is convex. Since $\psi$ is differentiable a.e.,
we have $|(\partial \psi)^{-1}(y_L)\backslash T^{-1}(y_L)|=0$ and $|(\partial \psi)^{-1}(y_L)|=\mu(y_L)>0$ so that $x_L$ lies in the interior of $(\partial \psi)^{-1}(y_L)$.
Since $\nabla \psi= y_L$ a.e. on this set, $\psi$ is affine inside $(\partial \psi)^{-1}(y_L)$ and thus $\psi$ is differentiable at $x_L$ with $T(x_L)=\nabla \psi(x_L)=y_L$.
Therefore, by the definition of $\yL$ and \eqref{eq:goodboundmu2} we  have for $\mu$ such that $2 r_{*,L}\le L$,  
\begin{equation}\label{estimx0}
 T(\xL)=\yL \qquad \textrm{and} \qquad |\yL|\le r_{*,L}.
\end{equation}

Finally, we can prove our main result, Theorem \ref{theo:mainresult intro} which we recall for the convenience of the reader.
\begin{theorem*}
 Let $L\gg 1$ be dyadic and $\mu$ be a $Q_L-$periodic Poisson point process. Then, if $\mu$ is such that $ r_{*,L}\ll L$,
\begin{equation}\label{estimxomega}
 |\xL|^2\les r_{*,L}^2 \log^3\lt(\frac{L}{r_{*,L}}\rt)
\end{equation}
and for every  $2 r_{*,L}\le \ell\le L$,
 \begin{equation}\label{eq:estimT}
  \frac{1}{\ell^4} \int_{B_\ell(\xL)} |T-(x-\xL)|^2\les \frac{\log^3\lt(\frac{\ell}{r_{*,L}}\rt)}{\lt(\frac{\ell}{r_{*,L}}\rt)^2}.
 \end{equation}

\end{theorem*}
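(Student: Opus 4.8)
The plan is to run a Campanato iteration using Theorem~\ref{theo:det intro} as the one-step improvement, with the stochastic input provided by \eqref{bounddata2} and \eqref{boundEinit2}. Concretely, I would introduce a change of variables at each dyadic scale: set $R_k := 2^{-k}L$ and maintain an affine map $A_k x + a_k$ (with $A_k$ symmetric, close to $\mathrm{Id}$), defining $\mu^{(k)}$ and $T^{(k)}$ as the images of $\mu$ and $T$ under the accumulated affine change of variables as in \eqref{def:hat}. The excess $E_k := E(\mu^{(k)}, T^{(k)}, R_k)$ and data term $D_k := D(\mu^{(k)}, O_k, R_k)$ are then related by \eqref{eq:mainonestep}: $E_{k+1} \le \tau E_k + C(\tau) D_k$ for a fixed small $\tau$, provided $E_k + D_k \le \eps(\tau)$. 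The crucial point is that $D_k$ is \emph{not} the Euclidean Wasserstein data term for $\mu$ itself but for its affine image; one must transfer the bound \eqref{bounddata2} through the affine change of variables, paying a multiplicative constant $\sim (1 + |A_k - \mathrm{Id}|)$ which, because $A_k$ stays within a bounded neighborhood of $\mathrm{Id}$ throughout the iteration (this is where \eqref{eq:estimBb}/\eqref{eq:estimBb} and summability of $|A_{k+1}-A_k|$ enter), can be absorbed. So effectively $D_k \les \log(R_k/r_{*,L}) / (R_k/r_{*,L})^2$. Centering at $\xL$ rather than $0$ is legitimate because, by \eqref{estimx0}, $T(\xL) = \yL$ with $|\yL| \le r_{*,L}$, so shifting the origin to $\xL$ moves the target point by at most $r_{*,L}$, which is lower-order compared with the scales $\ell \ge 2r_{*,L}$ we track.

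First I would fix $\tau$ with $\tau \le \frac14$ (say), then verify the initialization: \eqref{boundEinit2} gives $E_0 = E(\mu^{(0)},T^{(0)},L) \les \log(L/r_{*,L})/(L/r_{*,L})^2 \ll 1$ when $r_{*,L} \ll L$, and similarly $D_0 \ll 1$, so Theorem~\ref{theo:det intro} applies at the top scale. Then I iterate. Writing $\delta_k := \log(R_k/r_{*,L})/(R_k/r_{*,L})^2$ for the target rate (which roughly doubles in the denominator exponent and decreases linearly in the numerator as $k$ increases, i.e. $\delta_{k+1} \approx 4\delta_k \cdot \frac{\log(R_{k+1}/r_{*,L})}{\log(R_k/r_{*,L})}$ up to the geometric factor $\delta_{k+1} \sim \frac14 \delta_k$ since $R_{k+1} = R_k/2$), the recursion $E_{k+1} \le \tau E_k + C(\tau)\delta_k$ with $\tau \le \frac14$ and $\delta_k$ decaying geometrically (ratio $\tfrac14$ up to the slowly varying log) unwinds to $E_k \les \delta_k \cdot (\text{number of comparable scales})$. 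This is where the extra logarithmic factors in \eqref{eq:estimT} come from: summing $\sum_{j \le k} \tau^{k-j} \delta_j$ where $\delta_j$ itself decays like $4^{-(k-j)} \cdot \frac{\log(R_j/r_{*,L})}{\log(R_k/r_{*,L})}$ and the ratio of logs is at most $\sim k - j + \log(R_k/r_{*,L})$, one picks up a factor $\log^2(\ell/r_{*,L})$ beyond the clean rate $\log(\ell/r_{*,L})/(\ell/r_{*,L})^2$, which when combined with the single log already present yields $\log^3(\ell/r_{*,L})/(\ell/r_{*,L})^2$. I must also check that $E_k + D_k \le \eps(\tau)$ persists down to scale $2r_{*,L}$: this follows by induction since the bound we are proving is itself $\ll 1$ in that range once $r_{*,L} \ll L$, but care is needed at the first few steps where the log factors are largest — this is handled by requiring $r_{*,L} \ll L$ with a dimensional implicit constant absorbing $C(\tau)$.

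Next I would extract the estimate \eqref{estimxomega} on the shift $\xL$. The vector $a_k$ at each step satisfies $|a_{k+1} - a_k| \les R_k E_k^{1/2} \les R_k \delta_k^{1/2} \cdot \log(\cdots)$ by \eqref{eq:estimBb} (the vector $b$ in \eqref{def:hat} is $\nabla\phi(0)$, controlled by $(R^{d+2})^{-1}\int_{B_{2R}}|T-x|^2 = E$). Since $R_k \delta_k^{1/2} = R_k \cdot \frac{\log^{1/2}(R_k/r_{*,L})}{R_k/r_{*,L}} \cdot (\text{iteration log factors}) = r_{*,L} \log^{1/2}(\cdots) \cdot (\text{log factors})$, the increments are summable and the total accumulated shift satisfies $\big|\sum_k (a_{k+1}-a_k)\big| \les r_{*,L} \log^{3/2}(L/r_{*,L})$, giving \eqref{estimxomega} after squaring. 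One must identify this accumulated shift with (something equivalent to) $\xL$: by construction the iteration's fixed point in the domain variable converges to the point around which the excess is minimized at every scale, which by \eqref{estimx0} and the uniqueness/convexity argument in the paragraph preceding the theorem is exactly $\xL$ (or differs from it by $O(r_{*,L})$, which is absorbed).

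Finally, \eqref{eq:estimT} is read off from $E_k \les \log^3(R_k/r_{*,L})/(R_k/r_{*,L})^2$ by undoing the affine changes of variable: since $A_k$ is uniformly close to $\mathrm{Id}$ and the accumulated domain shift is (up to $O(r_{*,L})$) $\xL$, we have $\frac{1}{\ell^4}\int_{B_\ell(\xL)}|T - (x-\xL)|^2 \les E(\mu^{(k)},T^{(k)},R_k) + |A_k - \mathrm{Id}|^2 + r_{*,L}^2/\ell^2 \les \delta_k \cdot \log^2 + (\text{summed } |A_j-\mathrm{Id}|^2) + r_{*,L}^2/\ell^2$; all three terms are $\les \log^3(\ell/r_{*,L})/(\ell/r_{*,L})^2$ in the stated range $2r_{*,L} \le \ell \le L$ (the term $r_{*,L}^2/\ell^2 \le \tfrac14$ is dominated since $\ell \ge 2r_{*,L}$ and we only claim the bound up to dimensional constants, but more precisely $r_{*,L}^2/\ell^2 = 1/(\ell/r_{*,L})^2 \le \log^3(\ell/r_{*,L})/(\ell/r_{*,L})^2$ trivially). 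For non-dyadic $\ell$ one interpolates between the two neighboring dyadic scales, losing only a constant. \textbf{The main obstacle} I anticipate is the bookkeeping of how the logarithmic prefactor grows under iteration: one must be careful that the slowly-varying ratio $\log(R_j/r_{*,L}) / \log(R_k/r_{*,L})$ contributes exactly two extra powers of $\log$ and no more, which requires splitting the sum $\sum_{j\le k}\tau^{k-j}\delta_j$ at the scale where $R_j/r_{*,L} \sim (R_k/r_{*,L})^{1/2}$ or treating it via a careful discrete Gronwall-type estimate; sloppiness here would give either a non-optimal power of the log or, worse, break the induction hypothesis $E_k + D_k \le \eps(\tau)$ at small scales. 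A secondary technical point is ensuring the change-of-variables constants from transferring $D_k$ and from reconstructing \eqref{eq:estimT} genuinely stay bounded, which rests on the summability $\sum_k |A_{k+1}-A_k| \les \sum_k E_k^{1/2} < \infty$ — itself a consequence of the geometric decay of $E_k$, so there is a mild circularity that must be resolved by a continuation/bootstrap argument (assume $A_k$ stays in a fixed neighborhood, run the iteration, check a posteriori that it does).
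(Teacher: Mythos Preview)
Your overall strategy matches the paper's: iterate Theorem~\ref{theo:det intro} at geometric scales, track the accumulated affine change $(A_k,a_k)$, transfer the data bound \eqref{bounddata2} through the affine map (which stays close to the identity by the summability of $|B_k-\mathrm{Id}|$), and reconstruct the estimate on $T$ at the end. However, your analysis of the energy recursion contains a direction error that leads you to misattribute the $\log^3$.

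You assert that $\delta_k=\log(R_k/r_{*,L})/(R_k/r_{*,L})^2$ ``decays geometrically'' with $\delta_{k+1}\sim\tfrac14\delta_k$. This is backwards: since $R_{k+1}=\theta R_k$ with $\theta<1$, the ratio $R_k/r_{*,L}$ is \emph{decreasing} in $k$, and $t\mapsto(\log t)/t^2$ is decreasing for large $t$, so $\delta_k$ is \emph{increasing}. Consequently the recursion $E_k\le\tau E_{k-1}+C(\tau)\delta_{k-1}$ closes trivially with a \emph{single} log: using $\delta_{k-1}\le\delta_k$ one gets $E_k\le(\tau C_1+C(\tau)C_2)\delta_{k-1}\le C_1\delta_k$ as soon as $C_1(1-\tau)\ge C(\tau)C_2$. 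No extra logarithms arise in the energy iteration, and this yields the optimal rate recorded in Remark~\ref{rem:optimal rate}. Your ``main obstacle'' (the discrete Gronwall sum $\sum_j\tau^{k-j}\delta_j$) is therefore a non-issue; with the correct monotonicity that sum is $\les\delta_k$.

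The $\log^3$ enters only in the last step, when the scale-dependent shift $A_k^{-1}a_k$ (which gives the optimal one-log bound at scale $\ell_k$) is replaced by the fixed shift $-x_L\approx A_K^{-1}a_K$, $K\sim\log(L/r_{*,L})$. The mismatch satisfies $|A_K^{-1}a_K-A_k^{-1}a_k|\le\sum_{i=k+1}^K|A_{i-1}^{-1}b_i|\les(K-k)\,r_{*,L}\log^{1/2}(\ell_k/r_{*,L})$; since $K-k\les\log(\ell_k/r_{*,L})$, squaring and dividing by $\ell_k^2$ produces $\log^3(\ell_k/r_{*,L})/(\ell_k/r_{*,L})^2$. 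Your shift accounting reaches the right magnitude, but you insert spurious log factors from the energy bound; carried through literally this would give a worse power than $\log^3$ and would not match the stated estimates.
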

\begin{proof}
{\it Step 1.}[The setup]
By periodicity we have 
\begin{equation}\label{eq:defE}
 E:=\frac{1}{L^4} \int_{B_L} |T-x|^2\les \frac{1}{L^4} \int_{Q_{L}}|T-x|^2=\frac{1}{L^4} \Wper\lt(\hat{\mu}, 1\rt)\stackrel{\eqref{boundEinit2}}{\les} \frac{\log\lt(\frac{L}{r_{*,L}}\rt)}{\lt(\frac{L}{r_{*,L}}\rt)^2}.
\end{equation}
Let  $\tilde \mu:= \nabla \psi \# (\chi_{B_L} dx)$ so that $T$  is the Euclidean optimal transport map between $\chi_{B_L} dx$ and $\tilde \mu$. By the $L^\infty$ bound \eqref{LinftyboundT}, we have that $\tilde \mu=\hat{\mu}$ on $Q_{L}$.
Let finally 
\begin{equation}\label{defT0}
 T_0(x):=T(\xL+x)-\xL \qquad \textrm{and }\qquad \mu_0:=\tilde \mu (\cdot+\xL)
\end{equation}
so that \eqref{estimx0} becomes
\begin{equation}\label{estimx0hat}
 T_0(0)=\yL-\xL \qquad \textrm{with} \qquad |\yL|\le r_{*,L}.
\end{equation}
Denote by $\ell_0$ the largest dyadic $\ell$ such that $B_{2\ell}(\xL)\subset Q_L$. Fix $0<\tau\ll 1$ for which Theorem \ref{theo:det intro} applies. By \eqref{eq:defE} and the $L^\infty$ bound \eqref{LinftyboundT}, $|\xL|\les E^{\frac{1}{4}} L \ll L$ so that $\ell_0\sim L$. 
By \eqref{eq:defE} we thus have  that (recall \eqref{def:E} and \eqref{def:D})
\begin{multline*}
 E_0:=E(\mu_0,T_0,\ell_0)= \frac{1}{(2 \ell_0)^4} \int_{B_{2\ell_0}} |T_0-x|^2 \\
 \textrm{and } \quad D_0:= D(\mu_0,Q_L -\xL,\ell_0)= \frac{1}{\ell_0^4}  W_2^2\lt(\tilde \mu\restr Q_{L}, \frac{\tilde \mu(Q_{L})}{|Q_{L}|}\rt)
\end{multline*}
satisfy for $L$ large enough
\begin{equation}\label{estimE0}
 E_0+D_0\le \eps(\tau).
\end{equation}
 We also let 
 \[B_0:=Id, \qquad b_0:=0, \qquad  \Omega_0:=B_L-x_L \qquad \textrm{and } \qquad O_0:=Q_L-x_L.\]
Let $\theta>0$ be given by Theorem \ref{theo:det intro}. Without loss of generality we may assume that $\theta$ is dyadic i.e. $\theta=2^{-j}$ for some $j\in \N$. For $k\ge 1$, let $\ell_k:=\theta^k \ell_0$ and notice that $\ell_k$ is also  dyadic since $\ell_0$ is.
It is of course enough to show that  \eqref{eq:estimT} holds for  $\ell=\ell_k$. We now prove by induction that there exist $C_*, C_0, C_1, C_2>0$ sufficiently large but universal such that for every $k\ge 0$ such that $\ell_k\ge C_* r_{*,L}$,
we can find a symmetric matrix $B_k$ and a vector $b_k$  such that 
\begin{equation}\label{eq:induction1}
 |B_k-Id|^2+\frac{1}{\ell_{k}^2}|b_k|^2\le C_0 \frac{\log \lt(\frac{\ell_{k}}{r_{*,L}}\rt) }{\lt(\frac{\ell_k}{r_{*,L}}\rt)^2},  
\end{equation}
and letting $T_k(x):=B_k(T_{k-1}(B_k x)-b_k)$, $\Omega_k:=B_{k}^{-1} \Omega_{k-1}$ and $\mu_k:= T_k\# \chi_{\Omega_k}$ we have for $E_k:= E(\mu_k,T_k,\ell_k)$,
\begin{equation}\label{eq:induction2}
 E_k\le C_1  \frac{\log \lt(\frac{\ell_{k}}{r_{*,L}}\rt) }{\lt(\frac{\ell_k}{r_{*,L}}\rt)^2}, 
\end{equation}
and $T_k$ is the optimal transport map between $\chi_{\Omega_k}$ and $\mu_k$. Moreover, we may find a target set $O_k$ such that letting $D_k:= D(\mu_k,O_k, \ell_k)$, we have 
\begin{equation}\label{eq:inductionbis}
 B_{2\ell_k}\subset O_k \qquad \textrm{and} \qquad D_k\le C_2  \frac{\log \lt(\frac{\ell_{k}}{r_{*,L}}\rt) }{\lt(\frac{\ell_k}{r_{*,L}}\rt)^2}.
\end{equation}

As we shall argue, letting $A_0=Id$, $a_0=0$ and then for $k\ge 1$
\begin{equation}\label{def:Ak}
 A_k:= B_k A_{k-1} \qquad \textrm{and} \qquad a_k:= B_k a_{k-1} +B_k b_k,
\end{equation}
that is
\[
 A_k=B_k B_{k-1}\cdots B_0 \qquad \textrm{and} \qquad a_k= \sum_{i=0}^k B_k B_{k-1} \cdots B_i b_i,
\]
this entails 
\begin{equation}\label{Tk}
 T_k(x)=A_kT_0(A_k^* x)-a_k\stackrel{\eqref{defT0}}{=}A_kT(A_k^*x+x_L)-(A_kx_L+a_k), 
\end{equation}
 where $A^*$ denotes the transpose of $A$, 
\begin{equation}\label{eq:induction3}
 |A_k-Id|^2\les \frac{\log \lt(\frac{\ell_{k}}{r_{*,L}}\rt)}{\lt(\frac{\ell_k}{r_{*,L}}\rt)^{2}} \qquad \textrm{and }\qquad |a_k|^2\les k^2 r_{*,L}^2 \log\lt(\frac{L}{r_{*,L}}\rt).
\end{equation}
Notice that \eqref{eq:induction2} and \eqref{eq:inductionbis} in particular imply that if $\ell_k\ge C_* r_{*,L}$, then 
\begin{equation}\label{eq:hypsmallness}
 E_k+D_k\le \eps(\tau).
\end{equation}
\medskip

{\it Step 2.}[The iteration argument]
By \eqref{estimE0} the induction hypothesis is satisfied for $k=0$. Let us assume that it holds for $k-1$.\\
\smallskip

{\it Step 2.1.}[Proof of \eqref{eq:induction1} and \eqref{eq:induction2}]  Thanks to \eqref{eq:hypsmallness} we may apply  Theorem \ref{theo:det intro} with $R=\ell_{k-1}$ and $O=O_{k-1}$ (recall that we fixed $\tau\ll1$) to find a symmetric matrix $B_k$
and a vector $b_k\in \R^2$ such that 
\[
 |B_k-Id|^2+\frac{1}{\ell_{k}^2}|b_k|^2\le C E_{k-1}\stackrel{\eqref{eq:induction2}}{\le} C C_1 \frac{\log \lt(\frac{\ell_{k-1}}{r_{*,L}}\rt) }{\lt(\frac{\ell_{k-1}}{r_{*,L}}\rt)^2} \le C_0  \frac{\log \lt(\frac{\ell_{k}}{r_{*,L}}\rt)}{\lt(\frac{\ell_{k}}{r_{*,L}}\rt)^2} ,
\]
if $C_0$ is taken large enough (depending only on $C_1$). From this we see that  \eqref{eq:induction1} is satisfied. Moreover, by \eqref{eq:mainonestep}
\[
 E_k\le \tau E_{k-1} +C(\tau) D_{k-1}\stackrel{\eqref{eq:induction2}}{\le} C_1\lt( \tau +\frac{C(\tau)C_2 }{C_1}\rt) \frac{\log \lt(\frac{\ell_{k-1}}{r_{*,L}}\rt)}{\lt(\frac{\ell_{k-1}}{r_{*,L}}\rt)^2}. 
\]
Now if $C_1\ge \frac{1}{1-\tau} C(\tau) C_2$, since the function $f(t):= \frac{\log t}{t^2}$ is decreasing for $t$ large enough, if $\ell_k=\theta \ell_{k-1}\ge C_* r_{*,L}$ for some universal constant $C_*$ large enough then  $ f(\ell_k/r_{*,L})\ge f(\ell_{k-1}/r_{*,L})$ and thus
\[
 E_k\le  C_1 f\lt(\frac{\ell_{k-1}}{ r_{*,L}}\rt)\le C_1 f\lt(\frac{\ell_{k}}{ r_{*,L}}\rt)= C_1\frac{\log \lt(\frac{\ell_{k}}{r_{*,L}}\rt)}{\lt(\frac{\ell_{k}}{r_{*,L}}\rt)^2},
\]
which proves  \eqref{eq:induction2}.\\
\smallskip

{\it Step 2.2.}[Optimality of $T_k$]
Since $T_{k-1}$ is the optimal transport map between $\chi_{\Omega_{k-1}}$ and $\mu_{k-1}$, by Brenier's Theorem \cite[Theorem 2.12]{Viltop}, there exists a convex map $\psi_{k-1}$ such that $T_{k-1}=\nabla \psi_{k-1}$.
Then $T_k=\nabla \psi_k$ for the convex function $\psi_k(x):=\psi_{k-1}(B_k x)- b_k\cdot B_k x$ so that $T_k$ is the optimal transport map between $\chi_{\Omega_k}$ and $\mu_k$ (see \cite[Theorem 2.12]{Viltop}).\\
\smallskip

{\it Step 2.3.}[Derivation of \eqref{eq:induction3}]
 For $k>1$ and $i\le k$ we first prove that 
for $\ell_k/r_{*,L}\gg1$
\begin{equation}\label{toproveAk}
 \sum_{j=i}^k \lt(\frac{\log \lt(\frac{\ell_{j}}{r_{*,L}}\rt)}{\lt(\frac{\ell_j}{r_{*,L}}\rt)^2}\rt)^{\frac12}\les \frac{\log^{\frac12} \lt(\frac{\ell_{k}}{r_{*,L}}\rt)}{\frac{\ell_k}{r_{*,L}}}.
\end{equation}
Since 
the function $\frac{\log^{\frac12} t}{t^2}$ is decreasing for $t$ large enough, we obtain
\begin{align*}
  \sum_{j=i}^k \lt(\frac{\log \lt(\frac{\ell_{j}}{r_{*,L}}\rt) }{\lt(\frac{\ell_j}{r_{*,L}}\rt)^2}\rt)^{\frac12}&{\lesssim} \sum_{j=i}^k\int_{\frac{\ell_{j+1}}{r_{*,L}}}^{\frac{\ell_{j}}{r_{*,L}}} \frac{\log^{\frac12} t}{t^2}\\
  &=\int_{\frac{\ell_{k+1}}{r_{*,L}}}^{\frac{\ell_{i}}{r_{*,L}}} \frac{\log^{\frac12} t}{t^2}\\
  &\le \int_{\frac{\ell_{k+1}}{r_{*,L}}}^\infty \frac{\log^{\frac12} t}{t^2}\\
  &\les\frac{1+\log^{\frac12} \lt(\frac{\ell_{k+1}}{r_{*,L}}\rt)}{\frac{\ell_{k+1}}{r_{*,L}}}\les \frac{\log^{\frac12} \lt(\frac{\ell_{k}}{r_{*,L}}\rt)}{\frac{\ell_k}{r_{*,L}}},
\end{align*}
which proves \eqref{toproveAk}.\\
We can now make a downward induction on $i$ to show that \eqref{eq:induction1} implies that for $k>1$ and $i\le k$ 
\begin{equation}\label{eq:downwardinduction}
 |B_kB_{k-1}\cdots B_i-Id|\le 2 \sqrt{C_0} \sum_{j=i}^k \lt(\frac{\log \lt(\frac{\ell_{j}}{r_{*,L}}\rt)}{\lt(\frac{\ell_j}{r_{*,L}}\rt)^2}\rt)^{\frac12}
\end{equation}
which combined  with \eqref{toproveAk} implies
\begin{equation}\label{estimprodBk}
 |B_k B_{k-1}\cdots B_i-Id|^2\les \frac{\log \lt(\frac{\ell_{k}}{r_{*,L}}\rt)}{\lt(\frac{\ell_k}{r_{*,L}}\rt)^{2}}.
\end{equation}
 Notice that \eqref{estimprodBk} in particular gives $|B_k B_{k-1}\cdots B_i|\le 2$ provided we chose $C_*$ large enough. Estimate \eqref{estimprodBk} contains the first part of \eqref{eq:induction3} and the second part would also follow since for every $i$,
\[
 |B_k B_{k-1}\cdots B_i b_i|\stackrel{\eqref{estimprodBk}}{\le} 2  |b_i|\stackrel{\eqref{eq:induction1}}{\le} 2 \lt(C_0 {r_{*,L}^2} \log\lt(\frac{L}{r_{*,L}}\rt)\rt)^{\frac12}.
\]
 It thus remains to prove \eqref{eq:downwardinduction}  which clearly holds for $i=k$ by \eqref{eq:induction1}. Assume \eqref{eq:downwardinduction} holds for $i$. Then as already pointed out, \eqref{estimprodBk} implies $|B_kB_{k-1}\cdots B_i|\le 2 $ for $\frac{\ell_k}{r_{*,L}}$ large enough so that we can estimate
\begin{align*}
|B_kB_{k-1}\cdots B_{i-1}-Id| &\le |B_kB_{k-1}\cdots B_i-Id|+|B_kB_{k-1}\cdots B_i(B_{i-1}-Id)|\\
&\le 2\sqrt{C_0} \sum_{j=i}^k \lt(\frac{\log \lt(\frac{\ell_{j}}{r_{*,L}}\rt) }{\lt(\frac{\ell_j}{r_{*,L}}\rt)^2}\rt)^{\frac12} + |B_k\cdots B_i| |B_{i-1}-Id| \\
& \stackrel{\eqref{eq:induction1}}{\le} 2\sqrt{C_0} \sum_{j=i}^k \lt(\frac{\log \lt(\frac{\ell_{j}}{r_{*,L}}\rt) }{\lt(\frac{\ell_{j}}{r_{*,L}}\rt)^2}\rt)^{\frac12} +2\sqrt{C_0} \lt(\frac{\log \lt(\frac{\ell_{i-1}}{r_{*,L}}\rt)}{\lt(\frac{\ell_{i-1}}{r_{*,L}}\rt)^2}\rt)^{\frac12}\\
&\le 2\sqrt{C_0}\sum_{j=i-1}^k \lt(\frac{\log \lt(\frac{\ell_{j}}{r_{*,L}}\rt) }{\lt(\frac{\ell_j}{r_{*,L}}\rt)^2}\rt)^{\frac12}.
\end{align*}
\smallskip

{\it Step 2.4.}[Proof of \eqref{eq:inductionbis}]
We first notice that since $\theta\ll 1$,  $\ell_{k}\ll \ell_{k-1}$ and we recall that  $\ell_{k-1}$ is dyadic.  We set 
\begin{equation}\label{defOk}
O_k:=A_k Q_{\ell_{k-1}} -(A_k \xL+a_k) 
\end{equation}
and notice that by the $L^\infty$ bound \eqref{LinftyboundT} applied to $T_k$ we have
\[
 |A_k \yL-(A_k \xL+a_k)|\stackrel{\eqref{Tk}\& \eqref{estimx0hat}}{=}|T_k(0)|\les \ell_k E_k^{\frac{1}{4}}
\]
so that from \eqref{eq:induction2} in the form of $E_k\ll1$, the first part of \eqref{eq:induction3} in the form of $|A_k-Id|\ll1$ and  \eqref{estimx0hat},
\begin{equation}\label{estimaxoak}
|A_k \xL+a_k|\le |A_k \yL-(A_k \xL+a_k)|+ |A_k \yL| \les\ell_k E_k^{\frac{1}{4}} +r_{*,L} \ll \ell_k.
\end{equation}
Therefore, using again that $|A_k-Id|\ll 1$ together with the fact that $B_{2\ell_k}\subset Q_{\frac{ \ell_{k-1}}{2}}$ imply that $B_{2 \ell_k}\subset A_k Q_{\ell_{k-1}} -(A_k \xL+a_k)=O_k$ so that the first part of \eqref{eq:inductionbis} holds.\\

Let us prove that the second part of \eqref{eq:inductionbis} also holds. Let $\widetilde{T}^k$ be the optimal transport map between  the constant measure on $Q_{\ell_{k-1}}$ and the restriction of the measure $\tilde \mu$ to this set
i.e.
\[
 W_2^2\lt(\tilde \mu\restr Q_{\ell_{k-1}},\frac{\tilde\mu(Q_{\ell_{k-1}})}{|Q_{\ell_{k-1}}|}\chi_{Q_{\ell_{k-1}}}\rt)=\int_{Q_{\ell_{k-1}}} |\widetilde{T}^k-y|^2 \frac{\tilde\mu(Q_{\ell_{k-1}})}{|Q_{\ell_{k-1}}|}.
\]
 We then let for $z\in O_k$, $\widehat{T}^k(z):= A_k\widetilde{T}^k(A_k^{-1}(z+A_k\xL+a_k))-(A_k \xL+a_k)$. We first show that $\widehat{T}^k\# \frac{\mu_k(O_k)}{|O_k|}\chi_{O_k}=\mu_k \restr O_k$. For this we notice that by definition of $\mu_k$, if 
\[
\tilde \mu\restr Q_{\ell_{k-1}}=\alpha_0 \sum_i \delta_{y_i}
\]
then 
\begin{equation}\label{mukrestOk}
 \mu_k\restr O_k= \frac{\alpha_0}{|\det A_k|}\sum_i \delta_{A_k y_i-(A_k \xL+a_k)},
\end{equation}
 so that $\mu_k(O_k)=\frac{\tilde\mu(Q_{\ell_{k-1}})}{|\det A_k|}$ and $|O_k|=|\det A_k| |Q_{\ell_{k-1}}|$. For $\zeta\in C^0(O_k)$ we thus have
\begin{align*}
 \int_{O_k} \zeta \widehat{T}^k \# \frac{\mu_k(O_k)}{|O_k|}&=\int_{O_k} \zeta(A_k\widetilde{T}^k(A_k^{-1}(z+A_k \xL+a_k))-(A_k \xL+a_k)) \frac{\mu_k(O_k)}{|O_k|}\\
 &=\int_{Q_{\ell_{k-1}}} \zeta(A_k\widetilde{T}^k-(A_k \xL+a_k)) \frac{\mu_k(O_k)}{|O_k|} |\det A_k|\\
 &=\int_{Q_{\ell_{k-1}}}\zeta(A_ky -(A_k \xL+a_k))\frac{\mu_k(O_k)}{|O_k|} \frac{|Q_{\ell_{k-1}}|}{\tilde\mu(Q_{\ell_{k-1}})}|\det A_k| d\tilde\mu(y)\\
 &=\int_{Q_{\ell_{k-1}}}\zeta(A_ky -(A_k \xL+a_k))|\det A_k|^{-1}d\tilde\mu(y)\\
 &\stackrel{\eqref{mukrestOk}}{=}\int_{O_k} \zeta d\mu_k,
\end{align*}
proving that indeed $\widehat{T}^k\# \frac{\mu_k(O_k)}{|O_k|}\chi_{O_k}=\mu_k \restr O_k$. If we now use $\widehat{T}^k$ as competitor for the optimal
transport problem between $\frac{\mu_k(O_k)}{|O_k|}\chi_{O_k}$ and   $\mu_k \restr O_k$, we obtain
\begin{align*}
 D_k&\le \frac{1}{\ell_k^4} \int_{O_k} |\widehat{T}^k-z|^2 \frac{\mu_k(O_k)}{|O_k|}\\
 &= \frac{1}{\ell_k^4} \int_{O_k} |A_k \widetilde{T}^k(A_k^{-1}(z+A_k \xL+a_k))-(A_k \xL+a_k)-z|^2\frac{\mu_k(O_k)}{|O_k|}\\
 &\stackrel{\eqref{eq:induction3}\& \eqref{defOk}}{\les}  \frac{1}{\ell_k^4} \int_{Q_{\ell_{k-1}}} |\widetilde{T}^k-y|^2  \frac{\tilde\mu(Q_{\ell_{k-1}})}{|Q_{\ell_{k-1}}|}\\
 &\stackrel{\eqref{bounddata2}}{\les} \frac{\log \lt(\frac{\ell_{k}}{r_{*,L}}\rt)}{\lt(\frac{\ell_k}{r_{*,L}}\rt)^2}, 
\end{align*}
where we used that  $\tilde\mu=\hat \mu$ in $Q_{\ell_{k-1}}$. This concludes the proof of the second part of \eqref{eq:inductionbis}.\\
\smallskip

{\it Step 3.}[Conclusion]
We can thus iterate this procedure up to $K=\lt\lfloor \frac{\log \frac{\ell_0}{C_* r_{*,L}}}{|\log \theta|}\rt\rfloor\sim \log \frac{L}{r_{*,L}}$. By \eqref{eq:induction3} we have 
\begin{equation}\label{estima}
 |A_{K}^{-1}a_K|^2\les r_{*,L}^2 \log^3\lt(\frac{L}{r_{*,L}}\rt).
\end{equation}
Using  the $L^\infty$ bound \eqref{LinftyboundT} for $T_K$, we obtain 
\begin{multline}\label{eq:estimx0ak}
 |\xL+A_K^{-1}a_K|^2\stackrel{\eqref{eq:induction3}}{\les} |A_K T(\xL)-(A_K \xL+a_K)|^2 +| T(\xL)|^2 
 \stackrel{\eqref{Tk}}{\les} |T_K(0)|^2 +|\yL|^2\\
 \stackrel{\eqref{estimx0hat}}{\les} r_{*,L}^2
\end{multline}
which together with \eqref{estima} gives \eqref{estimxomega}. \\

We now prove \eqref{eq:estimT}.  
Since $B_{\ell_{k+1}}(\xL)\subset A_k^*B_{\ell_k} +\xL$ and recalling that  $T_k(x)=A_k T(A_k^* x+\xL)-(A_k \xL+a_k)$ (see \eqref{Tk}), we can first estimate 
\begin{align}
\lefteqn{ \frac{1}{\ell_{k+1}^4}\int_{B_{\ell_{k+1}}(\xL)} |T-(x+A_k^{-1}a_k)|^2}\nonumber\\
& \le \frac{1}{\ell_{k+1}^4}\int_{A_k^*B_{\ell_{k}}+\xL} |T-(x+A_k^{-1}a_k)|^2\nonumber\\
 &\stackrel{\eqref{eq:induction3}}{\les} \frac{1}{\ell_{k}^4} \int_{B_{\ell_k}}|A_k T(A_k^* y+\xL) -A_k(A_k^* y+\xL+A_k^{-1}a_k)|^2\nonumber\\
 &\les \frac{1}{\ell_{k}^4} \int_{B_{\ell_k}}|A_k T(A_k^* y+\xL)-(A_k \xL+a_k) -y|^2 + \frac{1}{\ell_{k}^4}\int_{B_{\ell_k}}|(Id-A_kA_k^*) y|^2\nonumber\\ 
 &\stackrel{\eqref{eq:induction3}\&\eqref{Tk}}{\les} \lt(\frac{1}{\ell_{k}^4}\int_{B_{\ell_k}} |T_k-y|^2 \rt) +|Id-A_k A_k^*|^2\stackrel{\eqref{eq:induction3}\&\eqref{eq:induction2}}{\les} \frac{\log \lt(\frac{\ell_k}{r_{*,L}}\rt)}{\lt(\frac{\ell_k}{r_{*,L}}\rt)^2} \label{eq:optimaldecay}.
\end{align}
Since by definition (recall \eqref{def:Ak}) $a_i=B_i a_{i-1}+ B_i b_i$, we have $A_i^{-1} a_i-A_{i-1}^{-1} a_{i-1} = A_{i-1}^{-1} b_i$ and thus for $1\le k< K$, we have 
\begin{multline}\label{estimaminak}
 |A_K^{-1}a_K-A_k^{-1}a_k|^2\le\lt(\sum_{i=k+1}^K |A_i^{-1} a_i-A_{i-1}^{-1} a_{i-1}|\rt)^2= \lt(\sum_{i=k+1}^K |A_{i-1}^{-1} b_i|\rt)^2\\
 \stackrel{\eqref{eq:induction3}}{\les} \lt(\sum_{i=k+1}^K |b_i|\rt)^2\stackrel{\eqref{eq:induction1}}{\les} r_{*,L}^2 (K-k)^2 \log\lt(\frac{\ell_k}{r_{*,L}}\rt)\\
 \les r_{*,L}^2 \log^3\lt(\frac{\ell_k}{r_{*,L}}\rt).
\end{multline}
Noticing that by \eqref{eq:estimx0ak}, it is enough to prove \eqref{eq:estimT} with $A_K^{-1} a_K$ instead of $-\xL$, we conclude by \eqref{eq:optimaldecay} and \eqref{estimaminak} that 
\begin{align*}
\lefteqn{ \frac{1}{\ell_{k+1}^4}\int_{B_{\ell_{k+1}}(\xL)} |T-(x+A_K^{-1}a_K)|^2}\\
&\les \lt(\frac{1}{\ell_{k+1}^4}\int_{B_{\ell_{k+1}}(\xL)} |T-(x+A_k^{-1}a_k)|^2\rt)+ \frac{1}{\ell_k^2} |A_K^{-1} a_K-A_k^{-1}a_k|^2\les  \frac{ \log^3\lt(\frac{\ell_k}{r_{*,L}}\rt)}{\lt(\frac{\ell_k}{r_{*,L}}\rt)^2},
\end{align*}
 and obtain \eqref{eq:estimT}.
\end{proof}

\begin{remark}\label{rem:optimal rate}
 We would like to highlight that, although our estimate \eqref{eq:estimT} is not optimal with respect to  the power on the logarithmic term, estimate \eqref{eq:optimaldecay} leads to the optimal estimate
 \[
  \inf_{\xi\in \R^2} \ \frac{1}{\ell^4} \int_{B_\ell(x_L)} |T-(x-\xi)|^2\les \frac{\log \lt(\frac{\ell}{r_{*,L}}\rt)}{\lt(\frac{\ell}{r_{*,L}}\rt)^2} \qquad \qquad \forall\  2 r_{*,L}\le \ell\le L.
 \]
The suboptimal rate in \eqref{eq:estimT} comes from the bound \eqref{eq:induction3} which does not take cancellations into account. 
 \end{remark}

\subsection{Locally optimal couplings between Lebesgue and Poisson}

In this section we show how Theorem \ref{theo:mainresult intro} can be used to derive locally optimal couplings between the Lebesgue measure and the Poisson measure on $\R^2$. 
For this we will use the optimal transport maps $T_{L}=T_{\mu,L}$ constructed above and pass to the limit as $L\to \infty$. Since the transport cost per unit volume diverges logarithmically, see \cite{AmStTr16} or Section \ref{sec:Poi}, we will need to use a renormalization procedure.
Therefore, while the approximating couplings enjoy strong stationarity properties, cf.\ \eqref{eq:covariantliftT}, the limiting couplings themselves will not. However, the shift stationarity property will be shown to survive in the second-order increments of the corresponding Kantorovich potentials. 
In order to set up the limit procedure, we need to equip the configuration space $\Gamma$ (see Section \ref{sec:Poi}) and the set of potentials with a topology.

 We equip $\Gamma$ with the topology obtained by testing against continuous and compactly supported functions.  
Denote the space of all real-valued convex functions $\psi:\R^2\to\R$ by $\mathcal K$. We equip $\mathcal K$ (and $C^0(\R^2)$) with the topology of uniform convergence on compact sets. Let us point out that with these topologies, both $\Gamma$ and $\K$ are metrizable, which makes them Polish spaces. On $\mathcal{P}(\Gamma\times\K)$, we will consider the weak topology given by testing against functions in $C_b(\Gamma\times \K)$.

Denote by $\widehat{\psi}_L$ the convex function on $\R^2$ such that $T_L=\nabla \widehat{\psi}_L$ on $Q_L$ and \eqref{Cordero} holds, i.e.\ $\nabla \widehat{\psi}_L(x+z)=\nabla \widehat{\psi}_L(x)+z$ for all $(x,z)\in \R^2\times(L\Z)^2$ and
$\nabla \widehat{\psi}_L\# dx= \frac{L^2}{\mu(Q_L)}\,\mu$.\\

Since $\xL$ (defined above Theorem \ref{theo:mainresult intro}) is logarithmically diverging in $L$, we will need to translate either the Lebesgue measure or the Poisson measure by a logarithmically diverging
factor in order to pass to the limit. Since the Lebesgue measure (on $\R^2$) is invariant under such translations while the Poisson point process is not, it is better to make this shift in the domain rather than in the image and set
\begin{equation}\label{def:shift} \psi_L(x):=\widehat \psi_L(x+\xL).\end{equation}
Note that 
\begin{equation}\label{eqnabpsi}
 \nabla \psi_L\#dx=\frac{L^2}{\mu(Q_L)}\,\mu,
\end{equation}
$\nabla \psi_L(0)=0$ and  the Legendre conjugate $\psi^*_L$ of $\psi_L$ satisfies
$$\psi^*_L(y)=\widehat\psi^*_L(y)-\xL\cdot y.$$
By adding a constant to $\psi_L$ we may assume that $\psi_L(0)=0$. Notice that by \eqref{D2hpsi} of Lemma \ref{lem:stationa} and recalling that $D^2_h\psi^*(y)= \psi^*(y+h)+\psi^*(y-h)-2\psi^*(y)$, we still have 
\begin{equation}\label{D2hpsiL}
 D^2_h \psi^*_{\theta_z \mu, L}(y)=D^2_h \psi^*_{\mu,L}(y+z).
\end{equation}
Let us point out that because of the shift introduced in \eqref{def:shift}, the same invariance does not hold for $\psi_L$.\\

 For a given $\mu\in \Gamma$, the bound \eqref{eq:estimT} directly translates into locally uniformly $L^2-$bounds for $\nabla \psi_{\mu,L}$ which by convexity of $\psi_{\mu,L}$ yields
compactness of $(\psi_{\mu,L})_L$ in $\mathcal{K}$ (see \eqref{eq:uniformpotential} below). Therefore, up to subsequence, $\psi_{\mu,L}$ converges locally uniformly to a convex function $\psi_\mu$ satisfying $\nabla \psi_\mu \# dx=\mu$. However since we do not have any uniqueness property of this limit, 
the subsequence depends {\it a priori} on $\mu$ and we need to pass to the limit in the sense of Young measures. For this purpose, we first define the map
$$ \Psi_L:\Gamma\to\mathcal K, \mu \mapsto \psi_L, $$
which is  measurable and  depends only on $\mu\LL Q_L$, and then  
 define the probability measure $q_L\in \mathcal{P}(\Gamma\times \K)$ by
\begin{equation}\label{def:qL} q_L:=(id,\Psi_L)\#\PP_L=\PP_L \otimes \delta_{\Psi_L}.\end{equation}
We will show that the sequence $(q_L)_L$ is tight and that any limit point $q$ gives full mass to pairs $(\mu,\psi)$ such that $\nabla \psi\#dx=\mu$ and such that the second-order increments of $\psi^*$ are shift covariant. The  crucial ingredient is the following lemma that gives us a uniform control on the potentials $\psi_L$.

\begin{lemma}\label{lem:uniformpotential}
 There exists a constant $C>1$ such that for every dyadic $L$ and every $\mu\in \Gamma$ such that  $ r_{*,L}\ll L$, there holds for $x\in \R^2$ 
\begin{equation}\label{eq:uniformpotential} 
\frac{1}{4}|x|^2 -  Cr_{*,L}^2 \leq \psi_L(x) \leq |x|^2 + C r_{*,L}^2 . 
\end{equation}
Therefore, letting for $\lambda\in \R$, 
\begin{equation}\label{def:Flambda}
 F_\lambda:=\lt\{\psi \in \K \, : \, \frac{1}{4}|x|^2-\lambda\le \psi\le |x|^2+\lambda\rt\}
\end{equation}
this means that if  $ r_{*,L}\ll L$, $\psi_L\in F_\lambda$ for every $\lambda\ge C r_{*,L}^2$.
\end{lemma}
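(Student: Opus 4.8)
The statement \eqref{eq:uniformpotential} is a coercivity/growth bound on the convex potential $\psi_L$, and the natural way to get it is to integrate the scale-by-scale $L^2$ control on $\nabla\psi_L - x$ provided by Theorem \ref{theo:mainresult intro}. The starting point is the observation that \eqref{eq:estimT} (together with \eqref{estimxomega}) gives, after recentering in the domain via \eqref{def:shift}, a bound of the form $\frac{1}{\ell^4}\int_{B_\ell}|\nabla\psi_L - x|^2 \lesssim \log^3(\ell/r_{*,L})/(\ell/r_{*,L})^2$ for every dyadic $2r_{*,L}\le \ell \le L$, while on the innermost ball $B_{2r_{*,L}}$ the $L^\infty$ bound \eqref{LinftyboundT} of Lemma \ref{lem:Linftybound} controls $|\nabla\psi_L - x|$ by $\lesssim r_{*,L}$ directly. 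Since $\nabla\psi_L(0)=0$ and $\psi_L(0)=0$, one then writes $\psi_L(x) = \int_0^1 \nabla\psi_L(tx)\cdot x\, dt$ and estimates $|\psi_L(x) - \frac{1}{2}|x|^2| = |\int_0^1 (\nabla\psi_L(tx) - tx)\cdot x\, dt|$.

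\textbf{Key steps.} First I would reduce to bounding the quantity $\fint_{B_\ell} |\nabla\psi_L - x|$ (an $L^1$-in-space average over balls), which by Cauchy--Schwarz and Jensen follows from the $L^2$ estimate above; dyadically summing the geometric-type series $\sum_k \log^{3/2}(2^k)/2^k$ shows $\fint_{B_\ell}|\nabla\psi_L - x| \lesssim r_{*,L}$ uniformly in $\ell \ge 2 r_{*,L}$, and for $\ell \le 2 r_{*,L}$ one uses \eqref{LinftyboundT} instead. Second, using monotonicity of $\nabla\psi_L$ and a layer-cake / telescoping argument over dyadic annuli, I would upgrade this to a pointwise-radial bound of the form $|\nabla\psi_L(x) - x| \lesssim |x| + r_{*,L}$ for all $x$; here convexity is essential, since for a monotone (gradient-of-convex) map an $L^1$ average bound on a ball controls the map pointwise on a slightly smaller ball up to the same order (this is exactly the mechanism behind Lemma \ref{lem:Linftybound}). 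Third, integrating along the segment from $0$ to $x$ gives $|\psi_L(x) - \tfrac12|x|^2| \lesssim \int_0^1 (|tx| + r_{*,L})|x|\, dt \lesssim |x|^2 + r_{*,L}|x| \lesssim |x|^2 + r_{*,L}^2$ by Young's inequality, which after adjusting constants yields both inequalities in \eqref{eq:uniformpotential} (the lower constant $\tfrac14$ and upper constant $1$ are generous enough to absorb the implicit dimensional constants). The final sentence, membership in $F_\lambda$ for $\lambda \ge C r_{*,L}^2$, is then immediate from the definition \eqref{def:Flambda}.

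\textbf{Main obstacle.} The delicate point is passing from the averaged $L^2$ (or $L^1$) control on $\nabla\psi_L - x$ over balls to a genuinely pointwise bound valid at \emph{every} $x\in\R^2$, including $|x|$ comparable to $L$ and $|x|$ small. Near the origin one must not lose the additive $r_{*,L}^2$ (rather than $r_{*,L}^2\log^3$), which is why invoking \eqref{LinftyboundT} on $B_{2r_{*,L}}$ directly — instead of summing the logarithmic series all the way down — is important. For $|x|$ of order $L$ one must check that the estimate \eqref{eq:estimT} indeed reaches the scale $\ell \sim L$ (it does, since it holds up to $\ell = L$) and that the recentering by $\xL$, whose size is controlled by \eqref{estimxomega}, does not spoil the quadratic growth; since $|\xL| \lesssim r_{*,L}\log^{3/2}(L/r_{*,L}) \ll L$, the shift is a lower-order perturbation absorbed into the $|x|^2$ term for $|x|$ large and into $r_{*,L}^2$ only after one more application of Young's inequality — one should double-check this does not force a $\log$ factor into the additive constant, which is the one spot where care with the exponents in \eqref{estimxomega} is needed. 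I expect this bookkeeping of where the logarithms go to be the only real subtlety; everything else is a routine consequence of convexity plus the already-established Theorem \ref{theo:mainresult intro} and Lemma \ref{lem:Linftybound}.
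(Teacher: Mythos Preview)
Your overall strategy matches the paper's: control $\nabla\psi_L-x$ in $L^2$ on balls via Theorem~\ref{theo:mainresult intro}, upgrade to $L^\infty$ via Lemma~\ref{lem:Linftybound}, and integrate using $\psi_L(0)=0$. However, several steps in your execution are either wrong or miss the point.

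First, your intermediate claim $\fint_{B_\ell}|\nabla\psi_L-x|\lesssim r_{*,L}$ is false: Jensen and \eqref{eq:estimT} only give $\fint_{B_\ell}|\nabla\psi_L-x|\lesssim r_{*,L}\log^{3/2}(\ell/r_{*,L})$, and no dyadic sum removes the logarithm from a \emph{fixed} average. More importantly, this step is unnecessary. The paper argues more directly: after the recentering \eqref{def:shift}, estimate \eqref{eq:estimT} reads exactly $\frac{1}{\ell^4}\int_{B_\ell}|\nabla\psi_L-x|^2\ll 1$ for $r_{*,L}\ll\ell\le L$ (so your worry about $\xL$ forcing a logarithm into the additive constant is unfounded---the shift cancels exactly), and this extends to \emph{all} $\ell\gg r_{*,L}$, including $\ell>L$, by $Q_L$-periodicity of $\nabla\psi_L-x$. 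You do not address $|x|>L$, but \eqref{eq:uniformpotential} must hold for every $x\in\R^2$.

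Second, and this is the real gap: you write $|\nabla\psi_L(x)-x|\lesssim|x|+r_{*,L}$ and then assert that ``the constants $\tfrac14$ and $1$ are generous enough''. They are not, unless the implicit constant in your $\lesssim$ is below $\tfrac12$, which nothing in your argument ensures. The paper's point is that the scale-invariant excess is not merely bounded but \emph{small}: $\frac{1}{\ell^4}\int_{B_\ell}|\nabla\psi_L-x|^2\ll 1$. Lemma~\ref{lem:Linftybound} then yields $|\nabla\psi_L(x)-x|\ll|x|$ for $|x|\gg r_{*,L}$, with a constant that can be made below $\tfrac14$ by choosing the threshold on $|x|/r_{*,L}$ large enough. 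Integration from $0$ gives $|\psi_L(x)-\tfrac12|x|^2|\ll|x|^2$ for $|x|\gg r_{*,L}$, while on $|x|\lesssim r_{*,L}$ the same $L^\infty$ bound at the smallest admissible scale yields $|\psi_L(x)|\lesssim r_{*,L}^2$ directly. This smallness is what produces the specific window $[\tfrac14|x|^2,\,|x|^2]$ up to $O(r_{*,L}^2)$; your ``$\lesssim$'' version does not.
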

\begin{proof}
Let us prove that 
\begin{equation}\label{firstboundpsiL}
 \sup_{ r_{*,L}\ll \ell}\, \frac{1}{\ell^4} \int_{B_\ell} |\nabla \psi_L - x|^2\ll 1.
\end{equation}
For $\ell\le L$, this directly follows from  \eqref{eq:estimT} and the definition of $\psi_L$. If now $\ell=k L$ for some $k\in \N$,  
 $Q_L-$periodicity of the function  $(\nabla \psi_L-x)$ yields 
\[
 \frac{1}{\ell^4} \int_{Q_\ell} |\nabla \psi_L - x|^2= \frac{1}{ L^2 \ell^2} \int_{Q_L}|\nabla \psi_L - x|^2\le \frac{1}{L^4}\int_{Q_L}|\nabla \psi_L - x|^2 \stackrel{ \eqref{bounddata2}}{\ll} 1
\]
so that \eqref{firstboundpsiL} can be also obtained for $\ell\ge L$.
 Letting $f(x):=\psi_L(x) - \frac{|x|^2}{2}$, this implies together with the $L^\infty-$bound given by Lemma \ref{lem:Linftybound} that for $r_{*,L}\ll \ell$,
$$ \sup_{B_\ell} |\nabla f|\ll \ell, $$
 which can be rewritten as $|\nabla f(x)|\ll  |x|$ for $r_{*,L}\ll |x|$. Using  $f(0)=0$, we obtain from integration $|f(x)|\ll |x|^2$ for $r_{*,L}\ll \ell$.
Going back to the definition of $f$, this concludes the proof of \eqref{eq:uniformpotential}.
\end{proof}

This lemma endows us with the necessary compactness to prove the main result of this subsection,  the convergence of $(\Psi_L)_L$ in terms of Young measures. This is precisely Theorem \ref{theo:limit intro} which we recall for the convenience of the reader.

\begin{theorem*}
 The sequence of probability measure $(q_L)_{L}$  (cf \eqref{def:qL}) is tight in $\mathcal P(\Gamma\times\mathcal K)$. Moreover, any accumulation point $q$ satisfies the following properties:
\begin{enumerate}
 \item[(i)] The first marginal of $q$ is the Poisson point process;
 \item[(ii)] $q$ almost surely $\nabla \psi\#dx=\mu$;
\item[(iii)] for any $h,z\in\R^2$ and $f\in C_b(\Gamma\times C^0(\R^2))$ there  holds
$$ \int_{\Gamma\times \K} f(\mu,D^2_h\psi^*) dq = \int_{\Gamma\times \K} f(\theta_{-z}\mu,D^2_h \psi^*(\cdot -z)) dq .  $$
\end{enumerate}
\end{theorem*}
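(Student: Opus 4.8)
The plan is to deduce tightness from the compactness provided by Lemma~\ref{lem:uniformpotential}, and then to establish properties (i)--(iii) by passing to the limit along a fixed subsequence realizing the accumulation point $q$. For tightness, recall that by Lemma~\ref{lem:uniformpotential}, on the event $\{r_{*,L}\ll L\}$ (whose complementary probability is controlled by the exponential moment bound on $r_{*,L}$ in Theorem~\ref{theo:estimDrstarper}), we have $\Psi_L(\mu)\in F_\lambda$ for every $\lambda\ge C r_{*,L}^2$. The set $F_\lambda$ defined in \eqref{def:Flambda} is, by a standard Arzel\`a--Ascoli argument for convex functions (locally uniform bounds on convex functions give locally uniform Lipschitz bounds, hence precompactness in $\mathcal K$), a compact subset of $\mathcal K$. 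Since $\PP_L$ is tight on $\Gamma$ (the Poisson point process is a tight family: indeed it is the same law for all $L$ after restriction to a fixed compact, by the remark that the restriction to $Q_\ell$ of a $Q_L$-periodic Poisson process is a Poisson process on $Q_\ell$), and since $\PP_L[\Psi_L\notin F_\lambda]\le \PP_L[Cr_{*,L}^2>\lambda]+\PP_L[r_{*,L}\not\ll L]$ which by Markov and the uniform exponential moment bound $\sup_L\EE_L[\exp(c r_{*,L}^2/\log(2r_{*,L}))]<\infty$ tends to $0$ uniformly in $L$ as $\lambda\to\infty$, we obtain that $(q_L)_L$ is tight on $\Gamma\times\mathcal K$ by combining a compact set $K_\Gamma\times F_\lambda$ for the two marginals.

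For property (i): the first marginal of $q_L$ is $\PP_L$ by construction \eqref{def:qL}, and $\PP_L\to\PP$ weakly on $\Gamma$ as $L\to\infty$ (since $\PP_L$ and $\PP$ agree on any fixed $Q_\ell$ once $L\ge\ell$, as noted after \eqref{def:PL}); hence the first marginal of any accumulation point $q$ is the Poisson point process $\PP$. For property (ii): this is a closedness statement. The constraint $\nabla\psi\#dx=\mu$ holds $q_L$-a.s.\ (on the full-probability event where $r_{*,L}\ll L$, using \eqref{eqnabpsi} together with the fact that $\mu(Q_L)/L^2\in[1/2,2]$ --- actually one needs $\nabla\psi_{\mu,L}\#dx=\tfrac{L^2}{\mu(Q_L)}\mu$, so strictly speaking (ii) should be read with this normalization, or one uses that $\mu(Q_L)/L^2\to 1$ in probability so the normalization disappears in the limit). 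I would show that the set $\{(\mu,\psi)\in\Gamma\times\mathcal K : \nabla\psi\#dx=\mu\}$ is closed: if $\psi_n\to\psi$ locally uniformly with $\psi_n$ convex, then $\nabla\psi_n\to\nabla\psi$ a.e.\ (convergence of convex functions implies convergence of gradients at points of differentiability of the limit, by a standard monotonicity argument), and this is enough to pass to the limit in $\int \zeta(\nabla\psi_n)\,dx=\int\zeta\,d\mu_n$ against $\zeta\in C_c$, using the uniform $L^\infty$-type bounds from \eqref{eq:uniformpotential} to control the push-forward on compacts. Then (ii) follows since the constraint is closed and holds along the approximating sequence.

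For property (iii): this is the key new structural property and the main obstacle. The idea is that \eqref{D2hpsiL}, which states $D^2_h\psi^*_{\theta_z\mu,L}(y)=D^2_h\psi^*_{\mu,L}(y+z)$, is an \emph{exact} identity at finite $L$ expressing that the second-order increments of $\psi^*$ are covariant under the shift $\theta_z$ on $\Gamma$; and the shift $\theta_z$ is measure-preserving for $\PP_L$. Therefore, for $f\in C_b(\Gamma\times C^0(\R^2))$,
\begin{align*}
\int_{\Gamma\times\mathcal K} f(\mu,D^2_h\psi^*)\,dq_L
&=\int_\Gamma f\bigl(\mu,D^2_h\psi^*_{\mu,L}\bigr)\,d\PP_L(\mu)\\
&=\int_\Gamma f\bigl(\theta_z\mu,D^2_h\psi^*_{\theta_z\mu,L}\bigr)\,d\PP_L(\mu)\\
&=\int_\Gamma f\bigl(\theta_z\mu,D^2_h\psi^*_{\mu,L}(\cdot+z)\bigr)\,d\PP_L(\mu),
\end{align*}
where the second line uses $\theta_z$-invariance of $\PP_L$ and the third uses \eqref{D2hpsiL}. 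Replacing $z$ by $-z$ and relabeling gives $\int f(\mu,D^2_h\psi^*)\,dq_L=\int f(\theta_{-z}\mu,D^2_h\psi^*(\cdot-z))\,dq_L$, i.e.\ the identity holds at every finite $L$. It then remains to pass to the limit. The difficulty is that the test functional $(\mu,\psi)\mapsto f(\mu,D^2_h\psi^*)$ and $(\mu,\psi)\mapsto f(\theta_{-z}\mu,D^2_h\psi^*(\cdot-z))$ must be shown to be continuous (or at least continuous $q$-a.e.) as maps $\Gamma\times\mathcal K\to\R$, so that weak convergence $q_L\to q$ can be applied. For this one needs: that $\psi\mapsto\psi^*$ is continuous for locally uniform convergence on the relevant compact sets $F_\lambda$ (Legendre transform is continuous on families with uniform coercivity and uniform upper bounds --- precisely what \eqref{eq:uniformpotential} provides, since $\tfrac14|x|^2-\lambda\le\psi\le|x|^2+\lambda$ translates into $\tfrac14|y|^2-\lambda\le\psi^*(y)\le|y|^2+\lambda$ after adjusting constants); that $\psi^*\mapsto D^2_h\psi^*$ is continuous $C^0\to C^0$ (immediate, it is linear and bounded for locally uniform convergence); and that $\mu\mapsto\theta_{-z}\mu$ is continuous on $\Gamma$ (clear from the definition of the vague topology). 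With these continuity facts, both functionals lie in $C_b(\Gamma\times\mathcal K)$ after the preliminary reduction to the compact $K_\Gamma\times F_\lambda$ carrying almost all the mass (one cuts off outside this compact at a cost tending to $0$ uniformly in $L$, using the tightness estimate), and the identity passes to the limit. The main technical care is thus in the continuity of the Legendre transform on the compact sets $F_\lambda$ and in the uniform cut-off argument; everything else is bookkeeping.
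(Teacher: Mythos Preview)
Your proposal is correct and follows essentially the same route as the paper: tightness via the compact sets $F_\lambda$ and the uniform moment bound on $r_{*,L}$; (i) via $\PP_L\to\PP$; (iii) via the exact finite-$L$ identity \eqref{D2hpsiL} combined with continuity of the Legendre transform on $F_\lambda$. For (ii) you correctly flag the normalization issue $\nabla\psi_L\#dx=\tfrac{L^2}{\mu(Q_L)}\mu$, and the paper resolves it exactly along the lines you suggest: it introduces the relaxed closed sets $G^{k,n}=\{(\mu,\psi)\in\Gamma\times F_k:(1-\tfrac1n)\mu\le\nabla\psi\#dx\le(1+\tfrac1n)\mu\}$, shows $q_L(G^{k,n})\ge 1-\eps$ via Cram\'er--Chernoff on $\mu(Q_L)/L^2$, and intersects over $n$.
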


\begin{proof}
{\it Step 1.}
We start with tightness. Since  trivially $\mu\LL Q_L \to \mu$ in $\Gamma$ we have that $\PP_L\to\PP$ weakly in $\mathcal P(\Gamma)$. 
In particular, the sequence $(\PP_L)_L$ is tight and for any $\eps>0$ there is a compact set $\Gamma_\eps\subset\Gamma$ such that for all $L$ we have 
$\PP_L(\Gamma_\eps)\ge 1-\eps.$ Since by Theorem \ref{theo:estimDrstarper} we have $\sup_L \EE_L\sqa{\exp\lt(\frac{c r_{*,L}^2}{\log (2r_{*,L})}\rt)}<\infty$, there is a constant $\lambda$ such that for each $L$ large enough
 \[
  \PP_L(\{ r_{*,L}\le \sqrt{\lambda}\})\ge 1-\eps.
 \]
 Lemma \ref{lem:uniformpotential} implies that
\[
\Psi_L(\{ r_{*,L}\le \sqrt{\lambda}\}) \subset F_{\lambda},
\]
so that
\begin{equation}\label{estimqL}
 q_L(\Gamma\times F_\lambda)\ge 1-\eps.
\end{equation}
Because of convexity, local boundedness yields local compactness in uniform topology. Thus setting $K_\eps:=\Gamma_\eps\times F_{\lambda}$ we have that $K_\eps$ is compact and
$$ q_L\bra{(K_\eps)^c} \leq 2\eps, $$ 
 which proves tightness. Moreover, since $\PP_L\to\PP$ weakly in $\mathcal P(\Gamma)$ item (i) is shown.
\medskip\\
{\it Step 2.}
To show (ii) we define for $k, n \in \N$ the set $G^{k,n}\subset \Gamma\times\mathcal K$ by (recall the definition of $F_k$ given in \eqref{def:Flambda})
\[  G^{k,n}:= \lt\{(\mu,\psi)\in \Gamma\times F_k \, : \, \lt(1-\frac{1}{n}\rt) \mu\le \nabla \psi\# dx\le \lt(1+\frac{1}{n}\rt) \mu\rt\}
\]
 and put $G:=\cap_{n\in \N}\cup_{k\in\N} G^{k,n}$. The claim would follow provided we can prove that $q(G)=1$.\\
 We first show that for fixed $k,n\in\N$ the set $G^{k,n}$ is closed. Let $(\mu_m,\psi_m)_{m\in \N}\in G^{k,n}$ be a sequence converging to some $(\mu,\psi)\in \Gamma\times \K$. Since $F_k$ is closed, we have $\psi\in F_k$ and we only need to prove that 
\begin{equation}\label{eq:toproveclose}
 \lt(1-\frac{1}{n}\rt) \mu\le \nabla \psi\# dx\le \lt(1+\frac{1}{n}\rt) \mu,
\end{equation}
which by weak convergence of $\mu_m$ to $\mu$ and the fact that  $(\mu_m,\psi_m)$ satisfies \eqref{eq:toproveclose}, would be proven provided we show that $\nabla \psi_m\#dx$ weakly converges up to subsequence to $\nabla \psi\#dx$.\\ 

Let $f\in C_c(\R^2)$ be fixed and let us prove that up to subsequence, 
\begin{equation}\label{eq:toproveclose2}
 \int_{\R^2} f (\nabla \psi_m)\to \int_{\R^2} f(\nabla \psi).
\end{equation}
 By local uniform convergence of the convex functions $\psi_m$ to $\psi$, if $p_m\in \partial \psi_m(x)$  with $p_m\to p$, then $p\in \partial \psi(x)$. Therefore,  $\nabla \psi_m$ converges a.e.\ to $\nabla \psi$.
Let $r>0$ be such that $\spt f\subset B_r$.  In order to apply the dominated convergence theorem and conclude the proof of \eqref{eq:toproveclose2}, we need to prove that
there exists $R$ depending only on $k$ and $r$ such that if $|x|\ge B_R$, then $|\nabla \psi_m(x)|\ge r$. This is a simple consequence of the fact that $\psi_m\in F_k$ and the monotonicity of $\nabla \psi_m$. Indeed, since $\psi_m\in F_k$,
\[
 \frac{1}{4}|x|^2- k\le \psi_m\le |x|^2+k
\]
so that at every point $x$ of differentiability of $ \psi_m$, since 
\[
  \frac{1}{4}|x|^2- 2k\le \psi_m(x)-\psi_m(0)\le \nabla \psi_m(x)\cdot x\le|\nabla \psi_m(x)| |x|, 
\]
we have 
\[
 |\nabla \psi_m (x)|\ge \frac{1}{4}|x| -\frac{2k}{|x|}.
\]
This gives the claim and shows that $G^{k,n}$ is indeed a closed set.

Since $G^{k,n}$ is measurable,  $G=\cap_n \cup_k G^{k,n}$ is also measurable. Let $q$ be an accumulation point of $(q_L)_L$ so that up to subsequence $q_{L}\to q$. For a given $\eps>0$, let us prove that for every $n$ and for $k$ large enough (depending only on $\eps$) and every $L$ large enough (depending only on $k$, $n$ and $\eps$),
\begin{equation}\label{eq:toproveGkn}
 q_L(G^{k,n})\ge 1-\eps.
\end{equation}
Since 
\[
 q_L((G^{k,n})^c)\le q_L(\Gamma\times (F^k)^c)+q_L\lt(\lt\{(\mu,\psi) \, : \, \lt(1-\frac{1}{n}\rt) \mu\le \nabla \psi\# dx\le \lt(1+\frac{1}{n}\rt) \mu\rt\}^c\rt),
\]
it is enough to prove that each of the terms on the right-hand side are smaller than $\frac{\eps}{2}$ for $k$, $n$ and $L$ large enough. The first term is estimated in \eqref{estimqL} and we just need to consider the second term. 
For every $L>0$ and $\mu\in \Gamma$, we have by \eqref{eqnabpsi}  
\begin{multline*}
 q_L\lt(\lt\{(\mu,\psi) \, : \, \lt(1-\frac{1}{n}\rt) \mu\le \nabla \psi\# dx\le \lt(1+\frac{1}{n}\rt) \mu\rt\}^c\rt)\\
 =\PP_L\sqa{ \mu(Q_L)\notin L^2\lt[ \frac{n}{n+1}, \frac{n}{n-1}\rt]},
\end{multline*}
which by Cram\'er-Chernoff's bounds for the Poisson distribution with intensity $L^2$ (see \cite{BouLuMa}) gives 
\[
 q_L\lt(\lt\{(\mu,\psi) \, : \, \lt(1-\frac{1}{n}\rt) \mu\le \nabla \psi\# dx\le \lt(1+\frac{1}{n}\rt) \mu\rt\}^c\rt)\le \exp\lt(-C \frac{L^2}{n^2}\rt),
\]
concluding the proof of \eqref{eq:toproveGkn}.

Now for fixed $k, n\in \N$ large enough, since $G^{k,n}$ is closed, we have by \eqref{eq:toproveGkn}
$$1-\eps\le \limsup_L q_{L}(G^{k,n})\leq q(G^{k,n}).$$
Using that for every $k, n\in \N$,  $G^{k,n+1}\subset G^{k,n}$  and that $G=\cap_n\cup_k G^{k,n}$, we obtain that  for every $\eps>0$,
 \[
  q(G)\ge 1-\eps,
 \]
which concludes the proof.
\medskip\\
{\it Step 3.} To show (iii) fix an accumulation point $q$ and a subsequence, still denoted by $(q_L)_L$ converging weakly to $q$. Since for fixed $\lambda>0$, the Legendre transform $\psi \to \psi^*$ is continuous from $F_\lambda$ to $\K$,  for every $h\in \R^2$ and $\lambda>0$ the map $\psi\mapsto D^2_h\psi^*$ is continuous  on $F_\lambda$ (recall \eqref{def:Flambda}) with values in $C^0(\R^2)$.
Hence, the convergence $q_L\to q$ together with \eqref{estimqL} readily implies for all $f\in C_b(\Gamma\times C^0(\R^2))$ that also
$$ \int_{\Gamma\times \K} f(\mu,D^2_h\psi^*) dq_L\to \int_{\Gamma\times \K} f(\mu, D^2_h\psi^*) dq.$$
By \eqref{D2hpsiL}, we have $q_L$ almost surely $D^2_h\psi^*_{\mu}=D^2_h\psi^*_{\theta_z \mu}(\cdot -z)$. Using the invariance of $\PP_L$ under $\theta$ an the definition of $q_L$ we have for fixed $z\in\R^2$
\begin{align*}
 \int_{\Gamma\times \K} f(\mu,D^2_h\psi^*)dq_L
=&\int_{\Gamma} f(\mu,D^2_h\psi^*_{\mu}) d\PP_L\\
=& \int_{\Gamma} f(\theta_{-z}\theta_z\mu, D^2_h\psi^*_{\theta_z \mu}(\cdot -z))d\PP_L\\
=& \int_{\Gamma} f(\theta_{-z} \mu, D^2_h\psi^*_\mu(\cdot -z))d\PP_L\\
=& \int_{\Gamma\times \K} f(\theta_{-z} \mu, D^2_h\psi^*(\cdot -z))dq_L.
\end{align*}
Since  for fixed $z\in \R^2$, $\theta_{-z}$ is continuous on $\Gamma$, for every such $z$ and  $\lambda>0$ the map $(\mu,\psi)\to f(\theta_{-z}\mu,D^2_h \psi^*)\in C_b(\Gamma\times F_\lambda)$ so that by weak convergence $q_L\to q$ combined again with \eqref{estimqL} we have
$$\int_{\Gamma\times \K} f(\theta_{-z} \mu, D^2_h\psi^*(\cdot -z))dq_L \to \int_{\Gamma\times \K} f(\theta_{-z} \mu, D^2_h\psi^*(\cdot -z)) dq$$
which implies the thesis.

\end{proof}

\providecommand{\bysame}{\leavevmode\hbox to3em{\hrulefill}\thinspace}
\providecommand{\MR}{\relax\ifhmode\unskip\space\fi MR }
\providecommand{\MRhref}[2]{%
  \href{http://www.ams.org/mathscinet-getitem?mr=#1}{#2}
}
\providecommand{\href}[2]{#2}

 \end{document}